\def\ints{{\mathbb Z}}
\def\rats{{\mathbb Q}}
\def\reals{{\mathbb R}}
\def\proj{{\mathbb P}}
\def\FF{{\mathbb F}}
\def\qed{\hfill $\Box$}
\def\ord{{\text{ord}}}
\DeclareMathOperator{\eff}{eff}
\def\Frac{{\text{Frac}}}
\DeclareMathOperator{\Aut}{Aut}
\def\Gal{{\text{Gal}}}
\def\Spec{{\mbox{Spec }}}
\def\mc#1{\mathcal{#1}}
\def\ol#1{\overline{#1}}
\def\matrix#1#2#3#4{
    \left( \begin{array}{cc} #1&#2 \\ #3&#4 \end{array} \right)}
\newtheorem{theorem}{Theorem}[section]
\newtheorem{prop}[theorem]{Proposition}
\newtheorem{lemma}[theorem]{Lemma}
\newtheorem{corollary}[theorem]{Corollary}
\newtheorem{predefinition}[theorem]{Definition}
\newenvironment{definition}{\begin{predefinition}\rm}{\end{predefinition}}
\newtheorem{preremark}[theorem]{Remark}
\newenvironment{remark}{\begin{preremark}\rm}{\end{preremark}}
\newtheorem{preconstruction}[theorem]{Construction}
\newenvironment{construction}{\begin{preconstruction}\rm}{\end{preconstruction}}
\newtheorem{prenotation}[theorem]{Notation}
\newenvironment{notation}{\begin{prenotation}\rm}{\end{prenotation}}
\newtheorem{preexample}[theorem]{Example}
\newenvironment{example}{\begin{preexample}\rm}{\end{preexample}}
\newtheorem{preclaim}[theorem]{Claim}
\newtheorem{prequestion}[theorem]{Question}
\newenvironment{question}{\begin{prequestion}\rm}{\end{prequestion}}
\begin{document}
\title{Vanishing cycles and wild monodromy}

\author{Andrew Obus}
\email{obus@math.columbia.edu}
\address{Department of Mathematics\\Columbia University\\MC 4403\\2990
Broadway\\New York, NY 10027}
\classification{14G20, 14H30 (primary), 14H25, 11G20, 11S20 (secondary)}
\keywords{Galois cover, stable reduction, curves, wild monodromy}
\thanks{Supported by an NDSEG Graduate Research Fellowship and an NSF
Mathematical Sciences Postdoctoral Research
Fellowship.}

\numberwithin{equation}{section}

\begin{abstract}
Let $K$ be a complete discrete valuation field of mixed characteristic $(0,p)$
with algebraically closed residue field,
and let $f: Y \to \proj^1$ be a three-point $G$-cover defined over $K$, where
$G$ has a cyclic $p$-Sylow subgroup $P$.  We
examine the stable model of $f$, in particular, the minimal extension $K^{st}/K$
such that the stable model is defined
over $K^{st}$.  Our main result is that, if $|P| = p^n$ and the center of $G$ has prime-to-$p$ order, 
then the $p$-Sylow subgroup of
$\Gal(K^{st}/K)$ has exponent dividing $p^{n-1}$.  This extends work of Raynaud
in the case that $|P| = p$.
\end{abstract}

\maketitle

\section{Introduction}\label{Sintro}
This paper investigates the stable reduction of three-point covers over complete
discrete valuation
fields of mixed characteristic.  In particular, we examine the minimal extension
of a field of definition of such a
cover that is necessary to obtain the stable model.

Let $G$ be a finite group.  Let $f:Y \to X$ be a branched $G$-Galois cover of
curves defined over a complete discretely valued field $K$ of 
characteristic $0$ with valuation ring $R$ whose residue field $k$ is
algebraically closed of characteristic $p$.  Suppose $X$ has a smooth model over
$R$.
Then, under mild hypotheses, there is a finite extension $K^{st}/K$ with
valuation ring $R^{st}$ such
that there is a minimal \emph{stable model} $f_{R^{st}}$ for $f \times_K K^{st}$
whose 
special fiber $\ol{f}: \ol{Y} \to \ol{X}$ (called the \emph{stable reduction})
is reduced with only nodal singularities.
The group $G$ acts on $\ol{Y}$.  See \S\ref{Sstable} for more details.

The proofs of the existence of stable reduction are non-constructive, and the
question of determining the minimal extension 
$K^{st}/K$ over which the stable model can be defined is far from being answered
in most cases.  In the case that the
branch points of $f$ are defined over $K$, the extension $K^{st}/K$ is Galois,
and we focus on the (unique) $p$-Sylow
subgroup $\Gamma_w$ of $\Gal(K^{st}/K)$, called the \emph{wild monodromy group}
of $f$.  This group acts faithfully on
the stable reduction $\ol{f}$.

The wild monodromy group has been studied 
by Lehr and Matignon in the case of a $\ints/p$-cover of $\proj^1$ with any
number of branch points (\cite{LM:wm}), 
and by Raynaud in the case of a three-point $G$-cover (i.e., a cover of
$\proj^1$ branched exactly at $0, 1$, and $\infty$) 
such that $p$ exactly divides $|G|$ (\cite{Ra:sp}).  Note that, in both cases,
$p$ exactly divides the order of the Galois 
group of $f$.  The methods of \cite{LM:wm} exploit the availability of explicit
equations coming from having a Kummer 
cover, whereas the methods of \cite{Ra:sp} involve understanding combinatorial
aspects of $\ol{f}$.
In particular, Raynaud proves a \emph{vanishing cycles formula} (\cite[3.4.2
(5)]{Ra:sp}) which places restrictions on
how complicated $\ol{f}$ can be, which in turn places bounds on the wild
monodromy group. 
Also, \cite{Ra:sp} relates the wild monodromy group to the question of good
reduction of $f$.  

This paper builds on the ideas of \cite{Ra:sp}.  We place bounds on the wild
monodromy group $\Gamma_w$ of $f$, when $f$ is 
a three-point $G$-cover such that $G$ has a \emph{cyclic} $p$-Sylow subgroup of
\emph{arbitrary} order.  Specifically, our main result is the following:

\begin{theorem}\label{Tmain}
Suppose $G$ is a finite group with nontrivial cyclic $p$-Sylow subgroup $P$ of order
$p^n$.  Assume that $p$ does not divide the order 
of the center of $G$.   If $f: Y \to X = \proj^1$ is a three-point $G$-cover 
defined over $K$, then the wild monodromy group $\Gamma_w$ has exponent dividing
$p^{n-1}$.    
\end{theorem}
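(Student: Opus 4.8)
The plan is to study the action of $\Gamma_w$ directly on the stable reduction $\ol f\colon \ol Y \to \ol X$ of \S\ref{Sstable}, using three structural facts: $\Gamma_w$ commutes with the $G$-action and acts faithfully on $\ol f$ (so it suffices to show that $\gamma^{p^{n-1}}$ acts trivially for each $\gamma \in \Gamma_w$); the branch points $0,1,\infty$ and the group $G$ are defined over $K$; and $\ol X$ is a tree of copies of $\proj^1$ rooted at an \emph{original component} $X_0$. The first observation is that every $\gamma \in \Gamma_w$ acts trivially on $X_0$: it fixes the three specialized branch points on $X_0 \cong \proj^1$, and an automorphism of $\proj^1$ fixing three points is the identity. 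Before treating a general $G$, I would reduce to the case $G = P = \ints/p^n$ by passing to an auxiliary cover assembled from the inertia data of $f$ in $P$; the hypothesis $p \nmid |Z(G)|$ is what guarantees that this passage neither creates nor destroys $p$-power monodromy --- there is no central $\ints/p$ acting trivially on $\ol X$ yet enlarging $K^{st}$ --- so that bounding $\exp(\Gamma_w)$ for the auxiliary cover suffices.

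\medskip
Next I would equip $\ol X$ with the deformation-theoretic data that govern its geometry. To each component on which $\ol f$ degenerates one attaches a \emph{deformation datum}: a $\ints/p$-torsor $\ol Z_v \to \ol X_v$ together with a meromorphic differential $\omega_v$, well defined up to scaling by $\FF_p^\times$, whose zeros and poles at the nodes and branch points are constrained by Raynaud's vanishing cycles formula (\cite[3.4.2 (5)]{Ra:sp}). The tower $\ints/p \subset \ints/p^2 \subset \cdots \subset \ints/p^n = P$ organizes these data into $n$ \emph{levels}, refining the classification of components into \'etale tails, inseparable components, and the primitive (outermost) tails meeting no further components. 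The quantitative heart of the argument is to show that this filtration is respected by $\Gamma_w$ and that it has exactly the depth dictated by $n$.

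\medskip
Now fix $\gamma \in \Gamma_w$ of order $p^m$; the goal is $m \le n-1$. Since $\gamma$ fixes $X_0$ pointwise it fixes every node lying on $X_0$, hence preserves setwise each component $C$ meeting $X_0$, and acts on $C \cong \proj^1$ fixing the attaching node. Because the residue field has characteristic $p$, any automorphism of $\proj^1$ of $p$-power order fixing a point is a translation $z \mapsto z+b$ and therefore has order dividing $p$; consequently $\gamma$ acts on any single component with order at most $p$, and can only accumulate higher $p$-power order by permuting, in orbits of size $p$, the subtrees hanging off a component that it translates nontrivially. Propagating this down the level filtration, $\gamma$ acts with order dividing $p$ on each level, and $\exp(\Gamma_w)$ is bounded by $p$ raised to the number of levels on which the action is nontrivial. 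The decisive step is to show this action is \emph{trivial on the primitive level}: the differential $\omega_v$ on an outermost tail, rigidified by the branch points of $f$ carried on it and by its $\FF_p^\times$-normalization, admits no nontrivial translation sending $\omega_v$ to a scalar multiple of itself, so $\gamma$ fixes the primitive tails pointwise. This removes one level from the naive count and, together with the depth bound from the tower, forces $m \le n-1$.

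\medskip
The main obstacle is precisely the analysis underlying the level filtration above, and it is where the case $n > 1$ departs from Raynaud's $|P| = p$ analysis. For $n = 1$ there is a single level, the primitive tails are the only degenerate components, and the rigidity argument yields $\Gamma_w = 1$ outright; for $n > 1$ one must track how the differentials $\omega_v$ at successive levels of the tower interact through the vanishing cycles formula at the \emph{non-tail} inseparable components, control the resulting depth, and verify that the per-level order bound and the primitive-level rigidity survive in the presence of deeper branching. Making the notion of level precise and proving the per-level bound uniformly --- rather than merely on outermost tails --- is the technical core of the argument and the step I expect to require the most care.
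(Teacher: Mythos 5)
There is a genuine gap, and it sits at the heart of your plan: the proposed reduction to $G = P = \ints/p^n$ via an auxiliary cover. That group is abelian, so its center has order $p^n$, and the theorem's essential hypothesis $p \nmid |Z(G)|$ fails for the reduced situation; the conclusion is then simply unavailable (and false to expect --- already for $\ints/p$-covers Lehr--Matignon exhibit nontrivial wild monodromy, and the $n=1$ case of the theorem is Raynaud's triviality statement precisely \emph{because} of the center hypothesis). Your gloss that $p \nmid |Z(G)|$ ``guarantees that this passage neither creates nor destroys $p$-power monodromy'' is exactly where the hypothesis would have to do its work, and no mechanism is offered. In the paper's proof the hypothesis is consumed twice, both times on $G$ itself and in ways that cannot be transported to an auxiliary group: in the potentially good reduction case (which your proposal never addresses --- there is no tree to filter there), where by \cite[Proposition 4.2.2]{Ra:sp} the group $\Gamma_w$ embeds into the $p$-Sylow subgroup of $Z(G)$; and in Lemma \ref{Lvertactioncentral}, where a nontrivial ``vertical'' action of $\gamma \in \Gamma_w$ on a component $\ol{Y}_b$ over an \'{e}tale tail produces an order-$p$ subgroup $Q$ central in the decomposition group $D_{\ol{Y}_b}$, and then a genuinely global argument (total ramification over the attaching node, connectivity of $\ol{Y} \setminus \ol{Y}_b$, Corollaries \ref{Cinsepnormalp} and \ref{Cnormalp}) promotes $Q$ to a subgroup central in all of $G$, a contradiction. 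Your proposal never uses the center hypothesis at any point of the action analysis, which is a sure sign the analysis cannot close.

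There are further defects even granting the setup. Your argument that $\gamma$ fixes the original component is wrong as stated: the branch points need not specialize to $\ol{X}_0$ --- a branch point of index divisible by $p^a$ specializes to a $p^a$-component (Proposition \ref{Pcorrectspec}), typically a tail; the correct and easier argument (Lemma \ref{Ltwopoints}) is that $G_K$ commutes with reduction of the fixed smooth model $\proj^1_R$. More seriously, the inseparable tails --- the actual novelty when $n \geq 2$ --- are absent from your accounting: $\Gamma_w$ may permute them nontrivially (there can be up to $p^d - 1$ tails that are $p^d$-components, Proposition \ref{Ptaillimit}), and the exponent bound must trade orbit sizes against fiber cardinalities over such tails. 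The paper does this by combining integrality of their invariants (Lemma \ref{Linsepint}), the jump estimate $\sigma_b - 1 \geq p$ when the fiber action has excess $p$-order (Lemma \ref{Lsigmaaction}, via Pries), and the refined count of Corollary \ref{Ctaillimit}, extracted from the generalized vanishing cycles formula (Proposition \ref{Pgenvancycles}). Your ``per-level order $p$ plus triviality on the primitive level'' bookkeeping does not yield exponent $p^{n-1}$ on its own: the depth of the tree of components is not bounded by $n$, and the paper's count runs instead through divisibility of fiber cardinalities by powers of $p$ dictated by the generic inertia. Finally, the claimed rigidity of the differential $\omega_v$ under translations of a primitive tail is asserted, not proved; the paper obtains trivial permutation of the \'{e}tale tails by pure counting (fewer than $p$ of them, hence trivially permuted by a $p$-group, Lemma \ref{Lfixetale}) and rules out fixed singular points over new tails by playing the conductor-compositum formula (Lemma \ref{Lcompositum}) against the vanishing cycles formula (Lemma \ref{Lfixetalevert}).
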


\begin{remark}\label{Rmain}
\begin{description}
\item{(i)} If $n = 1$, Theorem \ref{Tmain} says that $\Gamma_w$ is trivial, which is
the result that appeared as \cite[Th\'{e}or\`{e}me 4.2.10 (1)]{Ra:sp} (for $Y$ having genus $\geq 2$).  
\item{(ii)} If the branching indices of $f$ are prime to $p$ and there are no ``new" \'{e}tale tails (see \S\ref{Sstable}), then $\Gamma_w$ is trivial, 
regardless of $n$ (Proposition \ref{Ptrivwildmon}).
\item{(iii)} We are not yet sure if, in fact, there exist examples where $n \geq 2$, where $p$ does not divide the order of the center of $G$, 
and where $\Gamma_w$ has exponent $p^{n-1}$.  Indeed, it is difficult to
write down examples where there is any wild monodromy at all.  We write down such an example in Appendix \ref{Awildexample} 
where $p=5$, $n=3$, and $\Gamma_w \supseteq \ints/5$.
\item{(iv)} Using Raynaud's relation between the wild monodromy group and good reduction, we exhibit a family of
three-point $G$-covers with potentially good reduction to characteristic $p$, where $G$ has arbitrarily large cyclic $p$-Sylow subgroup.  
Specifically, $G \cong PGL_3(q)$, with $p^n \, |  \,q^2 + q + 1$.  See Example \ref{Xgoodreduction}.
\end{description}
\end{remark}

Several difficulties present themselves when we allow $G$ to have a $p$-Sylow
subgroup of order greater than $p$.  The
first is that Raynaud's vanishing cycles formula is proven only in the case
where $p$ exactly divides $|G|$ (in fact, it
is not immediately obvious what the generalization should be for arbitrary $G$).
 Our Theorem \ref{Tvancycles} extends
this formula to the case where $G$ has a cyclic $p$-Sylow subgroup.  The proof
uses invariants of \emph{deformation data}, 
which were used by Wewers in \cite{We:br} to give an alternate proof of
Raynaud's vanishing cycles formula.  Wewers 
associated deformation data to irreducible components of $\ol{Y}$ on which $G$
acts with inertia group of order $p$.  We 
extend this to the case where $G$ acts with cyclic inertia of any order, and
introduce new \emph{effective invariants}
for these deformation data.  

A more serious difficulty is that when $p$ divides the order of $|G|$ more than
once, then $\ol{f}: \ol{Y} \to \ol{X}$
can have \emph{inseparable tails}, i.e., irreducible components $\ol{W}$ of
$\ol{X}$ that intersect the rest of $\ol{X}$ 
at one point such that $G$ acts with nontrivial inertia above $\ol{W}$.  These
tails are not present when $p$ exactly
divides $|G|$, and they do not appear in Theorem \ref{Tvancycles}.  We prove a
\emph{generalized vanishing cycles
formula} (Proposition \ref{Pgenvancycles}) that takes these tails into account. 
This requires introducing
what we call \emph{truncated effective invariants} of deformation data.  Using
Proposition \ref{Pgenvancycles}, we are 
able to prove Theorem \ref{Tmain}.

There are proofs of Theorem \ref{Tvancycles} and Proposition \ref{Pgenvancycles}
that do not use deformation data (see
\cite[\S3.1]{Ob:th}), but the proof we give here is particularly nice, and the
material on deformation data that we
develop here will be used in the subsequent papers \cite{Ob:fm1} and \cite{Ob:fm2}.

\subsection{Section-by-section summary and walkthrough}
In \S2, we give preliminary results about group theory, stable reduction, and
ramification.  Many of these results are already known.  
In \S3 we generalize the construction of deformation data given in \cite{He:ht}
and used in \cite{We:br}, and use these 
deformation data to prove the aforementioned vanishing cycles formulas. For a
three-point $G$-cover $f:Y \to X$, 
results limiting the number and type of tails of the stable reduction $\ol{f}:
\ol{Y} \to \ol{X}$ are given in \S4.
The main theorem is proved in \S5, where we use arguments similar to, but more
complicated than, those of
\cite[\S4.2]{Ra:sp} to obtain our restrictions on the wild monodromy of $f$. 
The connection to good reduction is
discussed at the end of \S5.

\subsection{Notation and conventions}\label{Snotations}
The following notations will be used throughout the paper:  The letter $p$
always represents a prime number.
If $G$ is a group, and $H$ a subgroup, we write $H \leq G$.  
We denote by $N_G(H)$ the normalizer of $H$ in $G$ and by $Z_G(H)$ the
centralizer of $H$ in $G$.  The order of $G$ is
written $|G|$.  If $G$ has a cyclic $p$-Sylow subgroup $P$, and $p$ is
understood, we write $m_G = |N_G(P)/Z_G(P)|$.  

If $K$ is a field, $\ol{K}$ is its algebraic closure.  We write $G_K$ for the
absolute Galois group of $K$.  If $H \leq
G_K$, we write $\ol{K}^H$ for the fixed field of $H$ in $\ol{K}$.  Similarly, if
$\Gamma$ is a group of automorphisms of a
ring $A$, we write $A^{\Gamma}$ for the fixed ring under $\Gamma$.
If $K$ is discretely valued, then $K^{ur}$ is the \emph{completion} of the
maximal unramified algebraic extension of $K$.  

If $x$ is a scheme-theoretic point of a scheme $X$, then $\mc{O}_{X,x}$ is the
local ring of $x$ on $X$.  
If $R$ is any local ring, then $\hat R$ is the completion of $R$ with respect to
its maximal ideal. 
If $R$ is a discrete valuation ring with fraction field $K$ of characteristic 0
and residue field $k$ of
characteristic $p$, we normalize the valuation $v$ on $R$ so that $v(p) = 1$.  

A \emph{branched cover} $f: Y \to X$ of smooth proper curves is a finite,
surjective, generically \'{e}tale morphism. 
All branched covers are assumed to be geometrically connected.
If $f$ is of degree $d$ and $G$ is a finite group of order $d$ with $G \cong
\Aut(Y/X)$, then $f$ is called a
\emph{Galois cover with (Galois) group $G$}.  If we choose an isomorphism $i: G
\to \Aut(Y/X)$, then the datum $(f, i)$
is called a \emph{$G$-Galois cover} (or just a \emph{$G$-cover}, for short).  We
will usually suppress the isomorphism
$i$, and speak of $f$ as a $G$-cover. 
 
The \emph{ramification index} of a
point $y \in Y$ such that $f(y) = x$ is the ramification index of the extension
of complete local rings $\hat{\mc{O}}_{X, x}
\to \hat{\mc{O}}_{Y, y}$.  If $f$ is Galois, then the \emph{branching index} of
a closed point $x \in X$ is the ramification
index of any point $y$ in the fiber of $f$ over $x$.  If $x \in X$ (resp.\ $y
\in Y$) has branching index (resp.\
ramification index) greater than 1, then it is called a \emph{branch point}
(resp.\ \emph{ramification point}).

If $X$ is a smooth curve over a complete discrete valuation field $K$ with
valuation ring $R$, then a \emph{semistable}
model for $X$ is a relative curve $X_R \to \Spec R$ with $X_R \times_R K \cong
X$ and semistable special fiber (i.e.,
the special fiber is reduced with only ordinary double points for
singularities).

For any real number $r$, $\lfloor r \rfloor$ is the greatest integer less than
or equal to $r$. 
Also, $\langle r \rangle := r - \lfloor r \rfloor$.

\section{Background Material}\label{Sbackground}

\subsection{Finite groups with cyclic $p$-Sylow subgroups}\label{Sgroups}
In this section, we prove structure theorems about finite groups with
\emph{cyclic} $p$-Sylow subgroups.   
Throughout \S\ref{Sgroups}, $G$ is a finite group with a cyclic $p$-Sylow
subgroup $P$ of order $p^n$.  Recall that $m_G = |N_G(P)/Z_G(P)|$.

\begin{lemma}\label{Lautaction}
Let $Q \leq P$ have order $p$.  If $g \in N_G(P)$ acts trivially on $Q$ by
conjugation, it acts trivially on $P$.
Thus $N_G(P)/Z_G(P) \hookrightarrow \Aut(Q)$, so $m_G|(p-1)$.
\end{lemma}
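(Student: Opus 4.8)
The plan is to play off two facts against each other: the automorphism group of a cyclic $p$-group is extremely rigid, and $m_G$ is automatically prime to $p$ because $P$ is a Sylow subgroup. Write $P = \langle x \rangle$ with $x$ of order $p^n$. Since $P$ is cyclic it has a unique subgroup of order $p$, which must therefore be $Q = \langle x^{p^{n-1}} \rangle$. The conjugation action of $N_G(P)$ on $P$ is a homomorphism $N_G(P) \to \Aut(P)$ whose kernel is exactly $Z_G(P)$, so I get an injection $N_G(P)/Z_G(P) \hookrightarrow \Aut(P)$, and the automorphism $\sigma$ induced by $g$ is given by $\sigma(x) = x^a$ for some $a$ prime to $p$, determined by $a \bmod p^n$.

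First I would record that $m_G$ is prime to $p$. Because $P$ is cyclic it is abelian, so $P \leq Z_G(P) \leq N_G(P)$; and because $P$ is a $p$-Sylow subgroup of $G$ it is in particular a $p$-Sylow subgroup of $N_G(P)$, so $[N_G(P):P]$ is prime to $p$. Hence $m_G = [N_G(P):Z_G(P)]$ divides $[N_G(P):P]$ and is prime to $p$.

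Next I would turn the hypothesis into a congruence. Since $\sigma$ fixes $x^{p^{n-1}}$ we have $x^{a p^{n-1}} = x^{p^{n-1}}$, i.e.\ $(a-1)p^{n-1} \equiv 0 \pmod{p^n}$, which says exactly $a \equiv 1 \pmod p$. The residues $a \bmod p^n$ with $a \equiv 1 \pmod p$ form a subgroup of $(\ints/p^n)^\times$ of order $p^{n-1}$, so the corresponding automorphisms form a $p$-group; in particular $\sigma$ has $p$-power order. But the order of $\sigma$ equals the order of the image of $g$ in $N_G(P)/Z_G(P)$, which divides $m_G$ and is therefore prime to $p$. An element whose order is simultaneously a power of $p$ and prime to $p$ is trivial, so $\sigma = \mathrm{id}$; that is, $g$ acts trivially on $P$, which is the first assertion. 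This first assertion is precisely the statement that the composite $N_G(P)/Z_G(P) \hookrightarrow \Aut(P) \to \Aut(Q)$ (restriction to $Q$) is injective, so $N_G(P)/Z_G(P) \hookrightarrow \Aut(Q)$; since $Q \cong \ints/p$ we have $|\Aut(Q)| = p-1$, giving $m_G \mid (p-1)$.

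I expect essentially all the content to lie in the clash of the two divisibility constraints, with everything else bookkeeping. The one point that must be gotten right, and the true crux, is that $m_G$ is prime to $p$: this is what upgrades the congruence $a \equiv 1 \pmod p$ to $a \equiv 1 \pmod{p^n}$. The implication genuinely fails for a general automorphism of $P$ that is trivial on $Q$ (for instance $x \mapsto x^{1+p}$ fixes $Q$ but is nontrivial), so the Sylow hypothesis is indispensable and I would be careful to invoke it explicitly rather than treat the claim as a bare fact about $\Aut(P)$.
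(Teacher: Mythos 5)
Your proof is correct, and its first half coincides in substance with the paper's: both arguments reduce to the observation that the restriction map $\Aut(P) \to \Aut(Q)$ has $p$-group kernel (you make this explicit as the subgroup of classes $a \equiv 1 \pmod{p}$ in $(\ints/p^n)^{\times}$, of order $p^{n-1}$), so the image $\ol{g}$ of $g$ under $N_G(P)/Z_G(P) \hookrightarrow \Aut(P)$ has $p$-power order. Where you genuinely diverge is in how that $p$-element is killed. The paper argues by contradiction inside $G$: if $\ol{g} \neq 1$, then $g \notin P$ and $\langle g, P \rangle$ has a non-cyclic $p$-Sylow subgroup, contradicting that all $p$-subgroups of $G$ are cyclic of order at most $p^n$; this step is slightly delicate, since $g$ itself need not be a $p$-element and one must check that $p$ divides the order of $g$ modulo $P$ (which follows because $g^r \in P \leq Z_G(P)$ for $r$ the order of $g$ mod $P$ forces $p^{\ord(\ol{g})} \mid r$). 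You instead establish up front that $m_G$ is prime to $p$, using $P \leq Z_G(P)$ (cyclic, hence abelian, $P$) together with $p \nmid [N_G(P):P]$ (Sylow), and then the two order constraints on $\ol{g}$ clash immediately. Your route is the more standard one --- it is the computation underlying Burnside-type normal-complement arguments --- and it sidesteps the delicacy just noted while isolating the reusable fact $p \nmid m_G$, which the paper uses implicitly elsewhere in \S\ref{Sgroups}; the paper's version is marginally shorter on the page and leans directly on the cyclic-Sylow hypothesis. Your closing remark also correctly identifies the crux: the claim is false for a bare automorphism of $P$, as $x \mapsto x^{1+p}$ shows, so the Sylow hypothesis must enter exactly where you (and the paper, in its own way) invoke it.
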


\begin{proof}(cf. \cite[Remarque 3.1.8]{Ra:sp})
We know $\Aut(P) \cong (\ints/p^n)^{\times}$, which has order $p^{n-1}(p-1)$,
with a unique maximal prime-to-$p$ 
subgroup $C$ of order $p-1$.
Let $g \in N_G(P)$, and suppose that the image $\ol{g}$ of $g$ in $N_G(P)/Z_G(P)
\subseteq \Aut(P)$ acts trivially on $Q$.
Since $$(\ints/p^n)^{\times} \cong \Aut(P) \twoheadrightarrow \Aut(Q) \cong
(\ints/p)^{\times}$$ has $p$-group kernel
we know that $\ol{g}$ has $p$-power order.
If $\ol{g}$ is not trivial, then $g \notin P$, and the subgroup $\langle g, P
\rangle$ of $G$ has a non-cyclic 
$p$-Sylow subgroup.   
This is impossible, so $\ol{g}$ is trivial, and $g$ acts trivially on $P$.
\end{proof}

We state a theorem of Burnside:

\begin{lemma}[(\cite{Za:tg}, Theorem 4, p.\ 169)]\label{Lburnside}
Let $\Gamma$ be a finite group, with a $p$-Sylow subgroup $\Pi$.  Then, if
$N_{\Gamma}(\Pi) = Z_{\Gamma}(\Pi)$, the
group $\Gamma$ can be written as an extension
$$1 \to \Sigma \to \Gamma \to \Pi' \to 1,$$ where $\Pi \leq \Gamma$ maps
isomorphically onto $\Pi'$. 
\end{lemma}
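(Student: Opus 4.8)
The plan is to prove this, which is Burnside's normal $p$-complement theorem, using the transfer homomorphism (Verlagerung); the complement $\Sigma$ will emerge as the kernel of the transfer $V \colon \Gamma \to \Pi$. First I would record two elementary consequences of the hypothesis $N_\Gamma(\Pi) = Z_\Gamma(\Pi)$. Since $\Pi \leq N_\Gamma(\Pi) = Z_\Gamma(\Pi)$, the group $\Pi$ centralizes itself, so $\Pi$ is abelian; hence $\Pi/[\Pi,\Pi] = \Pi$ and the transfer may be regarded as a homomorphism into $\Pi$ itself. Moreover, conjugation by any element of $N_\Gamma(\Pi)$ acts trivially on $\Pi$.

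Next I would establish that $\Gamma$ induces no nontrivial fusion in $\Pi$. Because $\Pi$ is abelian, every element of $\Pi$ is central in $\Pi$, so Burnside's fusion lemma applies: if $x, x' \in \Pi$ are conjugate in $\Gamma$, they are already conjugate in $N_\Gamma(\Pi)$. Combined with the triviality of the $N_\Gamma(\Pi)$-action, this forces $x = x'$.

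I would then evaluate $V$ on $\Pi$. Letting $\langle x \rangle$ act by right multiplication on the coset space $\Pi \backslash \Gamma$ and choosing representatives $g_i$ for the orbits, with orbit lengths $n_i$ summing to $[\Gamma : \Pi]$, the transfer-evaluation formula gives, for $x \in \Pi$,
$$V(x) = \prod_i g_i x^{n_i} g_i^{-1},$$
where each factor lies in $\Pi$. Since $g_i x^{n_i} g_i^{-1}$ is conjugate in $\Gamma$ to $x^{n_i} \in \Pi$, the previous step yields $g_i x^{n_i} g_i^{-1} = x^{n_i}$, whence
$$V(x) = x^{\sum_i n_i} = x^{[\Gamma : \Pi]}.$$
As $[\Gamma : \Pi]$ is prime to $p$ and $\Pi$ is a finite abelian $p$-group, the map $x \mapsto x^{[\Gamma : \Pi]}$ is an automorphism of $\Pi$, so $V$ restricted to $\Pi$ is an isomorphism onto $\Pi$. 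Setting $\Sigma := \ker V$, this gives $\Sigma \cap \Pi = 1$; comparing orders shows $|\Sigma| = [\Gamma : \Pi]$ is prime to $p$ and $\Gamma = \Sigma \Pi$. Thus $\Sigma$ is normal of prime-to-$p$ order, and $\Pi$ maps isomorphically onto $\Pi' := \Gamma/\Sigma$.

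I expect the main obstacle to be the transfer-evaluation step: one must decompose $\Pi \backslash \Gamma$ into $\langle x \rangle$-orbits correctly, verify that each factor $g_i x^{n_i} g_i^{-1}$ genuinely lands in $\Pi$, and check that the orbit lengths sum to the index. Once this bookkeeping is in place, the fusion-control conclusion collapses the product to a single power, and the remainder of the argument is formal.
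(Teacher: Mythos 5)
Your proposal is correct: this lemma is Burnside's normal $p$-complement theorem, and the paper gives no proof of its own, quoting the result directly from Zassenhaus (\cite{Za:tg}), so there is no internal argument to compare against. Your transfer proof --- deducing that $\Pi$ is abelian from $\Pi \leq Z_\Gamma(\Pi)$, getting trivial fusion from Burnside's fusion lemma combined with $N_\Gamma(\Pi) = Z_\Gamma(\Pi)$, collapsing the transfer evaluation to $V(x) = x^{[\Gamma:\Pi]}$ so that $V$ restricts to an automorphism of $\Pi$, and taking $\Sigma = \ker V$ --- is exactly the classical argument found in the cited source and in standard texts, and the only ingredients you defer (well-definedness of the transfer and the orbit-decomposition evaluation formula) are standard.
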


\begin{lemma}\label{Lzpzm}
Suppose that a finite group $G'$ has  
a normal subgroup $Q$ of order $p$ contained in a cyclic $p$-Sylow subgroup $P$,
and
no nontrivial normal subgroups of prime-to-$p$ order.  
Then $G' \cong P \rtimes \ints/m_{G'}$.  In particular, $G'$ is solvable.
\end{lemma}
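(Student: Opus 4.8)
The plan is to first isolate the centralizer $Z := Z_{G'}(Q)$ and show that the hypotheses force $Z = P$; from there the structure of $G'$ follows almost formally. Since $P$ is cyclic, $Q$ is its unique subgroup of order $p$, and as $Q \trianglelefteq G'$ its centralizer $Z$ is normal in $G'$. Because $P$ is abelian it centralizes $Q$, so $P \leq Z$; as $[G':Z]$ is prime to $p$, the group $P$ is a $p$-Sylow subgroup of $Z$, and by definition $Q$ lies in the center of $Z$.

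The main step is to prove $Z = P$. First I would apply Lemma \ref{Lautaction} inside the group $Z$: every element of $Z$ centralizes $Q$, so in particular each $g \in N_Z(P)$ acts trivially on $Q$ and hence, by that lemma, acts trivially on all of $P$. Thus $N_Z(P) = Z_Z(P)$, and Burnside's Lemma \ref{Lburnside} provides a normal $p$-complement $\Sigma \trianglelefteq Z$ of prime-to-$p$ order with $Z = \Sigma \rtimes P$. Since $Z/\Sigma$ is a $p$-group, $\Sigma$ is exactly the set of prime-to-$p$ elements of $Z$, hence characteristic in $Z$; as $Z \trianglelefteq G'$, this makes $\Sigma$ a normal prime-to-$p$ subgroup of $G'$. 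The hypothesis that $G'$ has no nontrivial normal subgroup of prime-to-$p$ order now forces $\Sigma = 1$, so $Z = P$. This is the heart of the argument and the only place where both the cyclicity of $P$ (via Lemma \ref{Lautaction}) and the prime-to-$p$ hypothesis are genuinely used; it is the step I expect to be the main obstacle.

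It remains to assemble the conclusion. Since $Z_{G'}(Q) = P$ is the centralizer of a normal subgroup, it is normal in $G'$; thus $P$ is a normal $p$-Sylow subgroup, and Schur--Zassenhaus yields a complement $H$ with $G' \cong P \rtimes H$ and $H \cong G'/P$ of prime-to-$p$ order. The conjugation action identifies $G'/Z_{G'}(Q) = G'/P$ with a subgroup of $\Aut(Q) \cong (\ints/p)^{\times}$, so $H \cong G'/P$ is cyclic of order dividing $p-1$. Finally, I would compute $m_{G'}$: since $P \leq Z_{G'}(P) \leq Z_{G'}(Q) = P$ we get $Z_{G'}(P) = P$, and as $P$ is normal we have $N_{G'}(P) = G'$, whence $m_{G'} = [G':Z_{G'}(P)] = [G':P] = |H|$. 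Therefore $H \cong \ints/m_{G'}$ and $G' \cong P \rtimes \ints/m_{G'}$. Solvability is then immediate, since $G'$ is an extension of the cyclic group $\ints/m_{G'}$ by the abelian group $P$.
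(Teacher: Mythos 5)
Your proposal is correct and follows essentially the same path as the paper: show $Z_{G'}(Q) = P$ by combining Lemma \ref{Lautaction} with Burnside's Lemma \ref{Lburnside} to split off a normal prime-to-$p$ subgroup that the hypothesis kills, then apply Schur--Zassenhaus to the normal subgroup $P$. The only (harmless) variation is at the end: the paper identifies the complement as $\ints/m_{G'}$ via its faithful conjugation action on $P$ landing in a cyclic subgroup of $\Aut(P)$, whereas you embed $G'/P$ into $\Aut(Q)$ and verify $m_{G'} = [G':P]$ directly from $Z_{G'}(P) = P$ and $N_{G'}(P) = G'$.
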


\begin{proof}
Consider the centralizer $C := Z_{G'}(Q)$.  
Now, $P$ is clearly a $p$-Sylow subgroup of $C$.  
By the definition of $C$, any element in $N_C(P)$ acts trivially on $Q$ by
conjugation.  So $N_C(P)$ acts trivially on 
$P$ as well.  Thus $N_C(P) = Z_C(P)$.

Since $N_C(P) = Z_C(P)$, Lemma \ref{Lburnside} shows that $C$ can be written
as an extension $$1 \to S \to C \to \ol{P} \to 1,$$ where $P \subseteq C$ maps
isomorphically onto $\ol{P}$. 
The group $S$, being the maximal normal prime-to-$p$ subgroup of $C$, is
characteristic
in $C$.  Since $C$ is normal in $G'$ (it is the centralizer of the normal
subgroup $Q$), $S$ is normal in $G'$.
But by assumption, $G'$ has no nontrivial normal subgroups of prime-to-$p$
order, so $S$ is
trivial and $C = P$.  Again, since $C$ is normal in $G'$, then $G'$ is of the
form $P \rtimes T$, 
where $T$ is prime to $p$, by the Schur-Zassenhaus theorem.
The conjugation action of $T$ on $P$ must be faithful, since if there were
a kernel, the kernel would be a nontrivial prime-to-$p$ normal subgroup of $G'$,
contradicting the assumption that $G'$ has
none.  Since the subgroup of $\Aut(P)$ induced by this action is cyclic of order
$m_{G'}$, we have
$T \cong \ints/m_{G'}$. 
\end{proof}

\begin{corollary}\label{Cnormalp}
\begin{description}
\item{(i)} If $G$ has a normal subgroup of order $p$, then there exists a normal
prime-to-$p$ subgroup $N < G$ such that 
$G/N \cong \ints/p^n \rtimes \ints/m_G$.
\item{(ii)} If $G$ has a central subgroup of order $p$, then there exists a
normal prime-to-$p$ subgroup $N < G$ such that 
$G/N \cong \ints/p^n$.  In particular, $m_G = 1$.
\end{description}
\end{corollary}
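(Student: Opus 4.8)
The plan is to reduce both statements to Lemma \ref{Lzpzm} by passing to the quotient by the largest normal prime-to-$p$ subgroup of $G$. First I would let $N$ be the subgroup generated by all normal subgroups of $G$ of prime-to-$p$ order; since the product of two normal prime-to-$p$ subgroups is again normal of prime-to-$p$ order, this $N$ is itself of prime-to-$p$ order and is the unique maximal such subgroup. For part (i), let $Q \trianglelefteq G$ have order $p$; because $Q$ is a normal $p$-subgroup it lies in every $p$-Sylow, so in particular $Q \leq P$. Set $G' = G/N$ and write $\bar{\cdot}$ for the quotient map.

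Next I would verify that $G'$ satisfies the hypotheses of Lemma \ref{Lzpzm}. Since $N$ has order prime to $p$ we have $P \cap N = 1$, so $\bar P = PN/N$ is a cyclic $p$-Sylow subgroup of $G'$ isomorphic to $P \cong \ints/p^n$; likewise $\bar Q$ has order $p$, is normal in $G'$, and lies in $\bar P$. Moreover $G'$ has no nontrivial normal prime-to-$p$ subgroup, for the preimage in $G$ of such a subgroup would be a normal prime-to-$p$ subgroup strictly containing $N$, contradicting maximality. Lemma \ref{Lzpzm} then yields $G' \cong \bar P \rtimes \ints/m_{G'} \cong \ints/p^n \rtimes \ints/m_{G'}$.

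It remains to identify $m_{G'}$ with $m_G$, and this is the step I expect to require the most care. I would establish $N_{G'}(\bar P) = N_G(P)N/N$ by a Frattini argument: the preimage $H$ of $N_{G'}(\bar P)$ contains $PN$ as a normal subgroup having $P$ as a $p$-Sylow, so $H = PN \cdot N_H(P) = N \cdot N_H(P)$, whence $N_{G'}(\bar P) = \overline{N_G(P)}$. Since conjugation on $\bar P$ is compatible with the isomorphism $P \cong \bar P$, the images of $N_G(P)$ and of $N_{G'}(\bar P)$ in $\Aut(P)$ coincide; as these images have orders $m_G$ and $m_{G'}$ respectively, $m_{G'} = m_G$, which proves (i). Finally, for (ii) a central subgroup of order $p$ is in particular normal, so (i) applies; and since $Q$ is central, every element of $N_G(P)$ acts trivially on $Q$ by conjugation, hence by Lemma \ref{Lautaction} acts trivially on all of $P$. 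Thus $N_G(P) = Z_G(P)$, i.e. $m_G = 1$, and $G/N \cong \ints/p^n \rtimes \ints/1 = \ints/p^n$.
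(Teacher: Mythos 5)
Your proof is correct and follows essentially the same route as the paper: pass to the quotient by the maximal normal prime-to-$p$ subgroup $N$, check that $G/N$ satisfies the hypotheses of Lemma \ref{Lzpzm}, and for (ii) use Lemma \ref{Lautaction} to get $m_G = 1$. Your Frattini argument identifying $m_{G/N}$ with $m_G$ makes explicit a step the paper leaves implicit (Lemma \ref{Lzpzm} outputs $m_{G/N}$, while the corollary asserts $m_G$), which is a worthwhile addition but not a different approach.
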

\begin{proof}
In both cases, let $N$ be the maximal normal prime-to-$p$ subgroup of $G$.  
Then $G/N$ still has a normal subgroup of order $p$, and no nontrivial normal
subgroups of prime-to-$p$ order. 
Part (i) then follows from Lemma \ref{Lzpzm}.  If $G$ has a central subgroup of
order $p$, then by Lemma
\ref{Lautaction}, we have $m_G = 1$.  Part (ii) follows.
\end{proof}

\subsection{Basic facts about (wild) ramification}\label{Sramification}
We state here some facts from \cite[IV]{Se:lf} and derive some consequences.
Let $K$ be a complete discrete valuation field with algebraically closed 
residue field $k$ of characteristic $p > 0$.  If $L/K$ is a finite Galois
extension of fields with Galois group $G$, then 
$L$ is also a complete discrete valuation field with residue field $k$.  Here
$G$ is
of the form $P \rtimes \ints/m$, where
$P$ is a $p$-group and $m$ is prime to $p$.  The group $G$ has a filtration $G 
= G_0
\supseteq G_i$ ($i \in \reals_{\geq 0}$) for the lower numbering, and $G
\supseteq G^i$ for the upper numbering 
($i \in \reals_{\geq 0}$).  If $i \leq j$, then $G_i \supseteq G_j$ and $G^i
\supseteq G^j$ 
(see \cite[IV, \S1, \S3]{Se:lf}).   
The subgroup $G_i$ (resp.\ $G^i$) is known as the \emph{$i$th higher
ramification group for
the lower numbering (resp.\ the upper 
numbering)}.  One knows that $G_0 = G^0 = G$, and that for sufficiently small
$\epsilon > 0$, 
$G_{\epsilon} = G^{\epsilon} = P$.  For sufficiently large $i$, $G_i = G^i =
\{id\}$.    
Any $i$ such that $G^i \supsetneq G^{i + \epsilon}$ for all $\epsilon > 0$ is
called an \emph{upper jump} of the extension $L/K$.  Likewise, if $G_i
\supsetneq G_{i+\epsilon}$, then $i$ is called a
\emph{lower jump} of $L/K$.  If $i$ is a lower (resp.\ upper) jump and $i > 0$,
then $G^i/G^{i + \epsilon}$ (resp.\ $G_i/G_{i + \epsilon}$)
is an elementary abelian $p$-group.  The lower jumps are all integers.
The greatest upper jump (i.e., the greatest $i$ such that $G^i \neq \{id\}$) is
called the
\emph{conductor} of higher ramification of $L/K$.  The upper numbering is
invariant under
quotients (\cite[IV, Proposition 14]{Se:lf}).  That is, if $H \leq G$ is normal,
and $M = L^H$, then the $i$th higher
ramification group for the upper numbering for $M/K$ is $G^i/(G^i \cap H)$.

\begin{lemma}\label{Lhassearf}
If $P$ is abelian, then all upper jumps (in particular, the conductor of higher
ramification) are in 
$\frac{1}{m}\ints$.
\end{lemma}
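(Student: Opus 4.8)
The plan is to split $L/K$ into its tame and wild parts and to transport the integrality of the wild jumps down through the tame base. Since $k$ is algebraically closed we have $G = G_0$, and the wild inertia $G_1$ is the unique $p$-Sylow subgroup, so $G_1 = P$. Set $M := L^P$, the maximal tamely ramified subextension; then $\Gal(M/K) \cong \ints/m$ is totally and tamely ramified of degree $m$, so its Herbrand function is the linear map $\phi_{M/K}(w) = w/m$. The top extension $L/M$ has Galois group $P$, which is \emph{abelian} by hypothesis, and its residue field is again the perfect (indeed algebraically closed) field $k$; hence the Hasse--Arf theorem applies to $L/M$ and guarantees that every positive upper jump of $L/M$ is an integer.

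Next I would relate the ramification data of $L/M$ to that of $L/K$. By the compatibility of the lower numbering with subgroups (\cite[IV, Prop.~2]{Se:lf}), the lower ramification groups of $L/M$ are $G_i \cap P$; since $G_i \subseteq G_1 = P$ for every $i \geq 1$, these coincide with $G_i$ for all $i \geq 1$. Consequently the positive lower jumps of $L/K$ (which are integers, as noted in the excerpt) are exactly the lower jumps of $L/M$. This is the step that lets the same lower-numbering data feed into both Herbrand computations.

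Finally I would invoke the transitivity of the Herbrand function for the tower $K \subseteq M \subseteq L$, namely $\phi_{L/K} = \phi_{M/K} \circ \phi_{L/M}$ (\cite[IV, Prop.~15]{Se:lf}). Any positive upper jump $v$ of $L/K$ has the form $v = \phi_{L/K}(u)$ for a positive lower jump $u$. Writing $v' = \phi_{L/M}(u)$ for the corresponding upper jump of $L/M$, Hasse--Arf gives $v' \in \ints$, and then $v = \phi_{M/K}(v') = v'/m \in \frac{1}{m}\ints$. The jump coming from the tame part (at $u = 0$) contributes only the value $0 \in \frac{1}{m}\ints$, so all upper jumps, and in particular the conductor (the largest of them), lie in $\frac{1}{m}\ints$.

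The genuinely substantive input is the Hasse--Arf theorem; the remainder is bookkeeping with Herbrand functions. The points requiring care are checking that the lower numbering of $L/M$ really agrees with that of $L/K$ in the range $i \geq 1$ (so that identical lower jumps enter both computations), and correctly pinning down $\phi_{M/K}(w) = w/m$ for the tame base, including the harmless value at $u = 0$. I expect no trouble from the residue-field hypotheses, since $k$ is algebraically closed, hence perfect, and is preserved under these totally ramified extensions.
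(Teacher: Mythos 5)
Your proof is correct and takes essentially the same route as the paper: pass to the fixed field $M = L^P$ of the wild part, apply the Hasse--Arf theorem to the abelian $P$-extension $L/M$, and use Herbrand's formula to conclude that each upper jump of $L/K$ is $\frac{1}{m}$ times an (integral) upper jump of $L/M$. Your additional bookkeeping --- the identification $G_i \cap P = G_i$ for $i \geq 1$ and the transitivity $\phi_{L/K} = \phi_{M/K} \circ \phi_{L/M}$ with $\phi_{M/K}(w) = w/m$ --- merely makes explicit what the paper compresses into the phrase ``by Herbrand's formula.''
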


\begin{proof}
Let $L_0 \subset L$ be the fixed field of $L$ under $P$.
By the Hasse-Arf theorem (\cite[V, Theorem 1]{Se:lf}), the upper jumps for the
$P$-extension $L/L_0$ are integers.
By Herbrand's formula (\cite[IV, \S3]{Se:lf}), the upper jumps for $L/K$ are
$\frac{1}{m}$ times those for
$L/L_0$.  The lemma follows.
\end{proof}

The following lemma will be useful in \S\ref{Swild}.  In the case $G \cong
\ints/p \rtimes \ints/m$, it is essentially
\cite[Propositions 1.1.4, 1.1.5]{Ra:sp}.

\begin{lemma}\label{Lcompositum}
Fix an algebraic closure $\ol{K}$ of $K$.  
Suppose $L/K$ is a finite $G$-Galois extension with conductor $\sigma$, and
$K'/K$ is a 
$\ints/p$-Galois extension with conductor $\tau \leq \sigma$.  
Assume we can embed $L/K$ and $K'/K$ into $\ol{K}/K$ so that they are linearly
disjoint, and write $L'$ for the
compositum $LK'$ in $\ol{K}$.  Write $\rho$ for the conductor of $L'/K'$.  Then
$\rho-\tau = p(\sigma-\tau)$.
\end{lemma}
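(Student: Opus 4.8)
The plan is to realize all the fields inside the compositum $L' = LK'$ and to track the ramification filtration of $\widetilde{G} := \Gal(L'/K)$. Since $L/K$ and $K'/K$ are linearly disjoint and both Galois, $\widetilde{G} \cong G \times \ints/p$; write $H := \Gal(L'/K') \cong G$ and $N := \Gal(L'/L) \cong \ints/p$, so that $\widetilde{G}/N \cong \Gal(L/K)$ and $\widetilde{G}/H \cong \Gal(K'/K)$. The whole argument rests on three standard facts about the two numberings: the upper numbering is compatible with quotients (the statement recalled before Lemma \ref{Lhassearf}, \cite[IV, Proposition 14]{Se:lf}), the lower numbering is compatible with subgroups, namely $H_u = \widetilde{G}_u \cap H$ (\cite[IV, Proposition 2]{Se:lf}), and Herbrand's functions are transitive in a tower, $\varphi_{L'/K} = \varphi_{K'/K} \circ \varphi_{L'/K'}$ (\cite[IV, Proposition 15]{Se:lf}).

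First I would determine the top ramification break of $\widetilde{G}$. By quotient-compatibility, the image of $\widetilde{G}^i$ in $\Gal(L/K)$ is the upper ramification group of $L/K$ (top jump $\sigma$), and its image in $\Gal(K'/K)$ is the upper ramification group of $K'/K$, which equals $\ints/p$ for $i \le \tau$ and is trivial for $i > \tau$. Hence $\widetilde{G}^i \subseteq (\text{upper ram.\ of } L/K) \times (\text{upper ram.\ of } K'/K)$; since $\sigma \ge \tau$, both factors vanish for $i > \sigma$ while the first is nontrivial at $i = \sigma$, so the conductor of $L'/K$ is again $\sigma$. Moreover, for $i > \tau$ the second factor is trivial, so $\widetilde{G}^i \subseteq H$; in particular $\widetilde{G}^\sigma \subseteq H$ when $\sigma > \tau$.

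Next I would pass to the lower numbering. Let $u^* := \psi_{L'/K}(\sigma)$ be the top lower break of $L'/K$, so that $\widetilde{G}_{u^*} = \widetilde{G}^\sigma$ and $\widetilde{G}_u = 1$ for $u > u^*$. Using $\widetilde{G}^\sigma \subseteq H$ and subgroup-compatibility, $H_{u^*} = \widetilde{G}_{u^*} \cap H = \widetilde{G}_{u^*} \ne 1$ while $H_u = 1$ for $u > u^*$; thus $u^*$ is simultaneously the top lower break of $L'/K$ and of $L'/K'$. Feeding this into Herbrand transitivity gives
\[
\sigma = \varphi_{L'/K}(u^*) = \varphi_{K'/K}\bigl(\varphi_{L'/K'}(u^*)\bigr) = \varphi_{K'/K}(\rho),
\]
so $\rho = \psi_{K'/K}(\sigma)$. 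Since $K'/K$ is $\ints/p$ with its single break at $\tau$, one has $\psi_{K'/K}(x) = \tau + p(x-\tau)$ for $x \ge \tau$, and evaluating at $\sigma \ge \tau$ yields $\rho - \tau = p(\sigma - \tau)$.

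The step I expect to be the main obstacle is showing that the top break of $L'/K$ survives the passage to $H$ --- i.e.\ that $L'/K$ and $L'/K'$ share the same top lower break $u^*$. When $\sigma > \tau$ this is immediate from $\widetilde{G}^\sigma \subseteq H$, but in the boundary case $\sigma = \tau$ one must rule out that $\widetilde{G}^\sigma$ is a ``diagonal'' order-$p$ subgroup meeting $H$ trivially; this is exactly where the linear-disjointness hypothesis is essential, since a common scalar multiple of the relevant Artin--Schreier/Kummer data would force $L$ and $K'$ to fail to be linearly disjoint. I would handle $\sigma = \tau$ either through this disjointness input or by noting that then the claimed identity reads $\rho = \tau$, which can be verified directly.
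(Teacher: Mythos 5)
Your main argument is correct where $\tau < \sigma$, and it is organized somewhat differently from --- and in fact more robustly than --- the paper's proof. The paper works in the \emph{lower} numbering with $H = \Gal(L'/L)$: it defines $i$ as the last index with $G''_i \supseteq H$, asserts that the filtration splits as $G''_j = G'_j \times H$ for $j \le i$ and $G''_j = G'_j$ for $j > i$, and compares the two Herbrand functions slope by slope, using $\phi'(i) = \phi''(i) = \tau$. You instead use quotient-compatibility in the \emph{upper} numbering to get $\widetilde{G}^u \subseteq \Gal(L'/K')$ for all $u > \tau$, conclude that $L'/K$ and $L'/K'$ share the top lower break when $\sigma > \tau$, and finish with the transitivity $\varphi_{L'/K} = \varphi_{K'/K} \circ \varphi_{L'/K'}$ and the explicit $\psi_{K'/K}$. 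This buys you something real: the paper's intermediate splitting claim (equivalently, $\phi''(i) = \tau$) can actually fail even with $\tau < \sigma$ --- e.g.\ take $L/K$ cyclic of order $p^2$ with upper jumps $u_1 < u_2$, let $M/K$ be a degree-$p$ extension with jump $u_0 < u_1$, set $L' = LM$ and take $K'$ to be the fixed field of a cyclic subgroup of order $p^2$ generated by a ``diagonal'' element; then $\widetilde{G}^u$ is diagonal with respect to $\Gal(L'/K') \times \Gal(L'/L)$ on $(u_0, \tau]$, so the paper's $i$ sits at $u_0$, not at $\psi''(\tau)$ --- whereas your three cited facts are unconditional and still yield the stated formula. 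So for $\tau < \sigma$ your proof is not just a valid variant; it repairs a gap in the paper's own argument.

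The boundary case $\sigma = \tau$, which you rightly single out as the danger point, is a genuine problem --- but neither of your proposed repairs can work, because the lemma is simply \emph{false} there, linear disjointness notwithstanding. In equal characteristic take $K = k((t))$, $L: \wp(y) = t^{-\sigma}$ and $K': \wp(z) = t^{-\sigma} + t^{-\tau_0}$ with $0 < \tau_0 < \sigma$ and $\tau_0, \sigma$ prime to $p$ (here $\wp(y) = y^p - y$). Both conductors equal $\sigma$, so $\tau = \sigma$, and $L, K'$ are distinct degree-$p$ extensions, hence linearly disjoint. But $w := y - z$ satisfies $\wp(w) = -t^{-\tau_0}$, so $L'$ contains the degree-$p$ subextension $M = K(w)$ of conductor $\tau_0$; the filtration of $\Gal(L'/K) \cong (\ints/p)^2$ has jumps at $\tau_0$ and $\sigma$, with the middle group the diagonal $\Gal(L'/M)$, which meets $\Gal(L'/K')$ trivially. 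Intersecting with $\Gal(L'/K')$ gives $\rho = \tau_0 < \tau$ (one can confirm this by a different/discriminant count in the tower), while the lemma would predict $\rho = \tau$. So your suggestion to ``verify directly that $\rho = \tau$'' is impossible, and linear disjointness does not exclude the diagonal; your intuition that this is where the trouble lives was exactly right, but the correct resolution is that the statement needs the hypothesis $\tau < \sigma$. The paper's proof breaks silently at the same spot ($\phi''(i) = \tau_0 \neq \tau$ in this example). Fortunately, every invocation of Lemma \ref{Lcompositum} in the paper (in Lemma \ref{Lfixetalevert} and the monodromy arguments) has $\tau = 1 < \sigma$, so nothing downstream is affected, and your argument, restricted to $\tau < \sigma$, is complete.
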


\begin{proof}
Since $L$ and $K'$ are linearly disjoint, $G'' := \Gal(L'/K) \cong G \times
\ints/p$.  Also, write $G \cong G'
:= \Gal(L'/K')$.  Since the upper numbering is invariant
under quotients, the conductor of $L'/K$ is equal to $\max(\sigma, \tau) =
\sigma$.  
Let $H := \Gal(L'/L) < \Gal(L'/K)$, 
and let $i$ be the greatest integer such that $G''_i \supseteq H$ but $G''_{i+1}
\not
\supseteq H$.  Clearly $H \cong \ints/p$.
Since the lower numbering is preserved under subgroups, we have that $G'_j =
G''_j$ for $j > i$.
Furthermore, $G''_j = G'_j \times H$ for $j \leq i$.  If $\phi'$ and $\phi''$
are the respective Herbrand functions of
$L'/K'$, $L'/K$ (\cite[IV, \S3]{Se:lf}), then $\phi'(i) = \phi''(i) = \tau$.  By
the definition of the upper numbering,
for $j > i$, we have $\phi'(j) - \phi'(i) = p(\phi''(j) - \phi''(i))$ for $j >
i$.  Taking $j$ to be maximal such that $G'_j$ is nontrivial,
we obtain $\rho - \tau = p(\sigma-\tau)$.
\end{proof}  

If $A, B$ are the valuation rings
of $K, L$, respectively, sometimes we will refer to the conductor or
higher ramification groups of the extension $B/A$.

\subsubsection{Smooth Curves.}\label{Ssmooth}
Let $f:Y \to X$ be a branched cover of smooth, proper, integral curves over $k$.
The Hurwitz formula (\cite[IV \S2]{Ha:ag}) states that 
$$2g_Y - 2 = (\deg f)(2g_X - 2) + |\Delta|,$$ where $\Delta$ is the ramification
divisor and $|\Delta|$ is its degree 
(recall that $\Delta = \sum_{y \in Y} d_y y$, where $d_y$ is the length of
$\Omega_{Y/X}$ at $y$). 
For each point $y \in Y$ with image $x \in X$, the degree of $\Delta$ at $y$ can
be related to the higher
ramification filtrations of $\Frac(\hat{\mc{O}}_{Y,
y})/\Frac(\hat{\mc{O}}_{X,x})$ (\cite[IV, Proposition 4]{Se:lf}).  
In particular, if the ramification index $e_y$ of $y$ is prime to $p$, then the
degree of $\Delta$ is $e_y - 1$.

In particular, suppose the Galois group $G$ of $\hat{\mc{O}}_{Y,
y}/\hat{\mc{O}}_{X, x}$ is isomorphic to $P \rtimes
\ints/m$ with $P$ \emph{cyclic} of order $p^n$.  Then all subgroups of $P$ must
occur as higher ramification groups (the 
subquotients of the higher ramification filtration having exponent $p$).
For $1 \leq i \leq n$, write $u_i$ (resp.\ $j_i$) for the upper (resp.\
lower) jump such that $G^{u_i}$ (resp.\ $G_{j_i}$) is isomorphic to
$\ints/p^{n-i+1}$.  Write $u_0 = j_0 = 0$.
Then $0 = u_0 < u_1 < \cdots < u_n$ and $0 = j_0 < j_1 < \cdots < j_n$.  We will
sometimes call $j_i$ (resp.\ $u_i$) the
$i$th \emph{lower jump} (resp.\ \emph{upper jump}) of the extension
$\hat{\mc{O}}_{Y, y}/\hat{\mc{O}}_{X, x}$.
Let $|\Delta_y|$ be the degree of $\Delta$ at $y$.

\begin{lemma}\label{Ljumpsdifferent}
\begin{description}
\item{(i)} In terms of the lower jumps, we have $$|\Delta_y|= p^nm - 1 +
\sum_{i=1}^n j_ip^{n-i}(p-1) = 
p^nm-1 + \sum_{i=1}^n (p^{n-i+1} - 1)(j_i - j_{i-1}).$$
\item{(ii)} In terms of the upper jumps, we have $$|\Delta_y| = p^nm - 1 +
\sum_{i=1}^n mp^{i-1}(p^{n-i+1} - 1)(u_i -
u_{i-1}).$$
\end{description}
\end{lemma}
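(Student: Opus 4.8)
The plan is to read everything off Serre's formula for the different and then rearrange. The quantity $|\Delta_y|$ is the length of $\Omega_{Y/X}$ at $y$, which equals the valuation of the different of the totally ramified local extension $\hat{\mc{O}}_{Y,y}/\hat{\mc{O}}_{X,x}$, whose Galois group is $G \cong P \rtimes \ints/m$ with $P \cong \ints/p^n$. By \cite[IV, Proposition 4]{Se:lf},
$$|\Delta_y| = \sum_{i=0}^{\infty}\bigl(|G_i| - 1\bigr),$$
where the $G_i$ are the lower-numbering ramification groups indexed by nonnegative integers $i$. So the entire lemma reduces to identifying the step function $i \mapsto |G_i|$ from the jump data and performing two elementary manipulations.

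First I would pin down $|G_i|$ as a function of the integer $i$. Since $G_\epsilon = P$ for small $\epsilon > 0$, the tame quotient $\ints/m$ is lost immediately past $i = 0$, so $|G_0| = p^n m$ while $|G_i| = p^n$ for $0 < i \le j_1$. More generally, because $G_{j_k} \cong \ints/p^{n-k+1}$ by the definition of the lower jumps, one has $|G_i| = p^{n-k+1}$ precisely for $j_{k-1} < i \le j_k$, and $|G_i| = 1$ for $i > j_n$. Substituting this into Serre's sum and grouping the $j_k - j_{k-1}$ integers lying in each interval $(j_{k-1}, j_k]$ yields directly the second expression in (i), namely $|\Delta_y| = (p^n m - 1) + \sum_{k=1}^{n}(p^{n-k+1}-1)(j_k - j_{k-1})$. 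To get the first expression I would use summation by parts: writing $c_k = p^{n-k+1}-1$ and $c_{n+1} := 0$, and using $j_0 = 0$, the sum collapses to $\sum_{k=1}^{n}(c_k - c_{k+1})j_k$, and since $c_k - c_{k+1} = p^{n-k}(p-1)$ this is exactly $\sum_{k=1}^{n} j_k\, p^{n-k}(p-1)$.

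For part (ii) I would convert lower jumps to upper jumps through the Herbrand function $\phi = \phi_{L/K}$, which satisfies $u_i = \phi(j_i)$ and $\phi(u) = \tfrac{1}{|G_0|}\int_0^{u} |G_t|\,dt$. Integrating over $(j_{i-1}, j_i]$, where $|G_t| = p^{n-i+1}$, gives $u_i - u_{i-1} = \tfrac{p^{n-i+1}}{p^n m}(j_i - j_{i-1})$, i.e.\ $j_i - j_{i-1} = p^{i-1}m\,(u_i - u_{i-1})$. Feeding this into the second form of (i) produces (ii) term by term.

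The only real obstacle is bookkeeping: one must keep the indexing conventions straight (the real-indexed groups $G_t$ of this section versus the integer-indexed groups in Serre's formula, and the fact that $j_0 = 0$ records the tame drop from $G$ to $P$ rather than a jump of $P$ itself), so that the step function $|G_i|$ and the Herbrand integral are evaluated on the correct half-open intervals. Once the jump data is translated correctly, there is no analytic or geometric content left — both formulas are forced by the different formula and Herbrand's formula.
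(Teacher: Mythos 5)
Your proposal is correct and follows exactly the route of the paper's proof: both compute $|\Delta_y|$ as the valuation of the different via Serre's formula $\sum_{r=0}^{\infty}(|G_r|-1)$, obtain (i) by reading off the step function $r \mapsto |G_r|$ from the lower jumps (the paper leaves this as a ``straightforward exercise,'' which your grouping and summation-by-parts carry out), and deduce (ii) from (i) via Herbrand's formula. Your explicit substitution $j_i - j_{i-1} = p^{i-1}m(u_i - u_{i-1})$ is consistent with the paper's Remark~\ref{Rlower2conductor}, so there is nothing to correct.
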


\begin{proof}
By \cite[IV \S2]{Ha:ag}, $|\Delta_y|$ is equal to the valuation of the different
of the extension 
$\hat{\mc{O}}_{Y, y}/\hat{\mc{O}}_{X, x}$, where a uniformizer of
$\hat{\mc{O}}_{Y, y}$ is given valuation $1$.  By
\cite[IV, Proposition 4]{Se:lf}, this different is equal to 
$\sum_{r=0}^{\infty} (|G_r| - 1).$  Now it is a straightforward exercise to show
that (i) holds.  Part (ii) follows from
part (i) by Herbrand's formula (essentially, the definition of the upper
numbering).
\end{proof}

\begin{remark}\label{Rlower2conductor}
In the above context, it follows from Herbrand's formula that the conductor
$u_n$ is equal to 
$\frac{1}{m} \sum_{i=1}^n \frac{j_i - j_{i-1}}{p^{i-1}}$, which can also be
written as 
$\left(\sum_{i=1}^{n-1} \frac{p-1}{p^im}j_i \right) + \frac{1}{p^nm}j_n$.
\end{remark}

\subsection{Stable reduction}\label{Sstable}
We now introduce some notation that will be used for the remainder of 
\S\ref{Sbackground}.
Let $X/K$ be a smooth, proper, geometrically integral curve of genus $g_X$, 
where $K$ is a characteristic zero complete discretely
valued field, with algebraically closed residue field $k$ of characteristic
$p>0$ (e.g., $K =
\rats_p^{ur}$).  Let $R$ be the valuation ring of $K$.  Write $v$ for the
valuation on $R$.  
We normalize by setting $v(p) = 1$.  

For the rest of this section, assume that $X$ has a \emph{smooth} model $X_R$
over $R$.
Let $f: Y \to X$ be a $G$-Galois cover defined over $K$, with $G$ any finite
group, such that the branch points of $f$ are defined over $K$ and their
specializations 
do not collide on the special fiber of $X_R$.  Assume that
$2g_X - 2 + r \geq 1$, where $r$ is the number of branch points of $f$.  By a
theorem of Deligne and Mumford (\cite[Corollary 2.7]{DM:ir}), combined with work
of Raynaud (\cite{Ra:pg}, \cite{Ra:sp}) and Liu (\cite{Li:sr}), there is a
minimal finite extension $K^{st}/K$ 
with ring of integers $R^{st}$, and a unique model $Y^{st}$ of $Y_{K^{st}}$
(called the \emph{stable model}) such that

\begin{itemize}
\item The special fiber $\ol{Y}$ of $Y^{st}$ is semistable (i.e., it is reduced,
and has only nodes for singularities). 
\item The ramification points of $f_{K^{st}} = f \times_K K^{st}$ specialize to
\emph{distinct} smooth points of $\ol{Y}$.
\item Any genus zero irreducible component of $\ol{Y}$ contains at least three
marked points (i.e., ramification points or points of intersection with the rest
of $\ol{Y}$).
\end{itemize}
Since the stable model is unique, it is acted upon by $G$, and we set $X^{st} =
Y^{st}/G$.  Then
$X^{st}$ can be naturally identified with a blowup of $X \times_R R^{st}$
centered at closed points.
Furthermore, the nodes of $\ol{Y}$ lie above nodes of the special fiber $\ol{X}$
of $X^{st}$.

The map $f^{st}: Y^{st} \to X^{st}$ is called the \emph{stable model} of $f$ and
the field $K^{st}$ is called the minimal field of definition of the stable model
of $f$. 
Note that our definition of the stable model is the definition used in
\cite{We:br}.  This differs from the definition
in \cite{Ra:sp} in that \cite{Ra:sp} allows the ramification points to coalesce
on the
special fiber.  If we are working over a finite
extension $K'/K^{st}$ with ring of integers $R'$, we will sometimes abuse
language and call 
$f^{st} \times_{R^{st}} R'$ the stable model of $f$.  

For each $\sigma \in G_K$, $\sigma$ acts on $\ol{Y}$ and this action commutes
with $G$.  
Let $\Gamma^{st} \leq G_K$ consist of those $\sigma \in G_K$ such that $\sigma$
acts trivially on $\ol{Y}$.

\begin{prop}\label{Pstablecutout}
The extension $K^{st}/K$ is the extension cut out by $\Gamma^{st} \leq G_K$.  In
other words, $\Gamma^{st} =
G_{K^{st}}$.
\end{prop}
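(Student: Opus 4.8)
The plan is to use the uniqueness of the stable model to convert the Galois action into descent data, and then to read off the minimal field of definition from the induced action on the special fiber. Since the branch points, and hence $K^{st}/K$, are Galois over $K$, every $\sigma \in G_K$ preserves $K^{st}$ and induces an automorphism of $R^{st}$ over $R$; as $R$ and $R^{st}$ share the algebraically closed residue field $k$, this automorphism is trivial on $k$. Writing $\sigma^{\ast}Y^{st} = Y^{st}\times_{R^{st},\sigma}R^{st}$, the cover $\sigma^{\ast}Y^{st}$ is again a stable model of $Y_{K^{st}}=Y\times_K K^{st}$ (because $f$ is defined over $K$), so uniqueness of the stable model supplies a canonical isomorphism $\phi_\sigma\colon \sigma^{\ast}Y^{st}\to Y^{st}$ of $R^{st}$-covers which is the identity on the generic fiber; its reduction $\ol{\phi}_\sigma$ is, by definition, the automorphism of $\ol Y$ induced by $\sigma$. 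Uniqueness also forces a cocycle relation in $\sigma$ and shows that $\ol{\phi}_\sigma$ depends only on $\sigma|_{K^{st}}$. In particular the $G_K$-action on $\ol Y$ factors through $\Gal(K^{st}/K)$, so $G_{K^{st}}\subseteq\Gamma^{st}$ and the Proposition becomes equivalent to the assertion that the resulting homomorphism $\Gal(K^{st}/K)\hookrightarrow\Aut(\ol Y)$ is \emph{injective}.

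To prove injectivity, let $H\trianglelefteq\Gal(K^{st}/K)$ be its kernel (so $H=\Gamma^{st}/G_{K^{st}}$), let $K'=(K^{st})^H=\ol{K}^{\Gamma^{st}}$, and let $R'$ be the valuation ring of $K'$; thus $K\subseteq K'\subseteq K^{st}$ and $K^{st}/K'$ is Galois with group $H$. I would reduce everything to the single claim that the stable model \emph{descends to} $R'$: granting this, the stable model of $f$ exists over $R'$, so minimality of $K^{st}$ forces $K^{st}\subseteq K'$, whence $K'=K^{st}$ and $H=1$. Because $k$ is algebraically closed, $R^{st}/R'$ is totally, and possibly wildly, ramified, and the family $\{\phi_h\}_{h\in H}$ constructed above is a descent datum for $Y^{st}$ along the faithfully flat map $\Spec R^{st}\to\Spec R'$ whose reduction on the special fiber is, by the very definition of $H$, the \emph{identity} on $\ol Y$.

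The heart of the matter---and the step I expect to be the main obstacle---is the effectivity of this descent datum along a ramified extension of discrete valuation rings; this is exactly where the triviality of $\ol{\phi}_h$ must be used. I would argue it through the moduli interpretation: the cover $f$, equipped with its $G$-action and with its ramification points as marked points, is a point of the stack $\ol{\mc H}$ of stable (admissible) $G$-covers, a separated---in fact proper---Deligne--Mumford stack, the separatedness reflecting that the automorphism group scheme of a stable curve is finite and unramified over its base. The stable model is precisely the $R^{st}$-point of $\ol{\mc H}$ extending the $K^{st}$-point $[f_{K^{st}}]$, and it is this unramifiedness that makes each $\phi_h$ the \emph{unique} isomorphism lifting the identity on generic fibers. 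Over the generic fiber, where $K^{st}\otimes_{K'}K^{st}\cong\prod_{h\in H}K^{st}$ is \'etale, the $\phi_h$ assemble into an honest descent datum on $[f_{K^{st}}]$ for $\Spec K^{st}\to\Spec K'$; the obstruction to prolonging it across the ramification locus of $\Spec(R^{st}\otimes_{R'}R^{st})$ is measured on the special fiber, and I expect it to vanish precisely because every $\ol{\phi}_h$ is the identity, so the competing sheets agree there. As $\ol{\mc H}$ is a stack for the fppf topology, the resulting descent datum over $R^{st}\otimes_{R'}R^{st}$ is then effective and produces the required $R'$-point, i.e.\ a stable model of $f$ over $R'$ carrying the $G$-action. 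Conceptually this is the statement that for stable covers the field of moduli is a field of definition once the residue field is algebraically closed; I anticipate that essentially all of the technical work, and the only genuine use of the hypothesis $\sigma\in\Gamma^{st}$, lies in the gluing across the ramification locus.
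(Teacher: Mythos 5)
Your first paragraph and your reduction of the Proposition to injectivity of $\Gal(K^{st}/K)\to\Aut(\ol Y)$ are fine; indeed your uniqueness-of-the-stable-model argument for $G_{K^{st}}\subseteq\Gamma^{st}$ is a legitimate substitute for the paper's shorter argument (which uses Hensel's lemma: smooth points of $\ol Y$ are specializations of $K^{st}$-rational points, so $G_{K^{st}}$ fixes a dense set). But for the hard inclusion the paper does not reprove any descent statement: it quotes Liu's decomposition $K^{st}=K'K''$ (the Raynaud-stable field $K'$ composited with the field $K''$ of rationality of the ramification points), quotes Raynaud's Proposition 2.2.2 for faithfulness of $\Gal(K'/K)$ on $\ol Y'$, and disposes of the $K''$ factor by noting that $\gamma\in\Gamma^{st}$ cannot permute the ramification points since they specialize to \emph{distinct} points of $\ol Y$. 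What you propose is essentially to reprove the content of the cited Raynaud proposition, and your write-up leaves precisely its decisive step as an expectation: unramifiedness of the Isom scheme gives \emph{uniqueness} of extensions of the descent isomorphism over $\Spec(R^{st}\otimes_{R'}R^{st})$, not \emph{existence}, so ``I expect it to vanish because every $\ol\phi_h$ is the identity'' is a heuristic, not an argument. The gap is fixable along the lines you gesture at: $R^{st}\otimes_{R'}R^{st}$ is a complete local ring with residue field $k$, and is reduced because it is flat over $R^{st}$; a finite unramified scheme over such a ring is a disjoint union of closed immersions; your sections over the $|H|$ branches of the normalization all pass through the identity $k$-point (this is where $\ol\phi_h=\mathrm{id}$ enters), hence lie in a single closed-immersion component, and reducedness forces that component to be the whole base, yielding the descent datum (and, by the same uniqueness, the cocycle condition over the triple product). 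Alternatively one can argue via pro-representability of the deformation functor of $\ol Y$ (stable curves have no infinitesimal automorphisms) and $H$-invariance of the classifying map. Either way, a real argument is required where you have placed an expectation.

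A second, more concrete problem is the tool you invoke: a separated, let alone proper, Deligne--Mumford stack $\ol{\mc H}$ of ``stable admissible $G$-covers'' is a tame-characteristic construction, whereas here $p$ divides $|G|$ (to high order) and the reduction $\ol f$ is wildly ramified with inseparable components --- exactly the phenomena this paper studies --- so no such stack is available. The repair is to descend $Y^{st}$ merely as a stable \emph{marked curve} (via $\ol{\mc M}_{g,N}$, or directly by fppf descent of projective schemes polarized by the relative dualizing sheaf twisted by the markings), and then to descend the $G$-action --- it commutes with the $\phi_h$ by the same uniqueness --- and the marked ramification sections separately; $X^{st}$ is recovered as the quotient. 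Note that descending the markings as individual sections requires each $h\in H$ to fix each ramification point, and this follows from $\ol\phi_h=\mathrm{id}$ only via the stability condition that ramification points specialize to \emph{distinct} smooth points; this is exactly the $K''$ half of the paper's proof, so your remark that the only genuine use of $\gamma\in\Gamma^{st}$ lies in the gluing across the ramification locus undercounts where the hypothesis is needed. In sum: your route is the standard one underlying the result the paper cites, and it can be completed, but as written it has a genuine gap at the effectivity step and rests on a moduli stack that does not exist in this wild setting.
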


\begin{proof}
Choose $\gamma \in G_{K^{st}}$.  By Hensel's lemma, each smooth point $\ol{y}$
of $\ol{Y}$ is the specialization of 
a $K^{st}$-rational point $y$ of $Y^{st}$.  Since $\gamma$ fixes $y$, it fixes
$\ol{y}$.  Since the smooth points of $\ol{Y}$
are dense, $\gamma$ acts trivially on $\ol{Y}$, so $\gamma \in \Gamma^{st}$.

Now choose $\gamma \in \Gamma^{st}$.  By \cite[Remark 2.21]{Li:sr}, the
extension $K^{st}/K$ is the compositum of two
extensions: the minimal extension $K'/K$ leading to the stable reduction
$\ol{f}': \ol{Y}' \to \ol{X}'$
of $f$ under the definition of \cite{Ra:sp} (where we allow the ramification
points to coalesce), 
as well as the minimal extension $K''/K$ over which all of
the ramification points of $f$ are defined.  Since the ramification points of
$f$ specialize to distinct points on
$\ol{Y}$, it follows that $\gamma$ does not permute these points nontrivially. 
But any nontrivial element of 
$G(K''/K)$ does permute the ramification points nontrivially.  
Thus $\gamma \in G_{K''}$.  On the other hand, since $\gamma$ acts trivially on
$\ol{Y}$ (which dominates $\ol{Y}'$), 
it acts trivially on $\ol{Y}'$.  By \cite[Proposition 2.2.2]{Ra:sp}, $\gamma \in
G_{K'}$.  
Since $G_{K'} \cap G_{K''} = G_{K^{st}}$, we have $\gamma \in G_{K^{st}}$.
\end{proof}  
Since $\Gamma^{st}$ is the kernel of the homomorphism $G_k \to \Aut(\ol{Y})$, it
follows from Proposition 
\ref{Pstablecutout} that $K^{st}$ is Galois over $K$.

If $\ol{Y}$ is smooth, the cover $f: Y \to X$ is said to have \emph{potentially
good reduction}.  If $\ol{Y}$ can be contracted to a smooth curve by blowing
down components of
genus zero, then the curve $Y$ is said to have potentially good reduction.  If
$f$ or $Y$ does not have
potentially good reduction, it is said to have \emph{bad reduction}.  In any
case, the special fiber $\ol{f}:
\ol{Y} \to \ol{X}$ of the stable model is called the \emph{stable reduction} of
$f$.  
The action of $G$ on $Y$ extends to the stable reduction $\ol{Y}$ and $\ol{Y}/G
\cong \ol{X}$.  The strict transform of the special fiber of $X_{R^{st}}$ in
$\ol{X}$ is called the \emph{original
component}, and will be denoted $\ol{X}_0$.

\subsubsection{The graph of the stable reduction}
As in \cite{We:br}, we construct the (unordered) dual graph $\mc{G}$ of the
stable reduction of $\ol{X}$. 
An \emph{unordered graph} $\mc{G}$ consists of a set of \emph{vertices}
$V(\mc{G})$ and a set of \emph{edges} $E(\mc{G})$.  
Each edge has a \emph{source vertex} $s(e)$ and a \emph{target vertex} $t(e)$. 
Each edge has an \emph{opposite
edge} $\ol{e}$, such that $s(e) = t(\ol{e})$ and $t(e) = s(\ol{e})$.  Also,
$\ol{\ol{e}} = e$.

Given $f$, $\ol{f}$, $\ol{Y}$, and $\ol{X}$ as in this section, we construct two
unordered graphs $\mc{G}$ and
$\mc{G}'$.  In our construction, $\mc{G}$ has a vertex $v$ for each irreducible
component of $\ol{X}$ and an edge $e$ for each ordered triple $(\ol{x}, \ol{W}',
\ol{W}'')$, 
where $\ol{W}'$ and $\ol{W}''$ are irreducible components of $\ol{X}$ whose
intersection is $\ol{x}$.  If $e$
corresponds to $(\ol{x}, \ol{W}', \ol{W}'')$, then
$s(e)$ is the vertex corresponding to $\ol{W}'$ and $t(e)$ is the vertex
corresponding to
$\ol{W}''$.  The opposite edge of $e$ corresponds to $(\ol{x}, \ol{W}'',
\ol{W}')$.
We denote by $\mc{G}'$ the \emph{augmented} graph of $\mc{G}$ constructed as
follows: consider the set
$B_{\text{wild}}$ of branch points of $f$ with branching index divisible by $p$.
 
For each $x \in B_{\text{wild}}$, we know that $x$ specializes to 
a unique irreducible component $\ol{W}_x$ of $\ol{X}$, corresponding to a vertex
$A_x$ of $\mc{G}$.  
Then $V(\mc{G}')$ consists of the elements of $V(\mc{G})$ 
with an additional vertex $V_x$ for each $x \in B_{\text{wild}}$.  Also,
$E(\mc{G}')$ consists of the elements of 
$E(\mc{G})$ with two additional opposite edges for each $x \in
B_{\text{wild}}$, 
one with source $V_x$ and target $A_x$, and one with source $A_x$ and
target $V_x$.  We write $v_0$ for the vertex corresponding to the original
component $\ol{X}_0$.  

An irreducible component of $\ol{X}$ corresponding to a leaf of $\mc{G}$ that is
not $\ol{X}_0$ is 
called a \emph{tail} of $\ol{X}$.  All other components are called
\emph{interior components}.  We partially order the
vertices of $\mc{G}'$ such that $v_1 \preceq v_2$ iff $v_1 = v_2$, $v_1 = v_0$,
or $v_0$ and $v_2$ are in different connected components of $\mc{G}' \backslash v_1$ (we order ``outward" from the original component).  
Similarly, we can compare edges with each other, and edges with vertices.  For this we overload the symbol $\preceq$.
The set of irreducible components and singular points of $\ol{X}$ inherits the partial order $\preceq$.  

\subsubsection{Inertia Groups of the Stable Reduction.}

\begin{prop}[(\cite{Ra:sp}, Proposition 2.4.11)]\label{Pspecialram}
The inertia groups of $\ol{f}: \ol{Y} \to \ol{X}$ at points of $\ol{Y}$ are as
follows (note that points in the same
$G$-orbit have conjugate inertia groups):
\begin{description}
\item{(i)} At the generic points of irreducible components, the inertia groups
are $p$-groups.  
\item{(ii)} At each node, the inertia group is an extension of a cyclic,
prime-to-$p$ order group, by a $p$-group generated by
the inertia groups of the generic points of the crossing components.
\item{(iii)} If a point $y \in Y$ above a branch point $x \in X$ specializes
to a smooth point $\ol{y}$ on a 
component $\ol{V}$ of 
$\ol{Y}$, then the inertia group at $\ol{y}$ is an extension of the
prime-to-$p$ part of the inertia group at $y$ by
the inertia group of the generic point of $\ol{V}$.
\item{(iv)} At all other points $q$ (automatically smooth, closed), the inertia
group is equal to the inertia group of the 
generic point of the irreducible component of $\ol{Y}$ containing $q$.
\end{description}
\end{prop}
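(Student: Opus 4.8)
The plan is to analyze the $G$-action on the stable model $Y^{st}$ one point-type at a time, exploiting the dichotomy between \emph{vertical} inertia — supported at the generic points of the components of $\ol Y$, i.e.\ at the codimension-one points of $Y^{st}$ in the special fiber — and \emph{horizontal} ramification, coming from the induced cover of special-fiber curves together with the specializations of the branch points of $f$. The guiding principle, which the four cases make precise, is that all wild ($p$-group) inertia is vertical while all horizontal ramification is tame.

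I would prove (i) first. Let $\eta$ be the generic point of a component $\ol V$ of $\ol Y$. Since $\ol Y$ is reduced, $\ol V$ occurs with multiplicity one in the special fiber, so a uniformizer $\varpi$ of $R^{st}$ is also a uniformizer of the complete discrete valuation ring $B := \hat{\mc O}_{Y^{st},\eta}$, whose residue field is the function field $k(\ol V)$. The decomposition group $D_\eta$ acts on $B$ over $A := \hat{\mc O}_{X^{st}, f(\eta)}$, and the inertia group $I_\eta$ is the subgroup acting trivially on $k(\ol V)$. Because $G = \Aut(Y_{K^{st}}/X_{K^{st}})$ acts $R^{st}$-linearly, every $g \in G$ fixes $\varpi$; hence the tame character $I_\eta \to k(\ol V)^{\times}$, $g \mapsto \overline{g(\varpi)/\varpi}$, which is injective on the tame quotient of $I_\eta$ (\cite[IV]{Se:lf}), is identically trivial. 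Therefore $I_\eta$ coincides with its own wild inertia and is a $p$-group.

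For the closed points I would pass to the induced cover of special-fiber curves. Writing $\ol H = D_\eta/I_\eta$, the normalized cover $\ol V \to \ol V/\ol H$ is a Galois cover of smooth curves in characteristic $p$, and for a smooth closed point $q \in \ol V$ one has $I_\eta \trianglelefteq I_q$ with $I_q/I_\eta$ identified with the inertia of $\ol V \to \ol V/\ol H$ at $q$. This already yields the extension shape: $I_q$ is an extension of the inertia of a cover of smooth curves (necessarily of the form $(p\text{-group}) \rtimes (\text{cyclic prime-to-}p)$) by the $p$-group $I_\eta$. To finish each case I must pin down the horizontal inertia $I_q/I_\eta$. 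Granting the key claim that $\ol V \to \ol V/\ol H$ is \emph{tamely} ramified with branch locus contained in the nodes of $\ol V$ and the specializations of the ramification points of $f$, case (iv) is immediate, since $q$ is then unramified in $\ol V \to \ol V/\ol H$, forcing $I_q = I_\eta$. For (iii), where $q = \ol y$ is the specialization of a ramification point $y$, I would match the tame horizontal quotient $I_{\ol y}/I_\eta$ with the prime-to-$p$ part of the cyclic characteristic-zero inertia $I_y \subseteq G$, the wild part of the ramification at $y$ being realized vertically in $I_\eta$. For (ii) I would work in the complete local ring $k[[u,v]]/(uv)$ of a node $\ol y$: the vertical inertia groups of the two crossing components both lie in $I_{\ol y}$ and generate its wild part (a $p$-group, since all three sit inside the wild inertia of $I_{\ol y}$), while the action on the two tangent directions exhibits the tame quotient as cyclic of prime-to-$p$ order.

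The main obstacle is the tameness claim: that horizontal ramification carries no wild part, equivalently that the $p$-part of $I_q$ never exceeds the vertical contributions ($I_\eta$, together with the inertia of the adjacent component at a node). This is the technical heart of stable reduction for wildly ramified covers, expressing the fact that in the minimal stable model the wild ramification is resolved into vertical components rather than surviving as wild horizontal ramification of $\ol V \to \ol V/\ol H$. I would establish it by a local study of the reduced cover at $q$ — ruling out a wild inertia subgroup in $I_q/I_\eta$ by combining Abhyankar's lemma with the minimality and stability of $Y^{st}$, extra wild horizontal ramification being incompatible with the stability normalization of the components — and it is here, rather than in the formal inertia-group bookkeeping of the previous step, that the fine geometry of the stable model is indispensable.
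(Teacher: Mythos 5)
You should first know that the paper contains no proof of this statement at all: it is quoted directly from Raynaud (\cite[Proposition 2.4.11]{Ra:sp}), so the benchmark is Raynaud's argument, which rests on purity of the branch locus and his structure theory of Galois covers of formal disks and annuli that are \'{e}tale off the special fiber (\cite[\S 2.3]{Ra:sp}; see also Henrio \cite{He:ht}). Your part (i) does match the standard argument and is essentially correct: since $\ol{Y}$ is reduced, a uniformizer $\varpi$ of $R^{st}$ is a uniformizer at the generic point of each component, $G$ fixes $\varpi$ because it acts over $R^{st}$, so the tame character is trivial and the generic inertia is a $p$-group. (One caveat: the residue extension can be inseparable --- this is exactly the inseparability phenomenon the paper exploits --- so you cannot simply cite \cite[IV]{Se:lf}, which assumes separable residue extensions; the direct computation with $g \mapsto \overline{g(\varpi)/\varpi}$ still works, but you should say so.)

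The genuine gap is in your plan for the tameness claim, in two respects. First, the mechanism you propose --- that wild horizontal ramification is ``incompatible with the stability normalization'' and excluded by minimality of $Y^{st}$ --- is the wrong input: the proposition holds for \emph{any} semistable $G$-equivariant model with reduced special fiber and branch points specializing to distinct smooth points (blow up further and all four assertions remain true), so minimality and the stability conditions cannot be what forces tameness. What does the work is purity (Zariski--Nagata) on the smooth locus of $X^{st}$ combined with the classification of covers of the formal germs $\Spec R^{st}[[t]]$ (for (iii) and (iv)) and $\Spec R^{st}[[u,v]]/(uv-\varpi^e)$ (for (ii)) that are \'{e}tale outside the special fiber. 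Second, at a node this machinery is unavoidable and your sketch does not supply it: $X^{st}$ is not regular at the image of a node, so Abhyankar-type purity as you invoke it does not apply there, and your $k[[u,v]]/(uv)$ tangent-direction computation only identifies the tame quotient. The assertion that the two generic inertia groups \emph{generate} the wild part of the inertia at the node --- the hardest statement in the proposition, and precisely Raynaud's annulus result --- is simply asserted, and your parenthetical justification presupposes the extension structure being proven. A further trap in your bootstrap: after quotienting by $I_\eta$, the residue extension along the special fiber is purely inseparable (of degree $p^r$, as in Construction \ref{CCdefdata}), so trivial inertia in codimension one does \emph{not} imply \'{e}taleness there, and the argument ``\'{e}tale in codimension one near $q$, hence \'{e}tale by purity'' fails without the disk/annulus analysis.
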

If $\ol{V}$ is an irreducible component of $\ol{Y}$, we will always write
$I_{\ol{V}} \leq G$ for the inertia group of
the generic point of $\ol{V}$, and $D_{\ol{V}}$ for the decomposition group.

For the rest of this subsection, assume $G$ has a \emph{cyclic} $p$-Sylow
subgroup.
When $G$ has a cyclic $p$-Sylow subgroup, the inertia groups above a generic
point of an
irreducible component $\ol{W} \subset 
\ol{X}$ are conjugate cyclic groups of $p$-power order.  If they are of order
$p^i$, we call $\ol{W}$ a
\emph{$p^i$-component}.  If $i = 0$, we call $\ol{W}$ an \emph{\'{e}tale
component}, and if $i > 0$, we call $\ol{W}$ an
\emph{inseparable component}. 	 
For an inseparable component $\ol{W}$, the morphism $Y \times_X \ol{W} \to
\ol{W}$ corresponds to an inseparable extension 
of the function field $k(\ol{W})$.
This is because, since $\ol{Y}$ is reduced, the inertia of $f$ at the local ring
of the generic point of an
irreducible component of $\ol{Y}$ above $\ol{W}$ must come from an inseparable
extension of residue fields.

An \'{e}tale tail of $\ol{X}$ is called \emph{primitive} if it contains a branch
point other
than the point at which it intersects the rest of $\ol{X}$.  
Otherwise it is called \emph{new}.  This follows \cite{We:br}.  An inseparable
tail that does not contain the
specialization of any branch point will be called a \emph{new inseparable tail}.
 
A inseparable tail that is a $p^i$-component will also be called a
\emph{$p^i$-tail} (a \emph{new $p^i$-tail} if it is new).

\begin{corollary} \label{Cconsistency}
If $\ol{V}$ and $\ol{V}'$ are two adjacent irreducible components of $\ol{Y}$,
then either $I_{\ol{V}} \subseteq I_{\ol{V}'}$ or vice versa.
\end{corollary}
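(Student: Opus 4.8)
The plan is to deduce this directly from Proposition \ref{Pspecialram}, combined with the fact that in a group $G$ with cyclic $p$-Sylow subgroup $P$, every $p$-subgroup is cyclic (being contained in a conjugate of $P$). Since $\ol{V}$ and $\ol{V}'$ are adjacent, they cross at a node $\ol{x}$ of $\ol{Y}$. First I would apply Proposition \ref{Pspecialram}(i) to note that the inertia groups $I_{\ol{V}}$ and $I_{\ol{V}'}$ at the generic points are $p$-groups; by the observation above, both are therefore cyclic $p$-groups.

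Next I would apply Proposition \ref{Pspecialram}(ii) at the node $\ol{x}$: the inertia group there is an extension of a cyclic prime-to-$p$ group by the $p$-group $N := \langle I_{\ol{V}}, I_{\ol{V}'} \rangle$ generated by the inertia groups of the two crossing components. The key point is that $N$, being a $p$-subgroup of $G$, is again cyclic because $G$ has cyclic $p$-Sylow subgroup. Thus $I_{\ol{V}}$ and $I_{\ol{V}'}$ are simultaneously subgroups of the single cyclic $p$-group $N$.

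Finally, since a cyclic $p$-group has exactly one subgroup of each order dividing its order, its subgroups are totally ordered by inclusion. In particular $I_{\ol{V}}$ and $I_{\ol{V}'}$ are comparable, so either $I_{\ol{V}} \subseteq I_{\ol{V}'}$ or $I_{\ol{V}'} \subseteq I_{\ol{V}}$, as desired.

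There is no serious obstacle here; the statement is a short consequence of the inertia description. The one point requiring care is to recognize that the group $N$ generated by the two inertia groups is itself \emph{cyclic}, rather than an arbitrary abelian $p$-group containing two incomparable cyclic subgroups (which is what would happen if $P$ were, say, elementary abelian of rank $2$). This is precisely where the cyclicity hypothesis on the $p$-Sylow subgroup is used, and it is what forces the subgroup lattice to be a chain.
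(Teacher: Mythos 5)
Your proposal is correct and follows essentially the same route as the paper: both invoke Proposition \ref{Pspecialram}(ii) to see that the $p$-part of the inertia group at the node is a $p$-group generated by $I_{\ol{V}}$ and $I_{\ol{V}'}$, note it is cyclic since $G$ has cyclic $p$-Sylow subgroup, and conclude from the fact that the subgroups of a cyclic $p$-group are totally ordered. Your remark that the $p$-group generated by the two inertia groups is cyclic (not merely abelian) is exactly the point the paper's proof relies on, so there is nothing to add.
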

\begin{proof} Let $q$ be a point of intersection of $\ol{V}$ and $\ol{V}'$ and
let $I_q$ be its inertia group.  
Then the $p$-part of $I_q$ is a cyclic $p$-group, generated by the two
cyclic $p$-groups $I_{\ol{V}}$ and $I_{\ol{V}'}$.  
Since the subgroups of a cyclic $p$-group 
are totally ordered, the corollary follows. \end{proof}

\begin{corollary}\label{Cinsepnormalp}
Let $\ol{S} \subseteq \ol{Y}$ be a union of irreducible components of $\ol{Y}$,
all of which lie above inseparable
components of $\ol{X}$.  Suppose that $\ol{S}$ is connected.  Let $D_{\ol{S}}
\subseteq G$ be the decomposition group of
$\ol{S}$ (i.e., the maximal subgroup of $G$ such that $D_{\ol{S}}(\ol{S}) =
\ol{S}$).  Then $D_{\ol{S}}$ has a normal 
subgroup of order $p$.
\end{corollary}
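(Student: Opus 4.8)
The plan is to exhibit a single subgroup $Q \leq G$ of order $p$ that is the unique order-$p$ subgroup of $I_{\ol{V}}$ for \emph{every} irreducible component $\ol{V}$ of $\ol{S}$, and then to observe that $D_{\ol{S}}$ must normalize $Q$.

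First I would record the local structure at each component. Since every component $\ol{V}$ of $\ol{S}$ lies above an inseparable component of $\ol{X}$, its inertia group $I_{\ol{V}}$ at the generic point is nontrivial by the definition of inseparable component; and because $G$ has a cyclic $p$-Sylow subgroup, $I_{\ol{V}}$ is a nontrivial \emph{cyclic} $p$-group by Proposition \ref{Pspecialram}(i). In a cyclic $p$-group there is a unique subgroup of order $p$; write $Q_{\ol{V}}$ for this subgroup of $I_{\ol{V}}$.

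The key step is to show that $Q_{\ol{V}}$ is independent of $\ol{V}$. It suffices to treat two adjacent components $\ol{V}, \ol{V}'$ of $\ol{S}$, since $\ol{S}$ is connected and any two of its components are joined by a chain of adjacent ones. By Corollary \ref{Cconsistency}, one of $I_{\ol{V}}, I_{\ol{V}'}$ contains the other; say $I_{\ol{V}} \subseteq I_{\ol{V}'}$. Then $I_{\ol{V}}$ is a nontrivial subgroup of the cyclic $p$-group $I_{\ol{V}'}$, which has a unique order-$p$ subgroup; since every nontrivial subgroup of a cyclic $p$-group contains its order-$p$ subgroup, that subgroup lies in $I_{\ol{V}}$ as well, and therefore equals both $Q_{\ol{V}}$ and $Q_{\ol{V}'}$. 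Propagating this equality along a connecting chain yields a single subgroup $Q := Q_{\ol{V}}$, the same for every component $\ol{V}$ of $\ol{S}$.

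Finally I would use the equivariance of inertia under $G$. For $g \in D_{\ol{S}}$ and any component $\ol{V}$ of $\ol{S}$, the image $g(\ol{V})$ is again a component of $\ol{S}$, and $I_{g(\ol{V})} = g I_{\ol{V}} g^{-1}$, whence $Q_{g(\ol{V})} = g Q_{\ol{V}} g^{-1}$. By the previous step both $Q_{g(\ol{V})}$ and $Q_{\ol{V}}$ equal $Q$, so $g Q g^{-1} = Q$. Thus every element of $D_{\ol{S}}$ normalizes $Q$, and $Q$ is a normal subgroup of order $p$ in $D_{\ol{S}}$, as desired. I expect the only delicate point to be the adjacent-components step: one must ensure that the two inertia groups genuinely sit inside a common cyclic $p$-group so that ``the unique order-$p$ subgroup'' is meaningful. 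This is exactly what Corollary \ref{Cconsistency} (together with the description of inertia at a node in Proposition \ref{Pspecialram}(ii)) provides, so the argument becomes short once that is in hand.
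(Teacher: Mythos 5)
Your proof is correct and is essentially the paper's own argument: both use Proposition \ref{Pspecialram}(i)--(ii) and Corollary \ref{Cconsistency} to propagate the unique order-$p$ subgroup of the generic inertia along a chain of adjacent components of $\ol{S}$, and then conclude from $G$-equivariance of inertia that $gQg^{-1} = Q$ for every $g \in D_{\ol{S}}$. The only difference is organizational (you first establish the common subgroup $Q$ for all components and then conjugate, while the paper conjugates first and compares $\ol{V}$ with $g\ol{V}$ along the chain), which is immaterial.
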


\begin{proof} 
Pick any irreducible component $\ol{V}$ of $\ol{S}$, and let $Q$ be the unique
subgroup of order $p$ of $I_{\ol{V}}$.
Let $g \in D_{\ol{S}}$, and write $\ol{V}' = g\ol{V}$.  
Then $gQg^{-1}$ is the unique subgroup of order $p$ of the inertia group of the
generic point of $\ol{V}'$.  
Since $\ol{S}$ is connected, there exists a sequence $\Sigma$ of components of
$\ol{S}$, 
starting with $\ol{V}$ and ending with $\ol{V}'$, such that each
component in $\Sigma$ intersects the preceeding and the following component. 
The components in $\Sigma$ all lie
above inseparable components of $\ol{X}$.  
Then the inertia group of the generic point of each component in $\Sigma$ has a
unique subgroup of order $p$.
We know from Corollary \ref{Cconsistency} that for any two such adjacent
components, the inertia group of one contains the inertia group of the other. 
Thus, both inertia groups contain the 
\emph{same} subgroup of order $p$.  So $Q = gQg^{-1}$, and we are done.  
\end{proof}

\begin{prop} \label{Pcorrectspec}
If $x \in X$ is branched of index $p^as$, where $p \nmid s$, then $x$
specializes to a $p^a$-component.
\end{prop}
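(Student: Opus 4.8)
The plan is to compare the inertia group of a suitable ramification point $y \in Y$ lying over $x$ with the inertia group of its specialization $\ol{y}$ on the stable reduction, and then to read off the generic inertia of the component carrying $\ol{y}$ directly from Proposition \ref{Pspecialram}(iii). First I would set up notation. After base change to $K^{st}$ the ramification points of $f$ are defined and, by the definition of the stable model in \S\ref{Sstable}, specialize to distinct smooth points of $\ol{Y}$; that $K^{st}$ contains their field of definition is part of the description of $K^{st}$ used in the proof of Proposition \ref{Pstablecutout}. So I would choose a ramification point $y \in Y(K^{st})$ over $x$ and let $\ol{y}$ be its (Hensel-unique) specialization, which lies on some irreducible component $\ol{V}$ of $\ol{Y}$ mapping to the component $\ol{W}_x$ of $\ol{X}$ to which $x$ specializes. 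Because $K^{st}$ has algebraically closed residue field and $f$ has characteristic-zero generic fibre, the inertia group $I_y = \mathrm{Stab}_G(y)$ equals the decomposition group and is cyclic of order equal to the branching index; thus $I_y \cong \ints/p^a s$, with $p$-part $\ints/p^a$ and prime-to-$p$ part $(I_y)_{p'} \cong \ints/s$.

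The key step is to prove that $I_{\ol{y}} = I_y$. One inclusion is immediate from $G$-equivariance of specialization: if $g \in G$ fixes $y$, then $g\ol{y}$ is the specialization of $gy = y$, so $g$ fixes $\ol{y}$, giving $I_y \subseteq I_{\ol{y}}$. For the reverse inclusion I would invoke the defining property of the stable model. Given $g \in I_{\ol{y}}$, the point $gy$ is again a ramification point of $f_{K^{st}}$, and by equivariance it specializes to $g\ol{y} = \ol{y}$. Since the ramification points specialize to \emph{distinct} smooth points of $\ol{Y}$, the coincidence of specializations forces $gy = y$, whence $g \in I_y$. This yields $I_{\ol{y}} = I_y \cong \ints/p^a s$.

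Finally I would apply Proposition \ref{Pspecialram}(iii). As $\ol{y}$ is the smooth specialization of a point over a branch point, that result presents $I_{\ol{y}}$ as an extension of $(I_y)_{p'}$ by $I_{\ol{V}}$, so $I_{\ol{V}} = \ker\!\big(I_{\ol{y}} \twoheadrightarrow (I_y)_{p'}\big)$. Substituting $I_{\ol{y}} \cong \ints/p^a s$, this kernel is precisely the $p$-part $\ints/p^a$; moreover $I_{\ol{V}}$ is a cyclic $p$-group by Proposition \ref{Pspecialram}(i) together with the cyclicity of $P$, so $I_{\ol{V}} \cong \ints/p^a$. By definition $\ol{W}_x$ is then a $p^a$-component, which is the assertion.

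I expect the only delicate point to be the reverse inclusion $I_{\ol{y}} \subseteq I_y$: it is exactly here that the specific stable-model convention of \S\ref{Sstable} (ramification points specialize to distinct smooth points) is indispensable. Without it, distinct ramification points over $x$ could in principle collide at $\ol{y}$, extra wild inertia could appear at the specialization, and one would only be able to conclude $I_{\ol{V}} \subseteq \ints/p^a$ rather than the desired equality. I would therefore be careful, when writing the argument, to state that $gy$ is a genuine ramification point of $f_{K^{st}}$ (so that the distinctness property applies to it) and that the specialization map is $G$-equivariant, since these two facts are what drive the whole comparison.
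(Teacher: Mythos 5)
Your proof is correct and is essentially the paper's own argument: both rest on Proposition \ref{Pspecialram}(iii) combined with the stable-model requirement that the ramification points specialize to \emph{distinct} smooth points of $\ol{Y}$. Where the paper phrases the upper bound by orbit counting (the fiber over the specialization of $x$ must contain the $|G|/p^a s$ distinct specializations, forcing the generic inertia to have order at most $p^a$), you prove the equivalent stabilizer equality $I_{\ol{y}} = I_y$ --- the orbit--stabilizer dual of the same counting --- which in addition yields the content of Remark \ref{Rcorrectspec} along the way.
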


\begin{proof}
By Proposition \ref{Pspecialram} (iii) and the definition of the stable model, 
$x$ specializes to a smooth point of a component whose generic inertia has order
at least $p^a$.  
Because our definition of the stable model requires the specializations of the
$|G|/p^{a}s$ 
ramification points above $a$ to be disjoint, the specialization of $x$ must
have a fiber with cardinality a multiple of
$|G|/p^a s$.  This shows that $x$ must specialize to a component with inertia at
most $p^a$.
\end{proof}

\begin{remark}\label{Rcorrectspec}
It follows from Proposition \ref{Pspecialram} (iii) and the proof of Proposition
\ref{Pcorrectspec} that if $y$ is a 
ramification point above $x$, then the specialization $\ol{y}$ of $y$ also has
inertia group in $G$ cyclic of order
$p^a s$.
\end{remark}

\begin{lemma}[(\cite{Ra:sp}, Proposition 2.4.8)]\label{Letaletail}
If $\ol{W}$ is an \'{e}tale component of $\ol{X}$, then $\ol{W}$ is a tail.
\end{lemma}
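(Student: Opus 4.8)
The plan is to prove the contrapositive: if $\ol{W}$ is not a tail then $\ol{W}$ is inseparable. A non-tail component is either the original component $\ol{X}_0$ or an interior component, so I would treat these separately. That $\ol{X}_0$ is not \'{e}tale is the easy half: in the only relevant situation (bad reduction, so that $\ol{X}$ has more than one component and there are tails to discuss) the cover $f$ is wildly ramified, and the structure of the original component for a three-point cover with bad reduction forces its generic inertia to be nontrivial, as in \cite{Ra:sp}. The real content, and where I would spend my effort, is showing that \emph{no interior component is \'{e}tale}.

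So suppose $\ol{W}$ is an interior \'{e}tale component. Root the tree $\mc{G}$ at the vertex $v_0$ corresponding to $\ol{X}_0$; then $\ol{W}$ has a parent (toward $\ol{X}_0$) and at least one child. Under the blow-down $X^{st} \to X_R \times_R R^{st}$, the component $\ol{W}$ together with all its descendants contracts to a single closed point $\ol{w}$ of the smooth special fiber $X_k \cong \ol{X}_0$. Let $\tilde{D} \subset X$ be the residue disk of $\ol{w}$, i.e.\ the set of points specializing to $\ol{w}$; these are precisely the points reducing into $\{\ol{W}\} \cup \{\text{descendants}\}$. The key counting observation is that, since the $K$-rational branch points of $f$ have \emph{distinct} specializations on the smooth model $X_R$, the disk $\tilde{D}$ contains \emph{at most one} branch point.

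Next I would show each child contributes a branch point, forcing exactly one child. For a child $\mc{C}$ of $\ol{W}$, let $D_{\mc{C}} \subseteq \tilde{D}$ be the sub-disk of points reducing into $\mc{C}$. If $D_{\mc{C}}$ contained no branch point, then $f^{-1}(D_{\mc{C}}) \to D_{\mc{C}}$ would be finite \'{e}tale (we are in characteristic $0$, so ramification occurs only over branch points), hence split since a disk is simply connected; the cover would then have trivial good reduction over $D_{\mc{C}}$, and the minimality of the stable model would mean the components of $\mc{C}$ never arise, a contradiction. Distinct children give disjoint sub-disks, so the ``at most one branch point'' bound forces $\ol{W}$ to have exactly one child, and $\tilde{D}$ to contain exactly one branch point $x$, lying in that child's disk. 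If $x$ is tame, then by Proposition~\ref{Pcorrectspec} every component over $\tilde{D}$ is \'{e}tale and the cover over $\tilde{D}$ is tamely ramified, branched at the single point $x$; such a cover has good reduction to a \emph{single} (tail) component, so $\ol{W}$ cannot be interior---contradiction.

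The remaining case, where $x$ is \emph{wildly} branched, is the one I expect to be the main obstacle. Here $x$ specializes to an inseparable $p^a$-component $\ol{W}_x$ with $a \geq 1$ (Proposition~\ref{Pcorrectspec}, Remark~\ref{Rcorrectspec}), lying strictly beyond the \'{e}tale $\ol{W}$, so along the unique path in $\mc{G}$ from $\ol{X}_0$ to $\ol{W}_x$ the intermediate component $\ol{W}$ is \'{e}tale: wild ramification would have to ``vanish and reappear.'' Note that Corollary~\ref{Cconsistency} alone is insufficient, since the inclusions it provides across an \'{e}tale component are the trivial ones. What is really needed is that the inseparable components together with $\ol{X}_0$ form a \emph{connected} subtree, equivalently that the conductor of the cover over the chain of annuli of $\tilde{D}$ is monotone as one moves outward from $\ol{X}_0$ and therefore cannot drop to $0$ at $\ol{W}$ and then rise again at $\ol{W}_x$. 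I would establish this monotonicity from the relation between conductors across successive annuli (Herbrand's formula and Lemma~\ref{Lcompositum}); alternatively, the deformation-data machinery of \S\ref{Sbackground} onward furnishes invariants that are positive exactly on the inseparable components and vary monotonically, giving a clean route to the same conclusion. With connectivity in hand, the \'{e}tale $\ol{W}$ cannot separate the two inseparable components $\ol{X}_0$ and $\ol{W}_x$, completing the contradiction and hence the proof.
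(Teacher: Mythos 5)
The paper does not actually prove Lemma \ref{Letaletail}; it is quoted from \cite[Proposition 2.4.8]{Ra:sp}, whose argument is of the same contraction-plus-purity type as the paper's own proof of Proposition \ref{Pmonotonic}. Your skeleton (reduce to interior components, pass to residue disks, at most one branch point per disk because the specializations of the branch points do not collide, each child subtree must contain a branch point, hence exactly one child) is consistent with that argument in outline, but two steps are genuinely broken. First, the claim that $f^{-1}(D_{\mc{C}}) \to D_{\mc{C}}$ is ``split since a disk is simply connected'' is false in the $p$-adic setting: the open disk admits nontrivial connected finite \'{e}tale covers, e.g.\ the degree-$p$ self-cover $x \mapsto (1+x)^p - 1$ of the open unit disk (a Newton polygon computation shows all $p$ preimages of a point of the disk lie in the disk, and the fibers consist of $p$ distinct points). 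Indeed, covers of a disk that are \'{e}tale on the generic fiber but have bad reduction are precisely what produce new tails, so if residue disks were simply connected much of this paper would be vacuous. What rescues your step is the hypothesis you never used there, namely that $\ol{W}$ is \'{e}tale: contracting the child subtree as in the proof of Proposition \ref{Pmonotonic}, the germ at the node is the spectrum of a complete regular local ring whose special fiber lies in $\ol{W}$, so the cover germ is \'{e}tale over $(\pi)$ as well as over the generic fiber; purity of the branch locus then makes it \'{e}tale everywhere, hence trivial over this henselian local ring with algebraically closed residue field, contradicting minimality of the stable model.

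Second, and more seriously, your route through the wild case fails. Herbrand's formula and Lemma \ref{Lcompositum} are statements about abstract filtered extensions and carry no information about how inertia varies across the components of $\ol{X}$; the monotonicity you invoke is essentially the open Question following Definition \ref{Dmonotonic}, which the paper explicitly does not know how to answer in general. Worse, the one monotonicity result the paper does prove, Proposition \ref{Pmonotonic}, appeals to Corollary \ref{Cinsepnormalp}, which requires every component above the contracted region to be inseparable --- and that is available there exactly because Lemma \ref{Letaletail} rules out interior \'{e}tale components. The deformation-data formalism likewise presupposes the lemma (e.g.\ in Definition \ref{Dsigmaeff}, where ``$r \geq 1$ by Lemma \ref{Letaletail}''). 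So deriving the lemma from monotonicity or from deformation data is circular relative to this paper. The non-circular argument is the same contraction once more: the germ cover at the node is \'{e}tale over $(\pi)$ because $\ol{W}$ is \'{e}tale, and by purity it is branched at most along the single height-one prime $(u)$ cut out by the section through the unique branch point $x$; \cite[Lemme 6.3.2]{Ra:ab} then forces the ramification along $(u)$ to be prime to $p$, contradicting $p \mid e_x$ in the wild case, while in the tame or unbranched case the resulting Kummer germ normalizes to a disjoint union of formal disks, so no components beyond $\ol{W}$ are needed and $\ol{W}$ cannot be interior. This purity/tameness input, rather than conductor monotonicity, is the missing idea; with it, your counting argument closes up into essentially Raynaud's proof.
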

\smallskip
\begin{lemma} \label{Ltailetale}
If $\ol{W}$ is a $p^a$-tail of $\ol{X}$, then the component $\ol{W}'$ that
intersects $\ol{W}$ is a $p^b$-component with
$b > a$.
\end{lemma}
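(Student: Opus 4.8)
The plan is to work with a component $\ol{V}$ of $\ol{Y}$ lying over the tail $\ol{W}$, together with the unique node $\ol{x} = \ol{W} \cap \ol{W}'$, a point $\ol{v} \in \ol{V}$ over $\ol{x}$, and the component $\ol{V}'$ of $\ol{Y}$ through $\ol{v}$ that lies over $\ol{W}'$. Writing $p^a = |I_{\ol{V}}|$ and $p^b = |I_{\ol{V}'}|$, Corollary \ref{Cconsistency} tells me that $I_{\ol{V}}$ and $I_{\ol{V}'}$ are comparable, so it suffices to rule out the inclusion $I_{\ol{V}'} \subseteq I_{\ol{V}}$, i.e.\ the inequality $b \leq a$. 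First I would dispose of $b = 0$: since $\ol{W}$ is a tail and (for the dual graph having at least three vertices) $\ol{W}'$ is then an interior component, Lemma \ref{Letaletail} forces $\ol{W}'$ to be non-\'{e}tale, so $b \geq 1$. The two-vertex case, and the case $\ol{W}' = \ol{X}_0$, I would treat separately. Thus the whole content is to derive a contradiction from $1 \leq b \leq a$.

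The key step is to localize where the $p$-part of the inertia can exceed the generic value $p^a$. By Proposition \ref{Pspecialram}(iv) the inertia at a smooth non-ramification point of $\ol{V}$ equals the generic inertia $I_{\ol{V}}$; by Proposition \ref{Pcorrectspec} and Remark \ref{Rcorrectspec} a branch point of $f$ specializing to $\ol{W}$ has branching index with $p$-part exactly $p^a$, so the ramification points of $f$ above it again contribute $p$-inertia exactly $p^a$; and by Proposition \ref{Pspecialram}(ii) the $p$-part of the inertia at the only node $\ol{x}$ of $\ol{W}$ is $\langle I_{\ol{V}}, I_{\ol{V}'}\rangle = I_{\ol{V}}$, precisely because $b \leq a$. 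Hence, under the assumption $b \leq a$, the $p$-part of the inertia of $\ol{f}$ is \emph{constant}, equal to $p^a$, at every point over $\ol{W}$. Factoring $\ol{V} \to \ol{W}$ as $\ol{V} \to \ol{V}_0 := \ol{V}/I_{\ol{V}} \to \ol{W}$, the first map is purely inseparable (so $g_{\ol{V}} = g_{\ol{V}_0}$) and the second is a tame Galois cover whose branch locus is contained in $\{\ol{x}\}$ together with the branch points of $f$ on $\ol{W}$.

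I would finish by a Riemann--Hurwitz and stability argument. When $\ol{W}$ is a \emph{new} tail (no branch points on it), which is the case that matters for Remark \ref{Rmain}(ii), the tame cover $\ol{V}_0 \to \ol{W} \cong \proj^1$ is branched over the single point $\ol{x}$; a tame Galois cover of $\proj^1$ branched at one point is trivial, so $\ol{V}_0 \cong \ol{W}$ and $\ol{V}$ is a genus-zero component meeting the rest of $\ol{Y}$ in the single point $\ol{v}$ and carrying no ramification points---contradicting the requirement that every genus-zero component of $\ol{Y}$ carry at least three marked points. The same contradiction persists when $\ol{W}$ meets exactly one branch point, since then $\ol{V}_0 \to \proj^1$ is tame and branched at two points, hence cyclic and totally ramified with $\ol{V}$ of genus zero carrying only two marked points. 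The hard part will be the remaining configuration, where the inseparable tail $\ol{W}$ meets two or more of the three branch points: then $\ol{V}_0 \to \proj^1$ may be branched at three or more points, its source may have positive genus and at least three marked points, and the naive stability count no longer produces a contradiction. To handle this I would need a finer input---either a sharper Riemann--Hurwitz estimate that still forces $\ol{V}$ to be contractible, or an argument (using that $f$ is a three-point cover, so only three branch points are available in total and their specializations are constrained) showing that an inseparable tail cannot absorb two branch points while keeping $b \leq a$. This last case is where I expect the real difficulty to lie.
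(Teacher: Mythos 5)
Your setup and your first two cases reproduce the paper's argument (which in turn follows \cite[Lemme 3.1.2]{Ra:sp}): assume $b \leq a$, factor $\ol{V} \to \ol{W}$ as a radicial map of degree $p^a$ followed by a tamely ramified, generically \'{e}tale cover $h$ branched only at the node and at specializations of branch points, and contradict the three-marked-point condition of the stable model. But the ``hard case'' you defer --- two or more branch points specializing to the tail $\ol{W}$ --- is vacuous, and not seeing this is the one genuine gap in your proposal. The standing hypotheses of \S\ref{Sstable} require the branch points of $f$ to specialize to \emph{distinct} points on the special fiber of the smooth model $X_R$ (for a three-point cover, $0$, $1$, $\infty$ automatically reduce to distinct points of $\ol{X}_0$). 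Since $X^{st}$ is a blowup of $X \times_R R^{st}$ at closed points and a tail is by definition not the original component, the contraction $X^{st} \to X \times_R R^{st}$ maps all of $\ol{W}$ to a single closed point of $\ol{X}_0$; hence at most one branch point specializes to $\ol{W}$ (indeed, at most one to the entire tree of components over any given point of $\ol{X}_0$). This is precisely the paper's parenthetical ``the specialization of, at most, one point $a_i$ to $\ol{W}$.'' With it, your two completed cases exhaust everything: $h$ is a tame Galois cover of $\ol{W} \cong \proj^1$ branched at at most two points, hence totally ramified there, so $\ol{V}$ has genus zero, meets the rest of $\ol{Y}$ at one point, and carries at most one specialized ramification point --- fewer than three marked points, contradicting stability.

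Two minor points. First, your separate treatment of $b = 0$, of the two-vertex configuration, and of $\ol{W}' = \ol{X}_0$ is unnecessary: the radicial-plus-tame argument is uniform in $0 \leq b \leq a$ and nowhere uses that $\ol{W}'$ is interior (the paper makes no such case division). Second, the notation $\ol{V}_0 := \ol{V}/I_{\ol{V}}$ is off: $I_{\ol{V}}$ is the inertia group of the \emph{generic point} of $\ol{V}$, so it acts trivially on $\ol{V}$ and the quotient curve is $\ol{V}$ itself. The intermediate curve you want is the one whose function field is the separable closure of $k(\ol{W})$ in $k(\ol{V})$ --- equivalently, $\ol{V} \to \ol{V}_0$ is the degree-$p^a$ purely inseparable part of the normal extension $k(\ol{V})/k(\ol{W})$, as in Construction \ref{CCdefdata} --- not a group-theoretic quotient. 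Both of these are cosmetic; the substantive defect is only the unexamined multi-branch-point case, which the non-collision hypothesis eliminates.
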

\begin{proof}   
The proof is essentially the same as the proof of \cite[Lemme 3.1.2]{Ra:sp}.  
Assume that the proposition is false.  
Let $\ol{V}$ be an irreducible component of $\ol{Y}$ lying above the genus zero
component $\ol{W}$.  By Proposition \ref{Pspecialram} and our assumption, the
map $g:\ol{V} \to \ol{W}$ is the
composition $h \circ q$ of a tamely ramified, generically \'{e}tale morphism $h$
with a radicial morphism $q$ of degree $p^a$  
Now, $h$ can only be branched at the intersection $\ol{w}$ of $\ol{W}$ and
$\ol{W}'$, 
and the specialization of, at most, one point $a_i$ to $\ol{W}$.  Since there
are
at most two branch points, and they are tame, then $h$
is totally ramified at these points.  So $q(\ol{V})$ has genus zero and has only
one point above $\ol{w}$.
Since $q$ is radicial, the same is true for $\ol{V}$.  This
contradicts the definition of the stable model, as $\ol{V}$ has genus zero and
insufficiently many marked points. \end{proof}

Note that Lemma \ref{Ltailetale} shows that if $p$ exactly divides $|G|$,
then there are no inseparable tails.
But there can be inseparable tails if a higher power of $p$ divides $|G|$.

\begin{notation}\label{Nsx}
Let $x$ be the intersection point of two components $\ol{W}$ and $\ol{W}'$ of
$\ol{X}$, and let $y$ lie above $x$, on
the intersection of two components $\ol{V}$ and $\ol{V}'$ of $\ol{Y}$.
Assume that $\ol{W}$ is a $p^r$-component and $\ol{W}'$ is a $p^{r'}$-component,
$r \geq r'$.
The inclusion $\hat{\mc{O}}_{\ol{W}', x} \hookrightarrow \hat{\mc{O}}_{\ol{V}',
y}$ induced from the cover is a 
composition $$\hat{\mc{O}}_{\ol{W}', x} \hookrightarrow S \hookrightarrow
\hat{\mc{O}}_{\ol{V}', y}$$
where $\hat{\mc{O}}_{\ol{W}', x} \hookrightarrow S$ is a totally ramified Galois
extension with group 
$J \cong \ints/p^{r-r'} \rtimes \ints/m_x$ and $S \hookrightarrow
\hat{\mc{O}}_{\ol{V}',
y}$ is a purely inseparable extension of degree $p^{r'}$.  
The extension $\hat{\mc{O}}_{\ol{W}', x} \hookrightarrow S$ and the group $J$
depend only on $x$, up to isomorphism, so
we denote them by $\hat{\mc{O}}_{\ol{W}', x} \hookrightarrow S_x$, and $J_x$,
respectively.
\end{notation}

\begin{definition}\label{Draminvariant}
For $r > r'$, let $B_{r,r'}$ index the set of intersection points of a
$p^r$-component $\ol{W}$ with a $p^{r'}$-component $\ol{W}'$ of $\ol{X}$.  
For $b \in B_{r,r'}$, let $x_b$ be the corresponding point of intersection. 
For any $r' \leq \alpha \leq r$, let $I_{\alpha} \leq J_x$ be the unique
subgroup of $J_x$ of order $p^{\alpha - r'}$.  Finally, for $r' \leq \alpha <
r$, let 
$\sigma^{\alpha}_b$ be the conductor of the extension $\mc{O}_{\ol{W}', x_b}
\hookrightarrow 
S^{I_{\alpha}}_{x_b}$ (\S\ref{Sramification}).  
If $\alpha = r'$, we will often just write $\sigma_{x_b}$ for
$\sigma^{r'}_{x_b}$. 
Furthermore, if $x_b$ lies on a tail $\ol{X}_b$, we will simply write $\sigma_b$
(resp.\ $\sigma^{\alpha}_b$) 
for $\sigma_{x_b}$ (resp.\ $\sigma^{\alpha}_{x_b}$).  

We call the $\sigma^{\alpha}_{x_b}$ the \emph{truncated effective ramification
invariants} (of $\ol{f}$) at 
$x_b$.  We call $\sigma_{x_b}$ simply the \emph{effective ramification
invariant} (of $\ol{f}$) at $x_b$.  If
$\ol{X}_b$ is a tail of $\ol{X}$, then $\sigma^{\alpha}_b$ (resp.\ $\sigma_b$)
is called the truncated effective ramification
invariant (resp.\ effective ramification invariant) of the tail $\ol{X}_b$.
\end{definition}

\begin{remark}
In the case $r = 1, r' = 0$, the $\sigma_b$ for tails $\ol{X}_b$ are the same as
those defined in \cite{Ra:sp} and
\cite{We:br}.
\end{remark}

\begin{lemma}\label{Lramdenominator}
The effective ramification invariants $\sigma^{\alpha}_{x_b}$ lie in
$\frac{1}{m_{J_{x_b}}}\ints$.  In particular, they
lie in $\frac{1}{m_G} \ints$.
\end{lemma}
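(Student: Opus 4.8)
The plan is to recognize each $\sigma^\alpha_{x_b}$ as the conductor of an honest Galois extension whose $p$-Sylow subgroup is \emph{abelian}, and then to invoke the Hasse--Arf input already packaged in Lemma \ref{Lhassearf}. By definition $\sigma^\alpha_{x_b}$ is the conductor of $\mc{O}_{\ol{W}',x_b}\hookrightarrow S^{I_\alpha}_{x_b}$, so the first thing I would verify is that this extension is Galois with the expected group. The subgroup $I_\alpha$ is the unique subgroup of order $p^{\alpha-r'}$ of the cyclic $p$-Sylow $\ints/p^{r-r'}$ of $J_{x_b}$ from Notation \ref{Nsx}; being unique it is characteristic in that $p$-Sylow, which is itself normal in $J_{x_b}$, so $I_\alpha\trianglelefteq J_{x_b}$. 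Hence $S^{I_\alpha}_{x_b}/\mc{O}_{\ol{W}',x_b}$ is Galois with group $J_{x_b}/I_\alpha\cong\ints/p^{r-\alpha}\rtimes\ints/m_{x_b}$, whose $p$-Sylow $\ints/p^{r-\alpha}$ is cyclic, hence abelian.

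With this structure in place, the main step is a direct application of Lemma \ref{Lhassearf} to $S^{I_\alpha}_{x_b}/\mc{O}_{\ol{W}',x_b}$: its conductor, which is exactly $\sigma^\alpha_{x_b}$, lies in $\frac{1}{m}\ints$ where $m$ is the order of the prime-to-$p$ quotient of $J_{x_b}/I_\alpha$. The point I would stress is that this $m$ equals $m_{J_{x_b}}$. The action of $\ints/m_{x_b}$ on $\ints/p^{r-r'}$ is faithful, as recorded in the semidirect decomposition of Notation \ref{Nsx}, and since $m_{x_b}\mid(p-1)$ by Lemma \ref{Lautaction}, the reduction map $(\ints/p^{r-r'})^\times\to(\ints/p^{r-\alpha})^\times$ has $p$-power kernel and therefore remains injective on the prime-to-$p$ subgroup $\ints/m_{x_b}$ whenever $r-\alpha\geq 1$; this holds precisely in the defining range $r'\leq\alpha<r$. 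Thus the induced action on $\ints/p^{r-\alpha}$ is still faithful, $m=m_{J_{x_b}}$, and $\sigma^\alpha_{x_b}\in\frac{1}{m_{J_{x_b}}}\ints$.

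For the ``in particular'' clause I would prove $m_{J_{x_b}}\mid m_G$, from which $\frac{1}{m_{J_{x_b}}}\ints\subseteq\frac{1}{m_G}\ints$ follows. Let $Q$ be the unique subgroup of order $p$ of $P$; it is equally the unique order-$p$ subgroup of the cyclic $p$-Sylow of $J_{x_b}\leq G$, so $Q$ is normal in $J_{x_b}$ and $J_{x_b}$ acts on it by conjugation. By Lemma \ref{Lautaction} applied to $J_{x_b}$, the image of this action in $\Aut(Q)$ has order $m_{J_{x_b}}$. These automorphisms are realized by elements of $G$ normalizing $Q$, so they lie in the image of $N_G(Q)$ in $\Aut(Q)$; since $P\leq Z_G(Q)$, a Sylow/Frattini comparison inside $N_G(Q)$ identifies this image with the order-$m_G$ image of $N_G(P)$ given by Lemma \ref{Lautaction}. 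As $\Aut(Q)\cong(\ints/p)^\times$ is cyclic, containment of these subgroups forces $m_{J_{x_b}}\mid m_G$.

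The only genuinely delicate point is the denominator bookkeeping in the second paragraph: one must ensure that Lemma \ref{Lhassearf} produces the sharp value $m_{J_{x_b}}$ rather than some larger tame ramification number, and this rests entirely on the faithfulness of the tame action on the $p$-Sylow together with its persistence under the quotient by $I_\alpha$. Once that is settled the first assertion is immediate, and the divisibility $m_{J_{x_b}}\mid m_G$ is a short comparison of cyclic images in $\Aut(Q)$.
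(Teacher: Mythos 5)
Your proof has the right skeleton---realize $\sigma^{\alpha}_{x_b}$ as the conductor of the Galois extension $S^{I_{\alpha}}_{x_b}/\hat{\mc{O}}_{\ol{W}',x_b}$ and feed it to Lemma \ref{Lhassearf}---but the step you yourself flag as the crux is a genuine gap. You assert that the action of $\ints/m_{x_b}$ on $\ints/p^{r-r'}$ is faithful ``as recorded in the semidirect decomposition of Notation \ref{Nsx}.'' Notation \ref{Nsx} records no such thing: there $m_x$ is the prime-to-$p$ part of the ramification index at $x$, and the symbol $\ints/p^{r-r'}\rtimes\ints/m_x$ permits the action to have a kernel, even to be trivial (i.e., $J_x$ cyclic of order $p^{r-r'}m_x$). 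By definition $m_{J_{x_b}}$ is exactly the order of the \emph{image} of that action, and the whole content of the lemma is that the denominator is $m_{J_{x_b}}$, which may be strictly smaller than $m_{x_b}$; if faithfulness were built into Notation \ref{Nsx}, the lemma would simply say $\frac{1}{m_{x_b}}\ints$. In the non-faithful case your application of Lemma \ref{Lhassearf} yields only $\sigma^{\alpha}_{x_b}\in\frac{1}{m_{x_b}}\ints$, strictly weaker than the claim: for instance, if $J_{x_b}/I_{\alpha}$ is abelian of order $p^{r-\alpha}m_{x_b}$ with $m_{x_b}>1$, the lemma asserts integrality (since $m_{J_{x_b}}=1$, consistent with Hasse--Arf for abelian extensions), which your argument does not produce. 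The paper closes exactly this hole with one extra move you are missing: quotient $J_{x_b}/I_{\alpha}$ by its maximal prime-to-$p$ central subgroup $H$ (the kernel of the tame action). Since $H$ is prime to $p$ and the upper numbering is invariant under quotients, the conductor is unchanged, and the quotient group is $\ints/p^d\rtimes\ints/m_{J_{x_b}}$ with faithful action, to which Lemma \ref{Lhassearf} applies with the correct denominator. (Your observation that the $p$-power kernel of $(\ints/p^{r-r'})^{\times}\to(\ints/p^{r-\alpha})^{\times}$ preserves the order of the prime-to-$p$ image is correct and is genuinely needed in that proof as well---it guarantees the tame image still has order $m_{J_{x_b}}$ after quotienting by $I_{\alpha}$---but it cannot substitute for faithfulness of the original action.)

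A second, smaller inaccuracy sits in your divisibility argument: you write $J_{x_b}\leq G$, but $J_{x_b}$ is only a \emph{subquotient} of $G$, as the paper's proof says. When $r'>0$ the purely inseparable layer $S_{x_b}\hookrightarrow\hat{\mc{O}}_{\ol{V}',y}$ absorbs the generic inertia $\ints/p^{r'}$ of $\ol{V}'$, so $J_{x_b}$ is a quotient of a decomposition-type subgroup $D\leq G$ by a kernel of order divisible by $p^{r'}$; consequently the order-$p$ subgroup of its $p$-Sylow is not a subgroup of $G$, and your identification of it with $Q\leq P$ does not literally make sense. Your Frattini comparison inside $N_G(Q)$ does correctly prove $m_{G'}\mid m_G$ for honest subgroups $G'\leq G$, so the repair is to add the (easy) quotient step: a $p$-Sylow of $D/N$ is $PN/N\cong P/(P\cap N)$; lift an element normalizing it to $D$, adjust it by an element of $PN$ via the Frattini argument so that it normalizes $P$, and note that the automorphism it induces on $P/(P\cap N)$ is the reduction of one of $P$, whose prime-to-$p$ order cannot grow under the $p$-power-kernel map $\Aut(P)\to\Aut(P/(P\cap N))$. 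The paper dismisses all of this as ``easy to see,'' so here the divergence is mostly presentational---but as written your subgroup claim is false and the ``in particular'' clause needs this patch, whereas the faithfulness issue in the first paragraph is a real gap in the main assertion.
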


\begin{proof}
The extension $S_{x_b}^{I_{\alpha}}/\hat{\mc{O}}_{\ol{X}_b, x_b}$ has Galois
group $J_{x_b}/I_{\alpha}$,
which is isomorphic to $\ints/p^d \rtimes \ints/\ell$ for some $d, \ell$.  The
quotient of $J_{x_b}/I_{\alpha}$
by its maximal prime-to-$p$ central subgroup $H$ is $\ints/p^d \rtimes
\ints/m_{J_{x_b}}$.
The effective ramification invariants over $x_b$ are not affected by quotienting
out by $H$, as the upper numbering is 
invariant under taking quotients.  So $\sigma^{\alpha}_{x_b} \in
\frac{1}{m_{J_{x_b}}}$ by Lemma \ref{Lhassearf}.
Since $J_{x_b}$ is a subquotient of $G$, it is easy to see that $m_{J_{x_b}} |
m_G$, showing the second statement of the
lemma.
\end{proof}

We give one more definition:
\begin{definition}\label{Dmonotonic}
Let $\ol{W}$ be an irreducible component of $\ol{X}$.  We call the stable
reduction $\ol{f}$ of $f$ \emph{monotonic from
$\ol{W}$} if for every $\ol{W} \preceq \ol{W}' \preceq \ol{W}''$ such that
$\ol{W}'$ is a $p^i$-component and $\ol{W}''$
is a $p^j$-component, we have $i \geq j$.  In other words, the stable reduction
is monotonic from $\ol{W}$ if the
generic inertia does not increase as we move outward from $\ol{W}$ along
$\ol{X}$.  If $\ol{f}$ is monotonic from the
original component $\ol{X}_0$, we say simply that $\ol{f}$ is \emph{monotonic}.
\end{definition}

\begin{question}
Is the stable reduction of $f$ always monotonic?
\end{question}

This is true, for instance, when $G$ is $p$-solvable 
(i.e., has no non-abelian composition factors of order divisible by $p$) (\cite[Proposition
2.8]{Ob:fm1}).

\section{Deformation data and vanishing cycles formulas}\label{Sreduction}
Let $R$, $K$, and $k$ be as in \S\ref{Sstable},  and let $\pi$ be a uniformizer
of
$R$.  
Assume further that $R$ contains the $p$th roots of unity.
For any scheme or algebra $S$ over $R$, write $S_K$ and $S_k$ for its base
changes to $K$ and $k$, respectively.
Recall that we normalize the valuation of $p$ (not $\pi$) to be 1.  Then
$v(\pi) = 1/e$, where $e$ is the absolute ramification index of $R$.  

\subsection{Reduction of $\mu_p$-torsors}

The following result, Proposition \ref{Pmupreduction}, is used only in
Construction \ref{CCdefdata} (once).  
While essential, the proposition may be skipped on a first reading and its input
into Construction \ref{CCdefdata} 
accepted as a ``black box."

\begin{prop}[(\cite{He:ht}, Chapter 5, Proposition 1.6)]\label{Pmupreduction}
Let $X = \Spec A$ be a flat affine scheme over $R$, with relative dimension
$\leq 1$ and
integral fibers.
We suppose that $A$ is a factorial $R$-algebra that is complete with respect to
the $\pi$-adic valuation.
Let $Y_K \to X_K$ be a non-trivial \'{e}tale $\mu_p$-torsor, given by an
equation
$y^p = f$, where $f$ is invertible in $A_K$.
Let $Y$ be the normalization of $X$ in $Y_K$; we suppose the special fiber of
$Y$ is integral (in particular, reduced).
Let $\eta$ (resp.\ $\eta'$) be the generic point of the special fiber of $X$
(resp.\ $Y$).  
The local rings $\mc{O}_{X, \eta}$ and $\mc{O}_{Y, \eta'}$ are thus discrete
valuation rings with uniformizer $\pi$.  
Write $\delta$ for the valuation of the different of $\mc{O}_{Y,
\eta'}/\mc{O}_{X, \eta}$.  
We then have two cases, depending on the value of $\delta$ (which always
satisfies $0 \leq \delta \leq 1$):

\begin{itemize}
\item If $\delta = 1$, then $Y \cong \Spec B$, with $B = A[y]/(y^p - u)$, for
$u$ a
unit in $A$, not congruent to a $p$th power modulo $\pi$, and unique up to
multiplication
by a $p$th power in $A^{\times}$.  We say that the torsor $Y_K \to X_K$ has
\emph{multiplicative reduction}.
\item If $0 \leq \delta < 1$, then $\delta = 1 - n(\frac{p-1}{e})$, where $n$ is
an integer such that $0 < n \leq
e/(p-1)$.  Then $Y \cong \Spec B$, with $$B = \frac{A[w]}{(\frac{(\pi^n w+1)^p
-1}{\pi^{pn}} - u)},$$ for $u$ a unit of
$A$, not congruent to a $p$th power modulo $\pi$.  Also, $u$ is unique in the
following sense:  If an element $u' \in
A^{\times}$ could take the place of $u$, then
there exists $v \in A$ such that $$\pi^{pn}u' + 1 = (\pi^{pn}u + 1)(\pi^nv +
1)^p.$$  
If $\delta > 0$ (resp.\ $\delta = 0$), we say that the torsor $Y_K \to X_K$ has
\emph{additive reduction} (resp. \emph{\'{e}tale reduction}).
\end{itemize}
\end{prop}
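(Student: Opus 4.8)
The plan is to reduce the entire statement to a local computation at the generic point $\eta$ of the special fiber of $X$, where the torsor induces a degree-$p$ Kummer extension of the discrete valuation ring $\mc{O}_{X,\eta}$. Since $\mc{O}_{X,\eta}$ and $\mc{O}_{Y,\eta'}$ are both DVRs with the \emph{same} uniformizer $\pi$, this extension has ramification index $1$ and residue degree $p$, so all of the content sits in the degree-$p$ residue extension $k(\eta')/k(\eta)$ and the different $\delta$. The three regimes in the statement (multiplicative, additive, \'{e}tale) will correspond to the position of the best representative of the torsor class in $A_K^{\times}/(A_K^{\times})^p$.

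First I would pin down $f$ modulo $p$th powers. Because the special fiber of $X$ is integral, $\pi$ is a prime of the factorial ring $A$, so $A_K^{\times} = A[1/\pi]^{\times} = A^{\times} \cdot \pi^{\ints}$; hence modulo $p$th powers $f = u\pi^a$ with $u \in A^{\times}$ and $0 \le a < p$. If $a \neq 0$, then $y^p = u\pi^a$ has $\bar y^p = 0$ in the special fiber, which would be non-reduced, contradicting the hypothesis that the special fiber of $Y$ is integral; so $a = 0$ and I may take $f = u$ a unit. If $\bar u$ is not a $p$th power in $k(\eta)$, then $\bar A[\bar y]/(\bar y^p - \bar u)$ is already a domain and we are in the multiplicative case. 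Otherwise $\bar u$ is a $p$th power, and I would repeatedly divide $f$ by $p$th powers of units to push it closer to $1$; completeness of $A$ for the $\pi$-adic topology guarantees this successive-approximation process makes sense and converges.

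The key combinatorial input is the structure of principal units modulo $p$th powers, controlled by $v(p) = 1$: for $b \in A$ one has $(1 + \pi^m b)^p \in 1 + \pi^{\min(pm,\, m+e)}A$, so the relevant threshold is $m = e/(p-1)$, where $pm = m + e$. Tracking these jumps shows the optimal representative is either the multiplicative one above or of the form $f = 1 + \pi^{pn}u$ with $u$ a unit whose reduction is not a $p$th power, for a well-defined integer $n$ with $0 < n \le e/(p-1)$. In this case I substitute $y = \pi^n w + 1$, turning $y^p = 1 + \pi^{pn}u$ into $\frac{(\pi^n w + 1)^p - 1}{\pi^{pn}} = u$; expanding the binomial shows the resulting monic polynomial in $w$ has all coefficients in $A$ exactly when $n \le e/(p-1)$, which is where that bound comes from. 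The different is then computed directly from the derivative of the defining polynomial: in the multiplicative case $\frac{d}{dy}(y^p - u) = py^{p-1}$ with $y$ a unit gives $\delta = v(p) = 1$, while in the remaining case the derivative is $p\pi^{-n(p-1)}(\pi^n w + 1)^{p-1}$, and since $\pi^n w + 1$ reduces to $1$ this gives $\delta = 1 - n(p-1)/e$, with $\delta = 0$ the \'{e}tale endpoint. Uniqueness of $u$ records the only freedom left after $n$ is fixed, namely multiplying $y$ by the $p$th power of a principal unit, which replaces $u$ by the $u'$ satisfying $\pi^{pn}u' + 1 = (\pi^{pn}u + 1)(\pi^n v + 1)^p$.

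The main obstacle is the normalization step, and in particular upgrading it from a statement at the single DVR $\mc{O}_{X,\eta}$ to the global assertion that the normalization $Y$ equals $\Spec B$ over all of $A$. The local classification is essentially Kummer theory together with the different computation, but to obtain the global form of $B$ I must use factoriality of $A$ and integrality of the special fiber of $Y$ to control the factorization of $f$ and to ensure the successive-approximation argument converges inside $A$ rather than merely in the completion at $\eta$. Verifying that $n$ is well defined, that the multiplicative and additive/\'{e}tale cases are exhaustive and mutually exclusive, and carrying the non-$A$-linear substitution $y = \pi^n w + 1$ through to the stated uniqueness class of $u$, is where the real work lies; the different computations themselves are then routine.
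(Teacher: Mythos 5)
The paper itself contains no proof of this proposition: it is quoted from Henrio's thesis (\cite{He:ht}, Chapter 5, Proposition 1.6), and the paper's only original contribution is Remark \ref{Rvaluation}(i), observing that Henrio's proof, written for relative dimension $1$, carries over to relative dimension $0$. Measured against that source, your outline is the standard argument, and its computational core is correct: factoriality of $A$ together with primality of $\pi$ (integrality of the special fiber of $X$) gives $A_K^{\times} = A^{\times}\cdot \pi^{\ints}$; reducedness of the special fiber of $Y$ kills the exponent $a$; the principal-unit filtration with threshold $\min(pm, m+e)$ locates the minimal Kummer representative; the substitution $y = \pi^n w + 1$ is integral exactly when $n \leq e/(p-1)$; and $\delta$ falls out of the derivative of the monic defining polynomial in each case, since $y$ (resp.\ $\pi^n w + 1$) is a unit at $\eta'$. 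The uniqueness class of $u$ is also correctly identified with the residual freedom $y \mapsto y(\pi^n v + 1)$.

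Two steps you defer are genuinely load-bearing, and your sketch should name the mechanisms. First, the successive approximation requires re-invoking integrality of the special fiber of $Y$, not merely completeness of $A$: if the process stalls at $f \equiv 1 + \pi^m b$ with $\ol{b} \neq 0$, $p \nmid m$, and $m < pe/(p-1)$, then $v(y-1) = m/(pe)$ forces ramification index $p$ at $\eta$, hence a non-reduced special fiber of $Y$ --- this is what restricts stalling levels to multiples of $p$ and makes the two cases exhaustive. Moreover, at the boundary level $m = pe/(p-1)$ (i.e., $n = e/(p-1)$), the $p$th-power map on $1 + \pi^{e/(p-1)}A$ reduces modulo $\pi$ to an Artin--Schreier-type map $\ol{a} \mapsto \ol{a}^p + \ol{\lambda}\,\ol{a}$ rather than pure $p$th-powering; this is precisely the source of the \'{e}tale case $\delta = 0$, and your ``threshold'' sentence glosses over it. Second, you never argue that $\Spec B$ is normal, which is the entire content of the identification $Y \cong \Spec B$. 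The standard route: $B = A[w]/(g)$ with $g$ monic is finite flat with zero-dimensional (hence Cohen--Macaulay) fibers over the normal ring $A$, so $B$ satisfies $(S_2)$; since $A/\pi$ is a domain, $(\pi)$ is the unique height-one prime of $A$ containing $\pi$, so $(R_1)$ reduces to \'{e}taleness of $B_K$ over $A_K$ (given by the torsor) plus the single check that $B \otimes_A \mc{O}_{X,\eta}$ is a discrete valuation ring --- which holds because the reduction of $g$ modulo $\pi$ (namely $w^p - \ol{u}$, resp.\ its Artin--Schreier deformation at the boundary) is irreducible over the residue field of $\mc{O}_{X,\eta}$. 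This is where the condition that $\ol{u}$ is not a $p$th power, guaranteed by your stopping rule, is actually consumed. With these two points supplied, your plan proves the proposition by essentially Henrio's route.
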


\begin{remark}\label{Rvaluation} 
\begin{description}
\item{(i)}
In \cite{He:ht}, Proposition \ref{Pmupreduction} is stated for $X \to \Spec R$
with dimension 1
fibers, but the proof carries over without
change to the case of dimension $0$ fibers as well (i.e., the case where $A$ is
a discrete valuation ring containing
$R$).  It is this case that will be used in \S\ref{Sdefdata} to define
deformation data.
\item{(ii)} In the cases of multiplicative and additive reduction, the
map $Y_k \to X_k$ is seen to be inseparable. 
\end{description}
\end{remark}

\subsection{Deformation Data}\label{Sdefdata}
Deformation data arise naturally from the stable reduction of covers.  Say $f:Y
\to X$ is a branched $G$-cover as in
\S\ref{Sstable}, with stable model $f^{st}: Y^{st} \to X^{st}$ and stable
reduction $f: \ol{Y} \to \ol{X}$.  Much information is lost when we pass from
the stable model to the stable
reduction, and deformation data provide a way to retain some of this
information.  This process is described in detail in
\cite[5, \S1]{He:ht} in the case where the inertia group of a component has
order $p$.  In Construction \ref{CCdefdata},
we generalize it to the case where the inertia group is cyclic of order $p^r$.

\subsubsection{Generalities}
Let $\ol{W}$ be any connected smooth proper curve over $k$.  
Let $H$ be a finite group and $\chi$ a 1-dimensional character 
$H \to \FF_p^{\times}.$  A \emph{deformation datum} over
$\ol{W}$ of type $(H, \chi)$ is an ordered pair $(\ol{V}, \omega)$ such that:
$\ol{V} \to \ol{W}$ is
an $H$-Galois branched cover;
$\omega$ is a meromorphic differential form on $\ol{V}$ that is either
logarithmic or
exact (i.e., $\omega = du/u$ or $du$ for 
$u \in k(\ol{V})$); and $\eta^*\omega = \chi(\eta)\omega$ for all $\eta \in H$. 
If $\omega$
is logarithmic (resp.\ exact), the deformation datum is called
multiplicative (resp.\ additive).  When $\ol{V}$ is understood, we will
sometimes
speak of the deformation datum $\omega$.  

If $(\ol{V}, \omega)$ is a deformation datum, and $w \in \ol{W}$ is a closed
point, we
define $m_w$ to be the order of the 
prime-to-$p$ part of the ramification index of $\ol{V} \to \ol{W}$ at $w$. 
Define $h_w$
to be $\ord_v(\omega) + 1$, where $v \in
\ol{V}$ is any point which maps to $w \in \ol{W}$.  This is well-defined because
$\omega$
transforms nicely via $H$.  Lastly, define $\sigma_x = h_w/m_w$.  We call $w$ a
\emph{critical point} of the
deformation datum $(\ol{V}, \omega)$ if 
$(h_w, m_w) \ne (1, 1)$.  Note that every deformation datum contains only a
finite number of critical points.  The
ordered pair $(h_w, m_w)$ is called the \emph{signature} of $(\ol{V}, \omega)$
(or of
$\omega$, if $\ol{V}$ is understood) at $w$, and
$\sigma_w$ is called the \emph{invariant} of the deformation datum at $w$.

\begin{prop}\label{Pdefdatadenom}
Let $(\ol{V}, \omega)$ be a deformation datum of type $(H, \chi)$.  Let $v \in
\ol{V}$ 
be a tamely ramified point lying over $w \in \ol{W}$, and write $I_v$ for the
inertia group of $\phi: \ol{V} \to \ol{W}$ at 
$v$.  If $|I_v/(I_v \cap \ker(\chi))| = \mu$, then $\sigma_w \in
\frac{1}{\mu}\ints$.
\end{prop}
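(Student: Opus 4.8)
The plan is to reduce everything to a local computation at $v$ using tameness and the $\chi$-equivariance of $\omega$. Since $v$ is tamely ramified, the inertia group $I_v$ is cyclic of order equal to the ramification index $e$ of $\phi$ at $v$, and $e$ is prime to $p$. Consequently the prime-to-$p$ part $m_w$ of this ramification index is the whole of $e$, so that $m_w = |I_v| = e$ and $\sigma_w = h_w/e$. Thus it suffices to show that $e/\mu$ divides $h_w = \ord_v(\omega) + 1$, for then $\mu\sigma_w = h_w/(e/\mu) \in \ints$.

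First I would choose coordinates adapted to the tame action. Let $\eta_0$ be a generator of $I_v$. Because $e$ is prime to $p$, the field $k$ contains a primitive $e$th root of unity $\zeta$, and by tameness I can pick a local parameter $t$ at $v$ on which $\eta_0$ acts linearly, $\eta_0^* t = \zeta t$. Writing the meromorphic differential as $\omega = g\,dt$ with leading term $g = c\,t^{k} + \cdots$, where $k = \ord_v(\omega)$ and $c \neq 0$, I would then compute $\eta_0^*\omega = \zeta\, g(\zeta t)\,dt$, whose leading term is $c\,\zeta^{k+1} t^{k}\,dt = \zeta^{h_w}\omega$.

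Next I would invoke the defining transformation law $\eta_0^*\omega = \chi(\eta_0)\omega$. Comparing leading terms yields the identity $\chi(\eta_0) = \zeta^{h_w}$ in $k^\times$. The left-hand side has order exactly $\mu$: the image $\chi(I_v) \subseteq \FF_p^\times \subseteq k^\times$ is cyclic of order $\mu = |I_v/(I_v \cap \ker\chi)|$, generated by $\chi(\eta_0)$. The right-hand side, $\zeta^{h_w}$, has order $e/\gcd(e, h_w)$, since $\zeta$ has order $e$. Equating orders gives $\gcd(e, h_w) = e/\mu$, and in particular $e/\mu$ divides $h_w$, which is precisely what was needed.

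The step requiring the most care is the passage to the linearizing parameter $t$ and the comparison of roots of unity in characteristic $p$: I must make sure the relevant roots of unity genuinely live in $k$ (which holds because $e$ is prime to $p$) and that $\chi$, though valued in $\FF_p^\times$, is being compared inside $k^\times$, where $\FF_p^\times$ sits naturally. Once the identity $\chi(\eta_0) = \zeta^{h_w}$ is established, the remainder is just an order computation for cyclic groups; I do not anticipate difficulty there. A minor point to verify is that the leading-term comparison is legitimate, i.e.\ that $\omega$ is a nonzero meromorphic form so that $\ord_v(\omega)$ is a well-defined integer, which is part of the hypotheses on a deformation datum.
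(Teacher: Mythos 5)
Your proof is correct and essentially the same as the paper's: both linearize the tame $I_v$-action in a formal neighborhood of $v$ (the paper via Kummer theory, $\tau^{m_w} = t$ with $g^*\tau = \zeta_{m_w}\tau$) and compare leading terms of the Laurent expansion of $\omega$. The only cosmetic difference is that the paper applies equivariance to $g^{\mu} \in \ker(\chi)$ to get $m_w \mid \mu h_w$ directly, whereas you apply it to the generator itself and match orders of roots of unity, which incidentally yields the slightly sharper conclusion $\gcd(e, h_w) = e/\mu$.
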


\begin{proof}
In a formal neighborhood of $v$, we can use Kummer theory to see that $\phi$ is
given by the equation 
$k[[t]] \hookrightarrow k[[t]][\tau]/(\tau^{m_w} - t)$, where $t$ is a local
parameter at $w$ and $\tau$ 
is a local parameter at $v$.  Expanding $\omega$ out as a Laurent series in
$\tau$, we can write 
$$\omega = \left(c\tau^{h_w-1} + \sum_{i=1}^{\infty} c_i \tau^{h_w - 1 +
i}\right) d\tau.$$
Let $g$ be a generator of $I_v$ such that $g^*(\tau) = \zeta_{m_w} \tau$.  Since
$g^{\mu} \in \ker(\chi)$, we have that $(g^{\mu})^*\omega = \omega$.
Thus $(g^{\mu})^* (\tau^{h_w-1} d\tau) = \tau^{h_w - 1} d\tau$.  So $\mu h_w$ is
a multiple of $m_w$.  Therefore,
$\sigma_w = \frac{h_w}{m_w} \in \frac{1}{\mu}\ints$.
\end{proof}

\subsubsection{Deformation data arising from stable reduction.}
Maintain the notations of \S\ref{Sstable}.  For each irreducible component of
$\ol{Y}$ lying above a
$p^r$-component of $\ol{X}$ with $r > 0$, we will construct $r$ different
deformation data. 
For this construction, we can replace $K^{st}$ with a finite extension $K'$ that
is as large as we wish.  
In particular, we work over $K'$ containing a $p$th root of unity and having
ring of integers $R'$.  By abuse of notation,
we write $Y^{st}$ for $Y^{st} \times_{K^{st}} K'$.

\begin{construction}\label{CCdefdata}
Let $\ol{V}$ be an irreducible component of $\ol{Y}$ with generic point $\eta$
and nontrivial generic inertia group 
$I \cong \ints/p^r \subset G$.
Write $B = \hat{\mc{O}}_{Y^{st}, \eta}$, and $C = B^I$, the invariants of $B$
under
the action of $I$.  Then $B$ (resp.\ $C$) is a complete, mixed characteristic, 
discrete valuation ring with residue field $k(\ol{V})$ (resp.\
$k(\ol{V})^{p^r}$).
The group $I \cong \ints/p^r$ acts on $B$; 
for $0 \leq i \leq r$, we write $I_i$ for the subgroup of order $p^i$ in $I$,
and we
write $C_i$ for the fixed ring $B^{I_{r-i+1}}$.  
Thus $C_1 = C$.  Then for $1 \leq i \leq r$, the extension  $C_{i}
\hookrightarrow C_{i+1}$ is an
extension of complete discrete valuation
rings satisfying the conditions of Proposition \ref{Pmupreduction}, but with
relative dimension 0 instead of
1 over $R'$ (see Remark \ref{Rvaluation} (i)).  On the generic fiber, the
extension is
given by an equation $y^p = z$, where 
$z$ is well-defined up to raising to a prime-to-$p$ power in
$C_{i}^{\times}/(C_{i}^{\times})^p$.  We make $z$ completely
well-defined in $C_{i}^{\times}/(C_{i}^{\times})^p$ by fixing a $p$th root of
unity
$\mu$ and a generator $\alpha$ of
Aut($C_{i+1}/C_{i}$) and forcing $\alpha(y) = \mu y$.  In both the case of
multiplicative and additive reduction, Proposition \ref{Pmupreduction} yields an
element $$\ol{u} \in C_{i}
\otimes_{R'} k = k(\ol{V})^{p^{r-i+1}} \cong k(\ol{V})^{p^r},$$ the last
isomorphism coming from raising to the $p^{i-1}$st
power.  In the case of multiplicative reduction, set $\omega_i =
d\ol{u}/\ol{u}$, and in the case of additive reduction, 
set $\omega_i = d\ol{u}$.  In both cases, $\omega_i$ can be viewed as a
differential form on $k(\ol{V})^{p^r}$. 
Write $\ol{V}'$ for the curve whose function field 
is $C \otimes_{R'} k = k(\ol{V})^{p^r} \subset k(\ol{V})$.  Then each
$\omega_i$ is a meromorphic differential form on 
$\ol{V}'$.

Furthermore, let $D := D_{\ol{V}}$, and write $H = D/I$.  
Then if $\ol{W}$ is the component of $\ol{X}$ lying below $\ol{V}$, we have maps
$\ol{V} \to \ol{V}' \to \ol{W}$, with
$\ol{W} = \ol{V}'/H$.  The curves $\ol{V}$ and $\ol{V}'$ are abstractly
isomorphic.  
Any $g \in H$ has a canonical conjugation action on
$I$, and also on the subquotient of $I$ given by $\Aut(C_{i+1}/C_i)$.  This
action gives a
homomorphism $\chi: H \to \FF_p^{\times}$.  We claim to have constructed, for
each $i$, a deformation datum $(\ol{V}', \omega_i)$ of type $(H, \chi)$ over
$\ol{W}$.

Everything is clear except for the transformation property, so let $g \in H$. 
Then for $z$ as in the construction, taking a $p$th root of $z$ and of $g^*z$
must yield the same
extension, so $g^*z = c^pz^q$ with 
$c \in C_i$ and $q \in \{1, \ldots, p-1\}$.  It follows that $g^*y = \zeta cy^q$
for $\zeta$ some $p$th root of unity.  It also follows that $g^*(\omega_i) =
q\omega_i$.
If $\alpha$ is a generator of $\Aut(C_{i+1}/C_i)$ as before, then we must show
that $g \alpha g^{-1} = \alpha^q$.

Write $\alpha^*y = \mu y$ for some, possibly different, $p$th root of unity
$\mu$.  Then  
$$(g\alpha g^{-1})^*(y) = (g^{-1})^* \alpha^* g^*y = (g^{-1})^* \alpha^* \zeta
cy^q = (g^{-1})^* \mu^q\zeta c y^q = \mu^q
y.$$  Thus $g\alpha g^{-1} = \alpha^q$.  This completes Construction
\ref{CCdefdata}.
\end{construction}
\vspace{.35in}

For the rest of this section, we will only concern ourselves with deformation
data that arise from the stable reduction of 
branched $G$-covers $Y \to X = \proj^1$ where $G$ has a cyclic $p$-Sylow
subgroup,
via Construction \ref{CCdefdata}.
We will use the notations of \S\ref{Sstable} and Construction \ref{CCdefdata}
throughout this section.

From \cite[Proposition 1.7]{We:br}, we have the following result in the case of
inertia groups of order $p$.  The proof is the same in our case, and we omit it.
\begin{lemma}\label{Lcritical}
Say $(\ol{V}', \omega)$ is a deformation datum arising from the stable reduction
of a cover as in Construction
\ref{CCdefdata}, and let $\ol{W}$ be the component of $\ol{X}$ lying under
$\ol{V}'$.  Then a critical point $x$ of the
deformation datum on $\ol{W}$ is either a singular point of $\ol{X}$ or the
specialization of a branch point of $Y \to
X$ with ramification index divisible by $p$.  In the first case, $\sigma_x \ne
0$, and in the second case, $\sigma_x = 0$
and $\omega$ is logarithmic.
\end{lemma}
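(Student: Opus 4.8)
The plan is to reduce the statement to two independent local questions at a closed point $x \in \ol{W}$: first, where is the $H$-cover $\ol{V}' \to \ol{W}$ ramified (this governs $m_x$), and second, where does $\omega$ acquire a zero or pole (this governs $h_x = \ord_v(\omega)+1$)? Since $x$ is critical exactly when $(h_x, m_x) \neq (1,1)$, it suffices to show that away from the nodes of $\ol{X}$ and the specializations of branch points of index divisible by $p$ one has $m_x = 1$ and $\ord_v(\omega) = 0$, and then to read off the value of $\sigma_x$ in the two remaining cases.

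For the first question I would appeal directly to Proposition \ref{Pspecialram}. If $x$ is a smooth point of $\ol{X}$ that is not the specialization of a branch point, then by part (iv) the inertia of $\ol{f}$ at any point over $x$ equals the generic inertia $I \cong \ints/p^r$ of $\ol{V}$. Thus the ramification of $\ol{V} \to \ol{W}$ over $x$ is purely of $p$-power order and is entirely absorbed by the purely inseparable map $\ol{V} \to \ol{V}'$; consequently $\ol{V}' \to \ol{W}$ is unramified over $x$ and $m_x = 1$. Extra (tame, prime-to-$p$) ramification in $\ol{V}' \to \ol{W}$, and hence $m_x > 1$, can therefore occur only at nodes (part (ii)) and at branch specializations (part (iii)).

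The second question is the crux, and here the argument is exactly that of \cite[Proposition 1.7]{We:br}: it is local at a single point $v$ of $\ol{V}'$ and only involves the order-$p$ subextension $C_i \hookrightarrow C_{i+1}$ of Construction \ref{CCdefdata}, so it transfers verbatim to our setting. The idea is to spread the $\mu_p$-torsor $C_i \hookrightarrow C_{i+1}$ out over a one-dimensional neighborhood of $x$ in $X^{st}$ and apply Proposition \ref{Pmupreduction} in relative dimension $1$. Away from the branch locus the cover $f$ is \'{e}tale, so the reduction type (multiplicative or additive) stays constant along the component and the unit $\ol{u}$ producing $\omega$ neither vanishes nor blows up, with $d\ol{u}$ nonvanishing; hence $\ord_v(\omega)=0$ and $h_x = 1$ there. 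This confines the zeros and poles of $\omega$ to points lying over nodes and branch specializations, completing the proof that every critical point is of the two asserted kinds.

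Finally I would distinguish the two cases by the local reduction type of the torsor at $x$. At the specialization of a branch point with ramification index divisible by $p$, the tame Kummer character attached to the branching forces the multiplicative case, so $\omega$ is logarithmic; the form then has a simple pole at $v$ with nonzero integer residue, whence $h_x = 0$ and $\sigma_x = 0$. At a node the reduction is genuinely degenerate rather than \'{e}tale-type, so $\omega$ cannot have a simple pole at $v$; thus $\ord_v(\omega) \neq -1$, giving $h_x \neq 0$ and $\sigma_x \neq 0$. The main obstacle is exactly this local analysis — establishing the zero-freeness of $\omega$ off the critical locus in the previous step, and then pinning down the reduction type at each kind of critical point — but since everything is local in the order-$p$ quotient $C_{i+1}/C_i$, Wewers's computation carries over without change.
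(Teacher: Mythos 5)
Your proposal is correct and follows essentially the same route as the paper, which in fact omits the proof entirely, remarking only that the argument of \cite[Proposition 1.7]{We:br} ``is the same in our case'' --- precisely because, as you observe, the statement is local and reduces to the order-$p$ subextensions $C_i \hookrightarrow C_{i+1}$ of Construction \ref{CCdefdata}, where Henrio's $\mu_p$-torsor analysis (Proposition \ref{Pmupreduction}) applies verbatim. The only point worth making explicit is that a branch point with \emph{prime-to-$p$} index cannot specialize to the inseparable component $\ol{W}$ at all (Proposition \ref{Pcorrectspec}), which is needed so that the tame ramification your first step locates at branch specializations really falls under the lemma's second alternative.
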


The next result, Proposition \ref{Pannuluscompatibility},
generalizes one part of the theorem \cite[5, Theorem 1.10]{He:ht}.  It provides
the inner workings behind the cleaner 
interface given by Lemma \ref{Lsigmaeffcompatibility}.  
We assume the situation of Notation \ref{Nsx} (in particular,
the notations $x$, $y$, $\ol{W}$, $\ol{W}'$, $\ol{V}$, $\ol{V}'$, $r$, $r'$,
$S_x$, and $J_x$). 
By Lemma \ref{Letaletail}, $r \geq 1$.
For each $i$, $1 \leq i \leq r$, there is a deformation datum with
differential form $\omega_i$ associated
to $\ol{V}$.  For each $i'$, $1 \leq i' \leq r'$, there is a deformation datum
$\omega'_{i'}$
associated to $\ol{V}'$.  Let $m_x$ be the prime-to-$p$ part of the ramification
index at $x$.  
Let $I$ be the inertia group of $y$ in $G$, and let $I_i$ (resp.\ $J_i$) 
be the unique subgroup of order $p^i$ in $I$ (resp.\ $J_x$).
The following proposition gives a compatibility between deformation data, and
also relates
deformation data to the geometry of $\ol{Y}$.

\begin{prop}\label{Pannuluscompatibility}
With $x$ as above, let $(h_{i,x}, m_x)$ (resp.\ $(h'_{i', x}, m_x)$) be the
signature of $\omega_i$
(resp.\ $\omega'_{i'}$) at $x$.
Write $\sigma_{i,x} = h_{i,x}/m_x$ and $\sigma'_{i',x} = h'_{i',x}/m_x$.
Then the following hold:
\begin{description}
\item{(i)} If $i = i' + r - r'$, then $h_{i,x} = -h'_{i',x}$ and $\sigma_{i,x} =
-\sigma'_{i',x}$.
\item{(ii)} If $i \leq r - r'$, then $h_{i,x} = h$, where $h$ is the upper
(equivalently lower) jump in the extension
$S_x^{J_{r-r'-i+1}} \hookrightarrow S_x^{J_{r-r'-i}}$.
Also, $\sigma_{i,x} = \sigma$, where $\sigma$ is the upper jump in the extension
$S_x^{J_{r - r' - i + 1} \rtimes \ints/m_x} \hookrightarrow S_x^{J_{r-r'-i}}$.
\end{description}
\end{prop}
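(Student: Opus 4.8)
The plan is to work locally at the point $x$ by comparing the tower of $\mu_p$-extensions that define the deformation data $\omega_i$ (attached to $\ol V$) and $\omega'_{i'}$ (attached to $\ol V'$) with the single tower of complete discrete valuation rings $\hat{\mc O}_{\ol W',x} \hookrightarrow S_x \hookrightarrow \hat{\mc O}_{\ol V',y}$ of Notation \ref{Nsx}. The key observation is that both the $\omega_i$ and the $\omega'_{i'}$ are built, via Construction \ref{CCdefdata}, out of successive $\ints/p$-subquotients of the \emph{same} inertia group $I \cong \ints/p^r$ at $y$; the two families simply label these subquotients from opposite ends. Concretely, $\omega_i$ records the reduction datum of $C_{i} \hookrightarrow C_{i+1}$ (where $C_j = B^{I_{r-j+1}}$) on the upper curve, while $\omega'_{i'}$ records the analogous datum on $\ol V'$. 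Since $\ol V'$ and $\ol V$ differ only by a purely inseparable base change of degree $p^{r'}$ that does not disturb differential forms or their orders of vanishing, I can and will identify the relevant pieces.

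For part (i), the hypothesis $i = i' + r - r'$ is exactly the condition that $\omega_i$ and $\omega'_{i'}$ come from the \emph{same} $\ints/p$-layer of $I$, viewed once from below $\ol V$ and once from below $\ol V'$. First I would pin down the local equation $y^p = z$ for this layer near $x$ and expand $z$ (or the corresponding $\ol u$) as a Laurent series in a uniformizer $\tau$ at $y$, as in the proof of Proposition \ref{Pdefdatadenom}. The point $x$ is a node of $\ol X$, so there are two branches meeting there; passing from $\ol V$ to $\ol V'$ swaps the two branches, i.e.\ replaces a uniformizer $t$ by essentially $1/t$. Under $t \mapsto 1/t$ the differential $dt/t \mapsto -dt/t$ and, more importantly, the order of vanishing at one branch is the negative of the order at the other. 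This is what forces $h_{i,x} = -h'_{i',x}$, and dividing by the common $m_x$ gives $\sigma_{i,x} = -\sigma'_{i',x}$. I expect the care needed here to lie in checking that the logarithmic/exact normalization and the identification of the two reduction data across the node are genuinely compatible, rather than in any hard computation.

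For part (ii), when $i \leq r - r'$ the layer in question lies in the part of $I$ that survives to $\ol V'$ as an honest (inseparable) piece, so the deformation datum $\omega_i$ is visible directly in the Galois tower $S_x^{J_{r-r'-i+1}} \hookrightarrow S_x^{J_{r-r'-i}}$. Here I would invoke Proposition \ref{Pmupreduction} to translate the different $\delta$ of this $\ints/p$-extension into the value of $h_{i,x}$, and then match $\delta$ with the higher-ramification jump via the standard formula relating the different to the ramification filtration (\cite[IV, Proposition 4]{Se:lf}), exactly as in Lemma \ref{Ljumpsdifferent}. The statement about $\sigma_{i,x}$ and the upper jump of $S_x^{J_{r-r'-i+1} \rtimes \ints/m_x} \hookrightarrow S_x^{J_{r-r'-i}}$ then follows by incorporating the tame part $\ints/m_x$ and applying Herbrand's formula, so that dividing $h_{i,x}$ by $m_x$ produces precisely the upper jump; Lemma \ref{Lhassearf} guarantees this lands in $\frac{1}{m_x}\ints$ as it must.

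The main obstacle, I expect, is the bookkeeping in part (i): one must verify that the $\mu_p$-torsor produced by Construction \ref{CCdefdata} on $\ol V$ and the one produced on $\ol V'$ really are inverse to each other across the node $x$ (so that the exponents are negatives), keeping track of the chosen $p$th root of unity $\mu$ and generator $\alpha$ used to rigidify $z$ in $C_i^\times/(C_i^\times)^p$. Once that identification is made cleanly, the sign and the equality of $m_x$ on the two sides fall out, and part (ii) is then a relatively mechanical translation between differents, ramification jumps, and the upper numbering.
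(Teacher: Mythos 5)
There is a genuine gap, and it is the same one in both halves of your argument: everything you say stops exactly where the actual mathematical content lies, namely the behavior of $\mu_p$-torsors over the \emph{mixed-characteristic} formal annulus $\mc{A} = \Spec \hat{\mc{O}}_{Y^{st}, y}$. The paper's proof is a one-step application of Henrio's annulus theorem (\cite[5, Proposition 1.10]{He:ht}) to the quotient annuli $\mc{A}/(I_{r-i})$, with the automorphism a generator of $I_{r-i+1}/I_{r-i}$ viewed as a subquotient of $J_x$; that theorem simultaneously yields the sign relation in (i) between the critical exponents at the two boundaries of the annulus (the paper's $h_{i,x}$ is Henrio's $-m$) and the identification in (ii) of the boundary exponent with the ramification jump when the layer is generically \'{e}tale on the far side. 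Your part (i) replaces this with a special-fiber heuristic ``swapping branches sends $t$ to $1/t$, so orders negate.'' But $\omega_i$ and $\omega'_{i'}$ are \emph{not} one meromorphic form on one curve evaluated at two points exchanged by inversion: they are reductions of two separately normalized torsor data, produced by Construction \ref{CCdefdata} from the two distinct complete local rings at the generic points of $\ol{V}$ and $\ol{V}'$, with a priori different reduction types and different additive-reduction normalizations $(\pi^n w + 1)^p$ on the two sides. The special fiber of $\mc{A}$ is two crossing branches, and the comparison between the two reductions passes through the rigid generic fiber of the annulus, tracking the valuation of the defining unit $u$ across it. Your correct identification of the group-theoretic layer ($i = i' + r - r'$ picks out the same subquotient $I_{r'-i'+1}/I_{r'-i'}$) sets up the comparison but does not perform it; the sentence where you defer ``checking that the \ldots identification of the two reduction data across the node are genuinely compatible'' is precisely the theorem, not a routine verification.

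Part (ii) has a more concrete misapplication: Proposition \ref{Pmupreduction} produces the different $\delta$ at the \emph{generic point} of the special fiber, an invariant constant along the entire component, and it cannot detect $h_{i,x}$, which is the vanishing order of $\omega_i$ at the particular closed point $x$. Moreover, for $i \leq r - r'$ the layer in question is generically \'{e}tale (honestly Galois, not inseparable) on the $\ol{W}'$-side, so the object to be matched with $h_{i,x}$ is the jump of the residual equal-characteristic extension $S_x^{J_{r-r'-i+1}} \hookrightarrow S_x^{J_{r-r'-i}}$ from Notation \ref{Nsx}; equating a datum defined via reduction over $\ol{W}$ with a ramification jump over $\ol{W}'$ is again the boundary statement of Henrio's theorem, not a consequence of \cite[IV, Proposition 4]{Se:lf}. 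Your final mechanical step --- dividing by $m_x$ and invoking Herbrand's formula to pass to the upper jump of $S_x^{J_{r-r'-i+1} \rtimes \ints/m_x} \hookrightarrow S_x^{J_{r-r'-i}}$ --- is correct and matches the paper, but it requires $h_{i,x} = h$ as input, which your proposal never actually establishes. To repair the argument you would either cite Henrio's result, as the paper does, or redo its computation: write the torsor over $\Spec R'[[t,t']]/(tt'-\pi^e)$ and analyze the degeneration at both boundaries directly.
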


\begin{proof}
(cf. \cite[Proposition 1.8]{We:br})
The group $I$ acts on the annulus $\mc{A} = \Spec \hat{\mc{O}}_{Y^{st}, y}$.
The statements about $h_{i,x}$ follow from \cite[5, Proposition 1.10]{He:ht}
applied to
the formal annulus $\mc{A}/(I_{r -i})$ and an automorphism given by a generator
of
$I_{r-i+1}/I_{r-i}$ considered as a subquotient of $J_x$.  
The statements about $\sigma_{i,x}$ follow from dividing the statements about
$h_{i,x}$ by $m_x$.  
Note that what we call $h_{i,x}$, \cite{He:ht} calls $-m$.
\end{proof}

\begin{remark}\label{Rupperlower}
For $r' \leq \alpha < r$, consider the $\ints/p^{r-\alpha} \rtimes
\ints/m_x$-extension 
$\hat{\mc{O}}_{\ol{W}', x} \hookrightarrow S_x^{J_{\alpha}}$.
If the $j_i$ are its lower jumps (see \S\ref{Sramification}), then Proposition
\ref{Pannuluscompatibility}, combined
with \cite[Lemma 3.1]{OP:wc}, shows that $j_i = h_{i, x}$.  By Remark
\ref{Rlower2conductor}, the conductor of this
extension is equal to
$$\left(\sum_{i=1}^{r-\alpha-1} \frac{p-1}{p^{i}}\sigma_{i,x} \right) +
\frac{1}{p^{r-\alpha-1}}\sigma_{r-\alpha, x}.$$
\end{remark}
\vspace{.35in}

We set up the \emph{local vanishing cycles formula}.  Our first version,
Equation (\ref{Elocvancycles}), will be
unwieldy, but it will be used to prove our second version, the much cleaner
Equation (\ref{Egenlocvancycles}).
Let $(\ol{V}, \omega)$ be a deformation datum of type $(H, \chi)$
(not necessarily coming from the stable reduction of a cover). 
Let $B$ (resp.\ $B'$) be the set of critical points of $(\ol{V}, \omega)$ where
$\ol{V} \to \ol{W}$ is tamely (resp.
wildly) ramified.  Let $g_W$ be the genus of $\ol{W}$.  

For each $w \in B'$, suppose that the inertia group of a point $v$ above $w$ is 
$\ints/p^{n_w} \rtimes \ints/m_w$ with $p \nmid m_w$.  For $1 \leq i \leq n_w$,
let $h_{i, w}$ be the $i$th lower jump of 
the extension $\hat{\mc{O}}_{\ol{V}, v}/\hat{\mc{O}}_{\ol{W}, w}$, and let
$\sigma_{i,w} = h_{i, w}/m_w$.  
We maintain the notation $(h_w, m_w)$ for the signature of $\omega$ at $w$, and
$\sigma_w$ for the invariant at $w$ (note that there is not necessarily any
relation between the $\sigma_{i, w}$ and $\sigma_w$). Then we have

\begin{lemma}[(Local vanishing cycles formula)]\label{Llocvancycles}
\hspace*{\fill}
\begin{equation}\label{Elocvancycles}
\sum_{w \in B'} \left(\frac{\sigma_w}{p^{n_w}} - 1 - \sum_{i=1}^{n_w}
\frac{p-1}{p^i}\sigma_{i,w} \right)
+ \sum_{b \in B} (\sigma_b - 1) = 2g_W - 2.
\end{equation}
\end{lemma}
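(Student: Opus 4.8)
The plan is to compute the degree of the divisor of $\omega$ on $\ol{V}$ in two ways and compare. Write $\phi \colon \ol{V} \to \ol{W}$ for the $H$-Galois cover and $g_V$ for the genus of $\ol{V}$. The starting point is the identity $\deg\,\mathrm{div}(\omega) = 2g_V - 2$. Because $\omega$ is $\chi$-semi-invariant and $\chi$ takes values in $\FF_p^{\times}$, the order $\ord_v(\omega) = h_{\phi(v)} - 1$ depends only on the image $w = \phi(v)$, and the fiber over $w$ consists of $|H|/e_w$ points, where $e_w$ is the ramification index. Grouping the sum $\sum_{v} \ord_v(\omega)$ fiberwise gives
\[
2g_V - 2 = \sum_{w \in \ol{W}} \frac{|H|}{e_w}(h_w - 1).
\]

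Next I would apply the Hurwitz formula (\S\ref{Ssmooth}) to $\phi$, namely $2g_V - 2 = |H|(2g_W - 2) + \sum_{w} \frac{|H|}{e_w} d_w$, where $d_w$ is the valuation of the different at any point over $w$ (constant on the fiber). Subtracting the two expressions for $2g_V - 2$ and dividing by $|H|$ localizes everything to $\ol{W}$:
\[
2g_W - 2 = \sum_{w \in \ol{W}} \frac{1}{e_w}\bigl[(h_w - 1) - d_w\bigr].
\]
It then remains to evaluate the per-point contribution $c_w := \tfrac{1}{e_w}[(h_w - 1) - d_w]$ case by case. For a point that is unramified with $(h_w, m_w) = (1,1)$ one has $c_w = 0$. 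For a tamely ramified critical point $b \in B$ we have $e_b = m_b$ and $d_b = m_b - 1$, so $c_b = \tfrac{1}{m_b}[(h_b - 1) - (m_b - 1)] = \sigma_b - 1$. For a wildly ramified critical point $w \in B'$ with inertia $\ints/p^{n_w} \rtimes \ints/m_w$, I substitute the different $d_w = p^{n_w} m_w - 1 + \sum_{i=1}^{n_w} h_{i,w} p^{n_w - i}(p-1)$ from Lemma \ref{Ljumpsdifferent}(i), together with $h_w - 1 = m_w \sigma_w - 1$; after simplification and use of $\sigma_{i,w} = h_{i,w}/m_w$, the expression collapses to $c_w = \tfrac{\sigma_w}{p^{n_w}} - 1 - \sum_{i=1}^{n_w} \tfrac{p-1}{p^i}\sigma_{i,w}$, which is exactly the $B'$-summand. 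Summing over all $w$ then yields Equation (\ref{Elocvancycles}).

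The main obstacle is not the algebra—expanding $d_w$ and simplifying $c_w$ is routine once the bookkeeping is in place—but rather confirming that the only points with nonzero contribution are precisely those in $B \cup B'$. This is immediate for unramified points, but at a wildly ramified point one must check that the combinatorics of the higher ramification filtration (via Lemma \ref{Ljumpsdifferent}) and the order $\ord_v(\omega)$ fit together so that $w$ is genuinely critical and is captured by the $B'$-term. This is the one place where the specific nature of $\omega$ (logarithmic or exact) and the transformation law $\eta^*\omega = \chi(\eta)\omega$ enter in an essential way, and it is the step demanding the most care. Once this is settled, the two degree computations and the case analysis above combine directly to give the stated formula.
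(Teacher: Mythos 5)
Your proposal is correct and takes essentially the same approach as the paper: both proofs compute $2g_V - 2$ in two ways---once as the degree of $\mathrm{div}(\omega)$, grouped fiberwise over $\ol{W}$, and once via the Hurwitz formula with the wild different supplied by Lemma \ref{Ljumpsdifferent} (i)---and then equate and simplify. Your per-point bookkeeping of the contributions $c_w$ (including the case check that non-critical points contribute zero) is just an explicit reorganization of the paper's ``substituting and rearranging.''
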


\begin{proof}
Let $g_V$ be the genus of $\ol{V}$, and $d$ the degree of the map $\ol{V} \to
\ol{W}$.  The
Hurwitz formula, along with Lemma \ref{Ljumpsdifferent} (i), yields that 
$$2g_V - 2  = d(2g_W - 2) + d\sum_{b \in B} (1 - \frac{1}{m_b})
+ \sum_{w \in B'} \frac{d}{p^{n_w}m_w} (p^{n_w}m_w-1 + \sum_{i=1}^{n_w}
h_{i,w}p^{{n_w}-i}(p-1)).$$
Furthermore, the degree of a differential form on $\ol{V}$ is $$\sum_{b \in
B} \frac{d}{m_b}(h_b - 1) + \sum_{w \in B'} \frac{d}{p^{n_w}m_w}(h_w - 1).$$  
Substituting this for $2g_V - 2$ and rearranging yields the formula.
\end{proof}

\begin{remark}
In the case $B' = \emptyset$, the local vanishing cycles formula
(\ref{Elocvancycles}) reduces to that found in
\cite[p.\ 998]{We:br}.
\end{remark}

Let us resume the assumption that all of our deformation data come from
Construction \ref{CCdefdata}.
Recall that $\mc{G}'$ is the augmented dual graph of $\ol{X}$.  To each edge $e$
of $\mc{G}'$ we will associate an
invariant $\sigma^{\eff}_e$, called the \emph{effective invariant}.  

For each $0 \leq j < n$, write $\mc{G}'_j$ \label{TEgraph} for 
the subgraph of $\mc{G}'$ consisting of: those vertices corresponding to
$p^s$-components for $s > j$; those
corresponding to specializations of branch points where $p^{j+1}$ divides the
branching index; and the edges incident to at
least one of these vertices.  Write $\mc{G}_j = \mc{G}'_j \cap \mc{G}$.  
Note that an edge in $E(\mc{G}'_j)$ might have a source or a target not in
$V(\mc{G}'_j)$;
these edges correspond to points of $B_{r, r'}$ with $r > j \geq r'$ (see
Definition \ref{Draminvariant}).  
Note further that $E(\mc{G}'_{0}) = E(\mc{G}')$. 
For each edge in $\mc{G}'_{j}$, we associate a set of invariants $\sigma^{\eff,
\alpha}_e$, $0 \leq \alpha < j$, called 
the \emph{truncated effective invariants}.  
The effective invariant $\sigma^{\eff}_e$ will be equal to the truncated
effective invariant $\sigma^{\eff, 0}_e$.

\begin{definition}\label{Dsigmaeff}
\begin{itemize}
\item If $s(e)$ corresponds to a $p^r$-component $\ol{W}$, and $t(e)$
corresponds to a $p^{r'}$-component $\ol{W}'$ with 
$r \geq r'$, then $r \geq 1$ by Lemma \ref{Letaletail}. 
Let $\omega_i$, $1 \leq i \leq r$, be the deformation data above $\ol{W}$.  
If $\{w\} = \ol{W} \cap \ol{W}'$, then 
$$\sigma^{\eff}_e := \left( \sum_{i=1}^{r-1} \frac{p-1}{p^{i}}\sigma_{i,w}
\right) + \frac{1}{p^{r-1}}\sigma_{r,w}.$$
Note that this is a weighted average of the $\sigma_{i,w}$'s.
Furthermore, we write
$$\sigma^{\eff, \alpha}_e := \left( \sum_{i=1}^{r-\alpha - 1}
\frac{p-1}{p^{i}}\sigma_{i,w} \right) +
\frac{1}{p^{r-\alpha -1}}\sigma_{r-\alpha,w}$$ for all $0 \leq \alpha < r$.
\item If $s(e)$ corresponds to a $p^r$-component and $t(e)$ corresponds to a
$p^{r'}$-component with $r < r'$, then
$\sigma^{\eff}_e := -\sigma^{\eff}_{\ol{e}}$.  Also, 
$\sigma^{\eff, \alpha}_e := -\sigma^{\eff, \alpha}_{\ol{e}}$ for all $\alpha <
r'$.
\item If either $s(e)$ or $t(e)$ is a vertex of $\mc{G}'$ but not $\mc{G}$, then
$\sigma^{\eff}_e := 0$.  
If, additionally, $e \in E(\mc{G}'_j)$, then $\sigma^{\eff, \alpha}_e := 0$ for
all $\alpha < j$.
\end{itemize}
\end{definition}

Essentially, the truncated effective invariants are the same as the regular
effective invariants, but we ignore the ``top"
$\alpha$ differential forms (and thus the truncated invariants are not defined
unless Construction \ref{CCdefdata}
associates more than $\alpha$ differential forms to the component in question).

\begin{lemma}\label{Lsigmaeffcompatibility}
\begin{description}
\item{(i)} For any $e \in E(\mc{G}')$, we have $\sigma^{\eff, \alpha}_e =
-\sigma^{\eff, \alpha}_{\ol{e}}$.
\item{(ii)} Supppose $e$ corresponds to a point $x$, $s(e)$ corresponds to a
$p^r$-component, $t(e)$ corresponds to a 
$p^{r'}$-component, and $r > r'$.  Then $\sigma^{\eff, \alpha}_e =
\sigma^{\alpha}_x$ for any $r' \leq \alpha < r$
(Definition \ref{Draminvariant}).
\item{(iii)}  In particular, if $t(e)$ corresponds to an \'{e}tale tail
$\ol{X}_b$, 
then $\sigma^{\eff}_e = \sigma_b$.
\end{description}
\end{lemma}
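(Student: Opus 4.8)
Lemma \ref{Lsigmaeffcompatibility} asserts three facts about effective invariants, and my plan is to prove them in the order (i), (ii), (iii), since each builds on the previous. The proof is essentially a matter of unwinding the definitions and invoking the compatibility Proposition \ref{Pannuluscompatibility} together with the conductor formula in Remark \ref{Rupperlower}. Let me outline each part.

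For part (i), the claim $\sigma^{\eff,\alpha}_e = -\sigma^{\eff,\alpha}_{\ol e}$ should follow almost directly from Definition \ref{Dsigmaeff} by case analysis on the edge $e$. If $s(e)$ is a $p^r$-component and $t(e)$ a $p^{r'}$-component with $r > r'$, then $\ol e$ falls into the second bullet of Definition \ref{Dsigmaeff}, which \emph{defines} $\sigma^{\eff,\alpha}_{\ol e} := -\sigma^{\eff,\alpha}_e$, so the identity is immediate. The only case requiring a genuine argument is $r = r'$, where both $e$ and $\ol e$ are governed by the first bullet. Here I would invoke Proposition \ref{Pannuluscompatibility}(i): when $r = r'$ and $i = i'$, one has $\sigma_{i,x} = -\sigma'_{i',x}$ relating the invariants of $\omega_i$ (above $\ol W$) and $\omega'_{i'}$ (above $\ol W'$). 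Plugging these into the weighted-average formula defining $\sigma^{\eff,\alpha}_e$ and $\sigma^{\eff,\alpha}_{\ol e}$ gives the sign flip termwise. The remaining case (one endpoint a vertex of $\mc G'$ but not $\mc G$) is handled by the third bullet, where both invariants are zero.

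For part (ii), I would compare the \emph{definition} of $\sigma^{\eff,\alpha}_e$ in Definition \ref{Dsigmaeff} with the \emph{definition} of $\sigma^{\alpha}_x$ in Definition \ref{Draminvariant}. The quantity $\sigma^{\alpha}_x$ is the conductor of the extension $\mc O_{\ol W', x} \hookrightarrow S_x^{I_\alpha}$, where $I_\alpha \le J_x$ has order $p^{\alpha - r'}$. By Remark \ref{Rupperlower}, this conductor equals
$$\left(\sum_{i=1}^{r-\alpha-1} \frac{p-1}{p^{i}}\sigma_{i,x}\right) + \frac{1}{p^{r-\alpha-1}}\sigma_{r-\alpha,x},$$
which is precisely the expression defining $\sigma^{\eff,\alpha}_e$ in the first bullet of Definition \ref{Dsigmaeff}. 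So the content here is matching indices: confirming that the subgroup $I_\alpha = J_\alpha$ of Remark \ref{Rupperlower} (giving an extension with group $\ints/p^{r-\alpha} \rtimes \ints/m_x$) corresponds to the $\sigma_{i,x}$ appearing in the effective-invariant formula, and that these are the signatures of the deformation data $\omega_i$ above $\ol W$. This identification is exactly what Proposition \ref{Pannuluscompatibility}(ii) and Remark \ref{Rupperlower} supply. Part (iii) is then the special case $r' = 0$, $\alpha = 0$ of part (ii): an \'etale tail is a $p^0$-component, so $\sigma^{\eff}_e = \sigma^{\eff,0}_e = \sigma^0_x = \sigma_b$ by the notational convention of Definition \ref{Draminvariant}.

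The main obstacle, I expect, is not any single hard estimate but rather bookkeeping the indices correctly across the three definitions involved (Definitions \ref{Draminvariant} and \ref{Dsigmaeff}, Proposition \ref{Pannuluscompatibility}, and Remark \ref{Rupperlower}). In particular one must be careful that the $\sigma_{i,x}$ in the $\sigma^{\eff}_e$ formula — which are invariants of the deformation data $\omega_i$ associated to $\ol V$ above $\ol W$ — really do coincide with the jumps of the truncated extension $\mc O_{\ol W', x} \hookrightarrow S_x^{I_\alpha}$ defining $\sigma^\alpha_x$; this hinges on the sign relation in Proposition \ref{Pannuluscompatibility}(i) (so that jumps computed from the $\ol W$ side match those computed from the $\ol W'$ side) combined with the lower-jump identification $j_i = h_{i,x}$ from Remark \ref{Rupperlower}. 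Once the indices are aligned, all three parts reduce to direct substitution, so the proof should be short.
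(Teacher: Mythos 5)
Your proposal is correct and follows essentially the same route as the paper's proof: part (i) reduces to the case of two adjacent $p^r$-components and is settled by Proposition \ref{Pannuluscompatibility} with $r = r'$ (the other cases being true by definition), part (ii) is exactly the identification of the weighted average in Definition \ref{Dsigmaeff} with the conductor via Remark \ref{Rupperlower}, and part (iii) is the case $\alpha = r' = 0$ of (ii). The only point the paper makes explicit that you leave implicit is the citation of Lemma \ref{Ltailetale} in (iii) to guarantee that an \'{e}tale tail borders an inseparable component, i.e., that $r > r' = 0$ so that (ii) applies.
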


\begin{proof}
\emph{To (i):}  
This needs proof only when $e$ corresponds to the intersection of two
$p^r$-components.  But then the
result follows immediately from Proposition \ref{Pannuluscompatibility} (setting
$r = r'$).
\\
\\
\emph{To (ii):} 
In this case, $\sigma^{\eff, \alpha}_{e} = \left( \sum_{i=1}^{r-\alpha - 1}
\frac{p-1}{p^{i}}\sigma_{i,x}  \right) +
\frac{1}{p^{r-\alpha - 1}}\sigma_{r - \alpha, x}.$  
Remark \ref{Rupperlower} shows that this is equal to $\sigma^{\alpha}_x$.
\\
\\
\emph{To (iii)}: By Lemma \ref{Ltailetale}, this is (ii) in the case $\alpha =
r' = 0$.
\end{proof}

\begin{lemma}[(Effective local vanishing cycles
formula)]\label{Lgenlocvancycles}
Let $v \in V(\mc{G}')$ correspond to a $p^r$-component $\ol{W}$ of $\ol{X}$ with
genus $g_v$.  Then for all $\alpha <
r$,
\begin{equation}\label{Egenlocvancycles}
\sum_{s(e) = v} (\sigma^{\eff, \alpha}_e - 1) = 2g_v - 2.
\end{equation}
\end{lemma}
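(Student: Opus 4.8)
The plan is to derive (\ref{Egenlocvancycles}) by taking a suitable weighted average of the unwieldy local vanishing cycles formula (\ref{Elocvancycles}) over the deformation data attached to $\ol{W}$. By Construction \ref{CCdefdata} there are $r$ deformation data $\omega_1, \dots, \omega_r$ on $\ol{W}$; for truncation level $\alpha$ I would use only $\omega_1, \dots, \omega_{r-\alpha}$. Crucially, all of these share the \emph{same} underlying cover $\phi\colon \ol{V}' \to \ol{W}$ (only the differential form changes), so the genus $g_v$, the set of critical points, and the wild lower-jump data $\sigma_{l,w}$ appearing in (\ref{Elocvancycles}) are common to all $i$. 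Applying (\ref{Elocvancycles}) to each $\omega_i$ and forming the weighted sum with weights $c_i = \frac{p-1}{p^i}$ for $i < r-\alpha$ and $c_{r-\alpha} = \frac{1}{p^{r-\alpha-1}}$, I would first note that $\sum_i c_i = 1$, so the right-hand side collapses to $2g_v - 2$, matching (\ref{Egenlocvancycles}).

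The bulk of the argument is to show the weighted left-hand side equals $\sum_{s(e)=v}(\sigma^{\eff,\alpha}_e - 1)$. By Lemma \ref{Lcritical} the critical points of $(\ol{V}',\omega_i)$ on $\ol{W}$ are exactly the points corresponding to edges $e$ with $s(e) = v$: intersections with other components and specializations of branch points of index divisible by $p$. These split according to whether $\phi$ is tamely or wildly ramified. I would check that the tame critical points are precisely the branch-point specializations together with the intersections with $p^{r'}$-components for $r' \le r$ (where the extra $p$-inertia of $\phi$ is absorbed by the generic inertia $I \cong \ints/p^r$, using Proposition \ref{Pcorrectspec} for the branch points and Corollary \ref{Cconsistency} for the intersections). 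At each such point the weighted contribution is $\sum_i c_i\sigma_{i,w} - 1$, which is \emph{by definition} $\sigma^{\eff,\alpha}_e - 1$ (Definition \ref{Dsigmaeff}); for branch-point specializations one has $\sigma_w = 0$ and $\sigma^{\eff,\alpha}_e = 0$ by Lemma \ref{Lcritical} and the third bullet of Definition \ref{Dsigmaeff}, so the contribution is again $-1$ as required.

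The hard part is the wild critical points, which I expect to be exactly the intersections with $p^{r'}$-components for $r' > r$: the larger inertia of the neighbor propagates into the decomposition group $D_{\ol{V}}$, so that $H = D_{\ol V}/I$ acquires a $p$-part and $\phi$ is wildly ramified at $w$, with $n_w = r'-r$. Here the weighted contribution is $\frac{1}{p^{n_w}}\sum_i c_i\sigma_{i,w} - 1 - \sum_{l=1}^{n_w}\frac{p-1}{p^l}\sigma_{l,w}$, and the goal is to show this equals $\sigma^{\eff,\alpha}_e - 1 = -\sigma^{\eff,\alpha}_{\ol e} - 1$ (Definition \ref{Dsigmaeff}, second bullet). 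The main obstacle is precisely this identity: I would use Proposition \ref{Pannuluscompatibility} to identify the lower jumps $h_{l,w}$ of $\phi$ at $w$ with the signatures of the neighbor's deformation data $\omega'_{i'}$, then collapse the wild sum $\sum_l \frac{p-1}{p^l}\sigma_{l,w}$ into the conductor of the local extension $\hat{\mc O}_{\ol{W}',w}\hookrightarrow S_w^{J_\alpha}$ via Remarks \ref{Rlower2conductor} and \ref{Rupperlower}, and finally recognize that conductor as $\sigma^{\eff,\alpha}_{\ol e}$ through Lemma \ref{Lsigmaeffcompatibility}(ii). Getting the weights, the truncation count, and the single boundary term $\frac{1}{p^{n_w}}\sum_i c_i\sigma_{i,w}$ to reassemble exactly into $-\sigma^{\eff,\alpha}_{\ol e}$ is the delicate bookkeeping that the cleaner statement (\ref{Egenlocvancycles}) hides, and is where I would spend the most care.
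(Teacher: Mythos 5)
Your proposal is correct and follows essentially the same route as the paper's proof: the paper likewise applies the local formula (\ref{Elocvancycles}) to each deformation datum on $\ol{W}$, splits the edges at $v$ into those whose target has generic inertia at most $p^r$ and those bordering a $p^a$-component with $a > r$, uses Proposition \ref{Pannuluscompatibility} (both parts) to rewrite the wild terms in the neighbor's deformation data, and then forms the weighted sum with exactly your weights, collapsing the wild contributions to $-\sigma^{\eff,\alpha}_{\ol{e}} - 1$ before flipping the sign via Lemma \ref{Lsigmaeffcompatibility}(i). One small correction to your final step: for $\alpha < r$ the reassembled wild quantity at a boundary edge is \emph{not} the conductor of a subextension of $S_w$, so Remark \ref{Rupperlower} and Lemma \ref{Lsigmaeffcompatibility}(ii) do not apply there (they require $\alpha$ at least the smaller inertia exponent, here $r$; indeed the terms $\sigma_{j,w}$ with $j > n_w$ that appear after your telescoping are genuine deformation-datum invariants of the neighbor, not ramification jumps of $S_w$) --- instead one recognizes the telescoped sum directly as the defining expression for $\sigma^{\eff,\alpha}_{\ol{e}}$ in the first bullet of Definition \ref{Dsigmaeff}, which is how the paper closes the argument.
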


\begin{proof}
Each $e \in E(\mc{G}')$ with $s(e) = v$ corresponds to a point $w_e$ on
$\ol{W}$.  
Write $$B = \{e \in E(\mc{G}') \, | \, s(e) = v \text{ and } t(e) 
\text{ does not correspond to a } p^a \text{-component with } a > r\}.$$ 
Write $B' = \{e \in E(\mc{G}') \, | \, s(e) = v\} \backslash B$.  For $e \in B$,
let $\sigma_{i,w_e}$ be the invariant of the $i$th deformation datum above
$\ol{W}$ at $w_e$.  For
$e \in B'$, let $\sigma_{i, w_e}$ be the invariant of the $i$th deformation
above $\ol{W}'$ at $w_e$, where
$\ol{W}'$ is the component that intersects $\ol{W}$ at $w_e$, and
write $n_{w_e}$ as in Lemma \ref{Llocvancycles}.
Then for $1 \leq i \leq r$, Equation (\ref{Elocvancycles}), along with
Proposition \ref{Pannuluscompatibility}, shows that

\begin{equation}\label{Eeff1}
\sum_{e \in B'} \left(-\frac{\sigma_{i+n_{w_e}, w_e}}{p^{n_{w_e}}} - 1 -
\sum_{j=1}^{n_{w_e}}
\frac{p-1}{p^{j}}\sigma_{j,{w_e}} \right) + \sum_{e \in B} (\sigma_{i, w_e} - 1)
= 2g_v - 2.
\end{equation}

For $1 \leq i \leq r- \alpha - 1$, we multiply the $i$th equation (\ref{Eeff1})
by $\frac{p-1}{p^{i}}$ to obtain an equation
$E_i$.  For $i = r - \alpha$, we multiply it by $\frac{1}{p^{r-\alpha - 1}}$ to
obtain $E_{r-\alpha}$.  
Note that these coefficients add up to 1.  Adding up the equations $E_i$ yields
$$\sum_{e \in B'} (-\sigma^{\eff, \alpha}_{\ol{e}} - 1) + \sum_{e \in B}
(\sigma^{\eff, \alpha}_e - 1) = 2g_v - 2.$$
Combining and using Lemma \ref{Lsigmaeffcompatibility} (i) proves the lemma.
\end{proof}

\begin{remark}
Compare (\ref{Egenlocvancycles}) to the vanishing cycles formula in \cite[p.\
998]{We:br}.
\end{remark}

\subsection{Vanishing cycles formulas}\label{Svancycles}
Recall that $m_G = |N_G(P)/Z_G(P)|$, and that we will write $m$ instead of $m_G$
when $G$ is understood.
The vanishing cycles formula (\cite[3.4.2 (5)]{Ra:sp}, \cite[Corollary
1.11]{We:br}) is a key
formula that helps us understand the
structure of the stable reduction of a branched $G$-cover of curves in the case
where $p$ exactly divides the order of $G$. 
Here, we generalize the formula to the case where $G$ has a cyclic $p$-Sylow
group of arbitrary order.
For any \'{e}tale tail $\ol{X}_b$, recall that $\sigma_b$ is the effective
ramification invariant at the
point of intersection $x_b$ of $\ol{X}_b$ with the rest of $\ol{X}$ (Definition
\ref{Draminvariant}). 

\begin{theorem}[(Vanishing cycles formula)] \label{Tvancycles}
Let $f: Y \to X$, $X$ not necessarily $\proj^1$, be a $G$-Galois cover as in
\S\ref{Sstable}, where $G$ has a cyclic
$p$-Sylow subgroup.  As in \S\ref{Sstable}, there is a smooth model $X_R$ of $X$
where the specializations of the branch points do not collide, $f$ has bad
reduction, and $\ol{f}: \ol{Y} \to
\ol{X}$ is the stable reduction of $f$.
Let $\Pi$ be the set of branch points of $f$ that have branching index divisible
by $p$. 
Let $B_{\text{new}}$ be an indexing set for the new \'{e}tale tails and let
$B_{\text{prim}}$ be an indexing set for the primitive
tails.  Let $B_{\text{\'{e}t}} = B_{\text{new}} \cup B_{\text{prim}}$.
Let $g_X$ be the genus of $X$.  Then we have the formula 

\begin{equation}\label{Evancycles2} 
2g_X - 2 + |\Pi| = \sum_{b \in B_{\text{\'{e}t}}} (\sigma_b - 1). 
\end{equation}
\end{theorem}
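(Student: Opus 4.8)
The plan is to globalize the effective local vanishing cycles formula of Lemma~\ref{Lgenlocvancycles}. I would apply Equation~(\ref{Egenlocvancycles}) with $\alpha = 0$ at every inseparable component of $\ol{X}$; this is legitimate precisely because such a component is a $p^r$-component with $r \geq 1$, so that $\alpha = 0 < r$. Writing $\mc{I}$ for the set of inseparable components and summing over $v \in \mc{I}$ gives
$$\sum_{v \in \mc{I}} \ \sum_{s(e) = v} \bigl(\sigma^{\eff}_e - 1\bigr) = \sum_{v \in \mc{I}} (2g_v - 2).$$
The heart of the argument is then to reorganize the left-hand side by sorting the directed edges $e$ with $s(e) \in \mc{I}$ according to the nature of their target $t(e)$.

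There are three cases. If $t(e) \in \mc{I}$, the opposite edge $\ol{e}$ also has source in $\mc{I}$, and Lemma~\ref{Lsigmaeffcompatibility}(i) gives $\sigma^{\eff}_e + \sigma^{\eff}_{\ol{e}} = 0$, so each such \emph{interior} geometric edge contributes a net $-2$; let $E_{\mc{I}}$ be the number of these. If $t(e)$ is an \'{e}tale component, then it is a tail by Lemma~\ref{Letaletail}, and by Lemma~\ref{Ltailetale} every \'{e}tale tail is adjacent to an inseparable component, so these edges biject with $B_{\text{\'{e}t}}$ and, by Lemma~\ref{Lsigmaeffcompatibility}(iii), each contributes $\sigma_b - 1$. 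Finally, if $t(e) = V_x$ is an augmented vertex attached to a branch point $x \in \Pi$, Definition~\ref{Dsigmaeff} gives $\sigma^{\eff}_e = 0$, so each contributes $-1$; by Proposition~\ref{Pcorrectspec} such a point specializes to an inseparable component, so these edges biject with $\Pi$. Collecting terms, the summed identity becomes
$$-2E_{\mc{I}} + \sum_{b \in B_{\text{\'{e}t}}} (\sigma_b - 1) - |\Pi| = \sum_{v \in \mc{I}} (2g_v - 2).$$

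It remains to evaluate the right-hand side and the quantity $|\mc{I}| - E_{\mc{I}}$ by graph theory. Since $X_R$ is smooth and $X^{st}$ is a blow-up of $X \times_R R^{st}$ at closed points, the dual graph $\mc{G}$ is a tree, $b_1(\mc{G}) = 0$, and every component other than $\ol{X}_0$ has genus zero; hence the total genus $g_X$ is concentrated on $\ol{X}_0$. The decisive structural input I would establish is that $\mc{I}$ is a connected subtree containing $\ol{X}_0$: the original component is inseparable because $f$ has bad reduction (\cite{Ra:sp}), and any vertex lying strictly between two inseparable components has valence $\geq 2$, hence is not a leaf, hence---by Lemma~\ref{Letaletail}---cannot be \'{e}tale. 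Connectedness forces $|\mc{I}| - E_{\mc{I}} = 1$, while $\ol{X}_0 \in \mc{I}$ gives $\sum_{v \in \mc{I}} g_v = g_X$, so that $\sum_{v \in \mc{I}}(2g_v - 2) = 2g_X - 2|\mc{I}|$. Substituting and rearranging yields
$$\sum_{b \in B_{\text{\'{e}t}}} (\sigma_b - 1) = 2g_X - 2\bigl(|\mc{I}| - E_{\mc{I}}\bigr) + |\Pi| = 2g_X - 2 + |\Pi|,$$
as claimed. I expect the main obstacle to be exactly this structural claim---that the inseparable locus is a connected subtree containing the original component, together with the fact that $\ol{X}_0$ is inseparable under the bad-reduction hypothesis; once these are secured, the antisymmetric cancellation and the Euler-characteristic identity $|\mc{I}| - E_{\mc{I}} = 1$ make the remaining bookkeeping routine.
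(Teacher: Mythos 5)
Your proposal is correct and is in essence the paper's own argument: both globalize the effective local vanishing cycles formula (Lemma \ref{Lgenlocvancycles} with $\alpha = 0$) over the inseparable components, cancel interior edges via the antisymmetry of Lemma \ref{Lsigmaeffcompatibility}(i), and identify the boundary contributions as $\sigma_b - 1$ for \'{e}tale tails (Lemma \ref{Lsigmaeffcompatibility}(iii)) and $-1$ for each point of $\Pi$ (Definition \ref{Dsigmaeff} and Proposition \ref{Pcorrectspec}). The only difference is presentational: the paper grows a set $\mc{H}$ outward from $v_0$ one inseparable vertex at a time, keeping $F(\mc{H})$ invariant, whereas you sum over all of $\mc{I}$ at once and close the bookkeeping with the subtree Euler characteristic $|\mc{I}| - E_{\mc{I}} = 1$ --- and your explicit verification that $\ol{X}_0$ is inseparable under bad reduction and that $\mc{I}$ is a connected subtree makes precise exactly the structural facts the paper's outward induction uses implicitly.
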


Theorem \ref{Tvancycles} has the immediate corollary:
\begin{corollary}
Assume further that $f$ is a three-point cover of $\proj^1$.  Then
\begin{equation}\label{Evancycles} 
1 = \sum_{b \in B_{\text{new}}} (\sigma_b - 1) + \sum_{b \in B_{\text{prim}}}
\sigma_b.
\end{equation}
\end{corollary}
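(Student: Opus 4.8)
The plan is to deduce the corollary directly from Theorem \ref{Tvancycles} by specializing to $X = \proj^1$ and bookkeeping the three branch points. First I would set $g_X = 0$ in (\ref{Evancycles2}) and split the sum over $B_{\text{\'{e}t}}$ into the contributions from new and primitive tails, obtaining
$$-2 + |\Pi| = \sum_{b \in B_{\text{new}}}(\sigma_b - 1) + \sum_{b \in B_{\text{prim}}}(\sigma_b - 1).$$
Separating the constant $-1$ from the primitive sum, i.e.\ writing $\sum_{b \in B_{\text{prim}}}(\sigma_b-1) = \left(\sum_{b\in B_{\text{prim}}}\sigma_b\right) - |B_{\text{prim}}|$, reduces the corollary to the combinatorial identity $|B_{\text{prim}}| = 3 - |\Pi|$; granting this, the target equation (\ref{Evancycles}) falls out by cancelling $|\Pi|$ and rearranging.

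So the real content is to show that the number of primitive tails equals the number of the three branch points whose branching index is prime to $p$. I would establish a bijection between these two sets. Given a branch point $x$ with prime-to-$p$ index, Proposition \ref{Pcorrectspec} (with $a = 0$) shows $x$ specializes to an \'{e}tale component $\ol{W}$, which by Lemma \ref{Letaletail} is a tail; since $\ol{W}$ contains $x$, it is primitive (and automatically $\ol{W} \neq \ol{X}_0$, as $\ol{X}_0$ is never a tail). Conversely, a primitive tail contains a branch point by definition, and that branch point lies on an \'{e}tale component, hence has prime-to-$p$ index by the contrapositive of Proposition \ref{Pcorrectspec}. Note also that the branch points of $p$-divisible index are exactly $\Pi$, and these specialize to inseparable components, so they never lie on a primitive (\'{e}tale) tail; thus $\Pi$ and the primitive tails account for disjoint families of branch points.

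To upgrade these two maps into a bijection I would invoke the geometry of the stable model. Since $X^{st}$ is a blow-up of the smooth model $X \times_R R^{st}$ centered at closed points, and the specializations of the three branch points to $\ol{X}_0$ are distinct (the non-colliding hypothesis of \S\ref{Sstable}), the components of $\ol{X}$ other than $\ol{X}_0$ organize into subtrees attached to $\ol{X}_0$ at these distinct points, with subtrees attached at different points disjoint from one another. Consequently any tail lies in a single such subtree and can therefore contain at most one branch point, which gives injectivity, and shows each primitive tail accounts for exactly one prime-to-$p$ branch point. This yields $|B_{\text{prim}}| = 3 - |\Pi|$ and completes the proof. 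The main obstacle is precisely this last step: one must rule out distinct branch points being swept onto a common tail, and this is exactly where the disjointness of the exceptional subtrees---and hence the non-colliding hypothesis---is essential.
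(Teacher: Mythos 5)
Your proposal is correct and follows the same route the paper intends: the paper states the corollary as an immediate consequence of Theorem \ref{Tvancycles}, and the only suppressed content is exactly what you supply, namely $g_X = 0$ together with the identity $|B_{\text{prim}}| = 3 - |\Pi|$ via the bijection between primitive tails and prime-to-$p$ branch points (Proposition \ref{Pcorrectspec} with $a=0$, Lemma \ref{Letaletail}, and the non-colliding hypothesis to ensure each tail carries at most one branch point). Your careful justification of injectivity through the tree structure of the blow-up is a valid elaboration of what the paper treats as obvious.
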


\noindent\emph{Proof} (of the theorem, cf. \cite[Corollary 1.11]{We:br}).  
Let $\mc{G}$ (resp. $\mc{G}'$) be the dual graph (resp.\ augmented dual graph)
of the stable reduction of $f$.  For any collection of vertices $\mc{H}
\subseteq V(\mc{G})$ containing $v_0$, 
let $B(\mc{H})$ be the collection of edges $e \in E(\mc{G}')$ such that
$s(e) \in \mc{H}$, but $t(e) \notin \mc{H}$. 
We define $F(\mc{H}) = \sum_{e \in B(\mc{H})} (\sigma^{\eff}_e - 1)$ (see
Definition \ref{Dsigmaeff}).  
Then, if $\mc{H} = \{v_0\}$, $F(\mc{H}) = 2g_X - 2$ by (\ref{Elocvancycles}). 
By outward induction,
using Lemmas \ref{Lgenlocvancycles} and \ref{Lsigmaeffcompatibility} (i), we can
add one adjacent
vertex at a time to $\mc{H}$ without changing the value of $F(\mc{H})$, so long
as the vertex corresponds to an
inseparable component.  Thus, if $\mc{H} = V(\mc{G}_0)$ (p.\ \pageref{TEgraph}),
then  $F(\mc{H}) = 2g_X - 2$.  
By Lemma \ref{Lsigmaeffcompatibility} (iii), we obtain $\sum_{b \in
B_{\text{\'{e}t}}} (\sigma_b - 1) - |\Pi| = 2g_X - 2$.
\qed

\begin{remark}
One can also construct a proof analogous to that of \cite[3.4.2 (5)]{Ra:sp},
using the \emph{auxiliary
cover}.  This is done in the author's thesis \cite[\S 3.1]{Ob:th}.
\end{remark}

The above formula can be generalized.
For every $i$, $1 \leq i \leq n$, write $\Pi_i$ for the set of branch points of
$f$
which have branching index divisible by $p^i$.  
Let $B_{r,r'},$ $r > r'$, be as in Definition \ref{Draminvariant}, each $b \in
B_{r, r'}$ corresponding to a point $x_b$.
Then we have the following formula:

\begin{prop}\label{Pgenvancycles}
Fix $\alpha$ such that $0 \leq \alpha \leq n-1$ and there exists some nonempty
$B_{r,r'}$
with $r' \leq \alpha < r$.  Let $B^{\alpha}_{r, r'} \subset B_{r, r'}$ be the
subset consisting of those $b$ such that the 
vertex corresponding to the $p^r$-component containing $x_b$ is a maximal vertex
for $\mc{G}_{\alpha}$ 
(with ordering induced from $\mc{G}'$, see p.\ \pageref{TEgraph}).  Then
\begin{equation} \label{Egenvancycles}
2g_X - 2 + |\Pi_{\alpha+1}| \geq \sum_{\substack{r, r' \\ r' \leq \alpha < r}}
\sum_{b \in B^{\alpha}_{r,r'}}
(\sigma^{\alpha}_b - 1).
\end{equation}
If $\ol{f}$ is monotonic, we have equality in (\ref{Egenvancycles}).
\end{prop}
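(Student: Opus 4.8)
The plan is to mimic the proof of Theorem~\ref{Tvancycles}, replacing the effective invariants $\sigma^{\eff}_e$ by the truncated effective invariants $\sigma^{\eff,\alpha}_e$ and summing over the subgraph $\mc{G}_\alpha$ rather than over $\mc{G}_0$. For any collection of vertices $\mc{H} \subseteq V(\mc{G}_\alpha)$ containing $v_0$, define $B(\mc{H})$ to be the set of edges $e \in E(\mc{G}'_\alpha)$ with $s(e) \in \mc{H}$ but $t(e) \notin \mc{H}$, and set $F^\alpha(\mc{H}) = \sum_{e \in B(\mc{H})} (\sigma^{\eff,\alpha}_e - 1)$. First I would take $\mc{H} = \{v_0\}$ and evaluate $F^\alpha(\mc{H})$ using the effective local vanishing cycles formula (Lemma~\ref{Lgenlocvancycles}, Equation~(\ref{Egenlocvancycles})), which applies since the original component $\ol{X}_0$ is a $p^r$-component with $r > \alpha$ (so that $\sigma^{\eff,\alpha}_e$ is defined for its outgoing edges), giving $F^\alpha(\{v_0\}) = 2g_X - 2$.

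Next I would perform the outward induction: add one adjacent vertex at a time to $\mc{H}$, always choosing a vertex $v$ corresponding to a $p^s$-component with $s > \alpha$ (equivalently a vertex of $\mc{G}_\alpha$). At each such step, Lemma~\ref{Lgenlocvancycles} applied at $v$ together with the antisymmetry $\sigma^{\eff,\alpha}_e = -\sigma^{\eff,\alpha}_{\ol{e}}$ from Lemma~\ref{Lsigmaeffcompatibility}(i) shows that the contribution of the newly exposed edges exactly replaces the contribution of the edge that was crossing into $v$, so $F^\alpha(\mc{H})$ is unchanged. Carrying this out until $\mc{H} = V(\mc{G}_\alpha)$ yields $F^\alpha(V(\mc{G}_\alpha)) = 2g_X - 2$. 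The edges in $B(V(\mc{G}_\alpha))$ are then of two types: those landing on a branch-point vertex $V_x$ of $\mc{G}'_\alpha$ (with $p^{\alpha+1}$ dividing the branching index), each contributing $\sigma^{\eff,\alpha}_e - 1 = -1$ and accounting for $-|\Pi_{\alpha+1}|$; and those crossing from a maximal $p^r$-component ($r > \alpha \geq r'$) to a $p^{r'}$-component, which by Lemma~\ref{Lsigmaeffcompatibility}(ii) contribute $\sigma^{\eff,\alpha}_e - 1 = \sigma^\alpha_{x_b} - 1$ for $b \in B^\alpha_{r,r'}$. Rearranging gives the desired bound.

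The crucial point — and the reason the statement is an \emph{inequality} in general but an equality in the monotonic case — is which vertices may legitimately be absorbed during the induction. The induction step in Lemma~\ref{Lgenlocvancycles} requires that $\sigma^{\eff,\alpha}_e$ be defined at $v$, i.e.\ that the deformation datum count $r$ at $v$ exceed $\alpha$; this is exactly the condition that $v \in V(\mc{G}_\alpha)$. When we reach a maximal vertex of $\mc{G}_\alpha$, its remaining outward edges either land outside $\mc{G}_\alpha$ (on a $p^{r'}$-component with $r' \leq \alpha$, contributing a genuine $\sigma^\alpha_b$ term) or land on vertices we are not permitted to absorb. If $\ol{f}$ is monotonic, then generic inertia only decreases outward, so every vertex of $\mc{G}_\alpha$ is connected to $v_0$ through a path lying entirely in $\mc{G}_\alpha$, the induction absorbs \emph{all} of $V(\mc{G}_\alpha)$, and we get equality. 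Without monotonicity, a $p^s$-component with $s > \alpha$ can sit ``beyond'' a $p^{r'}$-component with $r' \leq \alpha$; such a component cannot be reached by the induction, and its edge into $\mc{G}_\alpha$ has already been counted at the maximal vertex with a contribution bounded using the sign conventions of Lemma~\ref{Lsigmaeffcompatibility}(i).

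The main obstacle I anticipate is bookkeeping the non-monotonic case carefully enough to see that the omitted components only push the left-hand side up, yielding $\geq$ rather than $=$. Concretely, I would argue that the full sum $F^\alpha(V(\mc{G}_\alpha)) = 2g_X - 2$ decomposes as $-|\Pi_{\alpha+1}| + \sum_{r' \leq \alpha < r} \sum_{b \in B^\alpha_{r,r'}} (\sigma^\alpha_b - 1)$ minus the contributions of those edges whose far endpoint is a $p^s$-component with $s > \alpha$ that lies outward of the maximal vertex; by antisymmetry (Lemma~\ref{Lsigmaeffcompatibility}(i)) and the fact that $\sigma^\alpha_b \geq 0$ for such reentrant configurations, these omitted terms have the sign needed to make the truncated sum a lower bound. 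Checking that the genus and $|\Pi_{\alpha+1}|$ terms really assemble as claimed, and that no edge is double-counted when the same maximal vertex borders several lower components, is the delicate part; everything else is a direct transcription of the proof of Theorem~\ref{Tvancycles} with $\alpha$-truncated invariants in place of the full effective invariants.
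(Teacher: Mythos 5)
You have the right skeleton---telescoping the effective local vanishing cycles formula (Lemma \ref{Lgenlocvancycles}) through $\mc{G}_\alpha$ via the antisymmetry of Lemma \ref{Lsigmaeffcompatibility}(i), then converting boundary terms using parts (ii)--(iii)---and you correctly identify the disconnectedness of $\mc{G}_\alpha$ as the source of the inequality. But your handling of the disconnected case has a genuine gap, in two places. First, your base step assumes $v_0 \in V(\mc{G}_\alpha)$ (``the original component $\ol{X}_0$ is a $p^r$-component with $r > \alpha$''); nothing guarantees this for $\alpha \geq 1$ when $\ol{f}$ is not monotonic, and if $v_0 \notin V(\mc{G}_\alpha)$ your induction has nowhere to start. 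Second, and more seriously, your induction only ever sees the boundary of the connected component of $\mc{G}_\alpha$ containing $v_0$, whereas the right-hand side of (\ref{Egenvancycles}) sums over \emph{all} $b \in B^{\alpha}_{r,r'}$, including points at maximal vertices of the other connected components of $\mc{G}_\alpha$ (and $\Pi_{\alpha+1}$ includes branch points specializing to those components, by Proposition \ref{Pcorrectspec}). These terms are never produced by your computation, and they cannot be dismissed by a sign argument: the quantities $\sigma^{\alpha}_b - 1$ on a far component can be large and enter the sum you must bound from \emph{above}, so positivity works against you there, not for you. Relatedly, your interim assertion $F^{\alpha}(V(\mc{G}_\alpha)) = 2g_X - 2$ is simply false once $\mc{G}_\alpha$ is disconnected.

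The paper closes exactly this gap by a per-component accounting in place of a single induction from $v_0$: for each connected component $\mc{U}_i$ of $\mc{G}_\alpha$, the telescoping argument of Theorem \ref{Tvancycles} gives $F(\mc{U}_i) = 2g_X - 2$ if $v_0 \in \mc{U}_i$ and $F(\mc{U}_i) = -2$ otherwise (all components of $\ol{X}$ other than $\ol{X}_0$ have genus zero), so the total over all boundary edges is $2\delta g_X - 2|I|$ with $\delta \in \{0,1\}$. The crucial extra ingredient, absent from your sketch, is the combinatorial identity
\begin{equation*}
\Bigl|\bigcup_{i \in I} B(\mc{U}_i)\Bigr| \ - \ \Bigl|\bigcup_{r' \leq \alpha < r} B^{\alpha}_{r,r'}\Bigr| \ = \ |\Pi_{\alpha+1}| + 2|I| - 2 :
\end{equation*}
the $2|I|-2$ extra boundary edges created by the gaps between components each satisfy $\sigma^{\eff,\alpha}_e \geq 0$ (Lemma \ref{Lsigmaeffcompatibility}(ii)--(iii)), hence contribute at least $-1$, and these exactly offset the $-2$ contributed by each component not containing $v_0$; the inequality (\ref{Egenvancycles}) then falls out, with equality iff $\delta = |I| = 1$, which monotonicity forces (note that monotonicity, together with the nonemptiness of some $B_{r,r'}$ with $r > \alpha$, is also what legitimizes your assumption $v_0 \in V(\mc{G}_\alpha)$ in the equality case). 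So the monotonic half of your argument is essentially sound, but without this per-component bookkeeping and edge count (or an equivalent), the non-monotonic case does not close.
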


\begin{proof}  
Let $\mc{U}_i$, $i \in I$, be the set of connected components of
$\mc{G}_{\alpha}$.  
For each $i \in I$, let $B(\mc{U}_i) \subset E(\mc{G}')$ be the set of those
edges $e$ such
that $s(e) \in V(\mc{U}_i)$ but $t(e) \notin V(\mc{U}_i)$.  Define $F(\mc{U}_i)
= \sum_{e \in B(\mc{U}_i)}
(\sigma^{\eff, \alpha}_e - 1)$.  Then, as in the proof of Theorem
\ref{Tvancycles}, we have $F(\mc{U}_i) = 2g_X - 2$ if
$v_0 \in \mc{U}_i$, and $F(\mc{U}_i) = -2$ otherwise.  

Set $\delta = 1$ if $v_0 \in V(\mc{G}_{\alpha})$, and $\delta = 0$ otherwise. 
Then we know 
\begin{equation}\label{Evancyclesineq}
\sum_{i \in I} \sum_{e \in B_{\mc{U}_i}} (\sigma^{\eff, \alpha}_e - 1) = 
\sum_{i \in I} F(\mc{U}_i) = -2|I| + 2\delta
g_X.
\end{equation}
Lemma \ref{Lsigmaeffcompatibility} (ii) shows that, for $e \in B(\mc{U}_i)$
corresponding to $b$ in some 
$B^{\alpha}_{r, r'}$, we have $\sigma^{\eff, \alpha}_e = \sigma^{\alpha}_b$.
In any case, for $e \in B(\mc{U}_i)$, Lemma \ref{Lsigmaeffcompatibility}
(ii)--(iii) shows that 
$\sigma^{\eff, \alpha}_e \geq 0$, with equality holding iff 
$t(e) \in \mc{G}' \backslash \mc{G}$.  Also, an easy combinatorial argument
shows that 
$$|\bigcup_{i \in I} B(\mc{U}_i)| \ - \ |\bigcup_{r' \leq \alpha < r}
B^{\alpha}_{r, r'}| = |\Pi_{\alpha + 1}| + 2|I| - 2.$$
We expand out $F(\mc{U}_i)$ in (\ref{Evancyclesineq}), using the inequality
$\sigma^{\eff, \alpha}_e - 1 \geq -1$ for those 
$e \in B(\mc{U}_i)$ not corresponding to elements of $B^{\alpha}_{r, r'}$.  This
yields
$$\left(\sum_{\substack{r, r' \\ r' \leq \alpha < r}}\sum_{b \in
B_{r,r'}^{\alpha}} (\sigma^{\alpha}_b - 1)\right) - 
(|\Pi_{\alpha + 1}| + 2|I| - 2) \leq 
-2|I| + 2g_X,$$ with equality iff $\delta = |I| = 1$, i.e., $\mc{G}'_{\alpha}$
is connected.  A simple rearrangement
yields Equation (\ref{Egenvancycles}).  

Lastly, if $\ol{f}$ is monotonic, then clearly $\delta = |I| = 1$, so we have
equality.
\end{proof} 

\begin{remark}
The case $\alpha = 0$ of the generalized vanishing cycles formula
(\ref{Egenvancycles}) is the vanishing cycles formula
(\ref{Evancycles2}).
\end{remark}

\section{Properties of tails of the stable reduction}\label{Scomb} 

We maintain the assumptions of \S\ref{Sstable}, along with the assumption that a
$p$-Sylow subgroup of $G$ is cyclic of
order $p^n$.  Throughout, we will use the abbreviation $m = m_G$, as well as the
notations $B_{r,r'}$ and the variants on 
$\sigma^{\alpha}_{x_b}$ from Definition \ref{Draminvariant}. 

\begin{lemma}\label{Linsepint}
If $b \in B_{r, r'}$ indexes an inseparable tail $\ol{X}_b$ (so $r' > 0$), 
then all $\sigma^{\alpha}_b$'s $(r' \leq \alpha < r)$ are integers.  In
particular, $\sigma_b \in \ints$.
\end{lemma}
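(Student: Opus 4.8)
The plan is to identify $\sigma^\alpha_b$ with the conductor of an explicit local extension, use Lemma \ref{Lramdenominator} to reduce integrality to the vanishing of the prime-to-$p$ conjugation action, and then exploit the inseparability hypothesis to kill that action. By Definition \ref{Draminvariant}, $\sigma^\alpha_b$ is the conductor of $\hat{\mc{O}}_{\ol{X}_b, x_b} \hookrightarrow S^{I_\alpha}_{x_b}$, whose Galois group is $J_{x_b}/I_\alpha \cong \ints/p^{r-\alpha} \rtimes \ints/m_{x_b}$ (here $I_\alpha$ is normal in $J_{x_b}$, being characteristic in the cyclic normal $p$-Sylow). By Lemma \ref{Lramdenominator}, $\sigma^\alpha_b \in \frac{1}{m_{J_{x_b}}}\ints$, where $m_{J_{x_b}}$ is the order of the image of the conjugation action of $J_{x_b}$ on its $p$-Sylow $\ints/p^{r-r'}$. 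So it suffices to prove $m_{J_{x_b}} = 1$, i.e.\ that the prime-to-$p$ part of $J_{x_b}$ acts trivially on its cyclic $p$-part; this is precisely where $r' > 0$ must enter, since in the \'{e}tale case $\sigma_b$ genuinely lies in $\frac{1}{m}\ints \setminus \ints$.

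To access the inseparability, I would pass from $J_{x_b}$ to a global decomposition group. Let $y$ be the node of $\ol{Y}$ over $x_b$, and let $\ol{S} \subseteq \ol{Y}$ be the connected component, containing the component $\ol{V}'$ over the tail $\ol{X}_b$, of the preimage of the union of inseparable components of $\ol{X}$ meeting $\ol{X}_b$. Since $r' > 0$ and the neighboring $p^r$-component is also inseparable (as $r > r' > 0$), every component of $\ol{S}$ lies over an inseparable component, so Corollary \ref{Cinsepnormalp} produces a subgroup $Q \leq D_{\ol{S}}$ of order $p$, normal in $D_{\ol{S}}$, equal to the common unique order-$p$ subgroup of all generic inertia groups occurring in $\ol{S}$. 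The inertia $I_y$ fixes $y$ and each branch there, hence preserves $\ol{V}'$ and so $\ol{S}$, giving $I_y \leq D_{\ol{S}}$; since $J_{x_b}$ is a subquotient of $I_y$, it is a subquotient of $D_{\ol{S}}$, and therefore $m_{J_{x_b}} \mid m_{D_{\ol{S}}}$ exactly as in the proof of Lemma \ref{Lramdenominator}.

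The crux of the argument — and the step I expect to be the main obstacle — is upgrading the normality of $Q$ in $D_{\ol{S}}$ to \emph{centrality}. Once $Q$ is central in $D_{\ol{S}}$, Corollary \ref{Cnormalp}(ii) gives $m_{D_{\ol{S}}} = 1$, whence $m_{J_{x_b}} \mid m_{D_{\ol{S}}} = 1$ and the reduction of the first paragraph finishes the proof. For the centrality I would show that the character $D_{\ol{S}} \to \Aut(Q) \cong \FF_p^{\times}$ is trivial: the prime-to-$p$ elements of $D_{\ol{S}}$ arise as tame inertia at the nodes joining the inseparable components of $\ol{S}$, and one would use the consistency of these inertia groups (Corollary \ref{Cconsistency}), together with the fact that the single subgroup $Q$ sits inside every generic inertia group of $\ol{S}$, to force the action on $Q$ to be trivial. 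Concretely this amounts to checking that the character $\chi$ attached by Construction \ref{CCdefdata} to the relevant deformation data is trivial on the tame inertia, which is the delicate point requiring the inseparable structure. With $m_{J_{x_b}} = 1$ established, Lemma \ref{Lramdenominator} yields $\sigma^\alpha_b \in \ints$ for all $r' \leq \alpha < r$, and taking $\alpha = r'$ gives $\sigma_b \in \ints$.
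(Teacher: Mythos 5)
Your first paragraph is a sound reduction: by Definition \ref{Draminvariant} and Lemma \ref{Lramdenominator}, integrality of all the $\sigma^{\alpha}_b$ would indeed follow from $m_{J_{x_b}} = 1$, and that is in effect what the paper establishes. The gap is in your route to $m_{J_{x_b}} = 1$: the pivotal claim that the order-$p$ subgroup $Q$ is \emph{central} in $D_{\ol{S}}$ is not just the ``main obstacle'' --- it is false in general. Corollary \ref{Cinsepnormalp} gives normality and nothing more, and $D_{\ol{S}}$ contains the full decomposition groups $D_{\ol{V}}$ of the components $\ol{V}$ over the neighboring $p^r$-component; these typically contain prime-to-$p$ elements (e.g., tame inertia at the \emph{other} nodes of that component) acting nontrivially on $I_{\ol{V}}$, hence, by Lemma \ref{Lautaction}, nontrivially on $Q$. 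Concretely, for $G \cong \ints/p^n \rtimes \ints/m$ with faithful action ($m > 1$), no subgroup of order $p$ is ever central in $G$, yet new inseparable tails do occur for such groups (the cover in Appendix \ref{Awildexample} has $G^{str} \cong \ints/125 \rtimes \ints/2$ with a new inseparable $p$-tail), and $D_{\ol{S}}$ can be all of $G$. Nor can ``consistency'' (Corollary \ref{Cconsistency}) help: it only orders the $p$-parts of inertia groups by inclusion and says nothing about the tame conjugation action. Finally, asserting that the character $\chi$ of Construction \ref{CCdefdata} is trivial on the tame inertia at $x_b$ is exactly equivalent to $m_{J_{x_b}} = 1$, so without a mechanism it begs the question --- and any such mechanism must use more than inseparability of the ambient components, since at \'{e}tale tails the analogous character is genuinely nontrivial.

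What actually makes the argument work, and what your sketch is missing, is a \emph{global geometric argument on the tail itself}, not a local or character-theoretic one over the inseparable locus. The paper takes an irreducible component $\ol{Y}_b$ over $\ol{X}_b$, uses Corollary \ref{Cinsepnormalp} (applied just to $\ol{Y}_b$) plus Corollary \ref{Cnormalp} (i) to find a normal prime-to-$p$ subgroup $N < D_{\ol{Y}_b}$ with $D_{\ol{Y}_b}/N \cong \ints/p^a \rtimes \ints/\ell$, and then kills $\ell$ geometrically: any ramification point specializing to $\ol{Y}_b$ has \emph{cyclic} inertia of order $p^{r'}s$ (Remark \ref{Rcorrectspec}), and since the prime-to-$p$ part of a cyclic subgroup of $\ints/p^a \rtimes \ints/\ell$ (faithful action) must centralize its nontrivial $p$-part, the image inertia in $D_{\ol{Y}_b}/N$ has order exactly $p^{r'}$. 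Hence, quotienting by the $p$-Sylow $H$, the tame cover $(\ol{Y}_b/N)/H \to \ol{X}_b \cong \proj^1$ is branched at most at the single node $x_b$ --- here is where the tail hypothesis (one node) and $r' > 0$ (inseparability forces the normal $Q$, and wild inertia at specialized ramification points absorbs the tame part) both enter --- so it is trivial and $\ell = 1$. The resulting cover is cyclic of $p$-power order, the Hasse--Arf theorem makes its upper jumps at $x_b$ integral, and invariance of the upper numbering under quotients transfers this to the $\sigma^{\alpha}_b$. Your proposal never isolates this one-branch-point trivialization of the tame quotient, which is the actual content of the lemma.
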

\begin{proof} 
Consider an irreducible component $\ol{Y}_b$ of $\ol{Y}$ lying above
$\ol{X}_b$.  By Corollary \ref{Cinsepnormalp}, its decomposition group
$D_{\ol{Y}_b}$ has a normal subgroup of order
$p$.  Thus, by Corollary \ref{Cnormalp} (i), there exists $N < D_{\ol{Y}_b}$
with $p \nmid |N|$ such that $D_{\ol{Y}_b}/N$ 
is of the form $\ints/p^a \rtimes \ints/\ell$ (for some $a > r'$ and $\ell$ with
$p \nmid \ell$).
Now, by Remark \ref{Rcorrectspec}, if $y$ is a ramification point of $f$
specializing to $\ol{y}$ on $\ol{Y}_b$, then
the inertia group of $\ol{y}$ in $\ol{Y}_b \to \ol{X}_b$ is cyclic of order
$p^{r'}s$, with $p \nmid s$.  
Then the inertia group of the image of $\ol{y}$ in $\ol{Y}_b/N \to \ol{X}_b$ is
also cyclic.  Since $p \nmid |N|$, the
order of this inertia group must be $p^{r'}$.

Let $H$ be the unique subgroup of $\Aut((\ol{Y}_b/N)/\ol{X}_b)$ of order $p^a$. 
Then, by Proposition \ref{Pspecialram},
the map $(\ol{Y}_b/N)/H \to \ol{X}_b$ can only be ramified at the intersection
point $x_b$ of $\ol{X}_b$ 
with the rest of $\ol{X}$.  Thus it is a trivial cover, and we see that $\ell =
1$.  So 
$\ol{Y}_b/N \to \ol{X}_b$ is a cyclic cover of
order $p^a$, branched at $x_b$.  By the Hasse-Arf Theorem (\cite[V, Theorem
1]{Se:lf}), the upper jumps of higher
ramification at $x_b$ are all integral.  Since the upper numbering is invariant
under quotients, we conclude that
$\sigma^{\alpha}_b$ is integral for all $r' \leq \alpha < r$.
\end{proof}

\begin{lemma} \label{Ltailbounds}
\begin{description}
\item{(i)} A new tail $\ol{X}_b$ (\'{e}tale or inseparable) has $\sigma_b \geq 1
+ 1/m$.  
\item{(ii)} If $r' > 0$, and $\ol{X}_c$ is any $p^{r'}$-tail that borders a
$p^r$-component,  
then $\sigma_b \geq p^{r-r'-1}$.
\item{(iii)} If $\ol{X}_b$ is a primitive \'{e}tale tail that borders a
$p^r$-component, then
$\sigma_b \geq p^{r-1}/m$.
\end{description}
\end{lemma}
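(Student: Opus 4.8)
The three statements are really lower bounds on the higher‑ramification conductor of the metacyclic extension $\mc{O}_{\ol{W}',x_b}\hookrightarrow S_{x_b}$ of Notation \ref{Nsx}: by Definition \ref{Draminvariant} and Lemma \ref{Lsigmaeffcompatibility}(ii)--(iii), the invariant $\sigma_b$ (and each truncation $\sigma^{\alpha}_b$) is exactly the conductor of a quotient extension with group $\ints/p^{r-\alpha}\rtimes\ints/m_{x_b}$. So the plan is, in every case, to estimate upper jumps of a cyclic‑by‑tame local extension, using Lemma \ref{Lhassearf} and Lemma \ref{Lramdenominator} (the jumps lie in $\tfrac1m\ints$) and, for inseparable tails, the integrality supplied by Lemma \ref{Linsepint}.

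For (i) I would first exploit that a \emph{new} tail meets the rest of $\ol{X}$ only at $x_b$ and carries no branch point, so the cover of the tail is branched over the single point $x_b$. Since a tame cover of $\proj^1\setminus\{x_b\}\cong\aff^1$ is trivial, the tame index at $x_b$ equals $1$ and the extension there is purely wild. Stability forbids the component of $\ol{Y}$ over the tail from being a rational curve with a single marked point, so it has genus $\ge 1$; feeding this into the Hurwitz formula (Lemma \ref{Ljumpsdifferent}) forces the conductor to be strictly larger than $1$, i.e.\ $\sigma_b>1$. As $\sigma_b\in\tfrac1m\ints$ by Lemma \ref{Lramdenominator}, the least admissible value exceeding $1$ is $1+\tfrac1m$, which is the claim.

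For (ii) and (iii) I would induct on the wild height $w$ (so $w=r-r'$ in (ii), $w=r$ in (iii)), running through the tower of truncations $\sigma^{r-1}_b,\dots,\sigma^{r'}_b=\sigma_b$, which are the successive conductors $v_1<\cdots<v_w$ of the $\ints/p$‑subquotients of $S_{x_b}$. The base case $w=1$ is a $\ints/p\rtimes\ints/m_{x_b}$ extension, bounded below by $1$ (an integer for an inseparable tail, by Lemma \ref{Linsepint}) and by $1/m$ in the primitive étale case, exactly as in Raynaud's setting where $p$ exactly divides $|G|$ and as in part (i). The inductive step must show that adjoining one more level of $\ints/p$‑ramification multiplies the conductor by at least $p$; this is the role of the conductor transformation $\rho-\tau=p(\sigma-\tau)$ of Lemma \ref{Lcompositum}, supplemented by the Hasse--Arf integrality of the intermediate $v_i$. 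Iterating over the $w-1$ levels upgrades the base estimate to $p^{w-1}$ times it, giving $p^{r-r'-1}$ in (ii) and $p^{r-1}/m$ in (iii).

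The hard part is precisely this factor of $p$ per level. Hasse--Arf integrality alone only forces the breaks apart additively (one checks $v_w\ge w$), which is far weaker than $p^{w-1}$, so the multiplicative growth must be extracted from the stable‑reduction geometry — that each $S^{I_{\alpha}}_{x_b}$ is cut out by a \emph{single} cyclic inertia group and is ramified over very few points — rather than from abstract ramification theory. Making Lemma \ref{Lcompositum} bear on a cyclic tower (its hypotheses concern linearly disjoint composita, not one cyclic extension) and controlling the correction term $(p-1)\tau$ at each stage, while tracking the tame index $m$ in (iii) and the integrality peculiar to inseparable tails in (ii), is the technical heart of the proof.
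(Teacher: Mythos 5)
Your reduction of all three bounds to the estimate that ``each extra level of $\ints/p$-ramification multiplies the conductor by at least $p$'' is the right skeleton, but the proposal never establishes that step, and the tool you reach for cannot deliver it. Lemma \ref{Lcompositum} computes the conductor of a compositum of two \emph{linearly disjoint} extensions, where the total group is $G \times \ints/p$; in the cyclic tower of subextensions $S_{x_b}^{I_{\alpha}}$ the pieces are nested, the group $\ints/p^{r-r'} \rtimes \ints/m_{x_b}$ has no direct $\ints/p$ factor, and the hypotheses fail at every stage --- as you yourself flag in calling this the ``technical heart.'' The paper closes exactly this gap by citing \cite[Lemma 19]{Pr:lg}, which gives $\sigma^{\alpha}_b \geq p\,\sigma^{\alpha+1}_b$ for the truncated invariants of such tame-by-cyclic extensions; combined with the base bounds $\sigma^{r-1}_b \geq 1$ (integrality from Lemma \ref{Linsepint}, for inseparable tails) and $\sigma^{r-1}_b \geq 1/m$ (Lemmas \ref{Lhassearf} and \ref{Lramdenominator}, for primitive \'{e}tale tails), this yields (ii) and (iii) in two lines. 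Since the multiplicative step is asserted rather than proved, your parts (ii) and (iii) are incomplete.

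Part (i) contains genuine errors as well. The claim that the inertia at $x_b$ is purely wild because a tame cover of $\aff^1$ is trivial confuses the global tame \emph{quotient} with the local tame inertia: only the maximal prime-to-$p$ quotient of the decomposition group is trivial, and new tails typically have $m_{x_b} = m > 1$ --- indeed the expected value $\sigma_b = 1 + 1/m$ is not an integer, which is incompatible with a purely wild $x_b$. Likewise ``stability forces genus $\geq 1$'' is unjustified: a genus-zero component over a new tail could a priori carry three or more nodes above $x_b$, so one must first know the cover is totally ramified there. The paper supplies exactly these inputs: if $\sigma_b < 1 + 1/m$, then either $\ol{X}_b$ borders a $p^r$-component with $r - r' \geq 2$, in which case \cite[Lemma 19]{Pr:lg} and $\sigma^{r-1}_b \geq 1/m$ give $\sigma_b \geq p/m \geq 1 + 1/m$ (using $m \mid p-1$), or else \cite[Lemme 1.1.6]{Ra:sp} forces every component above $\ol{X}_b$ to be a genus-zero Artin-Schreier cover, totally ramified over $x_b$, which violates the three-marked-point condition of the stable model since no ramification points specialize to a new tail. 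Your Hurwitz computation via Lemma \ref{Ljumpsdifferent} would indeed give $\sigma_b > 1$ once total ramification and positive genus are in hand, and the final step $\sigma_b \in \frac{1}{m}\ints \Rightarrow \sigma_b \geq 1 + 1/m$ is correct, but those two inputs are the content of the cited lemmas, not consequences of stability alone.
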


\begin{proof} For part (i), assume that $\ol{X}_b$ is a $p^{r'}$-component.
If we assume that (i) is false, then either $\ol{X}_b$ borders a $p^r$-component
with $r-r' \geq 2$ or
\cite[Lemme 1.1.6]{Ra:sp} shows that each irreducible component
above $\ol{X}_b$ is a genus zero Artin-Schreier cover of $\ol{X}_b$.  In the
first case, 
we cite \cite[Lemma 19]{Pr:lg}, which shows that $\sigma^{\alpha}_b \geq 
p\sigma^{\alpha+1}_b$ for all $\alpha$ where the statement makes sense.  Then
$\sigma_b := \sigma^{r'}_b \geq
p^{r-r'-1}\sigma^{r-1}_b$.  By Lemma \ref{Lhassearf}, $\sigma^{r-1}_b \geq
\frac{1}{m}$, so 
$p^{r-r'-1}\sigma^{a-1}_b \geq \frac{p}{m} \geq 1 + 1/m,$ as $m | (p-1)$.
In the second case, since no ramification points specialize to the components
over $\ol{X}_b$, 
this contradicts the three-point condition of the stable model.  

For parts (ii) and (iii), we have $\sigma_b = \sigma^{r'}_b$.
If $\ol{X}_b$ is inseparable, then $1 \leq \sigma^{r-1}_b \in \ints$ (by Lemma
\ref{Linsepint}).  So $\sigma^{r'}_b \geq p^{r - r' - 1}$ (again, using
\cite[Lemma 19]{Pr:lg}), proving (ii).
Also, $\sigma^{r-1}_{b} \geq 1/m$ for $\ol{X}_b$ \'{e}tale and primitive, by
Lemma \ref{Lramdenominator}.  
Then (iii) follows using \cite[Lemma 19]{Pr:lg}.
\end{proof}

\begin{remark}\label{Rvancyclespositive}
Lemma \ref{Ltailbounds} (i) and (iii) show that all terms on the right-hand side
of the vanishing cycles formula
(\ref{Evancycles}) are positive.
\end{remark}

We now give some sufficient criteria for the stable reduction of $f$ to be
monotonic. 
\begin{prop} \label{Pmonotonic}
For any $G$, if $\ol{T}$ is a component of $\ol{X}$ such that there 
are no \'{e}tale tails $\ol{X}_b \succ \ol{T}$, then $\ol{f}$ is monotonic from
$\ol{T}$.  
\end{prop}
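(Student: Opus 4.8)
The plan is to argue by contradiction, using the monotonicity definition
(Definition \ref{Dmonotonic}) together with the structural results on inseparable
tails proved earlier. Suppose $\ol{f}$ is not monotonic from $\ol{T}$. Then by
Definition \ref{Dmonotonic} there exist components $\ol{T} \preceq \ol{W}'
\preceq \ol{W}''$ with $\ol{W}'$ a $p^i$-component and $\ol{W}''$ a
$p^j$-component such that $i < j$; that is, the generic inertia strictly
\emph{increases} as we move outward along some path from $\ol{T}$. The idea is
to follow this path all the way to its outward extremity and produce a tail on
which the inertia is too large to be allowed, contradicting our hypothesis that
no \'{e}tale tails lie beyond $\ol{T}$.

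First I would reduce to a local statement about a single ``ascending'' step.
Since the partial order $\preceq$ is ordered outward from the original component
and the graph $\mc{G}$ is a tree, any two comparable components are joined by a
unique path, and along it the generic inertia is given by a sequence of
$p$-powers. A failure of monotonicity means this sequence goes up at some step:
there are adjacent components $\ol{U} \preceq \ol{U}'$ with $\ol{U}$ a
$p^a$-component and $\ol{U}'$ a $p^c$-component, $c > a$. The key point to
extract is that once the inertia has \emph{increased} at some adjacent pair, it
can never return to a strictly smaller value at a tail without forcing a tail of
positive inertia; I would push outward from $\ol{U}'$ and locate a maximal
component $\ol{W}^\ast \succeq \ol{U}'$, i.e.\ a tail. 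Because inertia only
changes by passing through nodes, and by Corollary \ref{Cconsistency} the
inertia groups of adjacent components are totally ordered, the inertia cannot
drop below $p^a$ along the way without some intermediate $p^{r'}$-tail bordering
a $p^r$-component with $r > r'$ appearing; but Lemma \ref{Ltailetale} already
controls exactly this configuration.

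The crux is then to classify the tail $\ol{W}^\ast$. By Lemma
\ref{Letaletail} a $p^0$-component is always a tail, and an \'{e}tale tail
$\succ \ol{T}$ is forbidden by hypothesis; so $\ol{W}^\ast$ must be an
\emph{inseparable} tail, a $p^{r'}$-tail with $r' > 0$. But Lemma
\ref{Ltailetale} forces the component $\ol{W}'$ bordering such a $p^{r'}$-tail to
be a $p^r$-component with $r > r'$, i.e.\ the inertia \emph{decreases} as we move
onto the tail. Tracing this back, a non-monotonicity (an increase of inertia
moving outward) propagates outward until it must terminate at a tail, and there
Lemma \ref{Ltailetale} says the last step before the tail is a \emph{decrease};
combined with the total ordering of Corollary \ref{Cconsistency}, I expect to
derive a contradiction by comparing the inertia value reached at the ascending
step $\ol{U}'$ with the value forced at the terminal tail, since no \'{e}tale
tail is available to absorb the discrepancy.

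I expect the main obstacle to be the bookkeeping of the combinatorial induction:
precisely formulating ``follow the inertia sequence outward to a tail'' on the
tree $\mc{G}$ and ensuring that the ascent at $\ol{U} \to \ol{U}'$ genuinely
cannot be undone without either an \'{e}tale tail (excluded by hypothesis) or an
inseparable tail whose bordering inertia violates Lemma \ref{Ltailetale}. The
delicate point is that monotonicity is a statement about \emph{all} chains from
$\ol{T}$, not just one path, so I would phrase the argument as: take a maximal
chain from $\ol{T}$ to a leaf along which inertia fails to be non-increasing,
examine its terminal tail, and apply Lemmas \ref{Letaletail} and
\ref{Ltailetale} together with Corollary \ref{Cconsistency} to reach the
contradiction. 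Everything else — the denominators, the effective invariants —
is not needed here; the proof is purely about generic inertia groups and the
tree structure.
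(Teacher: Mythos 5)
Your proposal has a genuine gap: the combinatorial facts you cite are simply not strong enough to produce a contradiction. Corollary \ref{Cconsistency} says only that adjacent inertia groups are nested (automatic for cyclic $p$-groups), Lemma \ref{Ltailetale} says inertia strictly \emph{drops} at the last step onto a tail, and Lemma \ref{Letaletail} says \'{e}tale components are tails. All three are perfectly consistent with a non-monotonic configuration: take $\ol{T}$ a $p$-component, a $p^3$-component directly outward from it, and then a $p^2$-tail. Every adjacent pair is comparable, the tail borders a component of strictly larger inertia exactly as Lemma \ref{Ltailetale} demands, and no \'{e}tale tail lies outward from $\ol{T}$. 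So when you say you ``expect to derive a contradiction by comparing the inertia value reached at the ascending step with the value forced at the terminal tail,'' no such contradiction exists at the level of generic inertia groups and the tree structure; your closing claim that ``the proof is purely about generic inertia groups and the tree structure'' is precisely where the argument fails. Inseparable tails do exist in general (the paper emphasizes this after Lemma \ref{Ltailetale}), so an interior ascent followed by a descent onto an inseparable tail cannot be ruled out combinatorially.

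The paper's actual proof is arithmetic, not combinatorial. Given $t \in \ol{T}$ with a $p^j$-component ($j > i$) outward from it, one contracts the union $\ol{U}$ of all components outward from $t$ to get a smooth point $t$ of a model $X'$, and studies the Galois cover of complete germs $\hat{Y}'_y \to \hat{X}'_t$. The hypothesis that no \'{e}tale tails lie outward from $\ol{T}$ enters here, and only here: it guarantees every component of $\ol{Y}$ above $\ol{U}$ has nontrivial inertia, so Corollary \ref{Cinsepnormalp} gives the decomposition group a normal subgroup of order $p$, and Corollary \ref{Cnormalp}(i) then yields a quotient $\ints/p^a \rtimes \ints/\ell$ with $a \geq j > i$. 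Quotienting out the generic inertia of $\ol{T}$ (order $p^i$) produces a cover of $\hat{X}'_t$, totally ramified over $t$ with index divisible by $p^{a-i}$, but unramified along the special fiber. Purity of the branch locus plus the fact that at most one branch point specializes to $t$ forces the branch locus to be a single horizontal divisor $(u)$, and Raynaud's Lemme 6.3.2 (\cite{Ra:ab}) then says the ramification index at $(u)$ is prime to $p$ --- contradicting divisibility by $p^{a-i}$. None of this machinery (contraction of the model, purity, Raynaud's lemma on germ covers \'{e}tale outside one divisor) appears in your outline, and Remark \ref{Rmonotonic} confirms the hypothesis is really being used for group theory rather than tree combinatorics: when $G \cong \ints/p^n \rtimes \ints/m$ the same proof works even \emph{with} \'{e}tale tails outward from $\ol{T}$, which would be inexplicable if the statement were a consequence of the tail lemmas alone.
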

\begin{proof} 
Suppose $\ol{T}$ is a $p^i$-component, and there are no \'{e}tale 
tails lying outward from $\ol{T}$.
For a contradiction, suppose $t \in \ol{T}$ is a point such that there exists a
$p^j$-component $\ol{W}$, 
with $j > i$, lying outward from $t$.
Consider the morphism $X^{st} \to X'$ that is ``the identity" on $X_{K^{st}}$
and contracts $\ol{U}$, the union of all
components of $\ol{X}$ outward from $t$.  
If $Y'$ is the normalization of $X'$ in $K^{st}(Y)$, then $Y'$ is obtained from
$Y^{st}$ by
contracting all of the components of the special fiber above those components
contracted by $X^{st} \to X'$.
Let $y$ be a point of $Y'$ lying over the image of $t$ in $X'$ (which we call
$t$, by abuse of notation), and consider the
map of complete germs $\hat{Y}'_y \to \hat{X}'_t$.  
This map is Galois.  Its Galois group $G'$ contains the decomposition group of
the connected component of
$\ol{f}^{-1}(\ol{U})$ containing a preimage of $y$ in $\ol{Y}$.  By Corollaries
\ref{Cnormalp} (i) and \ref{Cinsepnormalp}, 
there exists $N \leq G'$ such that $G'/N \cong \ints/p^a \rtimes \ints/\ell$,
with $p \nmid \ell$ (this is the only place we use the assumption that there are
no \'{e}tale tails lying outward from $\ol{T}$).  
Also, $a \geq j > i$ by our assumption on $t$.

Let $\hat{V}_v$ be the quotient of $\hat{Y}'_y$ by the subgroup of $G'$ that
contains $N$ and whose image in $G/N$ has order $p^i$.  Then 
$\phi: \hat{V}_v \to \hat{X}'_{t}$ is Galois with Galois group $\ints/p^{a-i}
\rtimes \ints/\ell$.  
Note that $t$ is a \emph{smooth} point 
of $X'$, as we have contracted only a tree of projective lines (of course, $y$
may be quite singular, but it is still a normal point of $Y'$).

Now, $\phi$ is totally ramified above the point $t$, but it is
unramified above the height 1 prime $(\pi)$, where $\pi$ is a uniformizer of
$R$, because we have quotiented out the
generic inertia of $\ol{T}$.  Using purity of the branch locus (\cite[Theorem
5.2.13]{Sz:fg}), we see that
$\phi$ must be ramified over some height 1 prime $(u)$ such that the scheme cut
out by $u$ intersects the generic fiber.  Since we have been assuming from the
beginning that the branch
points of $Y_{K} \to X_{K}$ do not collide on the special fiber $\ol{X}_0$,
and we have not contracted $\ol{X}_0$,
there is at most one branch point on the generic fiber that can
specialize to $t$.  Thus $(u)$ cuts the generic fiber in exactly one point,
and it is the only height 1 prime above which $\phi$ is ramified.  So $\phi$ is
\'{e}tale outside of the scheme cut out
by $(u)$.  We are now in the situation of \cite[Lemme 6.3.2]{Ra:ab}, and we
conclude that 
the ramification index at $(u)$ is prime to $p$.  But this contradicts the fact
that the ramification index above $t$ is
divisible by $p^{a-i}$.  
\end{proof}

\begin{remark}\label{Rmonotonic}
The proof above shows that, if $G \cong \ints/p^n \rtimes \ints/m$, then
$\ol{f}$ is monotonic from $\ol{T}$, even if there are 
\'{e}tale tails lying outward from $\ol{T}$.
\end{remark}

For the rest of this section, assume that $f:Y \to X$ is a three-point cover of
$\proj^1$ with bad reduction.  

\begin{prop}\label{Ptaillimit}
The stable reduction $\ol{X}$ has fewer than $p$ \'{e}tale tails.  For $d \geq
1$, the number of $p^d$-tails of $\ol{X}$
is less than $p^d$.
\end{prop}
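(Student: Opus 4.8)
The plan is to prove the two assertions by separate vanishing cycles arguments: the \'etale ($p^0$) tails are controlled by the three-point formula (\ref{Evancycles}), while the $p^d$-tails ($d\ge 1$) are controlled by the level-$d$ instance of the generalized formula (\ref{Egenvancycles}) together with the lower bounds of Lemma~\ref{Ltailbounds}.

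First I would bound the \'etale tails. Since $f$ is a three-point cover with bad reduction, (\ref{Evancycles}) reads $1=\sum_{b\in B_{\text{new}}}(\sigma_b-1)+\sum_{b\in B_{\text{prim}}}\sigma_b$. By Lemma~\ref{Ltailbounds}(i) each new tail satisfies $\sigma_b-1\ge 1/m$, and by Lemma~\ref{Ltailbounds}(iii) a primitive tail bordering a $p^r$-component satisfies $\sigma_b\ge p^{r-1}/m\ge 1/m$, the last step because $r\ge 1$ by Lemma~\ref{Ltailetale}. Thus each of the $|B_{\text{\'{e}t}}|$ summands is at least $1/m$, so $1\ge |B_{\text{\'{e}t}}|/m$ and hence $|B_{\text{\'{e}t}}|\le m$. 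Since $m=m_G$ divides $p-1$ by Lemma~\ref{Lautaction}, we get $|B_{\text{\'{e}t}}|\le p-1<p$, as desired.

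For the $p^d$-tails I would apply the level-$d$ formula. Because $X=\proj^1$, the dual graph of $\ol X$ is a tree and every component of $\ol X$ has genus $0$; consequently, exactly as in the proof of Proposition~\ref{Pgenvancycles}, each connected component $\mc U$ of $\mc G_d$ satisfies $\sum_{e\in B(\mc U)}(\sigma^{\eff,d}_e-1)=-2$. Every boundary edge $e\in B(\mc U)$ runs from a $p^{>d}$-component out to a $p^{\le d}$-component or to a branch point, so by Lemma~\ref{Lsigmaeffcompatibility}(ii)--(iii) one has $\sigma^{\eff,d}_e\ge 0$; moreover by Lemma~\ref{Ltailetale} every $p^d$-tail borders a $p^{>d}$-component, hence corresponds to exactly one such boundary edge, on which $\sigma^{\eff,d}_e=\sigma_b$ is a positive \emph{integer} (Lemma~\ref{Linsepint}), indeed $\sigma_b\ge p^{r-d-1}$ when the bordering component is a $p^r$-component (Lemma~\ref{Ltailbounds}(ii)). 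Writing $t_{\mc U}$ for the number of $p^d$-tails attached to $\mc U$, the identity gives $t_{\mc U}\le \sum_{e\in B(\mc U)}\sigma^{\eff,d}_e=|B(\mc U)|-2$.

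The main obstacle is that this estimate bounds only the weighted sum $\sum\sigma_b$, not the raw number of tails: a $p^d$-tail bordering a $p^{d+1}$-component can attain the minimal value $\sigma_b=1$, and since every boundary invariant is nonnegative, the formula alone does not prevent many such minimal tails from accumulating on a single $\mc U$. (Note that working with the per-component sum over $B(\mc U)$, rather than with the maximal-vertex sets $B^\alpha_{r,r'}$, already accounts for \emph{every} $p^d$-tail, including those bordering non-maximal components of $\mc G_d$, so that is not the difficulty.) To extract the factor $p^d$ I would analyze, on each $p^{d+1}$-component meeting the boundary of $\mc U$, the deformation datum $\omega$ governing the drop of inertia from $p^{d+1}$ to $p^d$: by Corollaries~\ref{Cinsepnormalp} and \ref{Cnormalp}(i) and the argument of Lemma~\ref{Linsepint}, each minimal tail arises from a cyclic $\ints/p^{a}$-cover branched only at its node $x_b$, and these nodes are distinct critical points of $\omega$ on a genus-zero curve at which $\omega$ has a prescribed zero. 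I expect the bound $p^d$ to emerge as a bound on the degree of the additive or multiplicative reduction producing $\omega$, limiting the number of such critical points, after which the count $\sum_{\mc U}t_{\mc U}$ can be assembled; making this degree estimate precise is the step I expect to be hardest.
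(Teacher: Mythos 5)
Your $d=0$ case is exactly the paper's argument, but for $d\geq 1$ there is a genuine gap, and you have misdiagnosed where it lies. The obstacle you name --- arbitrarily many $p^d$-tails accumulating at the minimal value $\sigma_b=1$ --- is dispatched by lemmas you already cite: a \emph{new} inseparable tail satisfies $\sigma_b\geq 1+\frac{1}{m}$ by Lemma~\ref{Ltailbounds}(i), hence $\sigma_b\geq 2$ by the integrality of Lemma~\ref{Linsepint}, so it contributes $\sigma_b-1\geq 1$; and a $p^d$-tail that is not new contains the specialization of a branch point, so there are at most three such tails globally --- in the paper's bookkeeping these are traded against the term $|\Pi_{d+1}|$ in (\ref{Egenvancycles}), so that they contribute $\sigma_b\geq 1$ rather than $\sigma_b-1\geq 0$ (this is the role of the set $B'$ in the paper's inequality (\ref{Eptails})). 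Thus \emph{every} $p^d$-tail contributes at least $1$ to the bounded sum, and no deformation-data degree estimate is needed for this point.

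The real difficulty is the one you wave away in your parenthetical: bounding the number of boundary edges of $\mc{G}_d$ that do \emph{not} end at $p^d$-tails. Each such edge can contribute as little as $\frac{1}{m}-1\geq\frac{2-p}{p-1}$ (Lemma~\ref{Lramdenominator} plus positivity of conductors), so your inequality $t_{\mc{U}}\leq |B(\mc{U})|-2$ is vacuous without a cap on $|B(\mc{U})|$. The paper supplies this cap by \emph{strong induction on $d$}: each such boundary point lies on a non-tail component of lower inertia, outward from which (by Proposition~\ref{Pmonotonic} combined with Lemma~\ref{Ltailetale}, which forbids a $p^a$-tail from appearing in a region where all generic inertia is $\leq p^a$) there must lie a tail of order $p^a$ with $a\leq d-1$, distinct edges yielding distinct such tails; the inductive hypothesis bounds their number by $M=(p-1)+(p-1)+(p^2-1)+\cdots+(p^{d-1}-1)$, and then $1\geq \frac{2-p}{p-1}M+\nu$ forces $\nu<p^d$. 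The factor $p^d$ thus emerges from this geometric sum of inductively bounded lower-order tail counts, not from any bound on the degree of the additive or multiplicative reduction producing $\omega$: the local information carried by the deformation data on a single component is precisely the effective local vanishing cycles formula (\ref{Egenlocvancycles}), i.e., the identity you have already consumed, so your proposed hardest step has nothing further to draw on and does not appear in the paper's proof.
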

\begin{proof} 
We proceed by strong induction on $d$, proving the base cases $d=0$ (i.e., the
\'{e}tale tails) and $d=1$
separately.  

For $d = 0$, Equation (\ref{Evancycles}) gives 
$$1 = \sum_{b \in B_{\text{new}}} (\sigma_b - 1) + \sum_{b \in B_{\text{prim}}}
\sigma_b.$$
Each term on the right-hand side above is at least $1/m$, by Lemma
\ref{Ltailbounds}.  
So there are at most $m$ \'{e}tale tails.  Since $m | (p-1)$, the case $d=0$ is
proved.

For $d=1$, consider (\ref{Egenvancycles}) for $\alpha = 1$.
In the notation of (\ref{Egenvancycles}), let $B' \subset \bigcup_{r>1} B_{r,1}$
be the set of tails which are 
$p$-components and which contain the specialization of a branch point of $f$.  
Then we obtain
\begin{equation}\label{Eptails}
1 \geq \sum_{b \in \bigcup_{r>1} B^1_{r,1} \backslash B'} (\sigma_{b} - 1) +
\sum_{b \in B'} (\sigma_{b}).
\end{equation}
We note that the number of points indexed by $\bigcup_{r > 1} B^1_{r,1}$ that do
\emph{not} lie on a $p$-tail is bounded
by the number of \'{e}tale components, i.e., there can be no more than $p-1$ of
them (this is because each such point
lies on a $p$-component that has an \'{e}tale tail lying outward from it, and
two such $p$-components do not share the 
same \'{e}tale tail).  On the right-hand side of (\ref{Eptails}), each term
corresponding to such a
$p$-component contributes at least $\frac{1}{m} - 1$, thus at least
$\frac{1}{p-1} - 1 = \frac{2-p}{p-1}$.
By Lemma \ref{Ltailbounds} (ii), each other term on the right-hand side of
(\ref{Eptails}) is nonnegative.  Also, each tail which is a $p$-component 
corresponds to a term on the right-hand side of (\ref{Eptails}), and each such
term is at least 1, by Lemma
\ref{Ltailbounds}.  
Thus the right-hand side is at least $\frac{2-p}{p-1}(p-1) + \nu$, 
where $\nu$ is the number of tails which are $p$-components.  We conclude from
(\ref{Eptails}) that $\nu \leq p - 1$,
proving the case $d =1$.

Now, assume the lemma holds up through $d = \delta$.  
The number of $p^{\delta+1}$-components $\bigcup_{r > \delta}
B^{\delta}_{r,\delta}$ which are not tails is bounded by the
number of tails which are $p^a$-components for some $a \leq \delta$.  
By the inductive hypothesis, this is bounded by $$M = (p-1) + (p-1) + (p^2-1) +
\cdots +
(p^{\delta}-1) = \frac{p^{\delta +1}-1}{p-1} + p
- \delta - 2.$$ 
Some calculation shows that this is less than $(p^{\delta+1} -
1)\frac{p-1}{p-2}$ for $p > 2$.

Analogously to the case of $d=1$, Equation (\ref{Egenvancycles}) for $\alpha =
\delta + 1$ yields the inequality
$$1 \geq \frac{2-p}{p-1}M + \nu > 1 -p^{\delta +1}  + \nu,$$ where $\nu$ is the
number of tails which are $p^{\delta+1}$-components (if $p=2$, the second
inequality holds without any condition
on $M$).  We conclude that $\nu < p^{\delta+1}$.  
\end{proof}

\begin{corollary}\label{Ctaillimit}
For any $\mu \in \reals$, $d > 0$, let $S_{d, \mu}$ be the set of $p^d$-tails
$\ol{X}_b$ that satisfy 
$\sigma_b - 1 \geq p^{\mu}$.  Then the cardinality of $S_{d, \mu}$ is less than
$p^{d-\mu}$.
\end{corollary}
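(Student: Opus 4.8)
The plan is to view this as a weighted refinement of Proposition \ref{Ptaillimit}. There, each $p^d$-tail's contribution to the vanishing cycles count is replaced by the crude lower bound $1$; here I would instead retain the full weighted sum $\sum_b (\sigma_b - 1)$ taken over all $p^d$-tails and bound it from above by $p^d$, after which the corollary is immediate. Since $f$ is a three-point cover of $\proj^1$ we have $g_X = 0$ and $|\Pi_{d+1}| \leq 3$, so the left-hand side of the generalized vanishing cycles formula (\ref{Egenvancycles}) at level $\alpha = d$ is at most $1$.

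To bound the sum I would rerun the combinatorial analysis from the proof of Proposition \ref{Ptaillimit} verbatim at level $\alpha = d$, but stop short of discarding the tail weights. On the right-hand side I separate the terms indexed by $b$ with $r' = d$ lying on a $p^d$-tail (for which $\sigma^d_b = \sigma_b$ and $\sigma_b - 1 \geq 0$ by Lemma \ref{Ltailbounds}) from the remaining terms, which come from non-tail $p^d$-components and from truncation points with $r' < d$. By Lemmas \ref{Lramdenominator} and \ref{Lsigmaeffcompatibility} the invariants attached to these remaining terms are nonnegative (indeed at least $\tfrac{1}{m}$, since they are conductors of nontrivial extensions), so each contributes at least $\tfrac{1}{m} - 1 \geq \tfrac{2-p}{p-1}$. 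The number $M$ of such terms is bounded exactly as in Proposition \ref{Ptaillimit} by $M < (p^d - 1)\tfrac{p-1}{p-2}$, whence $\tfrac{2-p}{p-1}M > 1 - p^d$ for $p > 2$ (the prime $p = 2$ being handled as there). Observing also that a $p^d$-tail carrying a branch point of index exactly divisible by $p^d$ contributes $\sigma_b \geq \sigma_b - 1$, and so only strengthens an upper bound, these estimates combine to give
$$\sum_{p^d\text{-tails}}(\sigma_b - 1) \;\leq\; 1 - \tfrac{2-p}{p-1}M \;<\; 1 + (p^d - 1) \;=\; p^d.$$

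With this in hand the corollary follows at once: since $S_{d,\mu}$ is contained in the set of $p^d$-tails and every $p^d$-tail contributes a nonnegative amount to the sum, while each $b \in S_{d,\mu}$ contributes at least $p^\mu$ by definition, I obtain
$$p^\mu\,|S_{d,\mu}| \;\leq\; \sum_{b \in S_{d,\mu}}(\sigma_b - 1) \;\leq\; \sum_{p^d\text{-tails}}(\sigma_b - 1) \;<\; p^d,$$
which rearranges to $|S_{d,\mu}| < p^{d-\mu}$.

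The step I expect to be most delicate is ensuring that \emph{every} $p^d$-tail is counted with its genuine weight $\sigma_b - 1$, rather than being folded into the coarse estimate $\sigma^{\eff,d}_e - 1 \geq -1$ used in the clean statement of Proposition \ref{Pgenvancycles}. For this I would work directly from the connected-component equality (\ref{Evancyclesineq}) underlying that proposition, where Lemma \ref{Lsigmaeffcompatibility}(ii) identifies $\sigma^{\eff,d}_e = \sigma^d_b$ on every boundary edge leaving $\mc{G}_d$ toward a $p^{\leq d}$-component, independently of any maximality hypothesis; all remaining boundary contributions (including those toward the $\le |\Pi_{d+1}|$ branch-point vertices) are $\geq -1$ and are absorbed into the same count $M$. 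Once this bookkeeping is set up exactly as in Proposition \ref{Ptaillimit}, the rest is the short rearrangement above.
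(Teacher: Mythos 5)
Your proposal is correct and is essentially the paper's own argument: the paper likewise ``carries the proof of Proposition \ref{Ptaillimit} through'' at level $d$, retaining the genuine weights $\sigma_b - 1 \geq p^{\mu}$ for the tails in $S_{d,\mu}$ in place of the crude bound $\geq 1$, and concludes $p^{\mu}|S_{d,\mu}| < p^{d}$ from the same inequality (\ref{Eptails})/(\ref{Egenvancycles}) with the same count $M$ of non-tail boundary terms. Your only cosmetic difference is to first extract the uniform bound $\sum_b (\sigma_b - 1) < p^{d}$ over all $p^{d}$-tails and then apply a Markov-type counting step, which is the identical chain of estimates reordered.
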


\begin{proof}
We carry the proof of Proposition \ref{Ptaillimit} through.  In the $d = 1$
step, if we let $\nu = |S_{1, \mu}|$ (as
opposed to the \emph{total} number of $p$-tails), then we obtain from
(\ref{Eptails}) that $p^{\mu}\nu \leq p-1$.  
This gives the corollary for $d=1$.  In the inductive
step, we set $\nu = |S_{\delta+1, \mu}|$ as opposed to the total number of
$p^{\delta+1}$-tails.  
Again, we conclude that $p^{\mu}\nu < p^{\delta+1}$, which gives the corollary.
\end{proof}

The following proposition will be useful for Example \ref{Xgoodreduction}:

\begin{prop}\label{Pnoinsepnonew}
Suppose $\ol{X}$ has no new \'{e}tale tails.  Then it has no new inseparable tails.
\end{prop}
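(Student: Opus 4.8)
The plan is to argue by contradiction, so suppose $\ol{X}$ has no new \'{e}tale tails but does have a new inseparable tail $\ol{X}_b$, say a $p^{r'}$-tail with $r' > 0$. By Lemma \ref{Ltailetale} the component $\ol{W}'$ meeting $\ol{X}_b$ at the node $x_b$ is a $p^r$-component with $r > r'$. Fix an irreducible component $\ol{Y}_b$ of $\ol{Y}$ lying above $\ol{X}_b$. The first observation is that since $\ol{X}_b$ is \emph{new}, no branch point of $f$ specializes to it, and hence (as a ramification point upstairs lies over a branch point downstairs, by Remark \ref{Rcorrectspec}) no ramification point specializes to $\ol{Y}_b$. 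Consequently every marked point of $\ol{Y}_b$ in the sense of the stable model lies over $x_b$, i.e.\ $\ol{Y}_b$ meets the rest of $\ol{Y}$ only above the single node $x_b$. The strategy is then to contradict the stability condition by showing that $\ol{Y}_b$ has genus $0$ and carries at most two such marked points.

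The intermediate step is to pin down the structure of $\ol{Y}_b \to \ol{X}_b$. By Corollary \ref{Cinsepnormalp} the decomposition group $D_{\ol{Y}_b}$ has a normal subgroup of order $p$, so by Corollary \ref{Cnormalp} (i) there is a prime-to-$p$ normal subgroup $N < D_{\ol{Y}_b}$ with $D_{\ol{Y}_b}/N \cong \ints/p^a \rtimes \ints/\ell$ for some $a > r'$ and $p \nmid \ell$. The quotient $\ol{Y}_b/N \to \ol{X}_b$ is then $\ints/p^a \rtimes \ints/\ell$-Galois, and by Proposition \ref{Pspecialram} (together with newness) it is ramified only at $x_b$. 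Passing to the further quotient by the $p$-Sylow subgroup gives a tame $\ints/\ell$-cover of $\ol{X}_b \cong \proj^1$ branched over the single point $x_b$; since the tame fundamental group of $\aff^1$ is trivial, this forces $\ell = 1$. Thus $\ol{Y}_b/N \to \ol{X}_b$ is a cyclic $\ints/p^a$-cover of $\proj^1$, totally (and wildly) ramified over $x_b$ and unramified elsewhere. This controls the number of marked points (the cover is totally ramified at $x_b$), but \emph{not} the genus.

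The main obstacle is precisely the genus of $\ol{Y}_b$: unlike in the \'{e}tale-tail case of Lemma \ref{Ltailetale}, a cyclic $p$-power cover of $\proj^1$ branched at one point can be wildly ramified there and have positive genus, in which case the stability condition is satisfied and no local contradiction arises. This is exactly where the hypothesis that there are no new \'{e}tale tails must enter, and it must enter \emph{globally} rather than locally. The plan is to translate the wild ramification at $x_b$ into the language of effective invariants: the conductor of $\ol{Y}_b/N \to \ol{X}_b$ at $x_b$ is governed by $\sigma_b$ (via upper-numbering invariance and Remark \ref{Rupperlower}), and $\sigma_b \in \ints$ by Lemma \ref{Linsepint}. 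One then feeds this into the vanishing cycles bookkeeping: with $B_{\text{new}} = \emptyset$, the three-point formula (\ref{Evancycles}) reads $1 = \sum_{b \in B_{\text{prim}}} \sigma_b$, while the generalized formula (\ref{Egenvancycles}) at the relevant level $\alpha$, combined with the lower bounds of Lemma \ref{Ltailbounds} on the remaining (necessarily primitive or branch-bearing) tails, should show that a positive-genus $\ol{Y}_b$ over a new inseparable tail cannot be accommodated without producing a new \'{e}tale tail, contradicting the hypothesis. Carrying out this combinatorial accounting --- identifying which vertices are maximal in $\mc{G}_\alpha$ and tracking the signs of $\sigma^{\eff,\alpha}_e$ inward from $\ol{X}_b$ toward $\ol{X}_0$ --- is the technical crux of the argument.
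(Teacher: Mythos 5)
Your setup and toolkit are the right ones, but the proof is not complete: the decisive step, which you yourself defer as ``the technical crux,'' is precisely where the content of the proposition lies, and your sketch does not indicate how to close it. The paper's proof is short and purely numerical. One takes $i \geq 1$ \emph{minimal} such that a new $p^i$-tail exists (your single tail at level $r'$, chosen without minimality, leaves the other boundary terms uncontrolled) and applies (\ref{Egenvancycles}) at $\alpha = i$, so the left-hand side is $-2 + |\Pi_{i+1}|$. The terms on the right are then sorted using Proposition \ref{Pmonotonic}, which your proposal never invokes: any $b \in B^i_{r,r'}$ not lying on a $p^i$-tail must have a primitive \'{e}tale tail lying outward from it (here the no-new-\'{e}tale-tails hypothesis enters, since every \'{e}tale tail is then primitive), and there are at most $3 - |\Pi_{i+1}|$ such terms (and at least one primitive tail exists, by (\ref{Evancycles})), each contributing $\sigma^i_b - 1 > -1$. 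The $b$ on non-new $p^i$-tails contribute $\sigma^i_b - 1 \geq 0$ by integrality (Lemma \ref{Linsepint}), and the new $p^i$-tail contributes $\sigma^i_b - 1 \geq 1$: this follows by combining Lemma \ref{Ltailbounds} (i), which gives $\sigma_b \geq 1 + 1/m$, with the integrality $\sigma_b \in \ints$ of Lemma \ref{Linsepint} to force $\sigma_b \geq 2$. You cite both lemmas but never perform this combination, which is the one place the two halves of your plan actually interact. Summing, the right-hand side is strictly greater than $-(3 - |\Pi_{i+1}|) + 1 = -2 + |\Pi_{i+1}|$, the desired contradiction.

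Two further points. First, your opening analysis of $\ol{Y}_b \to \ol{X}_b$ (normal subgroup of order $p$, the forced $\ell = 1$, the cyclic $p^a$-cover totally ramified over $x_b$) re-derives the proof of Lemma \ref{Linsepint}; all that is needed downstream is its conclusion $\sigma^{\alpha}_b \in \ints$, which you could simply cite. Second, your framing of the obstacle as ``the genus of $\ol{Y}_b$'' misdirects the argument: stability and the genus of $\ol{Y}_b$ play no role in the contradiction, which is entirely an accounting of contributions to (\ref{Egenvancycles}). A new inseparable tail is excluded not because a positive-genus component over it would violate stability (it would not), but because its effective invariant is too large -- at least $2$ -- for the budget $-2 + |\Pi_{i+1}|$ once each primitive-tail term eats strictly less than $1$. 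Likewise, ``tracking the signs of $\sigma^{\eff,\alpha}_e$ inward toward $\ol{X}_0$'' is unnecessary: that bookkeeping is already packaged once and for all in Proposition \ref{Pgenvancycles}, and the remaining work is term classification via minimality of $i$ and monotonicity, not sign-chasing.
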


\begin{proof}
Assume, for a contradiction, that $i \geq 1$ is minimal such that there is a new $p^i$-tail $\ol{X}_b$.  
Applying (\ref{Egenvancycles}) for $\alpha = i$ yields (in the notation of (\ref{Egenvancycles}))
$$- 2 + |\Pi_{i+1}| \geq \sum_{\substack{r, r' \\ r' \leq i < r}} \sum_{b \in B^{i}_{r,r'}} (\sigma^{i}_b - 1).$$
On the right-hand side, using Proposition \ref{Pmonotonic}, all terms $b$ that do not correspond to $p^i$-tails correspond to points with
primitive \'{e}tale tails lying outward from them.  There are at most $3 - |\Pi_{i+1}|$ such tails (and at least $1$, by the vanishing cycles formula
(\ref{Evancycles})), so there are at most $3 - |\Pi_{i+1}|$ such
$b$, for which $\sigma^i_b - 1 > -1$ in each case.   By Lemmas \ref{Linsepint} and \ref{Ltailbounds}, the $b$ corresponding to new $p^i$-tails 
(of which there is at least $1$)
satisfy $\sigma^i_b - 1 \geq 1$ and those corresponding to other $p^i$-tails satisfy $\sigma^i_b - 1 \geq 0$.  Putting this all together, we 
see that the
right-hand side is \emph{strictly} greater than $-(3 - |\Pi_{i+1}|) + 1 = -2 + |\Pi_{i+1}|$.  This is a contradiction.
\end{proof}

\section{Wild Monodromy and Stable Reduction}\label{Swild}

\subsection{The main theorem}
We maintain the assumptions and notations of \S\ref{Sstable}.
In particular, $G$ is a finite group with cyclic $p$-Sylow subgroup $P$ of order
$p^n$, and
$m_G =  |N_G(P)/Z_G(P)|$.  Assume $n \ne 0$ (so that $p$
divides the order of $G$).  We make the additional (important!) assumption that
$p$ \emph{does not divide the order 
of the center of $G$}.  Let $K_0 = \Frac(W(k))$, where $k$ is an algebraically
closed field of characteristic $p$.  
Let $f: Y \to X = \proj^1$ be a three-point $G$-cover 
defined over a finite extension $K/K_0$.  Write $K^{st}/K$ for the
smallest extension of $K$ over which 
the stable model of $f$ can be defined.  Then $\Gamma := \Gal(K^{st}/K)$ is
called the \emph{monodromy group}, and its
(unique) $p$-Sylow subgroup $\Gamma_w$ is called the \emph{wild monodromy
group}.  
Recall from \S\ref{Sstable} that $\Gamma$ is the largest quotient of $G_K$ that
acts faithfully on the stable reduction $\ol{f}: \ol{Y} \to \ol{X}$ of $f$.  
So $\Gamma$ acts on $\ol{Y}$ and the action descends to an action on $\ol{X}$.
Furthermore, the action of $\Gamma$ commutes with the action of $G$.  Theorem \ref{Tmain} states that $\Gamma_w$ has exponent dividing
$p^{n-1}$.  In other words, for any $g \in \Gamma_w$, $g^{p^{n-1}} = 1$.  

The proof of Theorem \ref{Tmain}, which is spread out over \S\ref{Sproof}, relies on methods
similar to those used in \cite{Ra:sp}.  The main idea is to
examine possible $p$-power order actions on the 
stable reduction of $f$ in detail, and to show that actions of order $p^n$
cannot be induced by elements of $\Gamma_w$.  
Our main tools are the generalized vanishing cycles formula
(\ref{Egenvancycles}) and Proposition \ref{Ptaillimit}.

\subsection{Proof of Theorem \ref{Tmain}}\label{Sproof}

Assume first that $f$ has potentially good reduction.  
Then, by \cite[Proposition 4.2.2]{Ra:sp}, the wild monodromy group $\Gamma_w$ 
is isomorphic to a subgroup of the $p$-Sylow subgroup $Q$ of the center of $G$,
which is trivial by assumption.

Now assume that $f$ does not have potentially good reduction.
We will use the notations $B_{r, r'}$, $J_x$ as well as $\sigma^{\alpha}_{x_b}$
and its variants from Notation \ref{Nsx}
and Definition \ref{Draminvariant}.

We study how $\Gamma_w$ acts on different parts of $\ol{Y}$ and $\ol{X}$.
We start with an easy lemma:

\begin{lemma}\label{Ltwopoints}
If $\gamma \in \Gamma_w$ acts on a component $\ol{W}$ of $\ol{X}$, then it fixes
pointwise any component $\ol{W}' \prec
\ol{W}$ of $\ol{X}$.
\end{lemma}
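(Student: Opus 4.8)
The plan is to split the statement into a purely combinatorial claim — that $\gamma$ carries each $\ol{W}' \prec \ol{W}$ to itself — and a geometric claim — that this set-theoretic fixing is actually pointwise. First I would record the structural facts I will lean on. Since $X^{st}$ is a blowup of $\proj^1 \times_R R^{st}$ centered at closed points (\S\ref{Sstable}), the special fiber $\ol{X}$ has arithmetic genus zero; hence its dual graph $\mc{G}$ is a tree and every irreducible component of $\ol{X}$ is isomorphic to $\proj^1_k$. I would also note that $\gamma$ fixes the vertex $v_0$: the original component $\ol{X}_0$ is the strict transform of the special fiber of the $R$-model $X_R$, which is stable under $\Gal(K^{st}/K)$, so $\gamma(\ol{X}_0) = \ol{X}_0$.

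Next I would establish the set-theoretic fixing. The automorphism $\gamma$ of $\ol{X}$ induces an automorphism of the tree $\mc{G}$ that fixes both $v_0$ and the vertex attached to $\ol{W}$ (the latter because $\gamma$ acts on $\ol{W}$ by hypothesis). Because the simple path between two vertices of a tree is unique, $\gamma$ must map the path from $v_0$ to the vertex of $\ol{W}$ onto itself; and an automorphism of a finite path fixing both of its endpoints is the identity on that path. Therefore $\gamma$ fixes setwise every component $\ol{W}'$ with $\ol{X}_0 \preceq \ol{W}' \preceq \ol{W}$, in particular every $\ol{W}' \prec \ol{W}$.

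For the pointwise statement I would combine the genus-zero structure with the order of $\gamma$. If $\ol{W}' = \ol{X}_0$, then the three branch points of $f$ specialize to three distinct, $K$-rational points of $\ol{X}_0 \cong \proj^1$, and $\gamma \in \Gal(K^{st}/K)$ fixes each of them; an automorphism of $\proj^1$ fixing three points is trivial, so $\gamma|_{\ol{X}_0} = \mathrm{id}$. If instead $\ol{W}' \neq \ol{X}_0$, then $\ol{X}_0 \prec \ol{W}' \prec \ol{W}$, so $\ol{W}'$ carries two distinct nodes: one in the direction of $\ol{X}_0$ and one on the path toward $\ol{W}$. Each is the unique intersection point of $\ol{W}'$ with an adjacent component that $\gamma$ already fixes setwise by the previous paragraph, so $\gamma$ fixes both nodes, giving two distinct fixed points on $\ol{W}' \cong \proj^1$.

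The crux — and what the name of the lemma points to — is the final group-theoretic input: $\gamma|_{\ol{W}'}$ is an element of $PGL_2(k)$ of $p$-power order that fixes two distinct points of $\proj^1_k$. In characteristic $p$ a nontrivial element of $PGL_2(k)$ of $p$-power order is unipotent and fixes exactly one point, so fixing two distinct points forces $\gamma|_{\ol{W}'} = \mathrm{id}$. I do not expect a serious obstacle here, since this is a preliminary lemma; the only points that need care are confirming that all components of $\ol{X}$ are genus zero and that $\gamma$ fixes $v_0$, after which the elementary fact about $PGL_2(k)$ is exactly what lets \emph{two} fixed points (rather than the usual three) suffice.
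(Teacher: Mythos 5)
Your overall architecture --- first make $\gamma$ trivial on $\ol{X}_0$, then propagate setwise fixing along the unique tree path from $v_0$ to the vertex of $\ol{W}$, then use two fixed points plus $p$-power order on each intermediate $\proj^1$ --- is exactly the paper's route (the paper compresses your tree argument into the phrase ``by continuity''), and the tree step, the node-fixing step, and the $PGL_2(k)$ fact are all sound as you state them. But your base case contains a step that fails. You assert that the three branch points of $f$ specialize to three distinct points of $\ol{X}_0$. That is true on the special fiber of the \emph{smooth} model $X_R$ (it is the standing hypothesis of \S\ref{Sstable}), but not on the stable model $X^{st}$, which is a blowup of $X \times_R R^{st}$ at closed points: if the section through a branch point passes through a blown-up point, its specialization moves onto a new component. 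In fact this happens precisely in the situations where the lemma has content: a primitive \'{e}tale tail \emph{by definition} contains the specialization of a branch point, and by Proposition \ref{Pcorrectspec} a branch point of index $p^a s$ specializes to a $p^a$-component, which need not be $\ol{X}_0$. (Your observation that $\gamma$ fixes the specializations of $K$-rational points is fine; the problem is solely where those specializations lie.) So in general none of the three branch points need specialize to $\ol{X}_0$, and your three fixed points on $\ol{X}_0$ evaporate.

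The gap is local and repairable. The paper's repair: since $k$ is algebraically closed, the reduction map $\proj^1(R) \to \proj^1(k)$ is surjective, $\gamma$ fixes every $K$-rational point and commutes with reduction, and $\ol{X}_0$ is ($\gamma$-equivariantly) identified with the special fiber of the standard model of $\proj^1_R$; hence $\gamma$ fixes a dense set of points of $\ol{X}_0$ and is therefore the identity there. Alternatively, your branch-point idea can be saved by running it on the contracted model: the contraction $X^{st} \to X \times_R R^{st}$ is $\gamma$-equivariant and restricts to an isomorphism from $\ol{X}_0$ onto the special fiber of $X \times_R R^{st}$, where the specializations of $0$, $1$, $\infty$ \emph{are} three distinct fixed points by the no-collision hypothesis. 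With either patch in place, the rest of your argument goes through as written.
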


\begin{proof}
Since $k$ is algebraically closed, all elements of $G_K$ commute with the
reduction from $R$ to $k$.  Thus $\Gamma_w$, which is a subquotient of $G_K$,
acts trivially on $\ol{X}_0$, which is the 
reduction of the standard model of $\proj^1_R$ to $k$.  So we may assume
$\ol{W}' \ne \ol{X}_0$.
By continuity, $\gamma$ fixes the singular points of $\ol{X}$ lying on $\ol{W}'$
in the directions of $\ol{X}_0$ and 
$\ol{W}$.  Then $\gamma$ acts on $\ol{W}' \cong \proj^1$ with at least two fixed
points and $p$-power order.  So $\gamma$ acts trivially.
\end{proof}

We will look separately at the action of $\gamma$ on the \'{e}tale tails, and
then on the inseparable tails.

\subsubsection{The \'{e}tale tails.}
We first examine how the action of $\Gamma_w$ interacts with the \'{e}tale
tails.
\begin{lemma} \label{Lfixetale}
The action of $\Gamma_w$ permutes the \'{e}tale tails of $\ol{X}$ trivially.
\end{lemma}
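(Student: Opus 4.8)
The plan is to reduce the statement to a standard fact about $p$-groups acting on small sets, the key external input being Proposition \ref{Ptaillimit}, which guarantees that $\ol{X}$ has fewer than $p$ \'{e}tale tails. First I would record that $\Gamma_w$ acts on $\ol{X}$ (the action on $\ol{Y}$ descends to $\ol{X}$ because $\Gamma_w$ commutes with $G$ and $\ol{Y}/G \cong \ol{X}$), and that this action fixes the original component $\ol{X}_0$ pointwise. The latter is exactly the point noted in the proof of Lemma \ref{Ltwopoints}: since $k$ is algebraically closed, every element of $\Gamma_w$, being a subquotient of $G_K$, commutes with reduction and hence acts trivially on $\ol{X}_0$, the reduction of the standard model of $\proj^1_R$. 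Consequently each $\gamma \in \Gamma_w$ induces an automorphism of the dual graph of $\ol{X}$ fixing the vertex $v_0$, so it carries leaves to leaves and therefore tails to tails.

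Next I would check that $\gamma$ preserves the generic inertia type of each component, so that \'{e}tale tails are sent to \'{e}tale tails rather than to some other kind of tail. Because the $\Gamma_w$-action on $\ol{Y}$ commutes with the $G$-action, for any component $\ol{V}$ of $\ol{Y}$ the inertia group of $\gamma \ol{V}$ is $\gamma I_{\ol{V}} \gamma^{-1} = I_{\ol{V}}$; in particular its order is unchanged, so a $p^i$-component is sent to a $p^i$-component. Combining this with the previous paragraph, $\gamma$ permutes the \'{e}tale tails (the $p^0$-tails that are leaves $\neq \ol{X}_0$) among themselves.

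Finally comes the counting argument: $\Gamma_w$ is a $p$-group acting on the set of \'{e}tale tails, which by Proposition \ref{Ptaillimit} has cardinality strictly less than $p$. Every orbit of this action has size a power of $p$ and is at most the size of the whole set, hence has size $1$. Therefore $\Gamma_w$ fixes each \'{e}tale tail, i.e.\ permutes them trivially. I do not expect a serious obstacle here; the only point requiring care is verifying that $\Gamma_w$ genuinely preserves the class of \'{e}tale tails (the combinatorial and inertial type), which is precisely what commutation with $G$ supplies, after which the orbit count is immediate.
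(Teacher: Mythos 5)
Your proof is correct and follows the same route as the paper: the paper's entire argument is that Proposition \ref{Ptaillimit} bounds the number of \'{e}tale tails by $p-1$, so the $p$-group $\Gamma_w$ must permute them trivially. The preliminary verifications you include (that $\Gamma_w$ fixes $\ol{X}_0$, hence sends tails to tails, and that commutation with $G$ preserves generic inertia, hence sends \'{e}tale tails to \'{e}tale tails) are left implicit in the paper but are exactly the right points to check.
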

\begin{proof}
(cf.\ \cite[p.\ 112]{Ra:sp}) By Proposition \ref{Ptaillimit}, there are at most
$p-1$ \'{e}tale
tails.  But $\Gamma_w$ is a $p$-group, and thus must permute them trivially.
\end{proof}

\begin{corollary} \label{Cfixbottom}
The group $\Gamma_w$ acts trivially on the components $\ol{W}$ of $\ol{X}$ such
that there exists an \'{e}tale tail
$\ol{X}_b \succ \ol{W}$.
\end{corollary}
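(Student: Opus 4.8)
The plan is to derive this directly from the two preceding results, Lemmas \ref{Lfixetale} and \ref{Ltwopoints}. The key observation is that an \'{e}tale tail lying outward from $\ol{W}$ serves as an ``anchor'' that $\Gamma_w$ must stabilize, and Lemma \ref{Ltwopoints} then propagates this rigidity inward to $\ol{W}$. So the corollary should be a formal consequence of combining the fact that \'{e}tale tails are individually fixed with the fact that everything inward of a stabilized component is fixed pointwise.

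Concretely, I would fix a component $\ol{W}$ of $\ol{X}$ for which there exists an \'{e}tale tail $\ol{X}_b$ with $\ol{X}_b \succ \ol{W}$, and let $\gamma \in \Gamma_w$ be arbitrary. First I would invoke Lemma \ref{Lfixetale}: since $\Gamma_w$ permutes the \'{e}tale tails trivially, $\gamma$ stabilizes $\ol{X}_b$ as a component, i.e., $\gamma$ acts on $\ol{X}_b$ in the sense required by Lemma \ref{Ltwopoints}. Next, since $\ol{X}_b \succ \ol{W}$ means exactly $\ol{W} \prec \ol{X}_b$, I would apply Lemma \ref{Ltwopoints} with $\ol{X}_b$ playing the role of the stabilized component and $\ol{W}$ playing the role of the more inward component $\ol{W}'$; this yields that $\gamma$ fixes $\ol{W}$ pointwise. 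Since $\gamma$ was arbitrary, $\Gamma_w$ acts trivially on $\ol{W}$, which is the assertion.

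In this case there is essentially no serious obstacle: the main point requiring care is simply matching the direction of the partial order, since the strict relation $\ol{X}_b \succ \ol{W}$ is precisely the hypothesis $\ol{W} \prec \ol{X}_b$ needed to feed into Lemma \ref{Ltwopoints}. I would also confirm that the degenerate possibility $\ol{W} = \ol{X}_b$ cannot arise, which is ruled out by the strictness of $\succ$ (so that $\ol{W}$ is genuinely a component inward of the tail rather than the tail itself). With those two bookkeeping checks in place, the two lemmas chain together to give the result immediately.
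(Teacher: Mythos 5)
Your proposal is correct and is exactly the argument the paper intends: its proof is the one-line remark ``This follows from Lemmas \ref{Ltwopoints} and \ref{Lfixetale},'' and your chain---$\gamma$ stabilizes the \'{e}tale tail $\ol{X}_b$ by Lemma \ref{Lfixetale}, hence fixes every component $\ol{W} \prec \ol{X}_b$ pointwise by Lemma \ref{Ltwopoints}---is precisely the intended deduction. Your bookkeeping checks (direction of the partial order, strictness of $\succ$) are sound and merely make explicit what the paper leaves implicit.
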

\begin{proof}
This follows from Lemmas \ref{Ltwopoints} and \ref{Lfixetale}.
\end{proof}

\begin{lemma} \label{Lvertactioncentral}
Let $\ol{Y}_b$ be a component of $\ol{Y}$ lying above an \'{e}tale tail
$\ol{X}_b$ of $\ol{X}$.  If $\gamma \in \Gamma_w$ acts trivially on $\ol{X}_b$ and acts on $\ol{Y}_b$, then it acts trivially on
$\ol{Y}_b$.
\end{lemma}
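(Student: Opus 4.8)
The plan is to reduce the statement to a fact about deck transformations and then force the resulting deck transformation to be trivial. Since $\ol{X}_b$ is an \'{e}tale tail, the generic inertia $I_{\ol{Y}_b}$ is trivial, so the map $\ol{Y}_b \to \ol{X}_b$ is generically \'{e}tale and hence Galois with group $D := D_{\ol{Y}_b}$ acting faithfully on $\ol{Y}_b$, with $\ol{X}_b = \ol{Y}_b/D$. By hypothesis $\gamma$ preserves $\ol{Y}_b$ and acts trivially on $\ol{X}_b$, so $\gamma|_{\ol{Y}_b}$ is an automorphism of $\ol{Y}_b$ lying over $\mathrm{id}_{\ol{X}_b}$; the group of such automorphisms is exactly $D$ acting as deck transformations, so $\gamma|_{\ol{Y}_b}$ coincides with the action of a unique element $g \in D$. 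The goal is then to show $g = 1$.

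Next I would pin down the two structural properties of $g$. Because $\gamma$ commutes with the $G$-action, it commutes in particular with the $D$-action on $\ol{Y}_b$ (the subgroup of $G$ preserving $\ol{Y}_b$); since that action is faithful, $g$ commutes with all of $D$, i.e. $g \in Z(D)$. Because $\gamma \in \Gamma_w$ has $p$-power order and its restriction to $\ol{Y}_b$ equals the faithful action of $g$, the order of $g$ is a power of $p$. Thus $g$ is a central $p$-element of $D$, and it remains to rule out $g \neq 1$.

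To finish I would argue by contradiction, assuming $g \neq 1$. Then $\langle g \rangle$ contains a central subgroup of order $p$ in $D$, so by Corollary \ref{Cnormalp}(ii) the maximal normal prime-to-$p$ subgroup $N \lhd D$ yields $D/N \cong \ints/p^a$ with $a \geq 1$, and the image $\bar{g}$ of $g$ in $\ints/p^a$ is nontrivial and acts nontrivially on the cyclic cover $\ol{Z} := \ol{Y}_b/N \to \ol{X}_b$. By Lemma \ref{Ltailetale} the tail $\ol{X}_b$ borders a $p^{b'}$-component $\ol{W}'$ with $b' \geq 1$, and by Corollary \ref{Cfixbottom} the element $\gamma$ acts trivially on $\ol{W}'$ (pointwise), hence fixes $x_b = \ol{X}_b \cap \ol{W}'$. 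I would then examine a node $y_b$ of $\ol{Y}_b$ lying over $x_b$, where $\ol{Y}_b$ meets a component $\ol{V}'$ above $\ol{W}'$, and use the compatibility of $\gamma$'s action on the two branches at $y_b$ (the deformation-data/node relation, together with $\gamma$ acting trivially on the $\ol{W}'$-side) to conclude that $g$ acts trivially along the $\ol{Y}_b$-branch at $y_b$, and therefore, being a central deck transformation of the cyclic cover, acts trivially on all of $\ol{Y}_b$.

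I expect the last step to be the main obstacle. The naive freeness argument fails, because the only points of $\ol{Y}_b$ that $\gamma$ is visibly forced to fix (the node over $x_b$, and any ramification points over a branch point on a primitive tail) are precisely the \emph{wild} ramification points of $\ol{Y}_b \to \ol{X}_b$, where deck transformations need not act freely; so one cannot simply exhibit a fixed point in the \'{e}tale locus. The genuine input is the global rigidity of the cover: since $p \nmid |Z(G)|$, the map from $\Gamma_w$ to $\Aut(\ol{X})$ (with kernel $\Gamma_w \cap Z(G)$) is injective, and the local counterpart of this — combined with the node-compatibility above and the reduction to the abelian cyclic cover $\ol{Z} \to \ol{X}_b$ — is what must be leveraged to kill the central $p$-part $\bar{g}$. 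Making that local-to-global compatibility at $y_b$ precise, and checking it forces $\bar{g} = 1$, is the delicate part of the proof.
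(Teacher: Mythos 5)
Your opening reduction is correct and matches the paper's first step: the action of $\gamma$ on $\ol{Y}_b$ is a deck transformation of the (generically \'{e}tale, hence faithfully $D$-Galois) cover $\ol{Y}_b \to \ol{X}_b$, it commutes with $D$, so it is a central $p$-element $g \in Z(D)$, and one may assume $g$ has order $p$. But the mechanism you propose for killing $g$ does not work, and you concede as much yourself. Two steps are unjustified. First, Corollary \ref{Cfixbottom} only gives that $\gamma$ fixes the neighboring component $\ol{W}'$ pointwise \emph{downstairs} on $\ol{X}$; it says nothing about the action of $\gamma$ on the components $\ol{V}'$ of $\ol{Y}$ above $\ol{W}'$, which is inseparable --- whether $\gamma$ can act nontrivially above a pointwise-fixed inseparable component is exactly the subtle question treated later in Lemmas \ref{Lnopvert} and \ref{Lnopvert2}, so ``$\gamma$ acting trivially on the $\ol{W}'$-side'' cannot be invoked here. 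Second, $\gamma$ fixing $x_b$ does not produce a node of $\ol{Y}_b$ fixed by $\gamma$: $\gamma$ may permute the fiber over $x_b$, so there is no fixed point at which to start a local (deformation-data) compatibility argument. The ``delicate part'' you defer at the end is thus the entire content of the lemma, and in addition your auxiliary remark that $\Gamma_w \to \Aut(\ol{X})$ has kernel $\Gamma_w \cap Z(G)$ is not literally meaningful, since $\Gamma_w$ is not a subgroup of $G$.

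The paper's actual argument is global rather than local-at-the-node, and it shows directly that $Q = \langle g \rangle$ would be \emph{central in $G$}, contradicting $p \nmid |Z(G)|$. From $Q \leq Z(D)$ one gets $m_D = 1$ (Corollary \ref{Cnormalp} (ii)), hence $\sigma_b \in \ints$ (Lemma \ref{Lramdenominator}), and the vanishing cycles formula (\ref{Evancycles}) then forces $\sigma_b = 1$ (primitive) or $2$ (new) and makes $\ol{X}_b$ the \emph{only} \'{e}tale tail. Next, centrality of $Q$ puts $Q$ inside every wild inertia group of $\ol{Y}_b \to \ol{X}_b$, so $\ol{Y}_b/Q \to \ol{X}_b$ is tame and branched at at most two points, hence cyclic, totally ramified, of genus zero, and $\ol{Y}_b \to \ol{Y}_b/Q$ is a one-point Artin--Schreier cover; this proves total ramification over the node $\ol{x}_b$ (a separate argument handles $p = 2$). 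Total ramification means that deleting the components over $\ol{X}_b$ leaves $\ol{Y}$ connected with all remaining components inseparable, so Corollary \ref{Cinsepnormalp} yields a normal order-$p$ subgroup of $G$, which by cyclicity of the $p$-Sylow must be $Q$. Finally, uniqueness of the \'{e}tale tail gives at most one branch point of prime-to-$p$ index; the $\ints/m_G$-quotient of $Y/N \to X$ (with $N$ as in Corollary \ref{Cnormalp} (i)) is then tame and branched at one point, hence trivial, so $m_G = 1$, $G/N \cong \ints/p^n$ is abelian, and the $G$-equivariant isomorphism $Q \cong QN/N$ shows $Q \leq Z(G)$ --- the desired contradiction. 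None of this chain (integrality of $\sigma_b$, vanishing cycles, uniqueness of the tail, total ramification, connectivity, normality and then centrality of $Q$) appears in your sketch, and the lemma genuinely needs it.
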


\begin{proof}
Write $D := D_{\ol{Y}_b}$, the decomposition group of $\ol{Y}_b \to \ol{X}_b$. 
Suppose that $\gamma$ acts nontrivially on $\ol{Y}_b$.  Then we can view $\gamma$ as an element of the
center of $D$, and replacing $\gamma$ with some power, we may assume that $\gamma$ has order $p$ in $D$.   
Let $Q \leq D$ be the central subgroup of order $p$ generated by $\gamma$.  We will show that $Q$ is central in $G$, contradicting
the running assumption of this section.

Corollary \ref{Cnormalp} (ii) shows that $m_D = 1$.  By Lemma \ref{Lramdenominator}, we have
$\sigma_b \in \ints$ for $b$ corresponding to $\ol{X}_b$.  By the vanishing cycles formula (\ref{Evancycles}),
we have $\sigma_b = 1$ if $\ol{X}_b$ is primitive and $\sigma_b = 2$ if $\ol{X}_b$ is new.
By Lemma \ref{Ltailbounds}, $\ol{X}_b$ borders a $p$-component, unless $p = 2$, in which case $\ol{X}_b$ can border a
$p^2$-component if it is new.  In all cases, (\ref{Evancycles}) shows that $\ol{X}_b$ is the only \'{e}tale tail.

We claim that $\ol{Y}_b \to \ol{X}_b$ is totally ramified above the singular point $\ol{x}_b$ of $\ol{X}$ on $\ol{x}_b$.  To prove the claim,
assume first that $\ol{X}_b$ borders a $p$-component.  Since $Q$ is central in $D$, 
it is contained in every $p$-subgroup of $D$, in particular, every
wild inertia group of a wildly ramified point of $\ol{Y}_b$.  By Proposition \ref{Pspecialram} (ii), $Q$ is the 
$p$-Sylow subgroup of all these inertia groups, and thus $h: \ol{Y}_b/Q \to \ol{X}_b$ is tamely ramified, branched
at at most two points.  So $h$ is a cyclic, totally ramified cover of degree $|D/Q|$, and $\ol{Y}_b/Q$ has genus zero.  
Therefore, $\ol{Y}_b \to \ol{Y}_b/Q$ is an Artin-Schreier cover, totally ramified at its unique ramification point, proving the claim in this case.

Now, assume $\ol{X}_b$ borders a $p^2$-component (so $p=2$ and $\ol{X}_b$ is new).  By \cite[Lemma 19]{Pr:lg}, $\sigma^1_b = 1$.
Again, $Q$ is contained in every inertia group, so $h: \ol{Y}_b/Q \to \ol{X}_b$ is a $D/Q$-cover with
inertia groups with $2$-Sylow subgroup of order $2$.  But $m_{D/Q} = 1$, so $D/Q$ has a subgroup $N$ of odd order such that
$|(D/Q)/N|$ is cyclic of $2$-power order (Lemma \ref{Lburnside}).  Since the inertia groups of $j: (\ol{Y}_b/Q)/N) \to \ol{X}_b$ have order $2$, we 
must have that $|(D/Q)/N| = 2$, and that $j$ is totally ramified above $\ol{x}_b$.  Since $\sigma_b^1 = 1$, a Hurwitz formula 
calculation shows that the genus of $(\ol{Y}_b/Q)/N$, being branched at only one point with conductor 1, is zero.  This implies that $N$ is trivial,
as $\ol{Y}_b/Q \to (\ol{Y}_b/Q)/N$ is a prime-to-$2$ cover of $\proj^1$ branched at one point.  So $|D| = 4$, and we have total ramification above
the singular point of $\ol{X}$.  The claim is proved.

Our next claim is that $Q$ is normal in $G$.  To show this, we note that because $\ol{Y}_b \to \ol{X}_b$ is totally ramified above $\ol{x}_b$, 
then $\ol{Y} \backslash \ol{Y}_b$ is still connected.  Clearly, we may remove all of the other components above $\ol{X}_b$ while preserving this
connectivity.  Since $G$ acts on what remains of $\ol{Y}$, and all of the remaining components have nontrivial inertia, then Corollary 
\ref{Cinsepnormalp} shows that $G$ has a normal subgroup of order $p$.  But since $G$ has cyclic $p$-Sylow subgroup, it can only have one subgroup
of order $p$, which must be $Q$.  The claim is proved.

Lastly, we show that $Q$ is central in $G$.  Now, since $\ol{X}_b$ is the only \'{e}tale tail, 
there is at most one branch point of $f: Y \to X$ with prime-to-$p$ branching index (it exists iff $\ol{X}_b$ is primitive).  
By Corollary \ref{Cnormalp} (i), there exists a subgroup $N \leq G$ such that $p \nmid |N|$ and $G/N \cong \ints/p^n \rtimes \ints/m_G$, 
with faithful action.  
So $Y/N \to X$ also has at most one branch point with prime-to-$p$ branching index.   Let $P \leq G/N$ be the unique $p$-Sylow subgroup.
Then $(Y/N)/P \to X$ is a $\ints/m_G$-Galois cover, tamely ramified at at most one point.  Thus it is trivial, $m_G = 1$, and $Q$ maps
isomorphically onto its image $Q/N$ in $G/N \cong \ints/p^n$.  The conjugation action of $G$ on $G/N$ is well defined, and clearly trivial.
Hence, the action of $G$ on $Q/N$ is trivial, so the action of $G$ on $Q$ is trivial.  This completes the proof of the lemma.
\end{proof}

\begin{lemma} \label{Lfixetalevert}
Let $\ol{X}_b$ be an \'{e}tale tail of $\ol{X}$ intersecting the rest of
$\ol{X}$ at $x_b$,
and let $\ol{Y}_b$ be a component of $\ol{Y}$ lying above $\ol{X}_b$.  Write
$\Sigma$ for the set of singular points of
$\ol{Y}$ lying on $\ol{Y}_b$.  Suppose that $\gamma \in \Gamma_w$ acts
nontrivially on $\ol{Y}_b$.  
Then $\gamma$ acts with order $p$ on $\ol{Y}_b$, and fixes no point of $\Sigma$.
\end{lemma}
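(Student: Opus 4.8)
The plan is to prove the two assertions separately, using Lemma~\ref{Lvertactioncentral} for the order statement and the compositum/conductor computation of Lemma~\ref{Lcompositum} for the fixed-point statement.

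First I would pin down that $\gamma$ acts with order exactly $p$ on $\ol{Y}_b$. Since $\gamma$ acts on $\ol{Y}_b$ but not trivially, the contrapositive of Lemma~\ref{Lvertactioncentral} shows $\gamma$ does not act trivially on $\ol{X}_b\cong\proj^1$. By continuity $\gamma$ fixes the singular point $x_b$, and a $p$-power-order automorphism of $\proj^1$ fixing a second point would be diagonalizable, hence of prime-to-$p$ order, hence trivial; so $\gamma$ fixes $x_b$ alone and (its $p$-Sylow in $\Aut(\proj^1)$ being unipotent of exponent $p$) acts with order exactly $p$ on $\ol{X}_b$. The same ``two fixed points'' remark forces $\ol{X}_b$ to be a \emph{new} tail: a primitive tail carries the specialization of a branch point, which is $K$-rational and so $\gamma$-fixed, giving a forbidden second fixed point. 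Now $\gamma^p$ acts trivially on $\ol{X}_b$ and acts on $\ol{Y}_b$, so Lemma~\ref{Lvertactioncentral} gives that $\gamma^p$ acts trivially on $\ol{Y}_b$; as $\gamma$ is nontrivial there, it has order exactly $p$.

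For the fixed-point statement I would first locate the candidates. Any fixed point of $\gamma$ on $\ol{Y}_b$ maps to a fixed point of $\gamma$ on $\ol{X}_b$, hence lies over $x_b$; since nodes of $\ol{Y}$ lie over nodes of $\ol{X}$ and $x_b$ is the only node of $\ol{X}$ on the tail $\ol{X}_b$, the set $\Sigma$ is precisely the fibre of $\ol{Y}_b\to\ol{X}_b$ over $x_b$, a single $D$-orbit where $D=D_{\ol{Y}_b}$. Because $\gamma$ commutes with $G$, hence with $D$, and $D$ is transitive on $\Sigma$, $\gamma$ fixes either all of $\Sigma$ or none of it; assume for contradiction it fixes $\Sigma$ pointwise. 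As $\ol{X}_b$ is new, $\ol{Y}_b\to\ol{X}_b$ is branched only over $x_b$. I would then set $\tilde{D}=D\times\langle\gamma\rangle$ acting on $\ol{Y}_b$, and form $\ol{Z}=\ol{Y}_b/\langle\gamma\rangle$ and $\ol{X}_b'=\ol{Y}_b/\tilde{D}=\ol{X}_b/\langle\gamma\rangle\cong\proj^1$, so that $\ol{Y}_b$ is the normalized fibre product of the $\ints/p$-cover $\ol{X}_b\to\ol{X}_b'$ and the $D$-cover $\ol{Z}\to\ol{X}_b'$. A Hurwitz computation on $\proj^1\to\proj^1$ shows $\ol{X}_b\to\ol{X}_b'$ is totally ramified over the image $x_b'$ of $x_b$ with conductor $1$. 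Writing $\sigma_Z$ for the conductor of $\ol{Z}\to\ol{X}_b'$ over $x_b'$, the assumption that $\gamma$ fixes $\Sigma$ makes the compositum totally ramified, so Lemma~\ref{Lcompositum}, applied to the local extensions at $x_b'$ with the $\ints/p$-extension $\ol{X}_b/\ol{X}_b'$, yields $\sigma_b-1=p(\sigma_Z-1)$, using that the conductor of $\ol{Y}_b\to\ol{X}_b$ at $x_b$ is exactly the effective invariant $\sigma_b$ (Lemma~\ref{Lsigmaeffcompatibility}(iii) and Definition~\ref{Draminvariant}).

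To close, the inertia of $\ol{Z}\to\ol{X}_b'$ has the same tame part $m_x$ (dividing $p-1$) and cyclic $p$-part as that of $\ol{Y}_b\to\ol{X}_b$, so Lemma~\ref{Lhassearf} gives $\sigma_Z\in\tfrac{1}{m_x}\ints$; since $\sigma_b>1$ by Lemma~\ref{Ltailbounds}(i) we get $\sigma_Z>1$, hence $\sigma_Z\geq 1+\tfrac{1}{m_x}$ and therefore $\sigma_b-1\geq p/m_x>1$. But the vanishing cycles formula (\ref{Evancycles}), all of whose terms are positive by Remark~\ref{Rvancyclespositive}, forces $\sigma_b-1\leq 1$ for the new tail $\ol{X}_b$, a contradiction; thus $\gamma$ fixes no point of $\Sigma$. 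The delicate step I expect to be the main obstacle is the conductor identity $\sigma_b-1=p(\sigma_Z-1)$: one must verify that $\ol{Y}_b=\ol{X}_b\times_{\ol{X}_b'}\ol{Z}$ is linearly disjoint in the sense of Lemma~\ref{Lcompositum}, select the correct point over $x_b'$, and secure the hypothesis $\sigma_Z\geq 1$ (equivalently, that the $D$-direction is at least as wild as the $\ints/p$-direction) so that the lemma applies; once this is in place the remaining inequalities are routine bookkeeping with Hasse--Arf and (\ref{Evancycles}).
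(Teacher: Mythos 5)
Your argument is, in substance, the paper's own proof: you descend the action to $\ol{X}_b$ via Lemma \ref{Lvertactioncentral}, kill the primitive case using two fixed points, get order $p$ from the structure of $p$-power automorphisms of $\proj^1$ in characteristic $p$, and then, assuming a fixed point in $\Sigma$, quotient by $\langle \gamma \rangle$, observe that $\ol{X}_b \to \ol{X}'_b$ is Artin--Schreier of conductor $1$, apply Lemma \ref{Lcompositum} with $\tau = 1$ to obtain $\sigma_b - 1 = p(\sigma_Z - 1)$, and contradict the vanishing cycles formula (\ref{Evancycles}). Your added observations (that $\Sigma$ is exactly the fiber over $x_b$, and that $\gamma$ fixes all of $\Sigma$ or none of it, by transitivity of $D_{\ol{Y}_b}$ and commutation with $G$) are correct and harmless refinements of what the paper leaves implicit.

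However, one step fails as stated: the parenthetical claim that $m_x$ divides $p-1$. The tame part $m_x$ of the inertia at the node is the prime-to-$p$ part of the local ramification index and is not bounded by $p-1$ in general, since $\ints/m_x$ may act on the wild part $\ints/p^j$ with a large kernel; only the faithful quotient of that action is constrained by Lemma \ref{Lautaction}. If $m_x \geq p$, your concluding inequality $\sigma_b - 1 \geq p/m_x > 1$ collapses and no contradiction with (\ref{Evancycles}) results. The repair is exactly the mechanism of Lemma \ref{Lramdenominator}: quotient by the maximal prime-to-$p$ central subgroup of the local Galois group (which does not change the conductor, as the upper numbering is invariant under quotients), so that $\sigma_Z \in \frac{1}{m_J}\ints$ with $m_J \mid m_G \mid (p-1)$; combined with $\sigma_Z > 1$ this gives $\sigma_Z \geq 1 + \frac{1}{p-1}$ and hence $\sigma_b - 1 \geq \frac{p}{p-1} > 1$, which is precisely the paper's bound (via its Lemmas \ref{Lhassearf} and \ref{Lautaction}). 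Finally, the hypothesis $\tau \leq \sigma_Z$ of Lemma \ref{Lcompositum} that you flag but do not verify is handled in the paper by citing \cite[Lemme 1.1.6]{Ra:sp}, which rules out conductors below $1$ in this situation (so $\sigma'_b \neq 1$ forces $\sigma'_b > 1$); that citation is the ingredient your sketch is missing.
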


\begin{proof}   
By Lemma \ref{Lvertactioncentral}, we see that  the action of $\gamma$ on $\ol{Y}_b$ descends faithfully to
$\ol{X}_b$.  If $\ol{X}_b$ is a primitive tail, then $\gamma$ fixes two points on the tail
and thus acts trivially, so we may
assume $\ol{X}_b$ is a new tail.  Since $\ol{X}_b \cong \proj^1$,
the action of $\gamma$ on $\ol{X}_b$ has order $p$.  Now, suppose $\gamma$ fixes
a point $y_b \in \ol{Y}_b$ that is a
singular point of $\ol{Y}$, lying above $x_b \in \ol{X}$.  
Consider the cover $\ol{Y}'_b \to \ol{X}'_b$ given by quotienting $\ol{Y}_b \to
\ol{X}_b$ by the group generated by 
$\gamma$.  Let $\sigma'_b$ be the corresponding effective ramification invariant
for $\ol{Y}'_b
\to \ol{X}'_b$.  Since $\ol{X}_b \to \ol{X}'_b$ is an Artin-Schreier extension
with conductor 1, Lemma \ref{Lcompositum} (with $\tau = 1$)
shows that $\sigma_b - 1 = p(\sigma'_b - 1)$.  But $\sigma'_b \ne 1$, because if
$\sigma'_b = 1$ then $\sigma_b = 1$, and
we know $\sigma_b > 1$ (Lemma \ref{Ltailbounds}).
So $\sigma'_b > 1$ (\cite[Lemme 1.1.6]{Ra:sp}), and in particular, $\sigma'_b
\geq 1 + \frac{1}{m} \geq 1 + \frac{1}{p-1}$ 
(Lemmas \ref{Lhassearf} and \ref{Lautaction}).
So $\sigma_b -1 = p(\sigma'_b - 1) \geq \frac{p}{p-1} > 1$, which contradicts
the vanishing cycles formula 
(\ref{Evancycles}).  
\end{proof}

\subsubsection{The inseparable tails.}

Suppose $\ol{X}_b$ is an inseparable tail whose intersection $x_b$ with the rest
of $\ol{X}$ is in $B_{r, r'}$ 
(by Lemma \ref{Ltailetale}, $r > r'$).
Pick any component $\ol{Y}_b$ lying above $\ol{X}_b$.
Let $y_b$ be a point of intersection of $\ol{Y}_b$ with the rest of $\ol{Y}$.  
Write $Q$ for a $p$-Sylow subgroup of $D_{\ol{Y}_b}/I_{\ol{Y}_b}$.  It is
cyclic.

Since no branch point of the generic fiber $Y \to X$ specializes to
$\ol{X}_b$, the map $\ol{Y}_b \to \ol{X}_b$ is branched only at $x_b$.
Now let $D$ be the maximal subquotient of $\Gamma_w$ which fixes $y_b$ and acts
faithfully on $\ol{Y}_b$.  If $D$ contains an element $\gamma$ such that $\gamma^p$ generates $Q$, then we set 
$\epsilon_b = 1$ (this implies that $\ol{X}_b$ is a new inseparable tail, because otherwise any $\gamma \in D$ fixes two points on $\ol{X}_b$, 
thus all of $\ol{X}_b$).  Otherwise, we set
$\epsilon_b = 0$.  Note that, since any action on $\ol{X}_b$ of $p$-power order
has order dividing $p$, the maximal
possible order of any element of $D$ is $p^{\epsilon_b}|Q|$.
The main lemma we need is the following:

\begin{lemma} \label{Lsigmaaction}
If $\epsilon_b = 1$, then $\sigma_b - 1 \geq p$.  
\end{lemma}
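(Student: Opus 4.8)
The plan is to exploit the extra order-$p$ automorphism supplied by $\epsilon_b = 1$: I will show that it glues together with the wild inertia $Q$ of the cover $\ol{Y}_b \to \ol{X}_b$ into a single cyclic local extension at $x_b$ whose first ramification break equals $1$, and then let the Hasse--Arf integrality of the higher breaks force $\sigma_b \geq 1 + p$.

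First I would fix the local geometry. Since $\epsilon_b = 1$ makes $\ol{X}_b$ a new inseparable tail, the cover $\ol{Y}_b \to \ol{X}_b$ is branched only at $x_b$; as a tamely ramified cover of $\proj^1$ branched at a single point is trivial, the same argument that gives $\ell = 1$ in Lemma \ref{Linsepint} shows $m_{x_b} = 1$, so $J_{x_b} = Q \cong \ints/p^{r-r'}$. The element $\gamma \in D$ fixes $y_b$, hence fixes $x_b$, and since $\gamma^p$ generates $Q$ (which acts trivially on $\ol{X}_b$), its image $\ol\gamma$ on $\ol{X}_b \cong \proj^1$ has order $p$ with $x_b$ as its unique fixed point. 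Setting $\ol{X}'_b = \ol{X}_b / \langle \ol\gamma \rangle$, the quotient $\ol{X}_b \to \ol{X}'_b$ is an Artin--Schreier cover branched only at the image $x'_b$ of $x_b$, of conductor $1$ (as in the proof of Lemma \ref{Lfixetalevert}).

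Next I would assemble the tower. Because $\gamma$ commutes with the action of $G$ and $\gamma^p$ generates $Q$, the automorphism $\gamma$ fixes $\hat{\mc{O}}_{\ol{X}'_b, x'_b}$ pointwise and $\langle \gamma \rangle = \Gal(S_{x_b} / \hat{\mc{O}}_{\ol{X}'_b, x'_b})$ is cyclic of order $p^{\,r-r'+1}$, with $Q = \langle \gamma^p \rangle$ the unique index-$p$ subgroup and $\hat{\mc{O}}_{\ol{X}_b, x_b} = S_{x_b}^{\,Q}$. The $\ints/p$-quotient of this cyclic group by $Q$ is exactly the conductor-$1$ extension $\ol{X}_b \to \ol{X}'_b$; since the upper numbering is invariant under quotients, the first upper break of $S_{x_b}/\hat{\mc{O}}_{\ol{X}'_b, x'_b}$ is $u_1 = 1$, and hence the first lower break is $j_1 = 1$. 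Running the break estimate, the Hasse--Arf theorem (\cite[V, Theorem 1]{Se:lf}) makes all upper breaks integers, and Herbrand's formula gives $u_2 - u_1 = (j_2 - j_1)/p$; integrality together with $u_2 > u_1$ forces $j_2 - j_1 \geq p$, so $j_2 \geq 1 + p$. Now $\sigma_b$ is the conductor of the $Q$-extension $\hat{\mc{O}}_{\ol{X}_b, x_b} \hookrightarrow S_{x_b}$, whose lower breaks are $j_2 < \cdots < j_{r-r'+1}$ (lower numbering is preserved by subgroups); its first break is already $j_2$, so $\sigma_b \geq j_2 \geq 1 + p$, which is the claim $\sigma_b - 1 \geq p$. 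Indeed Remark \ref{Rlower2conductor} with $m = 1$ gives $\sigma_b = j_2 + \sum_{i \geq 3} (j_i - j_{i-1})/p^{\,i-2}$ explicitly, and $j_2 \geq 1+p$ is the basic case of the successive application of \cite[Lemma 19]{Pr:lg}.

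The step I expect to be the main obstacle is the middle one: verifying that the monodromy element $\gamma$, whose $p$-th power lands inside the cover's own inertia $Q$, genuinely assembles with $Q$ into one cyclic extension of $\hat{\mc{O}}_{\ol{X}'_b, x'_b}$ --- in particular that $m_{x_b} = 1$ rules out any tame part that would obstruct cyclicity --- and that the bottom $\ints/p$-layer of the resulting tower is precisely the conductor-$1$ Artin--Schreier extension $\ol{X}_b \to \ol{X}'_b$. Once the first break is pinned at $1$, the Hasse--Arf spacing delivers the rest automatically.
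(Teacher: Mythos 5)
Your overall skeleton matches the paper's: quotient $\ol{X}_b$ by $\langle\gamma\rangle$ to get a conductor-$1$ Artin--Schreier bottom layer, stack the wild inertia on top, and let the spacing of ramification jumps in the resulting $p$-power tower force $\sigma_b \geq p+1$. Your one real innovation --- assembling the full cyclic tower of order $p^{r-r'+1}$ and invoking Hasse--Arf integrality ($u_1 = 1$, $u_2 \geq 2$, hence $j_2 \geq 1+p$ by Herbrand) --- is a legitimate and in fact more elementary substitute for the paper's route, which only builds a $\ints/p^2$-tower $\hat{v}_b \to \hat{x}_b \to \hat{x}'_b$ (with $\ol{V}_b = \ol{Y}_b/Q'$), cites \cite[Lemma 4.2]{Pr:lg} to get second upper jump $\geq p$ (so $j_2 \geq p^2 - p + 1$, stronger than needed), and then descends from $\sigma^{r-1}_b$ to $\sigma_b$. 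Your bound $j_2 \geq p+1$ is weaker but suffices, and bounding the conductor of the $Q$-extension directly by its first lower break is correct.

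However, there is a genuine gap exactly at the step you flagged: the claim that ``the same argument that gives $\ell = 1$ in Lemma \ref{Linsepint} shows $m_{x_b} = 1$.'' That argument first quotients $D_{\ol{Y}_b}$ by a prime-to-$p$ normal subgroup $N$ (via Corollaries \ref{Cinsepnormalp} and \ref{Cnormalp}) and only shows the tame part of $D_{\ol{Y}_b}/N$ is trivial; the tame inertia at $x_b$ in the actual cover can be swallowed into $N$ and need not vanish, so $J_{x_b} = Q$ is unjustified. If $m_{x_b} > 1$, your extension $S_{x_b}/\hat{\mc{O}}_{\ol{X}'_b, x'_b}$ is not cyclic and Hasse--Arf does not apply as stated; worse, a naive fix would degrade the jump spacing by a factor of $m_{x_b}$ (upper jumps only in $\frac{1}{m}\ints$, giving $j_2 - j_1 \geq p/m$), which destroys the bound. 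The paper avoids this not by proving $m_{x_b} = 1$ but by observing that the germ's Galois group is $\ints/p^2 \times H$ with $H$ prime-to-$p$ --- a \emph{direct} product, which is where the hypothesis that the monodromy element $\gamma$ commutes with the $G$-action earns its keep (it forces the tame inertia to centralize the wild part, rather than act on it through $\chi$) --- and then quotients out $H$ using invariance of the upper numbering, which leaves the relevant conductor unchanged. Your argument can be repaired the same way: show $H$ is a direct factor of your tower's Galois group via the commutation of $\gamma$ with $G$, pass to the cyclic subextension $S_{x_b}^H$, and note that the conductor $\sigma_b$ is insensitive to the direct tame factor; with that patch the rest of your proof goes through verbatim.
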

\begin{proof} 
Suppose that $\epsilon_b = 1$.  We will show that $\sigma^{r - 1}_b - 1 \geq p$.
 Since $r' \geq 1$, it then follows that 
$\sigma_b -1 = \sigma^{r-r'}_b -1 \geq \sigma^{r - 1}_b -1 \geq p$.  

Let $Q' \leq Q$ denote the unique subgroup of index $p$.  Then $Q'$ is central
in $D$.
Write $\ol{V}_b = \ol{Y}_b/Q'$, and let $v_b$ be the image of $y_b$.  
Then the canonical map $\phi: \ol{V}_b \to \ol{X}_b$ has the same first upper
jump at $v_b$ as the map 
$\Phi: \ol{Y}_b \to \ol{X}_b$ does at $y_b$, because the upper numbering is
preserved by
quotients.  Also, $D/Q'$ acts faithfully on $\ol{V}_b$ and commutes with the map
to
$\ol{X}_b$.  Viewed as an element of $D/Q'$, $\gamma$ has order $p^2$.

Let $q: \ol{X}_b \to \ol{X}'_b$ be the quotient of $\ol{X}_b$ by the action of
the group generated by $\gamma$, and let 
$x'_b$ be the image of $x_b$.  
We restrict our attention to the formal neighborhoods 
$\hat{v}_b$, $\hat{x}_b$, and $\hat{x}'_b$ of $v_b$, $x_b$, and $x'_b$ in
$\ol{V}_b$, $\ol{X}_b$, and $\ol{X}'_b$, 
respectively:
The map of germs $\hat{v}_b \to \hat{x}'_b$ has Galois group
$\ints/p^2 \times H$, where $H$ is of prime-to-$p$ order.  
Since the upper numbering is preserved by quotients, we can quotient out by $H$
and assume $H$ is trivial.
Our situation is summarized by the following diagram:

$$\overbrace{\hat{v}_b \underbrace{\stackrel{\phi}{\to}}_{\ints/p} \hat{x}_b 
\underbrace{\stackrel{q}{\to}}_{\ints/p} \hat{x}'_b}^{\ints/p^2}$$

The \emph{second lower jump} of wild ramification of $\hat{v}_b \to
\hat{x}'_b$ is equal to the 
\emph{lower jump} of wild ramification of $\hat{v}_b \to \hat{x}_b$, because the
lower numbering respects subgroups.  
Since the wild ramification of $\hat{v}_b \to \hat{x}_b$ is of order $p$,
this is the same as the upper jump, which is what we wish to calculate.  
Now, the first upper jump of $\hat{v}_b \to \hat{x}'_b$ is 1, because $\hat{X}_b
\to \hat{X}'_b$ is an Artin-Schreier cover of conductor 1.
By \cite[Lemma 4.2]{Pr:lg}, the second \emph{upper} jump of wild ramification of
$\hat{v}_b \to \hat{x}'_b$ is at least $p$, 
and thus, from the definition of the upper numbering, the second lower jump is
at least $p^2-p+1$.  So $\sigma^{r-1}_b \geq p^2 - p + 1 \geq p+1$, which proves
the lemma. 
\end{proof}

%

\subsubsection{The Wild Monodromy Group.}
Finally, we prove Theorem \ref{Tmain}, 
i.e., we show that for every $\gamma \in \Gamma_w$, $\gamma^{p^{n-1}} = 1$. 
Since $\Gamma_w$ acts
faithfully on $\ol{Y}$, it suffices to show that, for an arbitrary point $y \in
\ol{Y}$ and an arbitrary $\gamma \in \Gamma_w$, we have $\gamma^{p^{n-1}}(y) =
y$.

Let us first assume that $y$ is a point of $\ol{Y}$ which lies on an
irreducible component $\ol{Y}_b$ above an \'{e}tale tail $\ol{X}_b$.  By Lemma
\ref{Lfixetale}, the tail $\ol{X}_b$ is
acted on by $\gamma$.
By Lemma \ref{Ltailetale}, $\ol{X}_b$ intersects an inseparable component, and
thus the inertia groups of
$\ol{f}$ above the point of intersection are of order divisible by $p$.  
So the number of irreducible components $N$ lying above $\ol{X}_b$ is divisible
at
most by $p^{n-1}$, not by $p^n$.  Thus $\gamma^{p^{n-1}}(y) \in \ol{Y}_b$.  
Furthermore, if any $\gamma \in \Gamma_w$ acts nontrivially on $\ol{Y}_b$, then
Lemma
\ref{Lfixetalevert} shows that $\gamma$ acts with order $p$ on $\ol{Y}_b$, and
$p$ divides the
number of intersection points of $\ol{Y}_b$ with the rest of $\ol{Y}$.  In this
case, $v_p(N) \leq n-2$, and 
$\gamma^{p^{n-2}}(y) \in \ol{Y}_b$.  In any case, $\gamma^{p^{n-1}}(y) = y$.

Now, assume $y$ is a point of $\ol{Y}_b$, where $\ol{Y}_b$ is an irreducible
component of $\ol{Y}$ lying above an 
inseparable $p^{r'}$-tail $\ol{X}_b$ intersecting a $p^r$-component.  Let
$\epsilon_b = 0$ or $1$ as in Lemma \ref{Lsigmaaction}.  If $\ol{X}_b$ is new, then
by Lemmas \ref{Linsepint}, \ref{Ltailbounds} (i), and \ref{Lsigmaaction}, we have
$\sigma_b - 1 \geq p^{\epsilon_b}$.
Combining this with Corollary \ref{Ctaillimit}, we see that there are fewer than
$p^{r' - \epsilon_b}$ tails of 
type $(r, r')$ with the same $\sigma_b$, and thus $\gamma^{p^{r'-\epsilon_b -
1}}(\ol{X}_b) = \ol{X}_b$.  If $\ol{X}_b$ is not new, then $\gamma$ fixes $\ol{X}_b$ and $\epsilon_b = 0$, so 
we also have $\gamma^{p^{r'-\epsilon_b - 1}}(\ol{X}_b) = \ol{X}_b$.

Suppose a $p$-Sylow subgroup $Q$ of $D_{\ol{Y}_b}/I_{\ol{Y}_b}$ has order $p^a$.
Then there are at most $p^{n-a-r'}$ irreducible components lying above
$\ol{X}_b$.  So
$\gamma' = \gamma^{p^{n-a-\epsilon_b - 1}}$ satisfies $\gamma'(\ol{Y}_b) =
\ol{Y}_b$.  
But, as was remarked before Lemma \ref{Lsigmaaction}, 
we must have $(\gamma')^{p^{a + \epsilon_b}}(y) = y$.  So $\gamma^{p^{n-1}}(y) =
y$, as we wished to prove.

Lastly, assume the remaining case, i.e., that $y$ lies over some point $x$ on an
interior
component $\ol{W}$ of $\ol{X}$.  Suppose first that there exists an \'{e}tale
tail $\ol{X}_b \succ \ol{W}$.  Then, for
any $\gamma \in \Gamma_w$, Corollary \ref{Cfixbottom} shows that $\gamma$ fixes
$\ol{W}$ pointwise.  Then $\gamma$ acts on the fiber above $x$.  
By Lemma \ref{Letaletail}, the component $\ol{W}$ must be inseparable.  Thus
$p^n$ does not
divide the cardinality of the fiber above $x$, so $\gamma^{p^{n-1}}(y) = y$.
Now suppose that there does not exist any \'{e}tale tail lying outward from
$\ol{W}$.  In this case, let $\ol{X}_b \succ
\ol{W}$ be a $p^{r'}$-tail.  Then $\gamma^{p^{r'-1}}(\ol{X}_b) = \ol{X}_b$, and 
$\gamma^{p^{r'-1}}$ acts on the fiber above $x$.  By
Lemma \ref{Ltailetale} and Proposition \ref{Pmonotonic}, the generic inertia
above $\ol{W}$ has order divisible by 
$p^{r'+1}$, 
so $p^{n-r'}$ does not divide the cardinality of this fiber.  Then
$\gamma^{p^{r'-1 +n - r' - 1}}(y) = \gamma^{p^{n-2}}(y)
= y$, finishing the proof of Theorem \ref{Tmain}. \qed

\subsection{Further Restrictions on the Wild Monodromy}
We state here some stronger results than Theorem \ref{Tmain}, which will be useful for Example \ref{Xgoodreduction}.
We maintain the notation of \S\ref{Swild}.

\begin{lemma} \label{Lnopvert}
Let $\ol{W}$ be an inseparable component of $\ol{X}$.  
Suppose there exists $\gamma \in \Gamma_w$ that acts trivially on $\ol{W}$, but non-trivially
above $\ol{W}$.  Then, for any irreducible component $\ol{V}$ of $\ol{Y}$ above $\ol{W}$ with decomposition group
$D_{\ol{V}} \leq G$, we have $m_{D_{\ol{V}}} = 1$.
\end{lemma}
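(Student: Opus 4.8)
The plan is to exhibit a central subgroup of order $p$ inside $D_{\ol V}$ and then quote Corollary \ref{Cnormalp}(ii), which guarantees that a group with cyclic $p$-Sylow subgroup and a central subgroup of order $p$ has $m = 1$. First I would reduce to a convenient choice of $\ol V$: since $G$ permutes the components of $\ol Y$ above $\ol W$ transitively, their decomposition groups are mutually conjugate in $G$, so $m_{D_{\ol V}}$ is independent of the component $\ol V$ chosen above $\ol W$. It therefore suffices to treat one component $\ol V$ that is stabilized by some $\gamma \in \Gamma_w$ acting nontrivially on it; such a configuration is furnished by the hypothesis (after replacing $\gamma$ by a suitable $p$-power so that it fixes $\ol V$), because $\Gamma_w$ is a $p$-group acting nontrivially on the preimage of $\ol W$. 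Writing $I_{\ol V} \cong \ints/p^r$ (with $r \geq 1$ since $\ol W$ is inseparable) and $H = D_{\ol V}/I_{\ol V}$, I would recall from Construction \ref{CCdefdata} the factorization $\ol V \to \ol V' \to \ol W$, in which $\ol V \to \ol V'$ is purely inseparable of degree $p^r$ and $\ol V' \to \ol W$ is a connected $H$-Galois cover.

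The key step is to see that $\gamma$ induces a nontrivial central element of $H$. Since $\gamma$ acts trivially on $\ol W$, its action descends to an automorphism of $\ol V'$ over $\ol W$; as $\ol V' \to \ol W$ is a connected $H$-Galois cover, this automorphism is a deck transformation, hence lies in $H$. Because $\Gamma_w$ commutes with the $G$-action (hence with the action of $D_{\ol V}$, and so with $H$), this element is central: $\gamma|_{\ol V'} \in Z(H)$. It is nontrivial, since $\gamma$ acts nontrivially on $\ol V$ and $k(\ol V)/k(\ol V') = k(\ol V)/k(\ol V)^{p^r}$ is purely inseparable, so an automorphism of $\ol V$ is determined by its restriction to $\ol V'$; thus $\gamma|_{\ol V'} \neq \mathrm{id}$. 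Finally, $\gamma|_{\ol V'}$ has $p$-power order, so a suitable power generates a subgroup $\ol Q \leq Z(H)$ of order exactly $p$.

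It remains to push this centrality down through the extension $1 \to I_{\ol V} \to D_{\ol V} \to H \to 1$. Let $E \leq D_{\ol V}$ be the preimage of $\ol Q$; then $E$ is a normal $p$-subgroup of $D_{\ol V}$ containing $I_{\ol V}$ with $E/I_{\ol V} = \ol Q$ of order $p$, so, being a normal $p$-subgroup it lies in the cyclic $p$-Sylow subgroup of $D_{\ol V}$ and is therefore cyclic of order $p^{r+1}$. By construction $D_{\ol V}$ acts trivially by conjugation on $E/I_{\ol V} = \ol Q$, as $\ol Q$ is central in $H$. Now I would make the elementary observation that for $E \cong \ints/p^{r+1}$ both the action on the top quotient $E/I_{\ol V} \cong \ints/p$ and the action on the unique order-$p$ subgroup $Q \leq I_{\ol V}$ are given by the \emph{same} reduction $\Aut(E) = (\ints/p^{r+1})^{\times} \to (\ints/p)^{\times}$; hence triviality on $E/I_{\ol V}$ forces triviality on $Q$. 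Thus $Q$ is a central subgroup of order $p$ in $D_{\ol V}$, and Corollary \ref{Cnormalp}(ii) gives $m_{D_{\ol V}} = 1$.

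The main obstacle I anticipate is the second paragraph: making rigorous that the a priori ``external'' automorphism $\gamma$ (coming from $\Gamma_w \leq G_K$, not from $G$) is actually realized as an element of $H$ and is central there, while correctly accounting for the purely inseparable part of $\ol V \to \ol W$. A secondary point requiring care is the reduction in the first paragraph, namely ensuring that the hypothesis that $\gamma$ acts nontrivially above $\ol W$ can be upgraded to an element of $\Gamma_w$ that both stabilizes a single component $\ol V$ and acts nontrivially on it, rather than merely permuting the components above $\ol W$.
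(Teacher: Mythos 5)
Your second and third paragraphs are correct, and the third is a pleasant variant of the paper's Case (1): the paper regards $\gamma$ as a central element of $D := D_{\ol{V}}/I_{\ol{V}}$, applies Corollary \ref{Cnormalp} (ii) there, and then notes $m_{D_{\ol{V}}} = m_D$ since $I_{\ol{V}}$ is a $p$-group; you instead lift through the preimage $E$ of $\ol{Q}$, use cyclicity of the $p$-Sylow to get $E \cong \ints/p^{r+1}$, and exploit that the conjugation actions on $E/I_{\ol{V}}$ and on the order-$p$ subgroup $Q \leq E$ both factor through the same map $(\ints/p^{r+1})^{\times} \to (\ints/p)^{\times}$, producing a central order-$p$ subgroup of $D_{\ol{V}}$ itself. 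Either route is fine for that case.

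The genuine gap is the reduction in your first paragraph, and it is not the ``secondary point'' you call it but the main content of the lemma. If $\gamma$ permutes the components above $\ol{W}$ nontrivially, a $p$-power of $\gamma$ that stabilizes a component may act trivially on it: for instance, $\gamma$ could permute the components in orbits of length $p$ with $\gamma^p$ acting trivially above $\ol{W}$. Moreover, since $\Gamma_w$ commutes with $G$ and $G$ acts transitively on the components above $\ol{W}$, the situation is all-or-nothing: $\gamma$ either stabilizes every component above $\ol{W}$ (your case) or stabilizes none, and in the latter case the hypothesis hands you no element of $\Gamma_w$ that both fixes a component and acts nontrivially on it, so your argument never starts --- yet the conclusion must still be proved. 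The paper's Cases (2a) and (2b) handle exactly this: one picks a smooth point $y \in \ol{V}$ (whose inertia is $I_{\ol{V}}$ by Proposition \ref{Pspecialram}) and an element $g \in G$ with $g(y) = \gamma(y)$; commutation of $\Gamma_w$ with $G$ shows that conjugation by $g$ is trivial on $D_{\ol{V}}/I_{\ol{V}}$; when $p \nmid |D_{\ol{V}}/I_{\ol{V}}|$ one writes $D_{\ol{V}} = I_{\ol{V}} \rtimes H$ and uses Schur--Zassenhaus (vanishing of $H^1(H, I_{\ol{V}})$) to adjust $g$ by an element of $I_{\ol{V}}$ so that $g$ centralizes $H$; then $\gamma^p(y) = y$ forces $g^p \in I_{\ol{V}}$, cyclicity of the $p$-Sylow forces $g^p$ to generate $I_{\ol{V}}$, and hence $I_{\ol{V}}$ commutes with $H$, giving $m_{D_{\ol{V}}} = 1$. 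The remaining possibility $p \mid |D_{\ol{V}}/I_{\ol{V}}|$ is ruled out because in $M/I_{\ol{V}}$, where $M = \langle D_{\ol{V}}, g \rangle$, the image of $g$ and an order-$p$ element of the image of $D_{\ol{V}}$ would generate an elementary abelian group of rank two, contradicting the cyclic $p$-Sylow hypothesis. Without an argument of this kind for the component-moving case, your proof establishes only half the lemma.
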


\begin{proof} 
We may assume that the action of $\gamma$ above $\ol{W}$ is of order $p$.
Since the $D_{\ol{V}}$ are conjugate for each $\ol{V}$ above $\ol{W}$, it suffices to prove the lemma for any one $\ol{V}$. 
Let $I_{\ol{V}} \leq D_{\ol{V}}$ be the inertia group of $\ol{V}$. 
We have three cases to consider: \\\\
\emph{Case (1): There exists $\ol{V}$ above $\ol{W}$ on which $\gamma$ acts.} 

We can think of $\gamma$ as a (central) element of $D := D_{\ol{V}}/I_{\ol{V}}$. 
Since $\gamma$ is central in $D$, Corollary \ref{Cnormalp} (ii) shows that $m_D = 1$.  Since $D$
is a quotient of $D_{\ol{V}}$ by a $p$-group, it is easy to see that
$m_{D_{\ol{V}}} = m_D = 1$.  \\\\
\emph{Case (2a): There is no $\ol{V}$ above $\ol{W}$ on which $\gamma$ acts, and $p \nmid |D_{\ol{V}}/I_{\ol{V}}|$.} 

Pick some $\ol{V}$ above $\ol{W}$, and some point $y \in \ol{V}$ that is a smooth point of $\ol{Y}$.
By Proposition \ref{Pspecialram}, the inertia group of $y$ is $I_{\ol{V}}$.
Since $p \nmid |D_{\ol{V}}/I_{\ol{V}}|$, then $D_{\ol{V}} = I_{\ol{V}} \rtimes H$, where
$H$ has prime-to-$p$ order.

Now, $\gamma$ fixes $\ol{W}$, so there is some element $g \in G$ such that
$g(y) = \gamma(y)$.  Since $\Gamma_w$ commutes with $G$, this also implies
$g^a(y) =
\gamma^a(y),$ for all $a \in \ints$. 
We claim that $g$ normalizes $D_{\ol{V}}$.  In fact, even more is true:
$ghg^{-1}(v) = h(v)$ for all $h \in D_{\ol{V}}$ and $v \in
\ol{V}$.  This is because $g hg^{-1}(y) = g h\gamma^{-1}(y) = \gamma^{-1}gh(y)$, and
$\gamma^{-1}g$, being a Galois automorphism of
$\ol{V} \to \ol{W}$ with the fixed point $y$, fixes $\ol{V}$ pointwise.  
So $ghg^{-1}(y) = h(y)$, and $ghg^{-1}$ and $h$, both being elements of $D_{\ol{V}}$
which act the same way on $y$, must act the same way on all $v \in \ol{V}$.
Thus, conjugation by $g$ induces the identity on $D_{\ol{V}}/I_{\ol{V}}$, and $gHg^{-1}$ is
a lift of $H$ in $D_{\ol{V}}$.  Since $H^1(H,I_{\ol{V}}) = 0$ by the
Schur-Zassenhaus theorem, all such lifts differ only by conjugation by an element of $I_{\ol{V}}$, so we have that there is some
$i \in I_V$ such that conjugation by $g$ and conjugation by $i$ act
identically on $H$.  In particular, $g i^{-1}$ centralizes $H$ and normalizes
$D_{\ol{V}}$ and $I_{\ol{V}}$.  By replacing our choice of $g$ with $gi^{-1}$, we may even assume that $g$ centralizes $H$.  

In any case, we know that $\gamma^p(y) = y$, so $g^p(y) = y$.  This implies $g^p \in I_{\ol{V}}$.  
But $g \notin I_{\ol{V}}$, so we must have that $g^p$ generates $I_{\ol{V}}$ (if not, then $g$ and $I_{\ol{V}}$ generate a non-cyclic $p$-group).
Since $g$ centralizes $H$, so does $g^p$.  Then $I_{\ol{V}}$ commutes with $H$,
and $m_{D_{\ol{V}}} = 1$. \\\\
\emph{Case (2b): There is no $\ol{V}$ above $\ol{W}$ on which $\gamma$ acts, and $p \ | \ |D_{\ol{V}}/I_{\ol{V}}|$.}

We show that this case does not arise.
Take $\ol{V}$, $g$, and $y$ as in Case (2a).  As in Case (2a), we have that $g$ normalizes both $D_{\ol{V}}$ and $I_{\ol{V}}$, that $g$ centralizes
$D_{\ol{V}}/I_{\ol{V}}$, and that $g^p \in I_{\ol{V}}$.  Now, consider the group $M \leq G$ generated by $D_{\ol{V}}$ and $g$.  The subgroup
$I_{\ol{V}}$ is normal
in $M$, so let $M' \cong M/I_{\ol{V}}$.  Then the image of $g$ has order $p$ in $M'$ and centralizes the image of $D_{\ol{V}}$ in $M'$.  But the image
of $D_{\ol{V}}$ in $M'$ has a nontrivial element $d$ of order $p$, and so the $p$-subgroup of $M'$ generated by $d$ and the image of $g$ 
is elementary abelian.  This is a contradiction, as $G$ has cyclic $p$-Sylow group. 
\end{proof}

\begin{lemma}\label{Lnopvert2}
As in Lemma \ref{Lnopvert}, let $\ol{W}$ be an inseparable component of $\ol{X}$.  
If there exists $\gamma \in \Gamma_w$ that acts trivially on $\ol{W}$, but non-trivially
above $\ol{W}$, then for each singular point $w$ of $\ol{X}$ on $\ol{W}$, either no \'{e}tale tail lies outward from $w$ or every
\'{e}tale tail lies outward from $w$.
\end{lemma}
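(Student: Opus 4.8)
The plan is to reduce to the cyclic structure supplied by the preceding lemma and then to play two étale tails against the vanishing cycles budget. First I would record the consequence of Lemma \ref{Lnopvert}: under these hypotheses $m_{D_{\ol V}} = 1$ for every component $\ol V$ of $\ol Y$ lying over $\ol W$. Since $\ol W$ is inseparable, Corollary \ref{Cinsepnormalp} gives a normal subgroup of order $p$ in $D_{\ol V}$, so Corollary \ref{Cnormalp} (i) together with $m_{D_{\ol V}} = 1$ produces a normal prime-to-$p$ subgroup $N \triangleleft D_{\ol V}$ with $D_{\ol V}/N \cong \ints/p^b$ cyclic. As $X = \proj^1$, the curve $\ol X$ is a tree of projective lines, so $\ol W \cong \proj^1$, and $\ol V/N \to \ol W$ is a purely wild cyclic $p$-power cover of $\proj^1$, totally ramified over each of its branch points.

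Next I would reformulate the conclusion. The asserted dichotomy at \emph{every} singular point of $\ol W$ is equivalent to the single statement that all étale tails of $\ol X$ lie in one connected component of $\ol X \setminus \ol W$: a configuration with tails in two distinct directions from $\ol W$ would, at the node separating those directions, have some tails outward and some not, violating the dichotomy. So I assume for contradiction that étale tails occur in two distinct components of $\ol X \setminus \ol W$, meeting $\ol W$ at distinct nodes $w_1 \neq w_2$.

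The crux is then to manufacture too much ramification. After replacing $\gamma$ by a power I may take it to have order $p$; since $\gamma$ fixes $\ol W$ pointwise and $N$ has order prime to $p$, the automorphism $\gamma$ survives the quotient and acts as a nontrivial deck transformation of $\ol V/N \to \ol W$, hence fixes the unique point over each node toward which the generic inertia drops. Running outward toward the étale tail in each of the two directions, Lemma \ref{Lvertactioncentral} shows $\gamma$ acts trivially at the tail itself, so its wild contribution is concentrated on the interior path; quotienting by $\langle \gamma\rangle$ and applying Lemma \ref{Lcompositum} with $\tau = 1$ exactly as in the proofs of Lemmas \ref{Lfixetalevert} and \ref{Lsigmaaction}, the relevant effective invariant in each of the two directions is inflated by a factor of $p$. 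Feeding both inflated invariants into the effective local vanishing cycles formula (\ref{Egenlocvancycles}) at the genus-zero component $\ol W$ --- which must equal $-2$ both before and after the quotient --- is impossible, since two independent directions cannot each absorb a factor-$p$ inflation; equivalently, propagating the inflation outward via Lemma \ref{Lsigmaeffcompatibility} overruns the global budget $\sum_{b \in B_{\text{new}}}(\sigma_b - 1) + \sum_{b \in B_{\text{prim}}}\sigma_b = 1$ of the three-point vanishing cycles formula (\ref{Evancycles}), all of whose terms are positive (Remark \ref{Rvancyclespositive}).

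I expect the genuine difficulty to lie in this last step: verifying that $\gamma$ really does fix a point over each of the two directions (so that Lemma \ref{Lcompositum} applies with conductor $\tau = 1$ there) while contributing nothing in the remaining directions, and bookkeeping the toward-root edge together with the inner-versus-outward cases of the two tail directions. Monotonicity from $\ol W$ (Remark \ref{Rmonotonic}, now available since the local group is $\ints/p^b$) should guarantee that the inertia genuinely decreases outward in each direction, pinning down where the fixed points sit and justifying the two separate inflations.
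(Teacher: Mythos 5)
There is a genuine gap at the step you yourself flag as the crux: the factor-$p$ inflation via Lemma \ref{Lcompositum} with $\tau = 1$ has no purchase here, because that mechanism requires $\gamma$ to act nontrivially on the \emph{base} curve. In Lemmas \ref{Lfixetalevert} and \ref{Lsigmaaction}, the conductor-$1$ Artin--Schreier extension fed into Lemma \ref{Lcompositum} is the quotient of the tail itself by $\langle \gamma \rangle$: there $\gamma$ moves points of the base, and one compares conductors over a local ring and its index-$p$ invariant subring. In the present lemma the hypothesis is the opposite: $\gamma$ acts \emph{trivially} on $\ol{W}$, so its action above $\ol{W}$ is purely vertical; there is no degree-$p$ quotient of $\hat{\mc{O}}_{\ol{W},w}$, hence no $\ints/p$-extension of conductor $\tau = 1$ of the base to which Lemma \ref{Lcompositum} could apply, and no inflation of the effective invariants at $w_1$, $w_2$. (A vertical central element of order $p$ changes the group-theoretic picture, not the conductor picture --- compare Lemma \ref{Lvertactioncentral}, where such an element is played against the center of $G$ rather than against ramification invariants.) Moreover, $\gamma$ need not stabilize any component $\ol{V}$ above $\ol{W}$ at all --- this is precisely Case (2) in the proof of Lemma \ref{Lnopvert} --- in which case it is not a deck transformation of $\ol{V}/N \to \ol{W}$ and fixes no points over the nodes; and your final contradiction (``two independent directions cannot each absorb a factor-$p$ inflation'') is asserted rather than derived: the formula (\ref{Egenlocvancycles}) applies to the given stable reduction, and nothing is established about any quotient configuration.

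The paper's proof runs along entirely different, arithmetic lines, and your (correct) first paragraph is actually the entry point to it. From $m_{D_{\ol{V}}} = 1$ (Lemma \ref{Lnopvert}) one gets $m_{I_v} = 1$ at the points above the nodes, so by Proposition \ref{Pdefdatadenom} the invariants of all deformation data above $w$ are \emph{integers}; since $\sigma^{\eff}_e$ is a combination of these with $p$-power denominators, $\sigma^{\eff}_e \in \ints[\frac{1}{p}]$. On the other hand, propagating the effective local vanishing cycles formula (\ref{Egenlocvancycles}) outward from $w$ via Lemma \ref{Lsigmaeffcompatibility} yields $\langle \sigma^{\eff}_e \rangle = \langle \sum_{\ol{X}_b \succeq w} \sigma_b \rangle$, which lies in $\frac{1}{m_G}\ints \subseteq \frac{1}{p-1}\ints$. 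Since $\ints[\frac{1}{p}] \cap \frac{1}{p-1}\ints = \ints$, the sum of the $\sigma_b$ over the \'{e}tale tails outward from $w$ must be an integer, and the vanishing cycles formula (\ref{Evancycles}) then forces that set of tails to be empty or all of $B_{\text{\'{e}t}}$. So the missing idea is an integrality/fractional-part comparison at the node, not a conductor-inflation estimate; your reduction of the conclusion to ``all tails in one component of $\ol{X} \setminus \ol{W}$'' is fine but is not needed once one argues node by node.
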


\begin{proof}
We first claim that $\sigma^{\eff}_{e} \in \ints[\frac{1}{p}]$
for $e \in E(\mc{G})$ such that $e$ corresponds to $w$ and $s(e)$ corresponds to $\ol{W}$.
Suppose that $e$ is such an edge.  Write $\ol{W}'$ for the component corresponding to $t(e)$, write
$\ol{V}'$ for a component of $\ol{Y}$ above $\ol{W}'$ intersecting $\ol{V}$, and
pick $v \in \ol{V} \cap \ol{V}'$.  Write $I_v$ for the inertia group in $G$
at $v$.  By Lemma \ref{Lnopvert}, we have $m_{D_{\ol{V}}} = 1$. 
Since $I_v \subseteq D_{\ol{V}}$, it follows that $m_{I_v} = 1$.  

Assume that $I_{\ol{V}}$ contains the generic inertia group of $I_{\ol{V}'}$ in
$G$.  Then $\sigma^{\eff}_e$ is defined
using deformation data on $\ol{Z}$, where $\ol{V} \stackrel{\phi}{\to} \ol{Z}
\stackrel{\psi}{\to} \ol{X}$ is such that
$\phi$ is radicial and $\psi$ is tamely ramified at $\phi(v)$.  By Proposition
\ref{Pdefdatadenom}, the invariants of all
of these deformation data above $w$ are integers.  Then the definition of the
effective invariant $\sigma^{\eff}_e$ at $w$ shows that it is in $\ints[\frac{1}{p}]$.  If, instead,
$I_{\ol{V}'}$ contains $I_{\ol{V}}$, the same argument shows that $\sigma^{\eff}_{\ol{e}} \in \ints[\frac{1}{p}]$,
and so $\sigma^{\eff}_e = -\sigma^{\eff}_{\ol{e}} \in \ints[\frac{1}{p}]$.  The claim is proved.

On the other hand, it is not hard to see from the effective local vanishing cycles formula
(\ref{Egenlocvancycles}) and Lemma \ref{Lsigmaeffcompatibility} that 
\begin{equation}\label{Efracpart}
\langle \sigma^{\eff}_{e} \rangle = \langle \sum_{\substack{b \in B_{\text{\'{e}t}} \\ \ol{X}_b \succeq w}} \sigma_b \rangle.
\end{equation}
The right-hand side of (\ref{Efracpart}) is in $\frac{1}{m_G}\ints \subset \frac{1}{p-1} \ints$.  
The vanishing cycles formula (\ref{Evancycles}) shows that it is in $\ints$ iff the right-hand side counts either no \'{e}tale tails or
every \'{e}tale tail.
Since $\ints[\frac{1}{p}] \cap \frac{1}{p-1} \ints = \ints$, this must be the case.
\end{proof}

The following proposition is the main result of this section:
\begin{prop}\label{Ptrivwildmon}
If the branching indices of $f$ are prime to $p$ and $\ol{X}$ has no new \'{e}tale tails, then the wild monodromy $\Gamma_w$ is trivial.
\end{prop}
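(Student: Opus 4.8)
The plan is to convert the two hypotheses into rigid structural information about $\ol{X}$, show that $\Gamma_w$ acts trivially on $\ol{X}$, and then kill any remaining vertical action using Lemmas \ref{Lnopvert} and \ref{Lnopvert2}. First I would record the combinatorial reductions. Since the branching indices are prime to $p$, Proposition \ref{Pcorrectspec} shows that every branch point specializes to an \'{e}tale ($p^0$-)component; in particular no branch point can specialize to an inseparable tail, so every inseparable tail is \emph{new}. As $\ol{X}$ has no new \'{e}tale tails, Proposition \ref{Pnoinsepnonew} then gives no new inseparable tails, hence no inseparable tails at all. Thus every tail of $\ol{X}$ is a primitive \'{e}tale tail, and by Lemma \ref{Letaletail} the original component $\ol{X}_0$, together with every other interior component, is inseparable.

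Next I would prove that $\Gamma_w$ acts trivially on $\ol{X}$. By Proposition \ref{Ptaillimit} there are fewer than $p$ \'{e}tale tails, so the $p$-group $\Gamma_w$ fixes each of them (Lemma \ref{Lfixetale}). Each primitive tail $\ol{X}_b\cong\proj^1$ carries two $\Gamma_w$-fixed points, namely its intersection with the rest of $\ol{X}$ and the specialization of its ($K$-rational) branch point, so $\Gamma_w$ acts trivially on $\ol{X}_b$ as in Lemma \ref{Ltwopoints}; and every interior component has an \'{e}tale tail lying outward from it, so Corollary \ref{Cfixbottom} shows $\Gamma_w$ fixes it pointwise. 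Hence $\Gamma_w$ acts trivially on all of $\ol{X}$. It then remains to show that every $\gamma\in\Gamma_w$ acts trivially \emph{above} each component. Here I would first reduce to interior components: if $\gamma$ preserves a component $\ol{Y}_b$ over a primitive tail $\ol{X}_b$ and acts on it, Lemma \ref{Lvertactioncentral} forces that action to be trivial, so the only way $\gamma$ can be nontrivial above $\ol{X}_b$ is by permuting the components over it; such a permutation moves the nodes joining those components to the components over the inward neighbor of $\ol{X}_b$, which is inseparable (Lemma \ref{Ltailetale}), so $\gamma$ is then nontrivial above that interior component. Thus if $\gamma\neq 1$, it is nontrivial above some interior component.

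Now I would choose $\ol{W}$ to be an \emph{innermost} (minimal for $\preceq$) interior component above which $\gamma$ acts nontrivially, and split into two cases. If $\ol{W}\neq\ol{X}_0$, its inward neighbor $\ol{W}^-$ is interior and satisfies $\ol{W}^-\prec\ol{W}$, so by minimality $\gamma$ acts trivially above $\ol{W}^-$; hence the node where a component $\ol{V}$ over $\ol{W}$ meets a component over $\ol{W}^-$ is fixed, and $\gamma$ both preserves $\ol{V}$ and fixes a point of it. If instead $\ol{W}=\ol{X}_0$, I would apply Lemma \ref{Lnopvert2}: its all-or-nothing dichotomy says that at each node $w$ of $\ol{X}_0$ either no \'{e}tale tail or every \'{e}tale tail lies outward from $w$. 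But the branch points of the three-point cover specialize to distinct points of $\ol{X}_0$, producing primitive \'{e}tale tails in at least two distinct outward directions, and the dichotomy cannot place all tails on each of two distinct sides. This contradiction rules out $\ol{W}=\ol{X}_0$.

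This leaves the case $\ol{W}\neq\ol{X}_0$, where the plan is to show that the fixed point on $\ol{V}$ forces $\gamma|_{\ol{V}}$ to be trivial; since the components over $\ol{W}$ are $G$-conjugate and $\gamma$ commutes with $G$, triviality on one forces triviality above all of $\ol{W}$, the desired contradiction. Proving this vertical-triviality statement is the main obstacle, and I would argue exactly as in Lemma \ref{Lvertactioncentral}. Because the purely inseparable part of $\ol{V}\to\ol{W}$ admits no nontrivial automorphisms, a nontrivial $\gamma|_{\ol{V}}$ of order $p$ must lie in the center of the separable Galois group $D_{\ol{V}}/I_{\ol{V}}$; Lemma \ref{Lnopvert} gives $m_{D_{\ol{V}}}=1$, so this central order-$p$ subgroup, which commutes with all of $G$, would have to correspond to a subgroup central in $G$ (using Corollary \ref{Cnormalp} and controlling the fixed node, a singular point since no branch point specializes to the inseparable $\ol{W}$, through the effective invariants and the vanishing cycles formula (\ref{Evancycles})). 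This contradicts the standing hypothesis that $p\nmid |Z(G)|$. Carrying out this last identification carefully, matching the monodromy element with a deck transformation and bounding the geometry at the fixed node, is the step I expect to require the most care.
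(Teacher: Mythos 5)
Your combinatorial reductions (no inseparable tails, all tails primitive \'{e}tale) and your proof that $\Gamma_w$ fixes $\ol{X}$ pointwise coincide with the paper's argument, and your use of Lemma \ref{Lnopvert2} at $\ol{X}_0$ is sound. The genuine gap is your case $\ol{W} \neq \ol{X}_0$. There you drop Lemma \ref{Lnopvert2} and propose to rule out a vertical order-$p$ action above an interior inseparable component by arguing ``exactly as in Lemma \ref{Lvertactioncentral}'' --- but that proof does not transfer. It is specific to \'{e}tale tails: it uses the vanishing cycles formula (\ref{Evancycles}) to force $\sigma_b \in \{1,2\}$ and to make $\ol{X}_b$ the \emph{unique} \'{e}tale tail, deduces that $\ol{Y}_b \to \ol{X}_b$ is totally ramified over the node $\ol{x}_b$ via Artin--Schreier and genus computations, and only then gets normality of $Q$ in $G$ from the connectivity of $\ol{Y} \backslash \ol{Y}_b$ together with Corollary \ref{Cinsepnormalp}, and finally centrality from the tame-quotient argument. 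Over an interior inseparable $\ol{W}$ none of these inputs is available: the covers of $\ol{W}$ are branched at two or more nodes, no effective invariant is pinned down by (\ref{Evancycles}), and removing the fiber over $\ol{W}$ may disconnect $\ol{Y}$, so neither normality nor centrality of $Q$ in $G$ follows from your sketch. Note also that the fixed node you extract by minimality does not help directly: a nontrivial central deck transformation can perfectly well fix a ramification point (this is exactly the situation in Lemma \ref{Lvertactioncentral}, where triviality is \emph{not} deduced from the fixed points but from the centrality contradiction). So the entire weight of this case rests on the unproven ``central in $G$'' identification, which you yourself flag as the delicate step; as written, it is a hole.

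The repair is simple and is what the paper does: apply Lemma \ref{Lnopvert2} uniformly at \emph{every} inseparable component, not only at $\ol{X}_0$. You have already assembled the needed input. The three branch points have prime-to-$p$ indices, so they specialize to three \emph{distinct} primitive \'{e}tale tails (Proposition \ref{Pcorrectspec}, Lemma \ref{Letaletail}, and distinctness of their specializations on the original model); since every tail is primitive and at most one branch point specializes outward from any given singular point, \emph{exactly one} of the three tails lies outward from every singular point of $\ol{X}$ on an inseparable component. One is neither zero nor three, so the ``no tails or all tails'' dichotomy of Lemma \ref{Lnopvert2} fails at every such node, whence $\gamma$ --- which fixes all of $\ol{X}$ pointwise --- acts trivially above all inseparable components. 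Your finishing move for the tails (the fixed nodes prevent $\gamma$ from permuting the components above a tail, and Lemma \ref{Lvertactioncentral} kills any action preserving such a component) then completes the proof with no case analysis and no new vertical-triviality lemma needed.
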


\begin{proof}
By Proposition \ref{Pnoinsepnonew}, there are no new inseparable tails.  By Proposition
\ref{Pcorrectspec}, there are no inseparable tails at all.
So all tails are primitive \'{e}tale.  Pick $\gamma \in \Gamma_w$.  Then $\gamma$ fixes $\ol{X}$ pointwise (it fixes the interior
components by Corollary \ref{Cfixbottom}, and it fixes the primitive tails because it fixes two points on them).  Now, since every tail is primitive, 
we see that each singular point of $\ol{X}$ has exactly one of the three \'{e}tale tails lying outward from it.  So $\gamma$ acts trivially
above the inseparable components of $\ol{X}$ (Lemma \ref{Lnopvert2}), and thus does not permute the components of $\ol{Y}$ above the 
\'{e}tale tails.  Lastly, by Lemma \ref{Lvertactioncentral}, the action of $\gamma$
above the \'{e}tale tails is trivial.  So $\gamma$ is the identity.
\end{proof}

\subsection{Good reduction}

Recall that $f: Y \to X \cong \proj^1$ is a three-point cover defined over $K$.
The proposition below motivates Proposition \ref{Ptrivwildmon} above:

\begin{prop}\label{Pgoodred} 
If the absolute ramification index $e$ of $K$ is less than $(p-1)/m_G$, and if $f$ has bad reduction, then
$\Gamma_w$ is non-trivial.
\end{prop}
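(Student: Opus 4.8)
The plan is to prove the contrapositive: assuming $\Gamma_w$ is trivial and that $f$ has bad reduction, I would derive the lower bound $e \geq (p-1)/m_G$, contradicting the hypothesis $e < (p-1)/m_G$. The first step is to translate triviality of $\Gamma_w$ into a ramification statement. Since $k$ is algebraically closed, every finite extension of $K$ is totally ramified, and the tame quotient of $G_K$ is procyclic of order prime to $p$; hence if $\Gamma_w = 1$ the group $\Gamma = \Gal(K^{st}/K)$ is cyclic of some order $m'$ prime to $p$, and $K^{st}/K$ is tamely ramified. Writing $e'$ for the absolute ramification index of $R^{st}$, we then have $e' = e m'$. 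I would also want to bound $m'$: because the stable reduction is controlled by the effective ramification invariants, which lie in $\frac{1}{m_G}\ints$ by Lemma \ref{Lramdenominator}, the tame extension needed to realize these invariants should have degree dividing $m_G$, giving $m' \mid m_G$.

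The second step is to locate a $p$-component carrying genuinely degenerate reduction. Since $f$ has bad reduction, the vanishing cycles formula (\ref{Evancycles}), all of whose right-hand terms are positive by Remark \ref{Rvancyclespositive}, forces $\ol{X}$ to have at least one \'etale tail $\ol{X}_b$; by Lemma \ref{Ltailetale} it borders a $p^r$-component $\ol{W}'$ with $r \geq 1$. I would then examine the cover above $\ol{W}'$ through Construction \ref{CCdefdata}: at the generic point of a component $\ol{V}'$ of $\ol{Y}$ over $\ol{W}'$ one obtains a $\ints/p$-extension of complete discrete valuation rings to which Proposition \ref{Pmupreduction} applies (via Remark \ref{Rvaluation}(i)). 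The transition from the inseparable component $\ol{W}'$ to the \'etale tail $\ol{X}_b$ should force this $\mu_p$-torsor to have \emph{additive} rather than multiplicative reduction, the relevant deformation datum along the edge to the tail being exact; this is consistent with the nonvanishing invariant $\sigma_b = \sigma^{\eff}_e$ of Lemma \ref{Lsigmaeffcompatibility}(iii) together with the tail bounds of Lemma \ref{Ltailbounds}.

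The third step is the quantitative input. By the additive case of Proposition \ref{Pmupreduction}, the valuation of the different has the form $\delta = 1 - n\frac{p-1}{e'}$ with $n$ a positive integer, so additive reduction already forces $e' \geq p-1$; the point is to sharpen this, using the tame character $\chi$ of the deformation datum (equivalently the prime-to-$p$ ramification index $m_x$ attached to the tail, with $m_x \mid m_G$) and the minimality of $K^{st}$, to an honest lower bound on $e$ itself of the form $e\,m_G \geq p-1$. Combined with the reduction to a tamely ramified $K^{st}/K$ from the first step, this gives $e \geq (p-1)/m_G$, the desired contradiction.

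The main obstacle is the interlocking second and third steps: rigorously establishing that bad reduction must produce a component with additive ($0 < \delta < 1$) rather than purely multiplicative ($\delta = 1$) reduction, and then extracting the \emph{sharp} constant $(p-1)/m_G$ rather than the crude $p-1$. The delicacy is that $K$ need not contain $\mu_p$ — indeed $e < p-1$ forces $\mu_p \not\subset K$ — so passing to a field containing $\mu_p$ inflates the ramification index and weakens the bound; Proposition \ref{Pmupreduction} must therefore be applied over the minimal field, and the correct bookkeeping of the tame twist by $\ints/m_x$ is exactly what converts the factor $p-1$ into $(p-1)/m_G$. This is also where I expect to use the standing hypothesis that $p$ does not divide the order of the center of $G$, via the structure theory of \S\ref{Sgroups}, to control the local Galois groups $J_x$ and to guarantee $m_x \mid m_G$.
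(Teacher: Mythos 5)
The paper's own proof of Proposition \ref{Pgoodred} is a single line deferring to the argument of \cite[\S5.1]{Ra:sp}, so the real comparison is between your outline and Raynaud's argument. Your overall shape is the right one: the contrapositive, and the observation that triviality of $\Gamma_w$ forces $K^{st}/K$ to be tame (correct, since the image of wild inertia is the unique $p$-Sylow subgroup of $\Gamma$, and $k$ is algebraically closed). But the two steps you yourself flag as ``the main obstacle'' are precisely the substance of Raynaud's \S5.1, and one step you treat as routine is an assertion that is false. The claimed bound $m' \mid m_G$ on the tame degree $m' = [K^{st}:K]$ does not follow from Lemma \ref{Lramdenominator}: the denominators of the effective ramification invariants control nothing about the degree of $K^{st}/K$, since tame monodromy also arises from extracting prime-to-$p$ roots of thicknesses of annuli and from the mod-$p$ cyclotomic character, and its order is not controlled by $m_G$. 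Indeed, the paper's own closing Remark cites \cite[Cor.\ 1.5]{We:mc}: realizing the relevant metacyclic pieces as Galois covers requires absolute ramification index exactly $(p-1)/m_G$, so starting from $e = 1$ the tame part of the monodromy has degree divisible by $(p-1)/m_G$, which in general neither divides nor is bounded by $m_G$ (e.g., $p = 7$, $m_G = 2$); similarly, for the $\ints/p$-covers of \cite{LM:wm} one has $m_G = 1$ while the tame monodromy is typically large. With $m'$ unbounded by $m_G$, your chain $e\,m' \geq p-1 \Rightarrow e \geq (p-1)/m_G$ collapses, and no contradiction can be extracted along that route.

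The quantitative step is also circular as written. Proposition \ref{Pmupreduction} is only stated (and only true in that form) over a ring containing the $p$th roots of unity; over such a ring $e' \geq p-1$ holds automatically, so the inequality you extract from $\delta = 1 - n\frac{p-1}{e'}$ with $n \geq 1$ carries no information beyond the presence of $\mu_p$. You notice this delicacy, but the proposed fix --- ``apply Proposition \ref{Pmupreduction} over the minimal field'' --- is not available: if $\mu_p \not\subset K^{st}$ the Kummer description simply does not exist there, and nothing in your argument shows $\mu_p \subset K^{st}$ (under the hypothesis $e < (p-1)/m_G$ it typically is not, unless $m' $ is divisible by a large factor of $p-1$, which is what one is trying to rule out). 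The genuine content of the cited argument is the analysis of degenerating torsors under twisted finite flat group schemes of order $p$ over $R^{st}$ \emph{without} assuming $\mu_p \subset K^{st}$, where the tame character of order dividing $m_G$ (through which $N_G(P)/Z_G(P)$ twists the order-$p$ subquotient) is exactly what converts the constant $p-1$ into $(p-1)/m_G$. Since your second and third steps defer this analysis rather than carry it out, the proposal is an outline of the intended strategy, not a proof.
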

\begin{proof} This is essentially the argument of \cite[\S5.1]{Ra:sp}.
\end{proof}

This has the following consequence:

\begin{corollary} \label{Cgoodreduction}
Let $X = \proj^1_{K}$, where $K$ is a complete discretely valued field of
mixed characteristic $(0,p)$ and absolute ramification index $e$.  Let $f: Y \to X$ be a smooth, geometrically
connected, Galois cover branched only at $\{0,1,\infty\}$ with Galois group $G$, defined over $K$.  Assume that
$G$ has a cyclic $p$-Sylow subgroup, and that $e < (p-1)/m_G$.  
Assume further that there are no new \'{e}tale tails in the stable reduction of $f$.  
Then $f$ has potentially good reduction.
\end{corollary}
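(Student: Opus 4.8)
The plan is to combine the two preceding propositions and argue by contradiction, so suppose that $f$ has bad reduction. Since $f$ is a three-point cover and the absolute ramification index of $K$ satisfies $e < (p-1)/m_G$, Proposition \ref{Pgoodred} applies immediately and forces the wild monodromy group $\Gamma_w$ to be non-trivial. I would then contradict this by showing that in fact $\Gamma_w$ is trivial, via Proposition \ref{Ptrivwildmon}.

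Proposition \ref{Ptrivwildmon} requires two inputs: that $\ol{X}$ has no new \'{e}tale tails, and that the branching indices of $f$ are prime to $p$ (tameness at the three branch points). The first is assumed outright. The second is the crucial point; with prime-to-$p$ branching indices, Propositions \ref{Pnoinsepnonew} and \ref{Pcorrectspec} guarantee that $\ol{X}$ has no inseparable tails whatsoever, so that every tail is a primitive \'{e}tale tail --- exactly the situation in which Proposition \ref{Ptrivwildmon} produces $\Gamma_w = 1$. This contradicts the conclusion of the previous paragraph, and hence $f$ must have potentially good reduction.

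The combination is essentially bookkeeping --- the substance lives inside Propositions \ref{Ptrivwildmon} and \ref{Pgoodred} --- so only two points need care. First, Proposition \ref{Pgoodred} is phrased as ``bad reduction $\Rightarrow \Gamma_w \ne 1$'' and is being used here in contrapositive form ``$\Gamma_w = 1 \Rightarrow$ potentially good reduction''; this is legitimate precisely because the hypothesis $e < (p-1)/m_G$ excludes the otherwise-possible phenomenon of bad reduction accompanied by a tame (prime-to-$p$) stable field $K^{st}/K$, which is the content Raynaud's argument supplies. Second --- and this is the step I expect to be the genuine obstacle --- one must extract the prime-to-$p$ branching condition from the smoothness hypothesis on $f$ so that Proposition \ref{Ptrivwildmon} truly applies; if the branch points were allowed to be wildly ramified, primitive inseparable tails could persist and carry non-trivial wild monodromy (compare Lemma \ref{Lsigmaaction}), and the argument above would break down. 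Matching the stated hypotheses to this tameness requirement of the vanishing-cycles machinery of \S\ref{Scomb} is therefore the one place where I would need to be careful.
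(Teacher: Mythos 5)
Your overall architecture is exactly the paper's: the proof in the text reads, in its entirety, ``By \cite[Corollaire 4.2.13]{Ra:sp}, the ramification indices of $f$ are of prime-to-$p$ order. The Corollary follows from Propositions \ref{Ptrivwildmon} and \ref{Pgoodred}.'' So combining Proposition \ref{Pgoodred} (bad reduction plus $e < (p-1)/m_G$ forces $\Gamma_w \ne 1$) with Proposition \ref{Ptrivwildmon} ($\Gamma_w = 1$ when branching is prime to $p$ and there are no new \'{e}tale tails) is the right skeleton, and your first ``point needing care'' about using Proposition \ref{Pgoodred} in contrapositive form is harmless --- that is a direct logical contrapositive requiring no additional input.

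However, the step you yourself flag as ``the genuine obstacle'' is a genuine gap in your proposal: you never establish that the branching indices of $f$ are prime to $p$, and without that, Proposition \ref{Ptrivwildmon} simply does not apply (its hypothesis is exactly prime-to-$p$ branching; if some $e_i$ is divisible by $p$, Proposition \ref{Pcorrectspec} produces an inseparable component containing the specialization of a branch point, and the chain through Proposition \ref{Pnoinsepnonew} no longer yields an all-primitive-\'{e}tale-tails configuration). Moreover, you point at the wrong hypothesis: tameness cannot be ``extracted from the smoothness hypothesis on $f$'' --- smoothness and geometric connectedness of $Y$ say nothing about the branching indices. The correct source is the ramification bound $e < (p-1)/m_G$ itself, via \cite[Corollaire 4.2.13]{Ra:sp}: a three-point $G$-cover with a branch point of index divisible by $p$ cannot be defined (as a $G$-cover) over a field with absolute ramification index that small, roughly because wild branching forces the field of definition to contain a suitably ramified cyclotomic-type extension --- compare the remark following the corollary in the paper, which notes that for $\ints/p \rtimes \ints/m_G$-covers one exactly needs $e = (p-1)/m_G$. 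So the hypothesis $e < (p-1)/m_G$ does double duty in this corollary, feeding both Proposition \ref{Pgoodred} and the tameness input to Proposition \ref{Ptrivwildmon}; your proposal supplies the first use but leaves the second unproved.
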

\begin{proof} By \cite[Corollaire 4.2.13]{Ra:sp}, the ramification indices of
$f$ are of prime-to-$p$ order.  The Corollary follows from Propositions \ref{Ptrivwildmon} and \ref{Pgoodred}.  
\end{proof}

\begin{example}\label{Xgoodreduction}
We exhibit a family of covers with arbitrarily large cyclic $p$-Sylow subgroup that have good reduction 
by Corollary \ref{Cgoodreduction}.
Fix a prime $p \equiv 1 \pmod{3}$, and $n \geq 1$.
Let $q$ satisfy 
\begin{equation}\label{Egoodred}
q^2 + q + 1 \equiv 0 \pmod{p^n}.
\end{equation}  
We note that, for $n = 1$, this is satisfied whenever $q^3 \equiv 1 \pmod{p}$ and
$q \not \equiv 1 \pmod{p}$.  Since $p \equiv 1 \pmod{3}$, there are solutions of (\ref{Egoodred}) for $n=1$.  By Dirichlet's theorem there are infinitely 
many prime solutions $q$ (and perhaps some higher prime power solutions, too).  Once there is a solution of (\ref{Egoodred}) for $n=1$, Hensel's 
lemma gives solutions for all $n$.  Since these solutions are given by congruence conditions$\pmod{p^n}$, there are also infinitely many prime 
solutions $q$ for any fixed $n$ and $p \equiv 1 \pmod{3}$ (and perhaps some higher prime powers).
The smallest solution for $n > 1$ and $q$ a prime power is $p = 7$, $n = 2$, and $q = 67$. 

Assume $q = \ell^f$ is a prime power satisfying (\ref{Egoodred}). Let $G \cong PGL_3(q)$.  
We know by \cite[Satz 7.3]{Hu:eg} that $G$ has a cyclic $p$-Sylow subgroup of order $p^{v_p(q^2+q+1)} \geq p^n$, with $m_G = 3$.
We will construct a three-point $G$-cover defined over $K_0$ with potentially good reduction to characteristic $p$.

Consider $H := GL_3(q) \rtimes \ints/2$, where the $\ints/2$-action is inverse-transpose.
In \cite[II, Proposition 6.4 and Theorem 6.5]{MM:ig}, a rigid class vector
$(\tilde{C}_0, \tilde{C}_1, \tilde{C}_{\infty})$ is exhibited for 
$H/ \{\pm 1\}$, where $\tilde{C}_0$ has order $2$, $\tilde{C}_1$ has order $4$, and $\tilde{C}_{\infty}$ has order
$(q-1)\ell^a$ for some $a$ (this is because the characteristic polynomial for the elements of $\tilde{C}_{\infty}$ has eigenvalues of 
order $q-1$).  Since $p$ does not divide the order of any of the ramification indices, this triple is rational over $K_0$, so the corresponding
$H/ \{\pm 1\}$-cover is defined over $K_0$.  Thus there is a quotient $G \rtimes \ints/2$-cover $h: Y \to \proj^1$ defined over $K_0$. 

Let $X \to \proj^1 $ be the quotient cover of $h: Y \to \proj^1$ corresponding to the group $G$.  
Then $X \to \proj^1$ is a cyclic cover of degree 2, branched at $0$ and $1$
(this comes from the proof of \cite[II, Proposition 6.4]{MM:ig}).  
This means that $X \cong \proj^1$, and $Y \to X$ is branched at three points (the two points above 
$\infty$, and the unique point above $1$).  So we have constructed a three-point $G$-cover $f: Y \to X \cong \proj^1$ defined over $K_0$, 
and all three branch points have prime-to-$p$ branching index.  Thus, there are three primitive tails (Proposition \ref{Pcorrectspec} and Lemma 
\ref{Letaletail}).

Since $m_G = 3$, each primitive tail $\ol{X}_b$ satisfies $\sigma_b \geq \frac{1}{3}$.  By the vanishing cycles formula (\ref{Evancycles}), 
all $\sigma_b$ are equal to $\frac{1}{3}$ and there are no new (\'{e}tale) tails. 
We note that the absolute ramification index $e$ of $K_0$ is 1, and, since $p > 4$, we have $e < \frac{p-1}{m_G}$.  
We conclude using Corollary \ref{Cgoodreduction}.
\end{example}

\begin{remark}
There are no examples satisfying the hypotheses of Corollary \ref{Cgoodreduction} where $G$ is $p$-solvable.  This is
because there would be a quotient cover with Galois group $\ints/p \rtimes \ints/m_G$ and prime-to-$p$ ramification
indices (\cite[Proposition 2.1]{Ob:fm1}).  This cannot be defined as a Galois cover over a field of ramification index $e <
(p-1)/m_G$ (see \cite[Cor. 1.5]{We:mc}; in fact, one exactly needs $e = (p-1)/m_G$).  
\end{remark}

\appendix

\section{An example of wild monodromy}\label{Awildexample}

Throughout this appendix, let $G = SL_2(251)$, and let $k$ be an algebraically closed field of characteristic $p = 5$.  
Let $R_0 = W(k)$ and $K_0 = \Frac(R_0)$.  Lastly, let $K = K_0(\mu_{5^{\infty}})$ (that is, we adjoin all $5$th-power roots of unity
to $K$), and let $R$ be the valuation ring of $K$.  Note that $G$ has a cyclic $5$-Sylow subgroup of order $5^3 = 125$ and $m_G = 2$.
Our example of a three-point $G$-cover with nontrivial wild monodromy (and such that $5$ does not divide the order of the center of $G$)
depends on intricate calculations from \cite{Ob:fm2}.  We normalize all valuations on $R_0$, $K_0$, or any extensions thereof so that $v(5) = 1$.

\begin{prop}\label{P251cover}
There exists a three-point cover $f: Y \to X = \proj^1_K$, defined over $K$, such that the branching indices of the three branch points are
$e_1$, $e_2$, and $e_3$, with $v_5(e_1) = 0$, $v_5(e_2) = 2$, and $v_5(e_3) = 3$.
\end{prop}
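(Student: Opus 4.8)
The plan is to realize $f$ by the rigidity method of inverse Galois theory (as in Example~\ref{Xgoodreduction}) and then to descend it to $K$ using that $K$ contains all roots of unity. By the Riemann Existence Theorem, a three-point $G$-cover of $\proj^1_{\ol K}$ branched at $\{0,1,\infty\}$ is equivalent to a triple $(g_1,g_2,g_3)\in G^3$ with $g_1g_2g_3=1$ and $\langle g_1,g_2,g_3\rangle=G$, where the branching index of the $i$th point is the order of $g_i$. Hence it suffices to produce three conjugacy classes $C_1,C_2,C_3$ of $G=SL_2(251)$ whose elements have orders $e_1,e_2,e_3$ with $v_5(e_1)=0$, $v_5(e_2)=2$, $v_5(e_3)=3$, and which contain a generating triple of product $1$.

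First I would fix the classes from the subgroup structure of $SL_2(251)$. As $251$ is prime and $q-1=250=2\cdot 5^3$, the split (diagonal) torus $T$ is cyclic of order $250$ and contains the cyclic $5$-Sylow subgroup $P$ of order $5^3$, while $q+1=252=2^2\cdot 3^2\cdot 7$ is prime to $5$ and the unipotent elements have order $251$. So I would take $C_3$ to be the class of an element of $T$ of order divisible by $125$ (e.g.\ a generator of $T$, or of $P$), giving $v_5(e_3)=3$; $C_2$ to be the class of a fifth power of such an element, of order $50$ or $25$, giving $v_5(e_2)=2$; and $C_1$ to be a class of elements of order prime to $5$, e.g.\ a unipotent class (order $251$) or a class from the nonsplit torus (order dividing $252$), giving $v_5(e_1)=0$. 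Since any two elements of a single torus commute, the three representatives cannot all lie in one torus, so establishing that an honest generating triple with $g_1g_2g_3=1$ exists in $C_1\times C_2\times C_3$ is a nontrivial structure-constant computation in the character table of $SL_2(251)$; this, together with rigidity, is precisely the input I would quote from \cite{Ob:fm2}.

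Finally I would descend to $K$. By the branch cycle argument, the field of moduli of the cover (with its rational branch locus) is governed by the cyclotomic action $\sigma\mapsto C_i^{\chi(\sigma)}$ on the class vector. Now $K_0=\Frac(W(k))$ already contains all prime-to-$5$ roots of unity as Teichm\"uller lifts, and $K=K_0(\mu_{5^\infty})$ adjoins the $5$-power roots of unity, so $K$ contains \emph{every} root of unity and the cyclotomic character is trivial on $G_K$. Thus $C_i^{\chi(\sigma)}=C_i$ for all $\sigma\in G_K$, the rationality condition of \cite[II]{MM:ig} holds over $K$, and the field of moduli is contained in $K$; the remaining point is to pass from field of moduli to field of definition, handling the central subgroup $\{\pm I\}$ of $SL_2(251)$. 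I expect the real difficulty to lie not in this setup but in verifying that the prescribed class vector is rational-rigid and generates $G$: without rigidity there is no control of the field of definition, and the $5$-power orders $25,50,125,250$ forced on two of the classes make the $5$-local rigidity analysis delicate. This is the \emph{intricate calculation} I would import from \cite{Ob:fm2}.
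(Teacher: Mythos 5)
Your global strategy---choose a class vector in $SL_2(251)$ with element orders of the prescribed $5$-adic valuations, invoke rigidity, and descend to $K$ using that $K$ contains all roots of unity---is the same skeleton as the paper's proof (the paper shows the cover is defined over $\rats^{ab}$ and then uses $\rats^{ab} \hookrightarrow K$, which is your ``cyclotomic character is trivial on $G_K$'' argument in different clothing). But the entire content of the proposition is precisely the step you defer, and you defer it to the wrong place: \cite{Ob:fm2} contains no structure-constant or rigidity computation for $SL_2(251)$. The ``intricate calculations'' from \cite{Ob:fm2} that the appendix alludes to are used only in the \emph{second} proposition of the appendix (nontriviality of the wild monodromy, via the strong auxiliary cover and the explicit equations (\ref{E1})--(\ref{E2})), not in Proposition \ref{P251cover}. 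So as written, your proof has an unfilled hole exactly where the work is, backed by a phantom citation.

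The paper fills this hole with a short, self-contained argument requiring no character-table computation and nothing ``$5$-locally delicate.'' Take $\alpha$ unipotent of order $251$ and look for $\beta$ with $\ord(\beta) = 250$ and $\ord(\alpha\beta) = 50$. Since a $GL_2(251)$-conjugacy class of trace $\neq \pm 2$ in $G$ is determined by its trace, one only needs to solve the trivially solvable system $a + d = \tau$, $a + c + d = \rho$, $ad - bc = 1$, where $\tau$, $\rho$ are traces of classes of orders $250$ and $50$. Generation is Dickson's classification of subgroups of $PSL_2(q)$ (\cite[II, Hauptsatz 8.27]{Hu:eg}), applied to the images $\ol{\alpha}$, $\ol{\beta}$ in $PSL_2(251)$, lifted to $SL_2(251)$ via $\beta^{125} = -I_2$. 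Rigidity of $([\alpha], [\beta], [\alpha\beta]^{-1})$ is then quoted wholesale from \cite[I, Theorem 5.10]{MM:ig} (trace-determined triples in $SL_2(q)$ are rigid), and \cite[I, Theorem 4.8]{MM:ig} directly produces a three-point $G$-cover \emph{defined over} $\rats^{ab}$---which also disposes of the second loose end in your sketch, the passage from field of moduli to field of definition in the presence of the center $\{\pm I\}$: the theorem already outputs a field of definition, so there is nothing left to ``handle.'' To repair your proposal, either carry out the structure-constant and rigidity verification yourself or replace the citation of \cite{Ob:fm2} with the trace argument above.
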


\begin{proof}
We show that such a cover can be defined over $\rats^{ab}$.  Since $\rats^{ab} \hookrightarrow K$, this will prove the proposition.

Let $\alpha = \matrix{1}{1}{0}{1} \in G$.  This has order $251$.
We claim there exists $\beta = \matrix{a}{b}{c}{d} \in G$ satisfying the following properties:
\begin{itemize}
\item The order of $\beta$ is 250.
\item The order of $\alpha\beta$ is 50.
\item The matrices $\alpha$ and $\beta$ generate $SL_2(251)$.
\end{itemize}

To prove the claim, first note that any $GL_2(251)$-conjugacy class in $G$ is determined by the trace of the matrices it contains, 
unless the trace is $\pm 2$.  In particular, the trace of a matrix determines its order if it is not $\pm 2$. 
Let $\tau$ be the trace of the matrices in some conjugacy class of order $250$, and let $\rho$ be the trace of the matrices in some conjugacy
class of order $50$.  Then $\tau$, $\rho$, $2$, and $-2$ are pairwise distinct.
Choose $a$, $b$, $c$, and $d$ in $\FF_{251}$ solving the (clearly solvable) system of equations:
\begin{eqnarray*}
a+d &=& \tau \\
a + c+ d &=& \rho \\
ad - bc &=& 1
\end{eqnarray*}
Since the trace of $\alpha\beta$ is $a + c + d$, these equations ensure that $\beta$ and $\alpha\beta$ have the desired orders.  Let $\ol{\alpha}$ and
$\ol{\beta}$ be the images of $\alpha$ and $\beta$ in $H := PSL_2(251)$.  Since $c \ne 0$,
one checks that $\ol{\beta}$ does not normalize the subgroup generated by $\ol{\alpha}$.  
Then, by \cite[II, Hauptsatz 8.27]{Hu:eg}, we have that $\ol{\alpha}$ and $\ol{\beta}$ generate $H$.  Furthermore, since $\beta$ is diagonalizable
over $GL_2(251)$ and has eigenvalues of order $250$, then $\beta^{125} = -I_2$.  Since $\ol{\alpha}$ and $\ol{\beta}$ generate $H$, and 
$\beta$ generates $\ker(G \to H)$, then $\alpha$ and $\beta$ generate $G$.

Consider the triple $([\alpha], [\beta], [\alpha\beta]^{-1})$ of conjugacy classes of $G$.  By \cite[I, Theorem 5.10 and Remark afterward]{MM:ig}, 
this triple is rigid.  By \cite[I, Theorem 4.8]{MM:ig}, there exists a three-point $G$-cover of $\proj^1$, defined over $\rats^{ab}$, with branching indices
$e_1 = \ord(\alpha) = 251$, $e_3 = \ord(\beta) = 250$, and $e_2 = \ord((\alpha\beta)^{-1}) = 50$.  This completes the proof of the proposition.
\end{proof}

\begin{prop}
If $f: Y \to X = \proj^1_K$ is a cover satisfying the properties of Proposition \ref{P251cover}, then $f$ has nontrivial wild monodromy $\Gamma_w$.
\end{prop}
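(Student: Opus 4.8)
The plan is to first force bad reduction from the branching data, and then to exhibit a residual wild extension in the minimal field of definition of the stable model.

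First I would rule out potentially good reduction. If $f$ had potentially good reduction, then $\ol{Y}$ would be smooth, so $\ol{X} = \ol{Y}/G$ would be a smooth, irreducible curve; since $\ol{X}$ is the special fiber of the stable model of $X \cong \proj^1$ (which itself has good reduction), it would then have to be the single original component $\ol{X}_0$. But by Proposition \ref{Pcorrectspec} the branch point with $v_5(e_2) = 2$ specializes to a $5^2$-component and the one with $v_5(e_3) = 3$ specializes to a $5^3$-component, whereas the irreducible $\ol{X}_0$ has a single generic-inertia order $5^a$. This would force $a = 2$ and $a = 3$ at once, a contradiction. Hence $f$ has bad reduction, and $K^{st} \neq K$ is at least a tame extension; the point is to show it is not \emph{only} tame.

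Next I would pin down the relevant part of the stable reduction. The prime-to-$5$ branch point $x_1$ yields a primitive étale tail, while the two wildly branched points force, via Propositions \ref{Pcorrectspec} and \ref{Pspecialram} together with Lemma \ref{Ltailetale}, inseparable components realizing the $5^2$- and $5^3$-behavior, with effective ramification invariants $\sigma_b$ that are integers by Lemma \ref{Linsepint} and are constrained by the vanishing cycles formula (\ref{Evancycles}) and its generalization (\ref{Egenvancycles}). The precise tree and the precise values of the $\sigma_b$ (in particular along the $5^3$-part attached to the branching index $250 = 2 \cdot 5^3$) are exactly the intricate computations of \cite{Ob:fm2}, which I would import rather than redo here.

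To conclude $\Gamma_w \neq 1$ I would argue by contradiction. Since $\Gamma_w$ is the $p$-Sylow of $\Gal(K^{st}/K)$ and $k$ is algebraically closed, $\Gamma_w = 1$ would mean $K^{st}/K$ is tame, so the stable model would already be defined over the maximal tame extension of $K$. Running the local analysis of \S\ref{Sproof} in reverse along the deepest inseparable component — using Lemma \ref{Lsigmaaction} (which ties $\epsilon_b = 1$ to $\sigma_b - 1 \geq p$) and the tail counts of Corollary \ref{Ctaillimit} — this would force the reduction datum at the $5^3$-component to descend to a tamely ramified extension of $K$. The calculation of \cite{Ob:fm2} shows this descent is impossible, producing a $\gamma \in \Gamma_w$ of order $5$; since any nontrivial $p$-group contains a copy of $\ints/5$, this also gives $\Gamma_w \supseteq \ints/5$, as claimed in Remark \ref{Rmain}(iii).

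The main obstacle is precisely this last step. Because $K = K_0(\mu_{5^\infty})$ already contains every $5$-power root of unity, all the "cyclotomic" sources of wild monodromy — the ones exploited by Proposition \ref{Pgoodred} over a field of small absolute ramification index — have been absorbed into $K$; moreover $K$ is not discretely valued of finite absolute ramification index, so Proposition \ref{Pgoodred} cannot be applied to it at all, and in particular one cannot simply compare with $(p-1)/m_G = 2$. The content of the example is therefore to show that the geometry of the $5^3$-component contributes a genuinely \emph{non-cyclotomic} $5$-power extension to $K^{st}$, and isolating and computing this residual wild part is exactly what is deferred to \cite{Ob:fm2}.
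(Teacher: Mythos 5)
There is a genuine gap at the decisive step. Your preliminary reductions are fine: ruling out potentially good reduction from the incompatible specializations of the two wildly branched points is correct (the paper instead cites \cite[Lemma 3.2]{Ob:ac} to get a primitive and a new \'{e}tale tail), and your observation that Proposition \ref{Pgoodred} cannot be applied to $K = K_0(\mu_{5^{\infty}})$ is exactly right. But the core of your argument --- that if $\Gamma_w$ were trivial, then ``running the local analysis of \S\ref{Sproof} in reverse'' via Lemma \ref{Lsigmaaction} and Corollary \ref{Ctaillimit} would force a tame descent of the reduction datum at the $5^3$-component, which ``the calculation of \cite{Ob:fm2} shows is impossible'' --- does not work as stated. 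Lemma \ref{Lsigmaaction} and Corollary \ref{Ctaillimit} are \emph{constraints} on possible $\Gamma_w$-actions, used to prove the upper bound of Theorem \ref{Tmain}; they cannot be run in reverse to certify that a nontrivial action actually occurs. Worse, the non-descent statement you defer to \cite{Ob:fm2} is not proved there: that paper supplies only the inputs (the equations of the strong auxiliary cover, the existence and location of the new inseparable $p$-tail $\ol{X}_c$, the choice $a = 1 - \frac{25}{r^2}$, and the specialization point $z = d = \frac{2 \cdot 5^{7/5}}{r}$ together with the expansion of $g(d)$), while the obstruction itself must be, and in this paper is, established directly.

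The missing idea is the passage to the \emph{strong auxiliary cover}. Since $G = SL_2(251)$ is large and non-solvable, no explicit equations for $f$ are available, so nothing about descent of $f^{st}$ can be computed on $f$ itself. The paper replaces $f$ by $f^{str}$ with group $\ints/125 \rtimes \ints/2$, given by the explicit Kummer-type equations (\ref{E1})--(\ref{E2}); it locates the new inseparable $p$-tail $\ol{X}_c$, notes that there are $25$ points of $\ol{Y}^{str}$ above the specialization $\ol{d}$ of $z = d$, and then proves by a concrete congruence computation in $R_0[\sqrt[5]{5}]$ (Lemma \ref{Lpthpower}) that $g(d)$ is a $5$th power but not a $25$th power in $K(d) = K(\sqrt[5]{5})$; consequently the Galois group of the resulting Kummer-root extension $L/K(d)$ permutes those $25$ points in orbits of size $5$. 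Finally --- a step your outline also omits --- this action must be transported back to $f$: since $f^{str}$ is a quotient of the auxiliary cover $f^{aux}$, and $f^{st}$ is isomorphic to disjoint copies of $(f^{aux})^{st}$ above an \'{e}tale neighborhood of $\ol{X}_c$, the order-$5$ action on $\ol{f}^{str}$ induces a nontrivial action of order divisible by $5$ on $\ol{f}$, whence $\Gamma_w \supseteq \ints/5$. Without the auxiliary-cover reduction, the explicit $5$th-versus-$25$th power computation, and this transfer back to $f$, your proposal contains no mechanism for actually producing a nontrivial element of $\Gamma_w$.
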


\begin{proof}
To fix notation, we assume $f$ is branched at $x=0$, $x=1$, and $x=\infty$ of index $e_1$, $e_2$, and $e_3$, respectively, with $v_5(e_1) = 0$, $v_5(e_2) = 2$, and $v_5(e_3) = 3$.
By \cite[Lemma 3.2]{Ob:ac}, the stable reduction of $f$ has both a primitive \'{e}tale tail and a new \'{e}tale tail. 
Construct the \emph{strong auxiliary cover} $f^{str}: Y^{str} \to X$ of $f$ (\cite[\S2.5]{Ob:fm2}).  
This is a four-point $G^{str}$-cover, with $G^{str} \cong \ints/125 \rtimes \ints/2$ such that the
action of $\ints/2$ is faithful.  By \cite[p.\ 22, (3.1), (3.2)]{Ob:fm2}, this cover given by equations
\begin{eqnarray}
z^2 &=& \frac{x-a}{x} \label{E1} \\
y^{125} &=& g(z) := \left(\frac{z+1}{z-1}\right)^r \left(\frac{z + \sqrt{1-a}}{z- \sqrt{1-a}}\right)^s, \label{E2}
\end{eqnarray}
where $r$ and $s$ are integers satisfying $v_5(r) = 0$ and $v_5(s) = 1$.  Replacing $y$ with a prime-to-$p$ power, we can assume $s = 5$.
By \cite[Lemma 3.7]{Ob:fm2}, we have $v(1-a) > 0$ in $K(a)/K$, and then
\cite[Lemmas 3.22 and 3.26(i)]{Ob:fm2} show that we can take $a = 1 - \frac{25}{r^2}$.  In particular, $f^{str}$ is defined over $K$.
By \cite[Proposition 3.31]{Ob:fm2}, the stable model $(f^{str})^{st}: (Y^{str})^{st} \to X^{st}$ of $f^{str}$ has a new inseparable 
$p$-tail $\ol{X}_c$.  We claim that there is an extension $L/K$ such that
$\Gal(L/K)$ acts nontrivially of order $5$ on the stable reduction $\ol{f}^{str}: \ol{Y}^{str} \to \ol{X}$ of $f^{str}$ above $\ol{X}_c$.  
Since $(f^{str})^{st}$ is a quotient of the stable model $(f^{aux})^{st}$ of the (standard) 
auxiliary cover $f^{aux}$ (\cite[\S2.5]{Ob:fm2}), then $\Gal(L/K)$ will act nontrivially of order divisible by $5$
above $\ol{X}_c$ in $(f^{aux})^{st}$ as well.
Lastly, since, above an \'{e}tale neighborhood of $\ol{X}_c$, the stable model $f^{st}$ of $f$ is isomorphic to a set of disconnected copies
of $(f^{aux})^{st}$, the action of $\Gal(L/K)$ on the stable reduction $\ol{f}$ will be nontrivial of order divisible by $5$ above $\ol{X}_c$.  
By assumption, $f$ is defined over $K$, so this will show that $f$ has nontrivial wild monodromy.

It remains to prove the claim.  Let $Z^{str} = Y^{str}/(\ints/125)$, with stable model $(Z^{str})^{st}$ and stable reduction $\ol{Z}^{str}$. 
Then $z$ is a coordinate on $Z^{str}$, and by \cite[Proposition 3.31(iii)]{Ob:fm2}, there is a component of $\ol{Z}^{str}$ above $\ol{X}_c$
containing the specialization $\ol{d}$ of $$z = d := \frac{2 \cdot 5^{7/5}}{r},$$ where we can use any choice
of $5$th root.  Since $\ol{X}_c$ is a $p$-component, there are $25$ points of $\ol{Y}^{str}$ above $\ol{d}$.  
If $g(d)$ is a $5$th power, but not a $25$th power, in $K(d) = K(\sqrt[5]{5})$, 
then if $L = K(d, \sqrt[25]{d})$, the action of $\Gal(L/K(d))$ will permute these 25 points in orbits of order $5$, and we will be done. 
This follows from Lemma \ref{Lpthpower} below.
\end{proof}

\begin{lemma}\label{Lpthpower}
Let $d = \frac{2 \cdot 5^{7/5}}{r}$, where $r$ is a prime-to-$p$ integer and we choose any $p$th root of $5$.  Let $g$ be the rational function in
(\ref{E2}), with $s=5$ and $a = 1 - \frac{25}{r^2}$.  Then $g(d)$ is a $5$th power, but not a $25$th power,
in $K(\sqrt[5]{5})$.
\end{lemma}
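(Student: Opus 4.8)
The plan is to reduce both assertions to statements about $5$th powers of explicit $1$-units, prove the ``$5$th power'' half by a clean leading-term computation, and attack the ``not a $25$th power'' half by a valuation (Newton) analysis carried out over a \emph{finite}, hence discretely valued, subfield of $F:=K(\sqrt[5]{5})$. Set $\pi=5^{1/5}$, so $v(\pi)=1/5$ and $F=K(\pi)$. Substituting $s=5$, $a=1-25/r^2$ (so $\sqrt{1-a}=5/r$) and $d=2\cdot 5^{7/5}/r=2\pi^7/r$ into (\ref{E2}) gives
\[
g(d)=\left(\frac{d+1}{d-1}\right)^{r}\left(\frac{d+5/r}{d-5/r}\right)^{5}=:A^{r}B^{5},
\]
where the coincidence $d/(5/r)=2\pi^2$ rewrites $B=-\tfrac{1+2\pi^2}{1-2\pi^2}$ and $A=-\tfrac{1+d}{1-d}$. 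Since $v(d)=7/5$ and $v(5/r)=1$, both $A$ and $B$ are units, so $g(d)$ is a unit and everything lives in the $1$-unit filtration.

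For the first assertion I would expand $g(d)$ directly (up to sign) rather than treating the factors separately. One finds $A=-(1+w)$ with $v(w)=7/5$ and $B=-(1+w')$ with $v(w')=2/5$; multiplying out $A^rB^5$ and using the crucial identity $4\pi^7=20\pi^2$ (because $\pi^5=5$), the two valuation-$7/5$ contributions \emph{add} rather than cancel, giving
\[
g(d)=(-1)^{r+1}\bigl(1+Z\bigr),\qquad Z=40\pi^2+\cdots,\quad v(Z)=7/5.
\]
As $-1=(-1)^5$ and $7/5>\tfrac{p}{p-1}=\tfrac54$, the $1$-unit $1+Z$ is a $5$th power in $F$: applying $\log$ and $\exp$ on $1$-units (legitimate since $v(Z)>\tfrac1{p-1}=\tfrac14$) produces a fifth root $1+Y$ with $v(Y)=v(Z)-1=2/5$. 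Hence $g(d)=\bigl((-1)^{r+1}(1+Y)\bigr)^5$ is a $5$th power.

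The second assertion is where the work lies. Since $\mu_{5^\infty}\subset F$ and $\pm1$ are fifth powers, $g(d)\in(F^\times)^{25}$ if and only if $1+Y\in(F^\times)^5$, and a direct expansion gives $1+Y=1+8\pi^2+\cdots$ with $8\pi^2=8\cdot5^{2/5}$ and $v(Y)=2/5$. Here the naive valuation/graded obstruction \emph{vanishes}: the value group $\Gamma_F=\tfrac14\ints[1/5]$ is $5$-divisible, so $2/5\in5\Gamma_F$ and, the residue field $k$ being algebraically closed, the leading term is a fifth power in the associated graded ring. I would therefore observe that any hypothetical fifth root already lies in a finite layer $F_N=K_0(\mu_{5^N},5^{1/5})$, which \emph{is} complete discretely valued with $\Gamma_{F_N}=\tfrac1{4\cdot5^{N-1}}\ints$, and over $F_N$ run the Newton iteration for the fifth root of $1+Y$. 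At each stage one matches the running remainder of valuation $c\in(\tfrac1{p-1},\tfrac{p}{p-1})$ by an $\eta$ with $v(\eta)=c/5$ (via the dominant $\eta^5$ term); but the binomial linear term $5\eta$ re-injects into the remainder at valuation $1+v(\eta)/5$, and since $v(\eta)\in5\Gamma_{F_N}\setminus25\Gamma_{F_N}$ one has $1+v(\eta)/5\in\Gamma_{F_N}\setminus5\Gamma_{F_N}$ — a valuation \emph{not} divisible by $5$. Once such a term governs the remainder the iteration cannot continue, forcing $1+Y\notin(F_N^\times)^5$ for every $N$, hence $1+Y\notin(F^\times)^5$.

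The main obstacle is precisely this last step: because $F$ is deeply ramified there is no one-step valuation obstruction, so one must control the full $\pi$-adic expansion of $Y$ (equivalently of $g(d)$) through the iteration in order to locate the stage at which the $\pi^5=5$ carried by the binomial coefficients $\binom5i$ first pushes the remainder out of $5\Gamma_{F_N}$. This is exactly the type of delicate local computation performed in \cite{Ob:fm2}; in practice I would either expand $Y$ to the required order and track those binomial contributions explicitly, or reformulate the claim as a statement about the conductor of the associated $\ints/p^2$-Kummer tower and settle it via the conductor relation of Lemma \ref{Lcompositum} together with the Hasse--Arf theorem (Lemma \ref{Lhassearf}), the arithmetic heart being that $5$ has class of order exactly $5$ in $F^\times/(F^\times)^{25}$, i.e. $5^{1/25}\notin F$ (provable cleanly since $K/K_0$ is abelian while $K_0(5^{1/25})/K_0$ is not).
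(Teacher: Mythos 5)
There are two genuine gaps, one computational and one structural. The computational one: your expansion of $g(d)$ is wrong, and the error traces to the branch of $\sqrt{1-a}$. Writing $B=-\frac{1+2\pi^2}{1-2\pi^2}$ implicitly takes $\sqrt{1-a}=+5/r$, and with that choice the valuation-$7/5$ contributions $2rd=4\pi^7$ from $A^r$ and $5\cdot 4\pi^2=4\pi^7$ from $B^5$ do add. But the cover of \cite{Ob:fm2} corresponds to the other branch, for which these contributions cancel exactly: the expansion the paper imports from \cite[p.\ 38]{Ob:fm2} is $g(d)=\pm(1-\frac{8r^3}{75}d^3-\frac{32r^5}{5^5}d^5)+o(5^{9/4})$, with \emph{no} $d$-linear term, so the leading corrections sit at valuations $11/5$ and $2$, and the fifth root is $\delta=\pm(19+3\cdot 5^{6/5})+o(5^{5/4})$ --- not your $1+8\pi^2+\cdots$ with $v(Y)=2/5$. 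This is not cosmetic: the two branches change $g$ by $B^{\pm 10}$, whose class in $F^{\times}/(F^{\times})^{25}$ is not visibly trivial, so your whole second-half analysis is carried out on the wrong element. (For the first assertion it happens not to matter: on either branch the $1$-unit part has valuation $>5/4=p/(p-1)$, and your log/exp argument --- which should be run inside the complete subfield $K_0(\sqrt[5]{5})$ containing $g(d)$, since $F$ itself is not complete --- is sound and parallels the paper's binomial-theorem step.)

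The structural gap is that the ``not a $25$th power'' half, the actual content of the lemma, is never proved: you defer it explicitly, and the mechanism you sketch would not close it even with the correct expansion. Failure of a greedy Newton iteration in $F_N=K_0(\mu_{5^N},\sqrt[5]{5})$ does not show nonexistence of a fifth root there (one would need, say, a Newton-polygon or Krasner argument for $T^5-(1+Y)$); the key claim $v(\eta)\in 5\Gamma_{F_N}\setminus 25\Gamma_{F_N}$ is asserted rather than derived; and the obstruction you produce in layer $N$ lies in $5\Gamma_{F_{N+1}}$, so nothing in the sketch prevents a root from appearing higher in the tower --- the computation would have to be redone uniformly in $N$, and it is not done for any $N$. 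The paper sidesteps all of this with a one-step Galois descent that you are missing: since $K(\sqrt[5]{5})/K_0(\sqrt[5]{5})$ is abelian, a fifth root of $\delta$ inside $K(\sqrt[5]{5})$ would generate a Galois extension of $K_0(\sqrt[5]{5})$ of degree dividing $5$, which, lacking $\mu_5$, must be trivial; so it suffices to rule out $\delta\in (K_0(\sqrt[5]{5})^{\times})^5$, a finite check in the single complete discretely valued field $K_0(\sqrt[5]{5})$. Writing $\epsilon=\alpha+\beta 5^{1/5}+\cdots+\theta 5^{4/5}$ with coefficients in $R_0$ and matching coefficients yields the incompatible congruences $\alpha^4\beta\equiv 3\pmod{5}$ and $\alpha^5+5\beta^5\equiv 19\pmod{25}$. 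Your instinct that the deeply ramified $F$ carries no one-step graded obstruction, forcing a descent to discretely valued subfields, is exactly right and shared with the paper; but the correct descent target is the fixed bottom layer $K_0(\sqrt[5]{5})$ via the abelian-extension trick, not a cofinal tower attacked layer by layer (and your closing remark about the class of $5$ in $F^{\times}/(F^{\times})^{25}$ addresses the wrong element: what is needed is the class of $g(d)$, not of $5$).
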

 
\begin{proof}
Fix a $5$th root of $5$ in $\ol{K}$, which we will denote by either $\sqrt[5]{5}$ or $5^{1/5}$.  
We first note that $g(d) \in K_0(\sqrt[5]{5})$.  
By \cite[p.\ 38, equation after (3.18)]{Ob:fm2}, we have 
$$g(d) = \pm\left(1 - \frac{8r^3}{75}d^3 - \frac{32r^5}{5^5}d^5\right) + o(5^{9/4}).$$
Upon plugging in $d$ and simplifying, this gives
$$g(d) = \pm(1 - 3\cdot 5^{11/5} - 4 \cdot 5^2) + o(5^{9/4}).$$
Using the binomal theorem, we see that $g(d)$ has a $5$th root $\delta$ in $K_0(\sqrt[5]{5})$, and
$$\delta = \pm(1 - 3\cdot 5^{6/5} - 20) + o(5^{5/4}).$$  We wish to show that $\delta$ is not a $5$th power in $K(\sqrt[5]{5})$.

Now, since $K(\sqrt[5]{5})/K_0(\sqrt[5]{5})$ is abelian, any subextension is Galois.  So if $\delta$ is a $5$th power in $K(\sqrt[5]{5})$,
then taking a $5$th root of $\delta$ must generate a Galois extension of $K_0(\sqrt[5]{5})$.  This is clearly not the case unless 
$\delta$ is already a $5$th power in $K_0(\sqrt[5]{5})$, so it suffices to show that $\delta$ is not a $5$th power in $K_0(\sqrt[5]{5})$.
Since $-1$ is a $5$th power in $K_0$, we may assume that $\delta = 19 + 3 \cdot 5^{6/5} + o(5^{5/4})$.

Suppose that $\epsilon \in K_0(\sqrt[5]{5})$ such that $\epsilon^5 = \delta$, and write 
$$\epsilon = \alpha + \beta \cdot 5^{1/5} + \gamma \cdot 5^{2/5} + \eta \cdot 5^{3/5} + \theta \cdot 5^{4/5},$$
where $\alpha$, $\beta$, $\gamma$, $\eta$, and $\theta$ are in $K_0$.  By Gauss' Lemma, $\epsilon \in R_0[\sqrt[5]{5}]$.  Equating coefficients
of $1$ and $5^{6/5}$ gives the equations
\begin{eqnarray*}
\alpha^4\beta \equiv 3 \pmod{5}\\
\alpha^5 + 5\beta^5 \equiv 19 \pmod{25}
\end{eqnarray*}
The second equation yields $\alpha \equiv 4 \pmod{5}$, and then the first equation yields $\beta \equiv 3 \pmod{5}$.  But then
$\alpha^5 + 5\beta^5 \equiv 24 + 5 \cdot 18 \equiv 14 \not \equiv 19 \pmod{25}$.  So $\epsilon$ cannot exist, and we are done. 
\end{proof}

\begin{remark}\label{Rnosolvable}
The example above is quite complicated, and is not generalizable in any meaningful way (for instance, it depends critically on having $p=5$). 
One hopes for easier examples, but they are difficult to come by.  
For instance, results of \cite[\S4.1]{Ob:fm1} show that no examples of three-point $G$-covers with
nontrivial wild monodromy can exist when $G$ is $p$-solvable and $m_G > 1$.  So if one wants to find an easier example where $p$ does not
divide the order of the center of $G$, one needs to look either at a group that is not $p$-solvable, or at a group of the form
$G \cong H \rtimes \ints/p^n$, where the action of $\ints/p^n$ on $H$ is faithful.
\end{remark}

\begin{acknowledgements}
I would like to thank my thesis advisor, David Harbater, for many helpful
conversations related to this work.  I thank Irene Bouw for help with Example \ref{Xgoodreduction}, and a previous referee 
for useful suggestions.  Thanks also to Farin Rebecca Loeb, for proofreading an earlier version of this
paper.
\end{acknowledgements}

\end{document}